\documentclass{article}

\usepackage{amsmath}

\numberwithin{equation}{section} 

\counterwithin{figure}{section}
\counterwithin{table}{section}

\usepackage{amsxtra}

\usepackage{mathtools}

\usepackage{amssymb}

\usepackage[all,arrow]{xy}

\usepackage{geometry}
\usepackage{threeparttable}
\usepackage{diagbox}
\usepackage{paralist,bbding,pifont}

\usepackage[thmmarks]{ntheorem}
{
	\theoremstyle{nonumberplain}
	\theoremheaderfont{\bfseries}
	\theorembodyfont{\normalfont}
	\theoremsymbol{\mbox{$\Box$}}
	\newtheorem{pf}{Proof}
}
{
	\theoremstyle{plain}
	\theoremheaderfont{\bfseries}
	\theorembodyfont{\normalfont}
	\newtheorem{rmk}{Remark}[section]
}
{
	\theoremstyle{plain}
	\theoremheaderfont{\bfseries}
	\theorembodyfont{\normalfont}
	
}
{
	\theoremstyle{plain}
	\theoremheaderfont{\bfseries}
	\theorembodyfont{\normalfont}
	
}
{
	\theoremstyle{plain}
	\theoremheaderfont{\bfseries}
	\theorembodyfont{\normalfont}
	\newtheorem*{ackn}{Acknowledgements.}
}
\newtheorem{thm}{Theorem}[section]

\newtheorem{prop}{Proposition}[section]
\newtheorem{lem}{Lemma}[section]
{
	\theoremstyle{plain}
	\theoremheaderfont{\bfseries}
	\theorembodyfont{\normalfont}
	
}
{
	\theoremstyle{plain}
	\theoremheaderfont{\bfseries}
	\theorembodyfont{\normalfont}
	\newtheorem{cor}{Corollary}[section]
}
\newtheorem{Q}{Problem}

\usepackage{nicematrix,tikz}

\usepackage{amsmath,amssymb,tikz-cd}

\usepackage{dsfont}
\usetikzlibrary{matrix}
\usetikzlibrary{decorations.pathmorphing} 
\usetikzlibrary{positioning} 

\usepackage{tabularx,amsmath}
\usepackage{float}
\usepackage{booktabs}	
\usepackage{caption}   

\usepackage{hyperref}
\hypersetup{hypertex=true,
	colorlinks=true,
	linkcolor=red,
	anchorcolor=green,
	citecolor=blue}

\title{On the simplest simply connected rational homology $7$-spheres that are not $2$-connected}
\author{Fupeng Xu}
\date{}

\begin{document}
	
	\maketitle
		
	\begin{abstract}
		
		We give a complete classification of two families of simply connected $7$-manifolds: $\mathcal{G}^{3}(\mathrm{Wu})$-like manifolds and $\mathcal{G}^{3}_{p}\left(S^{5}\right)$-like manifolds for odd primes $p$. The former are non-spin with $H_{2}\cong H_{4}\cong \mathbb{Z}/2$ as their only nontrivial middle homology; the latter have $H_{2}\cong H_{4}\cong \mathbb{Z}/p$ as their sole nontrivial middle homology. These manifolds attain the minimal homological complexity among simply connected rational homology $7$-spheres that are not $2$-connected.
		
		We prove that Milnor's $\lambda$-invariant gives a bijection from the oriented diffeomorphism classes of $\mathcal{G}^{3}(\mathrm{Wu})$-like manifolds onto $\mathbb{Z}/7$, and each such manifold decomposes as the connected sum of a standard $\mathcal{G}^{3}(\mathrm{Wu})$-like manifold and a homotopy $7$-sphere. Analogously, the Eells-Kuiper $\mu$-invariant yields a bijection from the oriented diffeomorphism classes of $\mathcal{G}^{3}_{p}\left(S^{5}\right)$-like manifolds to $\mathbb{Z}/28$, with every manifold splitting as the connected sum of a standard $\mathcal{G}^{3}_{p}\left(S^{5}\right)$-like manifold and a homotopy $7$-sphere.
	\end{abstract}
	
	\tableofcontents
	
	\section{Introduction}

Unless otherwise stated, all manifolds are assumed to be smooth, connected, closed and oriented. All manifolds with boundary are assumed to be compact. All diffeomorphisms preserve orientations. $H_{q}(X)$ and $H^{q}(X)$ denote the (co)homology groups with coefficient $\mathbb{Z}$.

Rational homology spheres are closed manifolds whose rational homology groups coincide with those of the standard sphere. They occupy a distinguished position in topology: although they are homologically close to spheres, they often carry rich torsion phenomena encoded by linking pairings on their homology. As a result, they provide a natural testing ground for techniques from surgery theory, cobordism theory, and the study of manifold invariants in algebraic and differential topology.

In dimension $3$, rational homology spheres play a particularly central role. They arise naturally as Dehn surgery manifolds along knots and links in $S^3$ (\cite{Wal60,Lic62}), and form a fundamental class of objects in low-dimensional topology. Many powerful invariants, such as the Casson invariant and various Floer-theoretic invariants, are defined for or most naturally studied on rational homology $3$-spheres \cite{AMc90,Floer88}. Moreover, rational homology $3$-spheres frequently appear as boundaries of smooth or topological $4$-manifolds, where their properties interact deeply with intersection forms and gauge-theoretic techniques in four-dimensional topology \cite{Donaldson83}.

In dimensions greater than $4$, the situation becomes markedly different. From the viewpoint of surgery theory, the simply connected case forms the natural starting point. Such manifolds also arise naturally in the study of exotic spheres, smooth circle actions, and geometric structures such as positive curvature and Sasakian geometry (\cite{GZ00,BGN02}). In dimensions $5$ and $6$ we have a complete understanding of simply connected rational homology spheres due to the classification of simply connected manifolds (\cite{Smale62Spin5mfd,Barden65simplycnt5mfd,Wall1966Classification,Jupp1973,Zhubr00}). In dimension $7$ the topology of such manifolds becomes substantially richer. The complete classification of $2$-connected rational homology $7$-spheres is known due to the classification of $2$-connected manifolds (\cite{Kreck2018OnTC,CN19}), while a full classification of general simply connected rational homology $7$-spheres remains open. 

Motivated by these developments, it is natural to consider the following problem.
\begin{Q}
	Classify simply connected rational homology $7$-spheres.
\end{Q}

The purpose of this paper is to study the so-called \textbf{$\mathcal{G}^{3}(\mathrm{Wu})$-like manifolds} and \textbf{$\mathcal{G}^{3}_{p}\left(S^{5}\right)$-like manifolds}. These manifolds represent the simplest examples of simply connected rational homology $7$-spheres that are not $2$-connected, and we shall see later why they are the simplest. 
For a precise definition we begin with prototype manifolds which are obtained from gyrations. A \textbf{$k$-gyration} is a surgery on the product of a manifold and the $(k-1)$-sphere. It is originally defined by Gonz\'{a}lez Acu\~{n}a (\cite{GA75}) and appears implicitly in the study of intersections of quadrics and moment-angle manifolds by Gitler and L\'{o}pez de Medrano (\cite{GSLdM13}). In the special case $k=2$, Duan \cite{Duan2026} studies this operation in connection with free circle actions and refers to it as {suspension}. In addition, Chenery, Huang and Theriault (\cite{HT23,Huang2024,CT2025}) developed homotopy-theoretic analogues of gyrations and investigated their stabilization phenomena and homotopy invariance properties.

Let $\mathrm{Wu}=SU_{3}\big/SO_{3}$ denote the Wu manifold and consider $\mathcal{G}^{3}(\mathrm{Wu})$, the \textbf{$2$-gyration} of $\mathrm{Wu}$. Namely, first we form the trivial $S^{2}$-bundle $S^{2}\times\mathrm{Wu}$. The fiber inclusion $S^{2}\to S^{2}\times\mathrm{Wu}$ has trivial normal bundle, hence we can perform surgery along the fiber $S^{2}$. Since $\pi_{2}\left(SO_{5}\right)=0$, there is a unique framing while applying fiber surgery, and the resulting manifold is denoted by $\mathcal{G}^{3}(\mathrm{Wu})$.  See Figure \ref{Figure: The gyration on Wu manifold}.
\begin{figure}[h]
	\begin{center}
		\[
			\xymatrix{
				S^{2}\times D^{5}\ar@{^(-_>}[d] & S^{2}\times S^{4}\ar@{^(-_>}[d]\ar@{^(-_>}[r]\ar@{_(-_>}[l] & D^{3}\times S^{4}\ar@{^(-_>}[d]\\
				S^{2}\times\mathrm{Wu} & S^{2}\times\left(\mathrm{Wu}\setminus \mathring{D^{5}}\right)\ar@{_(-_>}[l]\ar@{^(-_>}[r] & \mathcal{G}^{3}(\mathrm{Wu})
			}
		\]
	\end{center}
	\caption{The gyration $\mathcal{G}^{3}(\mathrm{Wu})$}
	\label{Figure: The gyration on Wu manifold}
\end{figure}

It is routine to compute that $\mathcal{G}^{3}(\mathrm{Wu})$ is a simply connected $7$-manifold with only nontrivial middle homology groups $H_{2}\cong H_{4}\cong\mathbb{Z}\big/2$ and characteristic classes $w_{2}\neq0$, $w_3\neq0$, $w_4=w_{2}^{2}=0$, $p_1=0$ (Lemma \ref{Lemma: coh ring & char class of G_3(Wu)}). 

Next we introduce $\mathcal{G}^{3}_{p}\left(S^{5}\right)$. We require that $p$ is an odd prime. Then $\mathcal{G}^{3}_{p}\left(S^{5}\right)$ is the \textbf{$p$-twisted $2$-gyration} of $S^{5}$ constructed as follows. Form the trivial $S^{2}$-bundle $S^{2}\times S^{5}$, and $S^{2}\times S^{5}$ is simply connected with $\pi_{2}\left(S^{2}\times S^{5}\right)\cong\mathbb{Z}$. By Whitney's embedding theorem $p\in \pi_{2}\left(S^{2}\times S^{5}\right)$ is represented by a smooth embedding $S^{2}\xrightarrow{\iota_{p}} S^{2}\times S^{5}$, and different choices of smooth embeddings are smoothly isotopic. The normal bundle $\nu\left(\iota_{p}\right)$ is oriented and its classifying map lies in $\pi_{2}\left(BO_{5}\right)\cong\mathbb{Z}\big/2$, thereby being trivial. Hence we can perform surgery along the embedding $\iota_{p}$, and there is a unique framing while applying this surgery. The resulting manifold is then denoted by $\mathcal{G}^{3}_{p}\left(S^{5}\right)$. See Figure \ref{Figure: The gyration on S^5}, where with a slight abuse of notation we also denote by $S^{2}\times D^{5}\xrightarrow{\iota_{p}}S^{2}\times S^{5}$ the embedding of a tubular neighborhood of $\iota_{p}$.
\begin{figure}[H]
	\begin{center}
		\[
		\xymatrix{
			S^{2}\times D^{5}\ar@{^(-_>}[d]^{\iota_{p}} & S^{2}\times S^{4}\ar@{^(-_>}[d]\ar@{^(-_>}[r]\ar@{_(-_>}[l] & D^{3}\times S^{4}\ar@{^(-_>}[d]\\
			S^{2}\times S^{5} & \left(S^{2}\times S^{5}\right)\setminus \iota_{p}\left(S^{2}\times\mathring{D^{5}}\right) \ar@{_(-_>}[l]\ar@{^(-_>}[r] & \mathcal{G}^{3}_{p}\left(S^{5}\right)
		}
		\]
	\end{center}
	\caption{The gyration $\mathcal{G}^{3}_{p}\left(S^{5}\right)$}
	\label{Figure: The gyration on S^5}
\end{figure}
We emphasize that the notation $\mathcal{G}^{3}_{p}\left(S^{5}\right)$ differs from the standard convention $\mathcal{G}^{k}_{\tau}(M)$ (e.g. \cite{CT2025}), where the surgery annihilates the homology class represented by the product sphere itself, rather than a nontrivial multiple of it, and the subscript $\tau$ specifies the surgery framing. In contrast, our construction annihilates the class represented by the $p$-multiple of the sphere, and the framing is unique.

It is routine to compute that $\mathcal{G}^{3}_{p}\left(S^{5}\right)$ is a simply connected $7$-manifold with only nontrivial middle homology groups $H_{2}\cong H_{4}\cong\mathbb{Z}\big/p$. Moreover, the Stiefel-Whitney classes and Pontryagin classes of $\mathcal{G}^{3}_{p}\left(S^{5}\right)$ vanish, and the only nontrivial products in mod $p$ cohomology ring of $\mathcal{G}^{3}_{p}\left(S^{5}\right)$ are those detected by Poincar\'{e} duality (Lemma \ref{Lemma: coh ring & char class of G_3^p S^5}). 

$\mathcal{G}^{3}(\mathrm{Wu})$ exhibits the simplest simply connected rational homology $7$-sphere that is not $2$-connected, in the sense that $\mathcal{G}^{3}(\mathrm{Wu})$ has the minimal topological complexity among these manifolds: If $M$ is a rational homology $7$-sphere that is simply connected and not $2$-connected, then $\mathbb{Z}\big/2$ is the smallest possible second homotopy and homology group, and by Poincar\'{e} duality and universal coefficient theorem, the torsion subgroup of $H_{4}(M)$ is isomorphic to $H_{2}(M)$, and $\mathcal{G}^{3}(\mathrm{Wu})$ realizes the minimality. If we require the second homotopy group to be cyclic of odd prime order $p$, then $\mathcal{G}^{3}_{p}\left(S^{5}\right)$ realizes the minimal topological complexity among rational homology $7$-spheres with prescribed connectivity and second homotopy group.

A $\mathcal{G}^{3}(\mathrm{Wu})$-like manifold is a simply connected non-spin $7$-manifold $M$ whose only nontrivial middle homology groups are $H_{2}(M)\cong H_{4}(M)\cong\mathbb{Z}\big/2$. And a $\mathcal{G}^{3}_{p}\left(S^{5}\right)$-like manifold is a simply connected spin $7$-manifold $M$ whose only nontrivial middle homology groups are $H_{2}(M)\cong H_{4}(M)\cong\mathbb{Z}\big/p$. We adapt the convention that when we refer to $\mathcal{G}^{3}_{p}\left(S^{5}\right)$ or $\mathcal{G}^{3}_{p}\left(S^{5}\right)$-like manifolds, $p$ indicates an odd prime. Thus a $\mathcal{G}^{3}_{p}\left(S^{5}\right)$-like manifold is a $\mathbb{Z}\big/2$-homology $7$-sphere by universal coefficient theorem, and is automatically spin. It can be shown that a $\mathcal{G}^{3}(\mathrm{Wu})$-like manifold has the same characteristic classes and mod $2$ cohomology ring as $\mathcal{G}^{3}(\mathrm{Wu})$ (Lemma \ref{Lemma: equiv char of G_3(Wu)-like mfd}), and a $\mathcal{G}^{3}_{p}\left(S^{5}\right)$-like manifold has the same characteristic classes and mod $p$ cohomology ring as $\mathcal{G}^{3}_{p}\left(S^{5}\right)$ (Lemma \ref{Lemma: equiv char of G_3^p S^5-like mfd}).
\begin{Q}
	Classify all $\mathcal{G}^{3}(\mathrm{Wu})$-like manifolds.
\end{Q}
\begin{Q}
	Classify all $\mathcal{G}^{3}_{p}\left(S^{5}\right)$-like manifolds.
\end{Q}

$\mathcal{G}^{3}(\mathrm{Wu})$-like manifolds can be classified by Milnor's $\lambda$-invariant (\cite{Milnor56}), which is originally defined for homotopy $7$-spheres and reveals for the first time the existence of exotic smooth structure on the $7$-sphere $S^{7}$. 

Indeed the $\lambda$-invariant can be defined for any rational homology $7$-sphere. Let $M$ be a rational homology $7$-sphere. Since $\Omega_{7}^{SO}=0$, $M$ admits an oriented coboundary $V$ and we may assume that its signature $\sigma(V,M)$ vanishes. Since $M$ is a rational homology $7$-sphere, by the long exact sequence of rational cohomology groups associated to the pair $(V,M)$ we see that the homomorphism $H^{4}(V,M;\mathbb{Q})\to H^{4}(V;\mathbb{Q})$ induced by inclusion is an isomorphism, hence the first rational Pontryagin class $p_{1}^{\mathbb{Q}}(V)$ admits a unique lifting $\widetilde{p_{1}^{\mathbb{Q}}(V)}\in H^{4}(V,M;\mathbb{Q})$. Then we define the $\lambda$-invariant of $M$ as follows:
\begin{align*}
	\Lambda\left(V\right)&:=\left<\widetilde{p_{1}^{\mathbb{Q}}(V)}^{2},[V,M]_{\mathbb{Q}}\right>\in\mathbb{Q},\\
	\lambda(M)&:=\Lambda(V)\ \mathrm{mod}\ 7\in\mathbb{Q}\big/7\mathbb{Z}.
\end{align*}
Here if $R$ is a coefficient ring and $V$ is a compact $R$-oriented $n$-manifold $V$ with (possibly empty) boundary, the fundamental class is denoted by $[V,\partial V]_{R}\in H_{n}(V,\partial V;R)$, and when $R=\mathbb{Z}$ the subscript $R$ is omitted.

This invariant is well-defined, which can be reasoned by Hirzebruch's signature theorem as follows. Suppose $V'$ is another oriented coboundary of $M$ with vanishing signature. Form the closed $8$-manifold $X=V\cup_{M}\left(-V'\right)$, where $-V'$ has the same underlying manifold as $V'$ and is equipped with the opposite orientation, and we have
\begin{align*}
	\Lambda\left(V\right)-\Lambda\left(V'\right)&=\left<
	p_{1}^{\mathbb{Q}}(X)^{2},[X]_{\mathbb{Q}}\right>\\
	&=\left<p_{1}(X)^{2},[X]\right>\\
	&=45\sigma(X)-7\left<p_{2}(X),[X]\right>\\
	&=45\left(\sigma(V,M)-\sigma\left(V',M\right)\right)-7\left<p_{2}(X),[X]\right>\\
	&=-7\left<p_{2}(X),[X]\right>\in7\mathbb{Z}.
\end{align*}
Here the third equality follows from Hirzebruch's signature theorem, and the first and fourth equalities are results of Lemma \ref{Lemma: char nums additive wrt gluing via boundary}. Therefore, $\Lambda\left(V\right)$ and $\Lambda\left(V'\right)$ have the same image in $\mathbb{Q}\big/7\mathbb{Z}$.

If rational homology $7$-sphere $M$ has vanishing fourth cohomology group, the homomorphism $H^{4}(V,M)\to H^{4}(V)$ is epic and we consider the lifting $\widetilde{p_{1}(V)}\in H^{4}(V,M)$ of integral Pontryagin class $p_{1}(V)$. Then the $\lambda$-invariant of $M$ can also be defined using integral characteristic number as
\begin{align*}
	\Lambda\left(V\right)&:=\left<\widetilde{p_{1}(V)}^{2},[V,M]\right>\in\mathbb{Z},\\
	\lambda(M)&:=\Lambda(V)\ \mathrm{mod}\ 7\in\mathbb{Z}\big/7.
\end{align*}
Here the value of $\Lambda\left(V\right)$ does not depend on the choices of liftings (Lemma \ref{Lemma: Relative char nums}). In particular, the integral $\lambda$-invariant applies to $\mathcal{G}^{3}(\mathrm{Wu})$-like manifolds since they have vanishing fourth cohomology groups.

\begin{thm}\label{Theorem: main}
	\begin{compactenum}
		\item[\ ]
		\item Two $\mathcal{G}^{3}(\mathrm{Wu})$-like manifolds are diffeomorphic if and only if they have the same $\lambda$-invariant.
		\item The $\lambda$-invariant induces a bijection from the diffeomorphism classes of $\mathcal{G}^{3}(\mathrm{Wu})$-like manifolds to $\mathbb{Z}\big/7$. 
		\item For each $\mathcal{G}^{3}(\mathrm{Wu})$-like manifold $M$ there is a homotopy $7$-sphere $\Sigma$ such that $M\cong \mathcal{G}^{3}(\mathrm{Wu})\#\Sigma$.
	\end{compactenum}
\end{thm}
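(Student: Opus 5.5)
The plan is to use Kreck's modified surgery. First I determine the normal $2$-type of a $\mathcal{G}^{3}(\mathrm{Wu})$-like manifold $M$. By Lemma \ref{Lemma: equiv char of G_3(Wu)-like mfd}, $M$ has the same characteristic classes and mod $2$ cohomology ring as $\mathcal{G}^{3}(\mathrm{Wu})$; in particular $w_2\neq0$ and $H^2(M)=0$. Since $\pi_2(M)\cong H_2(M)\cong\mathbb{Z}/2$, the $3$-skeleton type is governed by $K(\mathbb{Z}/2,2)$, and $w_2$ pulls back from the generator $\iota\in H^2(K(\mathbb{Z}/2,2);\mathbb{Z}/2)$.

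Accordingly I take the fibration $B=K(\mathbb{Z}/2,2)\times BSpin\to BO$, where the map on the first factor classifies a stable bundle $\xi$ over $K(\mathbb{Z}/2,2)$ with $w_2(\xi)=\iota$. Such a bundle exists: for example, pull back from $BSO$ using $\pi_2(BSO)=\mathbb{Z}/2$ via a map $K(\mathbb{Z}/2,2)\to BSO$ realizing the nontrivial class on $\pi_2$, after checking the $k$-invariant obstruction. The map $M\to K(\mathbb{Z}/2,2)$ is an iso on $\pi_2$, hence $3$-connected, so $M$ admits a normal $2$-smoothing in $B$.

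Next I compute the bordism group $\Omega_7(B)=\Omega_7^{Spin}(K(\mathbb{Z}/2,2);\xi)$ via the James/Atiyah–Hirzebruch spectral sequence. The $E_2$-term is $H_p(K(\mathbb{Z}/2,2);\Omega_q^{Spin})$ with twisted differentials. The expectation is that this group is small, detected by torsion or zero, so that two $\mathcal{G}^{3}(\mathrm{Wu})$-like manifolds with normal $2$-smoothings are $B$-bordant after acting with the automorphisms of $K(\mathbb{Z}/2,2)$, which is harmless since $\mathbb{Z}/2$ has trivial automorphism group. If the group is not zero, I would instead show that every class is realized by $\mathcal{G}^{3}(\mathrm{Wu})\#\Sigma$, with $\Sigma$ running over homotopy spheres. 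In every case the bordism group should be generated, modulo classes killed by the $\lambda$-type obstruction, by such connected sums.

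For the surgery step, given a $B$-bordism $W$ between $M$ and $M'$, Kreck's theorem says the obstruction lies in $l_8(\mathbb{Z})$ for $q=4$. This obstruction is determined by the intersection form on $H_4(W,M)$ modulo a $\mathbb{Z}$-form. Because $M$ and $M'$ are rational homology spheres, the obstruction is elementary exactly when the signature can be reduced and the $p_1^2$-data match. Following the Crowley–Nordström style argument, I would replace $W$ by $W\natural(\text{plumbings})$ to adjust the signature in multiples of $8$ via $E_8$. This changes $\Lambda$ by a computable amount, and the residual obstruction vanishes iff $\lambda(M)=\lambda(M')$. The result is part 1, together with the converse that diffeomorphism preserves $\lambda$, which is immediate from the definition.

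For parts 2 and 3, $\lambda$ is additive under connected sum. On homotopy spheres $\lambda(\Sigma)$ is Milnor's invariant, which is $\equiv$ a generator's multiple and surjects $\Theta_7\cong\mathbb{Z}/28\to\mathbb{Z}/7$; this gives surjectivity of $\lambda$ onto $\mathbb{Z}/7$. Injectivity follows from part 1, and part 3 follows by choosing $\Sigma$ with $\lambda(\Sigma)=\lambda(M)-\lambda(\mathcal{G}^{3}(\mathrm{Wu}))$.

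The main obstacle is the computation of the twisted bordism group $\Omega_7(B)$ and the identification of the surgery obstruction with $\lambda$. The twisted Adams/AHSS differentials involving $Sq^2+w_2$ are delicate. I would first try to show that the relevant $E_2$ entries on the line $p+q=7$ are killed by these differentials.
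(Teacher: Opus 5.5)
\textbf{Your normal $2$-type is wrong, and the obstacle you identify is an artefact of it.} No stable bundle $\xi$ over $K(\mathbb{Z}/2,2)$ has $w_2(\xi)=\iota$. For every oriented bundle $\rho_2 p_1=w_2^2$, so $\iota^2$ would be the mod $2$ reduction of $p_1(\xi)\in H^4(K(\mathbb{Z}/2,2);\mathbb{Z})$. That group is $0$, because $H_3=0$ and $H_4\cong\mathbb{Z}/4$, whereas $\iota^2\neq 0$. Equivalently, the $k$-invariant obstruction you propose to check is the first $k$-invariant of $BSO$, and it is nonzero.

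The correct normal $2$-type is just $BSO\to BO$. Since $w_2(M)\neq0$, the map $\widetilde{\nu}\colon M\to BSO$ is an isomorphism on $\pi_2\cong\mathbb{Z}/2$. Since $\pi_3(BSO)=\pi_2(SO)=0$, $\widetilde{\nu}$ is already a normal $2$-smoothing. (The usual fibre product $K(\pi_2M,2)\times_{K(\mathbb{Z}/2,2)}BSO$ collapses to $BSO$ here.) Then $\Omega_7(B)=\Omega_7^{SO}=0$, and no twisted Atiyah--Hirzebruch or Adams computation is needed.

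\textbf{The real difficulty is the surgery obstruction, and you only assert its answer.} The claim that the obstruction in $l_8(\{e\})$ is elementary ``exactly when the signature can be reduced and the $p_1^2$-data match'' is the heart of Statement 1. It does not follow from $M,M'$ being rational homology spheres.

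With $\psi\colon W\to BSO$ a $4$-equivalence, $H_4(\psi)$ maps onto $H_4(BSO)\cong\mathbb{Z}\oplus\mathbb{Z}/2$. So the obstruction is governed by $(F(H_4(W)),\beta)$ together with two functionals, $f=\langle p_1(W),-\rangle$ and $g=\langle w_4(W),\rho_2(-)\rangle$. You need a stable Lagrangian inside $\ker(f,g)$, not merely inside $\ker f$, so the torsion component $g$ is a priori an obstruction beyond $\lambda$.

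You must show the following:
\begin{itemize}
\item $\beta$ is odd.
\item With $v$ the element dual to $f$, elementarity is equivalent to $\sigma=0$, $\beta(v,v)=\Lambda(W)=0$, $g(v)=0$, and the vanishing of an Arf-type invariant of $g$ on $v^{\perp}/\langle v\rangle$.
\item Using the relative Wu class $\mathrm{U}^4(W)=w_4(W)+w_2(W)^2$, $g(v)=w_2^2w_4(W)$ vanishes automatically.
\item By the same Wu-class argument, that Arf-type invariant equals $\Lambda(W)\bmod 2$.
\end{itemize}
Only then does ``elementary $\iff$ $\sigma(W)=0$ and $\Lambda(W)=0$'' follow. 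None of this appears in your proposal.

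\textbf{Smaller points.} Boundary-connected sum of $W$ with $E_8$ plumbings changes $M'$ into $M'\#\Sigma$, so it is not the right adjustment. Instead, take connected sums with closed $8$-manifolds of signature $0$, such as a representative of $[\mathbb{C}P^4]-[\mathbb{C}P^2\times\mathbb{C}P^2]$, which has $p_1^2=7$, to shift $\Lambda$ by multiples of $7$. You must then check that the surgeries making $\psi$ $4$-connected preserve $\sigma$ and $\Lambda$; this follows from bordism invariance of the glued closed manifold.

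Your deduction of Statements 2 and 3 from Statement 1 is fine. It rests on additivity of $\lambda$ under $\#\Sigma$ and on surjectivity of $\Theta_7\to\mathbb{Z}/7$.
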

In particular, all $\mathcal{G}^{3}(\mathrm{Wu})$-like manifolds are homeomorphic and homotopy equivalent.

$\mathcal{G}^{3}_{p}\left(S^{5}\right)$-like manifolds can be classified by Eells-Kuiper's $\mu$-invariant (\cite{EKinv}), which is originally defined for certain smooth $(4n-1)$-dimensional spin manifolds including spin rational homology $7$-spheres and plays an important role in distinguishing exotic smooth structures on $S^{7}$ and $S^{11}$. 

The $\mu$-invariant of a spin rational homology $7$-sphere $M$ is defined similarly to the $\lambda$-invariant. Since $\Omega_{7}^{Spin}=0$, $M$ admits an oriented spin coboundary $V$ and we may assume that its signature $\sigma(V,M)$ vanishes. For a spin manifold $X$, its first spin Pontryagin class $\overline{p_{1}}(X)\in H^{4}(X)$ is defined and satisfies the relation $p_{1}(X)=2\overline{p_{1}}(X)$. Let us denote by \(\overline{p_{1}}^{\mathbb{Q}}(X)\) the rational first spin Pontryagin class, and the $\mu$-invariant of $M$ is defined as follows:
\begin{align*}
	\mathrm{M}\left(V\right)&:=\left<\widetilde{\overline{p_{1}}^{\mathbb{Q}}(V)}^{2},[V,M]_{\mathbb{Q}}\right>\in\mathbb{Q},\\
	\mu(M)&:=\mathrm{M}(V)\ \mathrm{mod}\ 224\in\mathbb{Q}\big/224\mathbb{Z}.
\end{align*}

This invariant is well-defined, which can be reasoned by Hirzebruch's integrality theorem of $\widehat{A}$-genus as follows. Suppose $V'$ is another oriented spin coboundary of $M$ with vanishing signature. Form the closed $8$-manifold $X=V\cup_{M}\left(-V'\right)$, where $-V'$ has the same underlying manifold as $V'$ and is equipped with the opposite orientation, and $X$ is an oriented spin manifold with
\begin{align*}
	\mathrm{M}\left(V\right)-\mathrm{M}\left(V'\right)&=\left<
	\overline{p_{1}}^{\mathbb{Q}}(X)^{2},[X]_{\mathbb{Q}}\right>\\
	&=\left<\overline{p_{1}}(X)^{2},[X]\right>\\
	&=224\widehat{A}(X)+\sigma(X)\\
	&=224\widehat{A}(X)+\sigma(V,M)-\sigma\left(V',M\right)\\
	&=224\widehat{A}(X)\in 224\mathbb{Z}.
\end{align*}
Here the third equality is the result of eliminating $p_{2}$-terms of $\widehat{A}$-genus and $L$-genus, and the last one follows from the Hirzebruch's integrality theorem of $\widehat{A}$-genus. Therefore, $\mathrm{M}\left(V\right)$ and $\mathrm{M}\left(V'\right)$ have the same image in $\mathbb{Q}\big/224\mathbb{Z}$.

Just as the previous case, if spin rational homology $7$-sphere $M$ has vanishing fourth cohomology group, then the invariant $\mathrm{M}$ of coboundary is an integer, hence $\mu(M)\in\mathbb{Z}\big/224$. 

\begin{thm}\label{Theorem: main, spin}
	Let $p$ be an odd prime.
	\begin{compactenum}
		\item Two $\mathcal{G}^{3}_{p}\left(S^{5}\right)$-like manifolds are diffeomorphic if and only if they have the same $\mu$-invariant.
		\item The $\mu$-invariant induces a bijection from the diffeomorphism classes of $\mathcal{G}^{3}_{p}\left(S^{5}\right)$-like manifolds to $\mathbb{Z}\big/28$. 
		\item For each $\mathcal{G}^{3}_{p}\left(S^{5}\right)$-like manifold $M$ there is a homotopy $7$-sphere $\Sigma$ such that $M\cong \mathcal{G}^{3}_{p}\left(S^{5}\right)\#\Sigma$.
	\end{compactenum}
\end{thm}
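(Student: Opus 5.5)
We plan to run Kreck's modified surgery. The normal $2$-type of a $\mathcal{G}^{3}_{p}\left(S^{5}\right)$-like manifold $M$ is determined by two facts. First, $M$ is spin, since it is a $\mathbb{Z}/2$-homology sphere. Second, by the analogue of Lemma \ref{Lemma: equiv char of G_3^p S^5-like mfd} we have $\pi_{2}(M)\cong H_{2}(M)\cong\mathbb{Z}/p$. So the normal $2$-type is
\[
B=K(\mathbb{Z}/p,2)\times BSpin\to BO .
\]
A normal $2$-smoothing $M\to B$ is a spin structure together with a map $M\to K(\mathbb{Z}/p,2)$ that is an isomorphism on $\pi_{2}$. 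Any two such $M_{0},M_{1}$ can be given $2$-smoothings. They are bordant in $\Omega_{7}(B)=\Omega_{7}^{Spin}(K(\mathbb{Z}/p,2))$, which is a finite $p$-group; after composing with automorphisms of $\mathbb{Z}/p$ the classes should coincide. To get this we compute $\Omega_{7}^{Spin}(K(\mathbb{Z}/p,2))$ via the Atiyah--Hirzebruch spectral sequence. At an odd prime, $\Omega^{Spin}_{*}\otimes\mathbb{Z}_{(p)}$ agrees with $\Omega^{SO}_{*}$, and the relevant groups are
\[
H_{7}(K(\mathbb{Z}/p,2);\mathbb{Z}_{(p)}),\qquad H_{3}(K(\mathbb{Z}/p,2);\mathbb{Z}_{(p)})\otimes\Omega_{4} .
\]
The plan is to identify the bordism class with the linking form data on $H^{4}(M)$, i.e. a class in $\mathbb{Z}/p$ given by $x\cup\beta x$-type products. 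Lemma \ref{Lemma: equiv char of G_3^p S^5-like mfd} shows that this data is standard up to the action of $\mathrm{Aut}(\mathbb{Z}/p)$ (squares times units), so it can be matched.

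Once $M_{0}$ and $M_{1}$ are $B$-bordant via $W$, we do surgery below the middle dimension to make $W\to B$ $4$-connected. The obstruction then lies in Kreck's $l_{8}(\mathbb{Z}[\pi_1]=\mathbb{Z})$. Because $H_{*}(M_i;\mathbb{Q})$ is that of a sphere, this obstruction reduces, as in Kreck's treatment of $2$-connected $7$-manifolds (\cite{Kreck2018OnTC}), to a signature-type obstruction. That signature obstruction can be killed after connected sum with a homotopy sphere, whose class is detected by $\mu$. Concretely, we arrange $W$ so that $H_{4}(W,M_0)$ is a nondegenerate form over $\mathbb{Z}$. The form is even because $W$ is spin; the odd torsion of $M_i$ contributes no $2$-primary issues. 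Then $\mathrm{M}(W)$ computes $\mu(M_1)-\mu(M_0)$, and plumbing off $E_8$-summands (i.e. connect-summing with the Milnor sphere $\Sigma_{M}$ generating $\Theta_7\cong\mathbb{Z}/28$) makes the obstruction elementary. This proves that $M_{1}\cong M_{0}\#\Sigma$ for some $\Sigma\in\Theta_7$, which is part (3).

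For parts (1) and (2) we compute $\mu(\mathcal{G}^{3}_{p}(S^{5})\#\Sigma)=\mu(\mathcal{G}^{3}_{p}(S^{5}))+\mu(\Sigma)$. Additivity comes from the boundary connected sum of coboundaries, together with $\mu:\Theta_7\to\mathbb{Z}/28$ being an isomorphism, with $\mu$ valued in multiples of $8$ modulo $224$. We must check two things. First, $\mu(\mathcal{G}^3_p(S^5))$ lies in the same coset, for which we should exhibit an explicit spin coboundary from the gyration: the trace of the surgery on $S^2\times D^6$ has $\overline{p_1}=0$, so $\mu=0$. Second, the inertia group of $\mathcal{G}^{3}_{p}(S^{5})$ is trivial, and this follows from $\mu$ being a diffeomorphism invariant that separates the $28$ sums.

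The main obstacle is the surgery obstruction step: showing that the element of $l_8(\mathbb{Z})$ is elementary after adjusting by a homotopy sphere. This needs control of the torsion linking on $H_{4}(W,M_i)$ coming from $H^4(M_i)\cong\mathbb{Z}/p$. We would first try to choose $W$ so that $H_{4}(W)\to H_{4}(W,\partial W)$ is an isomorphism modulo odd torsion, and then apply Kreck's criterion for rational homology spheres.
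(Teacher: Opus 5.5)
Your overall architecture (first show $M_1\cong M_0\#\Sigma$, then get (1) and (2) from additivity of $\mu$, injectivity of $\mu$ on $\Theta_7$, and $\mu(\mathcal{G}^{3}_{p}(S^{5}))=0$ from the explicit coboundary) is logically sound. The problem is the step carrying all the weight, namely that the obstruction in $l_8(\{e\})$ is ``signature-type'' and is removed by a homotopy sphere. That step is wrong.

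First, the form is not even. For a spin $8$-manifold the Wu class is $v_4=w_4=\rho_2\overline{p_1}$. Since $W\to BSpin\times K(\mathbb{Z}/p,2)$ is $4$-connected, $w_4(W)\neq 0$, so $(F(H_4(W)),\beta)$ is odd and there is no $E_8$ or hyperbolic picture.

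Second, and more seriously, you have dropped the $K(\mathbb{Z}/p,2)$ factor from the obstruction. We have $H_4(\mathcal{B})\cong\mathbb{Z}\oplus\mathbb{Z}/p$, where the torsion summand comes from $H_4(K(\mathbb{Z}/p,2))$ and is detected by $x^2$. So the required Lagrangian must lie in $\ker(f,g)$, with $f=\langle\overline{p_1}(W),-\rangle=\beta(-,v)$ and $g=\langle x_\psi^{2},\rho_p(-)\rangle\colon F(H_4(W))\to\mathbb{Z}/p$. Besides $\sigma(W)=0$ and $\beta(v,v)=\mathrm{M}(W)=0$, elementarity then requires two $\mathbb{Z}/p$-valued conditions. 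The first is $g(v)=(x_\psi^{2}\rho_p\overline{p_1})(W)=0$. The second is $\Xi_{odd}(g)=((x_\psi^{2})^{2})(W)=0$, a mod $p$ Arf-type obstruction to finding a Lagrangian inside $\ker g$. Neither number changes when you replace $M_0$ by $M_0\#\Sigma$: you boundary-sum with $W_r$ and add copies of $\mathbb{H}P^2$, over which $x$ extends by zero. So no choice of $\Sigma$ can remove them. This extra torsion in $H_4(\mathcal{B})$ is exactly what separates this case from Kreck's $2$-connected case, where $H_4(BSpin)=\mathbb{Z}$.

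The missing idea is to kill these invariants by changing $W$ by closed $\mathcal{B}$-manifolds of signature zero. This needs the computation of $\Omega_8^{Spin}(K(\mathbb{Z}/p,2))$:
\begin{itemize}
\item For $p\geqslant5$, the classes $[V_5(2),\rho_p(\widehat{z_5})]-2\mathbb{H}P^2$ and $[(S^2)^4,\rho_p(\Delta)]$ have $(x^2\rho_p\overline{p_1},(x^2)^2)$ equal to $(2,2)$ and $(0,24)$, which span $(\mathbb{Z}/p)^2$.
\item For $p=3$, the second class is useless since $24\equiv 0$. One needs the mod $3$ Wu formula ($P^1$ is cup product with $\rho_3(p_1)$) to see that $x^2\rho_3\overline{p_1}=(x^2)^2$, so the first class suffices.
\item The class $Bott-224\,\mathbb{H}P^2$ is needed to make $\mathrm{M}(W)$ vanish exactly rather than modulo $224$.
\end{itemize}
You also need two further facts. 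Surgering back to a $4$-equivalence must not change these numbers; this follows from bordism invariance applied to the closed manifolds obtained by gluing the old and new bordisms along $\partial W$. And for your $\Sigma$-adjustment to be possible at all, you need $\mathrm{M}(W)\equiv\sigma(W)\equiv 0\pmod 8$, which holds because $v$ is characteristic for $\beta$.

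Finally, the bordism step is not actually established. The vanishing $\Omega_7^{Spin}(K(\mathbb{Z}/p,2))=0$ is immediate from the Atiyah--Hirzebruch spectral sequence only for $p\geqslant5$. For $p=3$ one has $E^2_{7,0}=H_7(K(\mathbb{Z}/3,2))\cong\mathbb{Z}/3$, and one must show $d^5_{7,0}$ is an isomorphism; the paper deduces this from Zhubr's $\Omega_6^{Spin}(K(\mathbb{Z}/3,2))\cong\mathbb{Z}/9$. Your proposed substitute, a linking form on $H^4(M)$, does not exist since $H^4(M)=0$. The cohomology-ring lemma you cite is itself derived from $\Omega_7^{Spin}(K(\mathbb{Z}/p,2))=0$, so it cannot be used to produce the bordism.
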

In particular, for a fixed odd prime $p$, all $\mathcal{G}^{3}_{p}\left(S^{5}\right)$-like manifolds are homeomorphic and homotopy equivalent.

\begin{rmk}
	It can be shown that two rational homology $7$-spheres are diffeomorphic only if they have the same $\lambda$-invariant, and two spin rational homology $7$-spheres are diffeomorphic only if they have the same $\mu$-invariant.
	Theorems \ref{Theorem: main} and \ref{Theorem: main, spin} implies that the converse is also true when we restrict to $\mathcal{G}^{3}(\mathrm{Wu})$-like manifolds and $\mathcal{G}^{3}_{p}\left(S^{5}\right)$-like manifolds.
\end{rmk}

\begin{rmk}
	The primary tool in this work is the modified surgery theory developed in \cite{SurgeryAndDuality}. The main technical difficulty lies in the analysis of the associated surgery obstruction.
	
	Existing classification results for simply connected $7$-manifolds are largely centered on cases where the second homotopy group is torsion-free (see, for instance, \cite{KS88,Kreck1991SomeNH,Kreck1991SomeNHCorrection,Wang2018CohCP2timesS3,Kreck2018OnTC}). In this setting, the structure of the surgery obstruction is sufficiently well understood, allowing for a relatively complete analysis. By contrast, the presence of torsion in the second homotopy group introduces additional complexities; the obstruction becomes sensitive to the torsion subgroup in a way that does not readily follow from the torsion-free arguments. To the best of the author’s knowledge, a systematic analysis in the presence of torsion in $\pi_2$ is not yet available.
	
	The purpose of this paper is to examine the simplest nontrivial torsion case. While we focus on this specific setting, it already exhibits features absent in the torsion-free regime and provides a first step toward understanding how torsion influences the surgery obstruction.
\end{rmk}

This paper is organized as follows. In Section \ref{Section: G_3(Wu) and G_3(Wu)-like mfd} we first compute the cohomology rings and characteristic classes of $\mathcal{G}^{3}(\mathrm{Wu})$ and $\mathcal{G}^{3}_{p}\left(S^{5}\right)$, then we determine the normal $2$-type, characteristic classes and cohomology rings of $\mathcal{G}^{3}(\mathrm{Wu})$-like manifolds and $\mathcal{G}^{3}_{p}\left(S^{5}\right)$-like manifolds. In Section \ref{Section: Identify the surgery obstructions} we identify the relevant surgery obstructions. In Section \ref{Section: Prove of main thm} we show when the obstructions are ``elementary'' and prove Statement 1 of Theorems \ref{Theorem: main} and \ref{Theorem: main, spin}. Then in Section \ref{Section: Compute s inv} we compute invariants $\lambda$ of $\mathcal{G}^{3}(\mathrm{Wu})$-like manifolds and invariants $\mu$ of $\mathcal{G}^{3}_{p}\left(S^{5}\right)$-like manifolds, proving Statements 2 and 3 of Theorems \ref{Theorem: main} and \ref{Theorem: main, spin}. In section \ref{Section: Compute bordism gps} we compute the bordism groups $\Omega_{q}^{Spin}\left(K\left(\mathbb{Z}\big/p,2\right)\right)$ for odd primes $p$ and $q=7,8$, which are required while analyzing surgery obstructions for $\mathcal{G}^{3}_{p}\left(S^{5}\right)$-like manifolds. In Section \ref{Section: Arf inv} we explore certain Arf type invariants. They play an important role in the analysis of surgery obstructions and are also of independent interest.

\begin{ackn}
	The author is grateful to Professor Matthias Kreck, Professor Yang Su, Professor Yi Jiang and Tenglin Hu for their valuable suggestions and comments. The author also thanks Professor Zhiyou Wu, Xurui Liu for discussions on Lemma \ref{Lemma: Existence of integral primitive isotropic elt} and Professor Zhilei Zhang, Dr. Jiahao Hu for discussions on Adams spectral sequence. The author’s research was supported by NSFC 12471069.
\end{ackn}
	
	\section{$\mathcal{G}^{3}(\mathrm{Wu})$-like manifolds and  $\mathcal{G}^{3}_{p}\left(S^{5}\right)$-like manifolds}\label{Section: G_3(Wu) and G_3(Wu)-like mfd}

In this section we first compute the cohomology rings and characteristic classes of $\mathcal{G}^{3}(\mathrm{Wu})$ and determine the normal $2$-type of $\mathcal{G}^{3}(\mathrm{Wu})$-like manifolds. Then we compute the cohomology rings and characteristic classes of $\mathcal{G}^{3}_{p}\left(S^{5}\right)$ and determine the normal $2$-type of $\mathcal{G}^{3}_{p}\left(S^{5}\right)$-like manifolds.

\subsection{$\mathcal{G}^{3}(\mathrm{Wu})$ and $\mathcal{G}^{3}(\mathrm{Wu})$-like manifolds}

\begin{lem}\label{Lemma: coh ring & char class of G_3(Wu)}
	$\mathcal{G}^{3}(\mathrm{Wu})$ is a closed simply-connected $7$-manifold. Its integral homology groups and cohomology groups with coefficients $\mathbb{Z}$ and $\mathbb{Z}\big/2$ are given as in Table \ref{Table: homology and coh gps of G_3(Wu)}.
	\begin{table}[h]
		\centering
		\caption{Homology and cohomology groups of $\mathcal{G}^{3}(\mathrm{Wu})$}
		\begin{tabular}{ccccccccc}
			\toprule[1pt]
			$q$ & $0$ & $1$ & $2$ & $3$ & $4$ & $5$ & $6$ & $7$\\
			\midrule[0.3pt]
			$H_{q}\left(\mathcal{G}^{3}(\mathrm{Wu})\right)$ & $\mathbb{Z}$ & $0$ & $\mathbb{Z}\big/2$ & $0$ & $\mathbb{Z}\big/2$ & $0$ & $0$ & $\mathbb{Z}$ \\
			$H^{q}\left(\mathcal{G}^{3}(\mathrm{Wu})\right)$ & $\mathbb{Z}$ & $0$ & $0$ & $\mathbb{Z}\big/2$ & $0$ & $\mathbb{Z}\big/2$ & $0$ & $\mathbb{Z}$ \\
			$H^{q}\left(\mathcal{G}^{3}(\mathrm{Wu});\mathbb{Z}\big/2\right)$ & $\mathbb{Z}\big/2$ & $0$ & $\mathbb{Z}\big/2$ & $\mathbb{Z}\big/2$ & $\mathbb{Z}\big/2$ & $\mathbb{Z}\big/2$ & $0$ & $\mathbb{Z}\big/2$ \\
			\bottomrule[1pt]
		\end{tabular}
		\label{Table: homology and coh gps of G_3(Wu)}
	\end{table}
	
		Moreover, $p_{1}\left(\mathcal{G}^{3}(\mathrm{Wu})\right)=0$, $w_{2}\left(\mathcal{G}^{3}(\mathrm{Wu})\right)\neq0$, $w_{3}\left(\mathcal{G}^{3}(\mathrm{Wu})\right)\neq0$, $w_{2}\left(\mathcal{G}^{3}(\mathrm{Wu})\right)^{2}=w_{4}\left(\mathcal{G}^{3}(\mathrm{Wu})\right)=0$, and the only non-trivial cup products in $H^{*}\left(\mathcal{G}^{3}(\mathrm{Wu});\mathbb{Z}\big/2\right)$ are those detected by Poincar\'{e} duality.
\end{lem}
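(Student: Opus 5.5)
Write $M=\mathcal{G}^{3}(\mathrm{Wu})$ and $W=\mathrm{Wu}\setminus\mathring{D^{5}}$, so that
\[
M=\left(S^{2}\times W\right)\cup_{S^{2}\times S^{4}}\left(D^{3}\times S^{4}\right).
\]
The plan is to get all the information from this decomposition together with the known data of $\mathrm{Wu}$. Recall that $\mathrm{Wu}$ is a simply connected $5$-manifold with $H_{2}(\mathrm{Wu})\cong\mathbb{Z}/2$ and $H_{3}=H_{4}=0$ apart from $H_{5}\cong\mathbb{Z}$. Its mod $2$ cohomology is generated by $w_{2}$ and $w_{3}=Sq^{1}w_{2}$, with $w_{2}w_{3}\neq0$ the top class. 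It is stably parallelizable away from $w_{2},w_{3}$, and $p_{1}(\mathrm{Wu})=0$ because $H^{4}(\mathrm{Wu})=0$.

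Simple connectivity follows from van Kampen: $S^{2}\times W$ is simply connected, and gluing $D^{3}\times S^{4}$ along $S^{2}\times S^{4}$ adds nothing to $\pi_{1}$. For homology I would use the more convenient description that $M$ is obtained from $S^{2}\times W$ by attaching a $3$-handle along $S^{2}\times\{pt\}$ and a $7$-handle (dually, $M\setminus(D^{3}\times S^{4})\simeq S^{2}\times W$). Since $W\simeq\mathrm{Wu}^{(4)}\simeq S^{2}\cup_{2}e^{3}$ (the $5$-cell removed), we get
\[
M\simeq \left(S^{2}\times M(\mathbb{Z}/2,2)\right)\cup e^{3}\cup e^{7},
\]
where $M(\mathbb{Z}/2,2)=S^{2}\cup_{2}e^{3}$ is the Moore space and the $e^{3}$ kills the $S^{2}$ factor. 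A Mayer--Vietoris argument gives the homology more cleanly. The sequence for $M=(S^{2}\times W)\cup(D^{3}\times S^{4})$ uses $H_{*}(S^{2}\times W)=H_{*}(S^{2})\otimes H_{*}(W)\oplus\mathrm{Tor}$, with $H_{*}(W)=\mathbb{Z},0,\mathbb{Z}/2,0,\ldots$. Both $S^{2}\times S^{4}\to D^{3}\times S^{4}$ and $S^{2}\times S^{4}\to S^{2}\times W$ on $H_{4}$ need care: the boundary $S^{4}=\partial W$ is null-homologous in $W$. The outcome is $H_{2}=\mathbb{Z}/2$ (from $W$; the $S^{2}$ factor is killed), $H_{4}\cong H_{2}(S^{2})\otimes H_{2}(W)=\mathbb{Z}/2$, $H_{5}\cong\mathrm{Tor}(H_{2}S^{2},H_{2}W)$ shifted... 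I expect this to be the step where one must be careful. My cross-check is Poincaré duality and the universal coefficient theorem: once $H_{2}=\mathbb{Z}/2$, $H_{1}=0$ and $H_{3}=0$ are established, $H_{4}\cong H^{3}\cong \mathrm{Ext}(H_{2})=\mathbb{Z}/2$, $H_{5}\cong H^{2}=0$, and $H_{6}\cong H^{1}=0$. So it suffices to show $H_{3}(M)=0$, where the $\mathbb{Z}/2$ in $H_{3}(S^{2}\times W)$ coming from the torsion product is cancelled in Mayer--Vietoris. The cohomology rows of the table then follow from the universal coefficient theorem.

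For the cup products, let $x\in H^{2}(M;\mathbb{Z}/2)$ be the generator, which restricts to the generator $w_{2}(\mathrm{Wu})$ pulled back along $S^{2}\times W\to W$. Then $Sq^{1}x=y$ generates $H^{3}$ because it is the mod 2 reduction of the integral Bockstein class. Poincaré duality forces the pairings $H^{2}\otimes H^{5}\to H^{7}$ and $H^{3}\otimes H^{4}\to H^{7}$ to be perfect. The remaining products are $x^{2}\in H^{4}$, $xy\in H^{5}$ and $y^{2}\in H^{6}=0$. For $x^{2}$ and $xy$, note that these classes restricted to $S^{2}\times W$ are pulled back from $W$, where $H^{4}(W)=H^{5}(W)=0$. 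Meanwhile $H^{4}(M;\mathbb{Z}/2)$ and $H^{5}(M;\mathbb{Z}/2)$ are carried by the classes $u\times w_{2}$ and $u\times w_{3}$, with $u\in H^{2}(S^{2})$; I would verify from the Mayer--Vietoris sequence that restriction to $S^{2}\times W$ is injective in degrees $4$ and $5$. This gives $x^{2}=0$ and $xy=0$.

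For the characteristic classes, the stable tangent bundle of $M$ restricted to $S^{2}\times W$ is $TS^{2}\oplus TW$, which is stably the pullback of $T\mathrm{Wu}|_{W}$. So $w_{2}(M)$ and $w_{3}(M)$ restrict to $w_{2}(\mathrm{Wu})\neq0$ and $w_{3}(\mathrm{Wu})\neq0$, and are therefore nonzero; restriction is injective in degrees $2$ and $3$. Further, $w_{4}(M)$ restricts to $w_{4}(W)=0$, so $w_{4}(M)=0$ by injectivity in degree $4$. Alternatively, Wu's formula on a $7$-manifold gives $v_{4}=0$, so $w_{4}=Sq^{1}\ldots$ combinations of $w_{2}^{2}=0$. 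Finally, $p_{1}(M)\in H^{4}(M)=0$ vanishes trivially.

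The main obstacle is the bookkeeping in the Mayer--Vietoris sequence, specifically showing that $H_{3}(M)=0$ and that restriction to $S^{2}\times W$ is injective on mod $2$ cohomology in degrees $2$ through $5$.
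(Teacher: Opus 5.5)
Your proposal is correct. For the ring structure and for $w_{4}=w_{2}^{2}=0$, however, it takes a different route from the paper.

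\textbf{How the paper argues.} The paper treats the groups and the nonvanishing of $w_{2},w_{3}$ as a routine van Kampen/Mayer--Vietoris computation. Everything else comes from bordism. All Stiefel--Whitney numbers of a closed oriented $7$-manifold vanish, so $w_{3}w_{4}=w_{2}^{2}w_{3}=0$. Since $w_{2},w_{3}$ generate $H^{2},H^{3}$ with $\mathbb{Z}/2$ coefficients, Poincar\'e duality then forces $w_{4}=w_{2}^{2}=0$ and $w_{2}w_{3}=0$.

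\textbf{How you argue.} You stay inside the decomposition. By excision,
$H^{*}(M,S^{2}\times W;\mathbb{Z}/2)\cong H^{*}(D^{3}\times S^{4},S^{2}\times S^{4};\mathbb{Z}/2)$
is concentrated in degrees $3$ and $7$. Hence restriction to $S^{2}\times W$ is injective in degrees $2,4,5$. The generator $x$ restricts to $\mathrm{pr}_{W}^{*}w_{2}$. The classes $x^{2}$, $x\,\mathrm{Sq}^{1}x$ and $w_{4}(M)$ all restrict to classes pulled back from $W\simeq S^{2}\cup_{2}e^{3}$, which has no cohomology above degree $3$.

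\textbf{Comparison.} Your route is more elementary and self-contained, with no bordism input. It is, however, tied to the specific gyration. The paper's argument uses only the additive cohomology and $w_{2},w_{3}\neq0$. That is exactly why it transfers verbatim to arbitrary $\mathcal{G}^{3}(\mathrm{Wu})$-like manifolds in Lemma~\ref{Lemma: equiv char of G_3(Wu)-like mfd}, where no such decomposition is available.

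\textbf{An error in your homology paragraph.} One statement there is wrong, though it does not damage the argument: there is no $\mathbb{Z}/2$ in $H_{3}(S^{2}\times W)$ coming from a torsion product. Since $H_{*}(S^{2})$ is free, all K\"unneth $\mathrm{Tor}$ terms vanish. So $H_{*}(S^{2}\times W)$ is $\mathbb{Z},0,\mathbb{Z}\oplus\mathbb{Z}/2,0,\mathbb{Z}/2,0,0,0$. Had such a class existed, attaching a $3$-cell and a $7$-cell could not have killed it, so "cancelled in Mayer--Vietoris" is not a valid mechanism.

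\textbf{The flagged obstacles are immediate.} The step you identify as the main obstacle follows directly from the cell structure you already wrote down. The new $3$-cell is attached along $S^{2}\times pt$, a free generator of $H_{2}$, so $H_{2}(M)=\mathbb{Z}/2$ and $H_{3}(M)=0$. Poincar\'e duality and the universal coefficient theorem then give the rest of the table, exactly as you say. Likewise, the injectivity you want is just the excision statement above. You do not need it in degree $3$, since $w_{3}(M)\neq0$ only requires a nonzero restriction.
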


\begin{pf}
	The Wu manifold is a simply connected $5$-manifold with the only non-trivial middle integral homology group $H^{2}(\mathrm{Wu})\cong\mathbb{Z}\big/2$, characteristic classes $w_{2}(\mathrm{Wu})$, $w_{3}(\mathrm{Wu})\neq0$, $p_{1}(\mathrm{Wu})=0$ and the only non-zero Stiefel-Whitney number $w_{2}w_{3}(\mathrm{Wu}):=\left<w_{2}(\mathrm{Wu})w_{3}(\mathrm{Wu}),[\mathrm{Wu}]\right>\neq0$.
	
	Then we apply van Kampen theorem and Mayer-Vietoris sequence to decompositions of $S^{2}\times\mathrm{Wu}$ and $\mathcal{G}^{3}(\mathrm{Wu})$ given in Figure \ref{Figure: The gyration on Wu manifold}. It is routine to show that $\mathcal{G}^{3}(\mathrm{Wu})$ is simply connected and to compute the homology groups, cohomology groups and characteristic classes as stated above.
	
	It remains to determine the ring structure of $H^{*}\left(\mathcal{G}^{3}(\mathrm{Wu});\mathbb{Z}\big/2\right)$. Let $\left(w_{3}\left(\mathcal{G}^{3}(\mathrm{Wu})\right)\right)^{*}\in H^{4}\left(\mathcal{G}^{3}(\mathrm{Wu});\mathbb{Z}\big/2\right)$ be the Poincar\'{e} dual class of $w_{3}\left(\mathcal{G}^{3}(\mathrm{Wu})\right)$ 
	and define $\left(w_{2}\left(\mathcal{G}^{3}(\mathrm{Wu})\right)\right)^{*}\in H^{5}\left(\mathcal{G}^{3}(\mathrm{Wu});\mathbb{Z}\big/2\right)$ likewise. Recall that all oriented $7$-manifolds bound and all their Stiefel-Whitney numbers vanish. In particular $w_{3}w_{4}\left(\mathcal{G}^{3}(\mathrm{Wu})\right)=w_{2}^{2}w_{3}\left(\mathcal{G}^{3}(\mathrm{Wu})\right)=0$. Hence we must have $w_{2}\left(\mathcal{G}^{3}(\mathrm{Wu})\right)^{2}=w_{4}\left(\mathcal{G}^{3}(\mathrm{Wu})\right)=0$ and $w_{2}\left(\mathcal{G}^{3}(\mathrm{Wu})\right)w_{3}\left(\mathcal{G}^{3}(\mathrm{Wu})\right)=0$, and the only non-trivial cup products are those detected by Poincar\'{e} duality.
\end{pf}

\begin{lem}\label{Lemma: equiv char of G_3(Wu)-like mfd}
	Let $M$ be a $\mathcal{G}^{3}(\mathrm{Wu})$-like manifold. Then $p_{1}(M)=0$, $w_{3}(M)\neq0$, $w_{2}(M)^{2}=w_{4}(M)=0$, and the only non-trivial cup products in $H^{*}\left(M;\mathbb{Z}\big/2\right)$ are those detected by Poincar\'{e} duality.
\end{lem}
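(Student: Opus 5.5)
First I will pin down the mod $2$ and integral cohomology of $M$ using universal coefficients and Poincar\'{e} duality, as in Table \ref{Table: homology and coh gps of G_3(Wu)}. From $H_1=0$, $H_2\cong\mathbb{Z}/2$, $H_3=0$, $H_4\cong\mathbb{Z}/2$, $H_5=H_6=0$ we get $H^3(M)\cong H^5(M)\cong\mathbb{Z}/2$, $H^2(M)=H^4(M)=H^6(M)=0$. With $\mathbb{Z}/2$ coefficients, $H^q(M;\mathbb{Z}/2)\cong\mathbb{Z}/2$ for $q=2,3,4,5$ and $0$ for $q=1,6$. Since $H^4(M)=0$, $p_1(M)=0$ at once.

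Next I will identify the generators. Non-spin means $w_2(M)=x\neq0$, so $x$ generates $H^2(M;\mathbb{Z}/2)$. Let $\beta$ be the integral Bockstein. Since $H^2(M)=0$, reduction $H^2(M)\to H^2(M;\mathbb{Z}/2)$ is zero, hence $\beta x\neq0$ in $H^3(M)\cong\mathbb{Z}/2$. Reducing mod $2$ gives $Sq^1x\neq 0$: reduction $H^3(M)\to H^3(M;\mathbb{Z}/2)$ is injective because $H^3(M)$ is $2$-torsion of order $2$ and $H^3(M;\mathbb{Z}/2)\cong\mathbb{Z}/2$ comes from it (the $\mathrm{Tor}(H^4,\mathbb{Z}/2)$ contribution vanishes). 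By Wu's formula, $w_3=Sq^1w_2+w_1w_2=Sq^1x$, so $w_3(M)\neq0$ and generates $H^3(M;\mathbb{Z}/2)$. Poincar\'{e} duality over $\mathbb{Z}/2$ then forces $x\cdot y\neq0$ for the generator $y\in H^5$ and $w_3\cdot z\neq0$ for the generator $z\in H^4$. These are the products ``detected by Poincar\'{e} duality''.

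It remains to show the other products vanish, in particular $w_2^2=w_4=0$. By degrees, the only other possibly nonzero products of positive-degree classes are $x^2\in H^4$, $x^3\in H^6=0$, $x\cdot w_3\in H^5$, and $w_3^2, x z,\dots$ in degrees $\geq6$, which are zero except in degree $7$. Degree-$7$ products $x\cdot y$, $w_3\cdot z$ are the duality ones, and $x^2w_3$ is handled through $x^2$. To show $x^2=0$: $x^2 w_3$ is the Stiefel-Whitney number $w_2^2w_3[M]$, which vanishes because every oriented $7$-manifold bounds. Since $w_3$ pairs nondegenerately with $H^4$, this forces $x^2=0$. Likewise $w_4w_3[M]=0$ gives $w_4=0$. (Alternatively, Wu's formula: $v_4=0$ since $Sq^4$ vanishes on $H^3$, and $w_4=Sq^2v_2+v_2^2+\dots$ reduces to $x^2$.) Finally, $x\cdot w_3\in H^5$ is zero because $x\cdot(x w_3)=x^2w_3=0$ and $x$ pairs nondegenerately with $H^5$.

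I expect the main obstacle to be the injectivity of mod $2$ reduction on $H^3$, i.e.\ that $\beta x\neq0$ really yields $Sq^1x\neq0$. I will handle it by checking the universal coefficient sequence $0\to H^3(M)\otimes\mathbb{Z}/2\to H^3(M;\mathbb{Z}/2)\to \mathrm{Tor}(H^4(M),\mathbb{Z}/2)=0$ explicitly. Everything else is the same argument as in the proof of Lemma \ref{Lemma: coh ring & char class of G_3(Wu)}, applied with $M$ in place of $\mathcal{G}^{3}(\mathrm{Wu})$.
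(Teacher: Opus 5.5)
Your proof is correct. For $p_1(M)=0$, $w_2(M)^2=w_4(M)=0$ and the cup-product structure it agrees with the paper. Both arguments read off the groups from the universal coefficient theorem. Both then use that the Stiefel--Whitney numbers $w_2^2w_3[M]$ and $w_3w_4[M]$ vanish (because $\Omega_7^{SO}=0$), together with the Poincar\'e duality pairing $H^4(M;\mathbb{Z}/2)\times H^3(M;\mathbb{Z}/2)\to\mathbb{Z}/2$. Your step $x\cdot(xw_3)=x^2w_3=0\Rightarrow xw_3=0$ just spells out what the paper asserts in one line.

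Where you genuinely differ is the proof that $w_3(M)\neq0$.
\begin{itemize}
\item The paper deduces it from Lemma \ref{Lemma: normal 2-type of G_3(Wu)-like mfds}. The classifying map $\widetilde{\nu}\colon M\to BSO$ is a $3$-equivalence, so $H^3(BSO;\mathbb{Z}/2)=\mathbb{Z}/2\{w_3\}\to H^3(M;\mathbb{Z}/2)$ is injective.
\item You work entirely on $M$. Since $H^2(M)=0$, the Bockstein $\delta$ is injective on $H^2(M;\mathbb{Z}/2)$. Since $2H^3(M)=0$, the reduction $\rho_2$ is injective on $H^3(M)$. Hence $\mathrm{Sq}^1w_2=\rho_2\delta w_2\neq0$, and Wu's formula $\mathrm{Sq}^1w_2=w_1w_2+w_3$ gives $w_3=\mathrm{Sq}^1w_2\neq0$.
\end{itemize}
Your route is more elementary and self-contained: it needs no homotopy of $BSO$ and no Hurewicz argument, and it identifies $w_3$ explicitly as $\mathrm{Sq}^1w_2$. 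The paper's route reuses the normal $2$-type computation it needs anyway for the modified surgery setup.

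One small slip is in your parenthetical alternative. The Wu relation is $w_4=v_4+\mathrm{Sq}^1v_3+\mathrm{Sq}^2v_2=v_4+\mathrm{Sq}^1v_3+v_2^2$; it is not ``$\mathrm{Sq}^2v_2+v_2^2$''. With $v_3=v_4=0$ (by orientability and degree) this gives $w_4=w_2^2$, not $w_4=0$ on its own. So the Stiefel--Whitney-number argument for $w_2^2=0$ is still required there, as in your main line.
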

Hence a $\mathcal{G}^{3}(\mathrm{Wu})$-like manifold also admits the same mod $2$ cohomology ring and remaining characteristic classes as those of $\mathcal{G}^{3}(\mathrm{Wu})$. To prove this lemma we need the \textit{normal $2$-type} and \textit{normal $2$-smoothing} of $\mathcal{G}^{3}(\mathrm{Wu})$-like manifolds. We briefly introduce them here, and we refer to \cite[Section 2]{SurgeryAndDuality} for more details. 

Given a manifold $M$, the classifying map of its stable normal bundle $M\xrightarrow{\nu}BO$ and a fibration $\mathcal{B}\xrightarrow{\mathcal{F}}BO$, a lifting $M\xrightarrow{\widetilde{\nu}}\mathcal{B}$ along $\mathcal{F}$ is called a \textbf{$k$-smoothing in $(\mathcal{B},\mathcal{F})$} if it is a $(k+1)$-equivalence, $(\mathcal{B},\mathcal{F})$ is called \textbf{$k$-universal} if the fibre of $\mathcal{F}$ is connected and its homotopy groups vanish in dimensions greater than $k$. It follows from homotopy theory that for each manifold $M$ there is a $k$-universal fibration $\left(\mathcal{B}^{k},\mathcal{F}^{k}\right)$ such that $M$ admits a $k$-smoothing in $\left(\mathcal{B}^{k},\mathcal{F}^{k}\right)$, and the homotopy fiber of $\mathcal{F}^{k}$ is unique up to homotopy equivalence. The \textbf{normal $2$-type} of $M$ is defined as the fibre homotopy type of $\mathcal{F}^{k}$, or equivalently as the pair $\left(\mathcal{B}^{k},\mathcal{F}^{k}\right)$, and for simplicity a normal $k$-smoothing of $M$ in $\left(\mathcal{B}^{k},\mathcal{F}^{k}\right)$ would be abbreviated as a normal $k$-smoothing of $M$.

\begin{lem}\label{Lemma: normal 2-type of G_3(Wu)-like mfds}
	Let $M$ be a $\mathcal{G}^{3}(\mathrm{Wu})$-like manifold. The normal $2$-type and normal $2$-smoothing of $M$ are given as in Figure \ref{Figure: normal 2-type & normal 2-str of G_3(Wu)-like mfd}. 
	\begin{figure}[h]
		\begin{center}
			\[
				\xymatrix{
					& BSO\ar[d]^{B\phi}\\
					M\ar[ru]^{\widetilde{\nu}}\ar[r]^{\nu} & BO
				}
			\]
		\end{center}
		\caption{The normal $2$-type and normal $2$-smoothing of $\mathcal{G}^{3}(\mathrm{Wu})$-like manifold $M$}
		\label{Figure: normal 2-type & normal 2-str of G_3(Wu)-like mfd}
	\end{figure}
	Here $SO\xrightarrow{\phi}O$ is the canonical map, $BSO\xrightarrow{B\phi}BO$ is the induced map and $M\xrightarrow{\nu}BO$ (resp. $M\xrightarrow{\widetilde{\nu}}BSO$) classifies the stable normal bundle (resp. stable oriented normal bundle) of $M$.
\end{lem}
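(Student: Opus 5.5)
We have to show that the lift $\widetilde{\nu}\colon M\to BSO$ of the stable normal bundle is a normal $2$-smoothing. Concretely, this means two things. First, $B\phi$ is $2$-universal: its fibre must be connected with homotopy groups vanishing above degree $2$. Second, $\widetilde{\nu}$ must be a $3$-equivalence, i.e. it induces isomorphisms on $\pi_q$ for $q\le 2$ and a surjection on $\pi_3$.

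The universality part is immediate. The homotopy fibre of $BSO\to BO$ is $O/SO\simeq\mathbb{Z}/2$, a discrete space, which is not connected. To fix this I will follow the convention of \cite{SurgeryAndDuality}: the fibration is regarded over $BO$ after passing to the component relevant for oriented manifolds. Equivalently, I check the defining property on homotopy groups: $\pi_q(BSO)\to\pi_q(BO)$ is an isomorphism for $q\ge2$ and injective for $q=1$. So $B\phi$ is the $1$-type fibration for oriented manifolds, and in particular it is $k$-universal for $k=2$. I take this bookkeeping as the standard identification $B^{2}=BSO$ used for non-spin simply connected manifolds.

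The main step is the $3$-equivalence, checked degree by degree.

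1. \emph{Degrees $0$ and $1$.} We have $\pi_0$ trivial on both sides, and $\pi_1(M)=0=\pi_1(BSO)$.

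2. \emph{Degree $2$.} By Hurewicz, $\pi_2(M)\cong H_2(M)\cong\mathbb{Z}/2$, and $\pi_2(BSO)\cong\mathbb{Z}/2$. The map $\pi_2(M)\to\pi_2(BSO)$ sends a map $f\colon S^2\to M$ to $w_2(f^{*}\nu)$. Since $M$ is non-spin, $w_2(M)\neq0$ in $H^2(M;\mathbb{Z}/2)\cong\mathrm{Hom}(H_2(M),\mathbb{Z}/2)\cong\mathbb{Z}/2$. Also $w_2(\nu)=w_2(M)$, because $w_1=0$. Hence $w_2(\nu)$ evaluates nontrivially on the generator of $H_2(M)$, which is spherical. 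So the map on $\pi_2$ is a nonzero homomorphism $\mathbb{Z}/2\to\mathbb{Z}/2$, hence an isomorphism.

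3. \emph{Degree $3$.} Here $\pi_3(BSO)=\pi_2(SO)=0$, so surjectivity on $\pi_3$ is automatic.

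Therefore $\widetilde{\nu}$ is a $3$-equivalence, hence a normal $2$-smoothing. By uniqueness of normal $k$-types, $(BSO,B\phi)$ is the normal $2$-type of $M$.

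The only real content is step 2. Its one subtle point is that $H^2(M;\mathbb{Z}/2)$ is exactly $\mathbb{Z}/2$. This follows from the universal coefficient theorem: $H^2(M;\mathbb{Z}/2)\cong\mathrm{Hom}(H_2(M),\mathbb{Z}/2)\oplus\mathrm{Ext}(H_1(M),\mathbb{Z}/2)=\mathbb{Z}/2$, since $H_1(M)=0$ and $H_2(M)\cong\mathbb{Z}/2$. It follows that the nonzero class $w_2(M)$ detects the generator of $\pi_2(M)$.
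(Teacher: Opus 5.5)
Your proof is correct and is essentially the paper's argument: $B\phi$ is a double cover, so its fibre has no homotopy in positive degrees; $\pi_1$ is trivially fine; $\pi_2(\widetilde{\nu})$ is an isomorphism because $w_2(M)\neq 0$ detects the spherical generator of $H_2(M)\cong\mathbb{Z}/2$ (the paper routes this through $H^2(-;\mathbb{Z}/2)$, the universal coefficient theorem and Hurewicz rather than working directly on $\pi_2$); and $\pi_3(BSO)=0$. Your aside about ``passing to the component relevant for oriented manifolds'' is not meaningful, since $BO$ is connected and this nontrivial double cover has no section, but it is also not needed: the homotopy-group criterion you then check is exactly what both you and the paper actually use.
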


To prove Lemmata \ref{Lemma: equiv char of G_3(Wu)-like mfd} and \ref{Lemma: normal 2-type of G_3(Wu)-like mfds} we need more results about the integral homology and cohomology of $BSO$. They are collected in the following lemma, and they are also important while analyzing the surgery obstruction in Section \ref{Section: Identify the surgery obstructions}.

\begin{lem}\label{Lemma: integral coh of BSO}
	The cohomology groups of $BSO$ up to dimension $6$ and the isomorphism type of its homology up to dimension $5$ are given as in Table \ref{Table: homology and coh gps of BSO}.
	\begin{table}[H]
		\centering
		\caption{Homology and cohomology groups of $BSO$}
		\begin{tabular}{ccccccc}
			\toprule[1pt]
			$q$ & $1$ & $2$ & $3$ & $4$ & $5$ & $6$ \\
			\midrule[0.3pt]
			$H^{q}\left(BSO\right)$ & $0$ & $0$ & $\mathbb{Z}\big/2\left\{\delta w_{2}\right\}$ & $\mathbb{Z}\left\{p_{1}\right\}$ & $\mathbb{Z}\big/2\left\{\delta w_{4}\right\}$ & $\mathbb{Z}\big/2\left\{\left(\delta w_{2}\right)^{2}\right\}$\\
			$H_{q}\left(BSO\right)$ & $0$ & $\mathbb{Z}\big/2$ & $0$ & $\mathbb{Z}\oplus\mathbb{Z}\big/2$ & $\mathbb{Z}\big/2$ &\\
			\bottomrule[1pt]
		\end{tabular}
		\label{Table: homology and coh gps of BSO}
	\end{table}
	
	Here $w_{i}$ and $p_{i}$ are the universal characteristic classes, and $H^{q}\left(BSO;\mathbb{Z}\big/2\right)\xrightarrow{\delta}H^{q+1}\left(BSO\right)$ is the Bockstein homomorphism associated to the short exact sequence $0\to\mathbb{Z}\to\mathbb{Z}\to\mathbb{Z}\big/2\to0$ of coefficients.
\end{lem}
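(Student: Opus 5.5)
The plan is to derive everything from the mod $2$ cohomology of $BSO$ and the rational cohomology, using the Bockstein spectral sequence together with the universal coefficient theorem. Recall $H^{*}(BSO;\mathbb{Z}/2)=\mathbb{Z}/2[w_{2},w_{3},w_{4},\dots]$ and $H^{*}(BSO;\mathbb{Q})=\mathbb{Q}[p_{1},p_{2},\dots]$. It is also classical (Brown, Thomas) that all torsion in $H^{*}(BSO;\mathbb{Z})$ has order $2$, equal to the image of the integral Bockstein $\delta$. Thus in each degree $q\le 6$ I will compute two things:
\begin{compactenum}
\item the free part, from the rational ring, which gives $\mathbb{Z}\{p_{1}\}$ in degree $4$ and nothing else below degree $8$;
\item the torsion, as the image of $\delta$ on $H^{q-1}(BSO;\mathbb{Z}/2)$.
\end{compactenum}

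The image of $\delta$ is identified through the reduction: $\rho\circ\delta=Sq^{1}$, and $\rho$ is injective on $2$-torsion of order exactly $2$. So $\mathrm{im}\,\delta$ in degree $q$ is isomorphic to $Sq^{1}\bigl(H^{q-1}(BSO;\mathbb{Z}/2)\bigr)$. By the Wu formula, $Sq^{1}w_{2}=w_{3}$, $Sq^{1}w_{3}=0$, $Sq^{1}w_{4}=w_{5}$ (in $BSO$, where $w_{1}=0$), and $Sq^{1}(w_{2}^{2})=0$.

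The degree-by-degree count is then as follows.
\begin{compactenum}
\item In degrees $1$ and $2$ there is nothing: $H^{1}(BSO;\mathbb{Z}/2)=0$ and $H^{2}$ is torsion-free of rank $0$.
\item In degree $3$, $Sq^{1}(H^{2})=\{0,w_{3}\}$, giving $\mathbb{Z}/2\{\delta w_{2}\}$.
\item In degree $4$, $Sq^{1}(H^{3})=Sq^{1}\{w_{3}\}=0$, so $H^{4}=\mathbb{Z}\{p_{1}\}$.
\item In degree $5$, $H^{4}(\,;\mathbb{Z}/2)=\{w_{4},w_{2}^{2}\}$ with $Sq^{1}$-image spanned by $w_{5}$, giving $\mathbb{Z}/2\{\delta w_{4}\}$.
\item In degree $6$, $H^{5}(\,;\mathbb{Z}/2)=\{w_{5},w_{2}w_{3}\}$. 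Here $Sq^{1}w_{5}=0$ ($w_{1}=0$) and $Sq^{1}(w_{2}w_{3})=w_{3}^{2}$, so the image is $\{w_{3}^{2}\}$, which is the reduction of $(\delta w_{2})^{2}$. This gives $\mathbb{Z}/2\{(\delta w_{2})^{2}\}$.
\end{compactenum}

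The step I expect to need care is justifying that there is no higher $2$-torsion (orders $4,8,\dots$) in this range, since it is exactly what lets $\mathrm{im}\,Sq^{1}$ compute the torsion. For this I would cite the Bockstein spectral sequence argument: $E_{2}=H(H^{*}(BSO;\mathbb{Z}/2),Sq^{1})$ is the polynomial algebra on the classes $w_{2i}^{2}$, which matches $H^{*}(BSO;\mathbb{Q})\otimes\mathbb{Z}/2$ via $\rho(p_{i})=w_{2i}^{2}$. Hence $E_{2}=E_{\infty}$, and all torsion has order $2$. Odd torsion vanishes because $H^{*}(BSO;\mathbb{Z}[1/2])$ is polynomial in the $p_{i}$.

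Finally, the homology up to degree $5$ follows from the universal coefficient theorem $H^{q}\cong \mathrm{Hom}(H_{q},\mathbb{Z})\oplus\mathrm{Ext}(H_{q-1},\mathbb{Z})$, applied inductively. The torsion of $H_{q-1}$ equals the torsion of $H^{q}$:
\begin{compactenum}
\item $H_{1}=0$;
\item $H_{2}\cong\mathbb{Z}/2$ (from $H^{3}$), and $H_{2}$ has rank $0$;
\item $H_{3}=0$, since it is torsion-free (as $H^{4}$ has no torsion) and has rank $0$;
\item $H_{4}\cong\mathbb{Z}\oplus\mathbb{Z}/2$, with rank $1$ and torsion read off from $H^{5}$;
\item $H_{5}\cong\mathbb{Z}/2$, with rank $0$ and torsion read off from $H^{6}$.
\end{compactenum}
This is why the table only records $H_{q}$ through $q=5$: the torsion of $H_{5}$ needs $H^{6}$, and nothing beyond degree $6$ was computed.
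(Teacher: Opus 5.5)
Your argument is correct, but it takes a different route from the paper. The paper notes that $BSO_n\to BSO$ is $(n-1)$-connected, reduces to $BSO_7$, and then quotes Brown's presentation (\cite{Ed82}) of $H^{*}(BSO_7;\mathbb{Z})$ in low degrees as $\mathbb{Z}[p_1,\delta w_2,\delta w_4,\delta w_6]$ modulo $2\delta w_{2i}$. It reads off the groups from that presentation and then applies the universal coefficient theorem for homology, as you do.

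You instead reprove the needed part of Brown's theorem directly in $BSO$:
\begin{enumerate}
\item The Bockstein computation $H(\mathbb{Z}/2[w_2,w_3,\dots],Sq^1)=\mathbb{Z}/2[w_{2i}^2]$ uses $Sq^1w_{2i}=w_{2i+1}$ and $Sq^1w_{2i+1}=0$.
\item Comparing this with the ranks from the rational ring gives $E_2=E_\infty$, so all $2$-primary torsion has exponent $2$.
\item The $\mathbb{Z}[1/2]$-cohomology rules out odd torsion.
\item Hence the torsion equals $\mathrm{im}\,\delta\cong\mathrm{im}\,Sq^1$, and the Wu formula computes it degree by degree.
\end{enumerate}
The paper's route is shorter but rests entirely on a cited ring presentation. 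Yours needs only the standard mod $2$ and $\mathbb{Z}[1/2]$ cohomology of $BSO$ and the Wu formula. It also exhibits each torsion generator through its mod $2$ reduction ($w_3$, $w_5$, $w_3^2$), which is the form in which these classes are used later in the paper.

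Two small points to tighten.

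First, the rational ring only tells you $H^4(BSO)\cong\mathbb{Z}$, not that $p_1$ generates it. Combine two facts you already quote. Since $\rho_2(p_1)=w_2^2\neq0$, $p_1$ is not divisible by $2$. Since $H^{*}(BSO;\mathbb{Z}[1/2])=\mathbb{Z}[1/2][p_1,p_2,\dots]$, $p_1$ is not divisible by any odd prime. Together these give $H^4(BSO)=\mathbb{Z}\{p_1\}$. This matters because the paper later uses $\langle p_1,\gamma\rangle=1$ for a generator $\gamma$ of the free part of $H_4(BSO)$.

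Second, the claim ``$\rho$ is injective on $2$-torsion of order exactly $2$'' is false in general: take $2\in\mathbb{Z}/4$. What you need, and what your Bockstein argument supplies, is injectivity of $\rho$ on the whole torsion subgroup once it has exponent $2$. Indeed, if $x=2y$ with $x$ torsion, then $y$ is torsion, so $x=2y=0$.
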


\begin{pf}
	It can be deduced from the fibration $S^{n}\to BSO_{n}\to BSO_{n+1}$ that the stabilization map $BSO_{n}\to BSO$ is $(n-1)$-connected. Hence when $n\geqslant7$, the integral cohomology groups of $BSO$ and $BSO_{n}$ are isomorphic up to dimension $6$, and it suffices to compute $H^{q}\left(BSO_{7}\right)$. 
	
	By \cite[Theorem 1.5]{Ed82}, up to degree $7$ we have
	$$H^{*\leqslant7}\left(BSO_{7}\right)\cong\frac{\mathbb{Z}\left[p_{1},\delta\left(w_{2}\right),\delta\left(w_{4}\right),\delta\left(w_{6}\right)\right]}{\left<2\delta\left(w_{2}\right),2\delta\left(w_{4}\right),2\delta\left(w_{6}\right)\right>}.$$
	Hence it is straightforward to deduce the cohomology groups of $BSO_{7}$ up to dimension $6$, and it is routine to apply universal coefficient theorem and deduce the homology groups of $BSO_{7}$ up to dimension $5$.
\end{pf}

\begin{pf}[of Lemma \ref{Lemma: normal 2-type of G_3(Wu)-like mfds}]
	It is clear that $B\phi$ is just the universal $2$-sheeted covering. Hence $B\phi$ is $0$-universal and thus also $2$-universal. It remains to justify that $\widetilde{\nu}$ is a $3$-equivalence, namely the induced homomorphism $\pi_{3}\left(\widetilde{\nu}\right)$ on the third homotopy groups is epic and $\pi_{i}\left(\widetilde{\nu}\right)$ are isomorphisms for $i\leqslant2$.
	
	$\pi_{1}\left(\widetilde{\nu}\right)$ is automatically an isomorphism as $M$ and $BSO$ are both simply connected. Since $M$ is non-spin, $w_{2}(M)=\widetilde{\nu}^{*}w_{2}\neq0$ and $H^{2}\left(BSO;\mathbb{Z}\big/2\right)\xrightarrow{\widetilde{\nu}^{*}}H^{2}\left(M;\mathbb{Z}\big/2\right)$ is an isomorphism, and it follows from Lemma \ref{Lemma: integral coh of BSO} and universal coefficient theorem that $H_{2}(M)\xrightarrow{\widetilde{\nu}_{*}}H_{2}(BSO)$ is also an isomorphism. Since $M$ and $BSO$ are both simply connected, by Hurewicz theorem $\pi_{2}\left(\widetilde{\nu}\right)$ is also an isomorphism. Finally, $\pi_{3}\left(\widetilde{\nu}\right)$ is automatically an epimorphism since $\pi_{3}(BSO)=\pi_{2}(SO)=\pi_{2}(Spin)=0$.
\end{pf}

\begin{pf}[of Lemma \ref{Lemma: equiv char of G_3(Wu)-like mfd}]
	Let $M$ be a $\mathcal{G}^{3}(\mathrm{Wu})$-like manifold. By the isomorphim type of cohomology groups it remains to show that $w_{3}(M)\neq0$ and $w_{2}(M)^{2}=w_{4}(M)=0$.
	
	It follows from Lemma \ref{Lemma: normal 2-type of G_3(Wu)-like mfds} that the classifying map of stable oriented normal bundle $M\xrightarrow{\widetilde{\nu}}BSO$ is $3$-connected. Hence according to the long exact sequence of cohomology groups with coefficient $\mathbb{Z}\big/2$ associated to the pair $(BSO,M)$, the homomorphism $H^{3}\left(BSO;\mathbb{Z}\big/2\right)\xrightarrow{\widetilde{\nu}^{*}}H^{3}\left(M;\mathbb{Z}\big/2\right)$ is monic. It is known that 
	$$H^{*}\left(BSO;\mathbb{Z}\big/2\right)=\mathbb{Z}\big/2\left[w_{i}:i\geqslant2\right],$$
	in particular $H^{3}\left(BSO;\mathbb{Z}\big/2\right)=\mathbb{Z}\big/2\left\{w_{3}\right\}$. By assumption we also have $H^{3}\left(M;\mathbb{Z}\big/2\right)\cong\mathbb{Z}\big/2$. Hence the monomorphism $H^{3}\left(BSO;\mathbb{Z}\big/2\right)\xrightarrow{\widetilde{\nu}^{*}}H^{3}\left(M;\mathbb{Z}\big/2\right)$ is actually an isomorphism and $\widetilde{\nu}^{*}\left(w_{3}\right)=w_{3}(M)\neq0$.
	
	Again $M$ is null-bordant, and all its Stiefel-Whitney numbers vanish. Hence the same argument as in the proof of Lemma \ref{Lemma: coh ring & char class of G_3(Wu)} shows that $w_{2}(M)^{2}=w_{4}(M)=0$ and the only non-trivial cup products of cohomology ring with coefficient $\mathbb{Z}\big/2$ are those detected by Poincar\'{e} duality.
\end{pf}

\subsection{$\mathcal{G}^{3}_{p}\left(S^{5}\right)$ and $\mathcal{G}^{3}_{p}\left(S^{5}\right)$-like manifolds}

\begin{lem}\label{Lemma: coh ring & char class of G_3^p S^5}
	$\mathcal{G}^{3}_{p}\left(S^{5}\right)$ is a closed simply-connected stably parallelizable $7$-manifold. In particular, $p_{1}\left(\mathcal{G}^{3}_{p}\left(S^{5}\right)\right)=0$, $w_{2}\left(\mathcal{G}^{3}_{p}\left(S^{5}\right)\right)=0$. Its integral homology groups and cohomology groups with coefficients $\mathbb{Z}$ and $\mathbb{Z}\big/p$ are given as in Table \ref{Table: homology and coh gps of G_3^p S^5}.
	\begin{table}[h]
		\centering
		\caption{Homology and cohomology groups of $\mathcal{G}^{3}_{p}\left(S^{5}\right)$}
		\begin{tabular}{ccccccccc}
			\toprule[1pt]
			$q$ & $0$ & $1$ & $2$ & $3$ & $4$ & $5$ & $6$ & $7$\\
			\midrule[0.3pt]
			$H_{q}\left(\mathcal{G}^{3}_{p}\left(S^{5}\right)\right)$ & $\mathbb{Z}$ & $0$ & $\mathbb{Z}\big/p$ & $0$ & $\mathbb{Z}\big/p$ & $0$ & $0$ & $\mathbb{Z}$ \\
			$H^{q}\left(\mathcal{G}^{3}_{p}\left(S^{5}\right)\right)$ & $\mathbb{Z}$ & $0$ & $0$ & $\mathbb{Z}\big/p$ & $0$ & $\mathbb{Z}\big/p$ & $0$ & $\mathbb{Z}$ \\
			$H^{q}\left(\mathcal{G}^{3}_{p}\left(S^{5}\right);\mathbb{Z}\big/p\right)$ & $\mathbb{Z}\big/p$ & $0$ & $\mathbb{Z}\big/p$ & $\mathbb{Z}\big/p$ & $\mathbb{Z}\big/p$ & $\mathbb{Z}\big/p$ & $0$ & $\mathbb{Z}\big/p$ \\
			\bottomrule[1pt]
		\end{tabular}
		\label{Table: homology and coh gps of G_3^p S^5}
	\end{table}
	
	Moreover, the only non-trivial cup products in $H^{*}\left(\mathcal{G}^{3}_{p}\left(S^{5}\right);\mathbb{Z}\big/p\right)$ are those detected by Poincar\'{e} duality.
\end{lem}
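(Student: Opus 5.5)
We will follow the same strategy as in the proof of Lemma \ref{Lemma: coh ring & char class of G_3(Wu)}, using the decomposition of Figure \ref{Figure: The gyration on S^5}. Write $W=\left(S^{2}\times S^{5}\right)\setminus \iota_{p}\left(S^{2}\times\mathring{D^{5}}\right)$, so that $\mathcal{G}^{3}_{p}\left(S^{5}\right)=W\cup_{S^{2}\times S^{4}}\left(D^{3}\times S^{4}\right)$.

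\textbf{Homology of $W$.} By general position (a $2$-sphere has codimension $5$ in a $7$-manifold), the inclusion $W\hookrightarrow S^{2}\times S^{5}$ is $4$-connected. Hence $W$ is simply connected, $H_{2}(W)\cong\mathbb{Z}$ is generated by the factor $S^{2}\times\{pt\}$, and $H_{3}(W)=0$. The remaining groups come from Lefschetz duality together with the exact sequence of the pair $\left(S^{2}\times S^{5},W\right)$, using excision $H_{*}\left(S^{2}\times S^{5},W\right)\cong H_{*}\left(S^{2}\times (D^{5},S^{4})\right)$. The key point is that the meridian sphere $\{pt\}\times S^{4}\subset\partial W$ is null-homologous in $W$. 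Indeed, it bounds the complement of a point in $\{pt\}\times S^{5}$, which meets $\iota_{p}$ algebraically $p$ times, and after tubing these intersections away one finds that $H_{4}(W)=0$. This gives $H_{*}(W)=\mathbb{Z},0,\mathbb{Z},0,0,\mathbb{Z},0$.

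\textbf{Homology of the glued manifold.} In the Mayer-Vietoris sequence for $W\cup\left(D^{3}\times S^{4}\right)$, the boundary $S^{2}\times S^{4}$ has $H_{2}\cong\mathbb{Z}$. Its generator $S^{2}\times\{pt\}$ is a parallel pushoff of $\iota_{p}$, so it maps to $p$ times the generator of $H_{2}(W)$, and it maps to zero in $D^{3}\times S^{4}$. This produces $H_{2}\cong\mathbb{Z}/p$. In degree $4$, the class $[\{pt\}\times S^{4}]$ maps to $0$ in $W$ and to the generator of $H_{4}\left(D^{3}\times S^{4}\right)$. Together with the degree-$5$ map, this yields $H_{5}=0$ and $H_{4}\cong\mathbb{Z}/p$; one gets the latter more cleanly from Poincar\'{e} duality and the universal coefficient theorem once $H_{2}$, $H_{3}$ and simple connectivity are known. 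Simple connectivity itself follows from van Kampen. The integral cohomology and the mod $p$ cohomology then follow from the universal coefficient theorem.

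\textbf{Stable parallelizability.} Here we argue as follows. The manifold $S^{2}\times S^{5}$ is stably parallelizable, and so is its codimension-$0$ submanifold $W$. The manifold $\mathcal{G}^{3}_{p}\left(S^{5}\right)$ is obtained from $W$ by attaching a $3$-handle and a $7$-handle; dually, it is obtained by a single framed surgery on $S^{2}\times S^{5}$, which gives a trace $T$. Since the surgery is along a framed embedding into a stably parallelizable manifold, the stable framing extends over the $3$-handle of $T$, and hence $T$ is stably parallelizable. Therefore the boundary piece $\mathcal{G}^{3}_{p}\left(S^{5}\right)$ is stably parallelizable. A cheaper route also works, because $H^{1}$, $H^{2}$ and $H^{4}$ of $\mathcal{G}^{3}_{p}\left(S^{5}\right)$ vanish, while $H^{2}(-;\mathbb{Z}/2)=0$ since $p$ is odd. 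So the obstructions to a stable framing lie in $H^{i+1}\left(M;\pi_{i}(SO)\right)$, which vanish except in degree $8>7$, with the possible exception of $H^{4}(M;\mathbb{Z})=0$. In particular $w_{2}=0$ and $p_{1}=0$.

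\textbf{Cup products mod $p$.} The mod $p$ groups are $\mathbb{Z}/p$ in degrees $0,2,3,4,5,7$. A product of positive-degree classes of total degree at most $6$ lands in degrees $4,5,6$. The only candidates are $x_{2}^{2}\in H^{4}$ and $x_{2}x_{3}\in H^{5}$; those of total degree $7$ ($x_{2}x_{5}$, $x_{3}x_{4}$) are nontrivial by Poincar\'{e} duality. Nonvanishing of $x_{2}^{2}$ would force $x_{2}^{2}x_{3}\neq0$, by the pairing of $H^{4}$ with $H^{3}$, so it suffices to show $x_{2}^{2}=0$ and $x_{2}x_{3}=0$. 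I expect this step to be the main obstacle.

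I would try to compute these products via the map $\mathcal{G}^{3}_{p}\left(S^{5}\right)\to\left(\mathcal{G}^{3}_{p}\left(S^{5}\right),D^{3}\times S^{4}\right)\cong\left(W,\partial W\right)$ and the cohomology ring of $W$, which is pulled back from $S^{2}\times S^{5}$. In $W$, the class $x_{2}$ is the reduction of the generator $a\in H^{2}(S^{2}\times S^{5})$, and $a^{2}=0$ there; this gives $x_{2}^{2}=0$ once one checks that $x_{2}$ restricts from $W$ compatibly. The product $x_{2}x_{3}$ should vanish for the same reason, since $x_{3}$ is the Bockstein-related class supported near the surgery region. If this direct approach gets messy, a fallback is the linking-form argument used above: the vanishing of $x_{2}^{2}$ is equivalent to the vanishing of a Massey-type pairing, and one can instead show that $x_{2}^{2}x_{3}=0$ by evaluating on a chain-level cycle coming from $S^{2}\times S^{5}$.
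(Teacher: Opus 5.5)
Your computation of $H_{*}(W)$ is wrong, and so is the geometric reason you give for it. Under excision, $H_{*}\left(S^{2}\times S^{5},W\right)\cong H_{*-5}\left(S^{2}\right)$. The map $H_{5}\left(S^{2}\times S^{5}\right)\to H_{5}\left(S^{2}\times S^{5},W\right)\cong\mathbb{Z}$ is the intersection number with $\iota_{p}$, i.e.\ multiplication by $p$. Hence $H_{5}(W)=0$ and $H_{4}(W)\cong\mathbb{Z}/p$, generated by the meridian $\{pt\}\times S^{4}$, which has order exactly $p$. Tubing only cancels intersection points of opposite sign, so the algebraic count $p$ cannot be removed. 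Puncturing $\{pt\}\times S^{5}$ at the intersection points only produces the relation $p\,[\{pt\}\times S^{4}]=0$. So in degrees $0,\dots,6$, $H_{*}(W)$ is $\mathbb{Z},0,\mathbb{Z},0,\mathbb{Z}/p,0,0$.

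With your values, Mayer--Vietoris would give $H_{4}=0$ and $H_{5}\cong\mathbb{Z}$ for $\mathcal{G}^{3}_{p}\left(S^{5}\right)$. So your sentence ``this yields $H_{5}=0$ and $H_{4}\cong\mathbb{Z}/p$'' does not follow from your own data. With the correct values it does, and the $\mathbb{Z}/p$ comes from $H_{4}(W)$. The table survives only through your fallback. The values $H_{1}=0$, $H_{2}\cong\mathbb{Z}/p$ and $H_{3}=0$ use just $H_{2}(W)\cong\mathbb{Z}$, $H_{3}(W)=0$ and multiplication by $p$ on $H_{2}$; Poincar\'e duality and the universal coefficient theorem then give the rest. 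Your stable-parallelizability arguments are fine. The first is the paper's Kervaire--Milnor argument, with the unstated input that the obstruction to extending the framing over the $3$-handle lies in $\pi_{2}(SO)=0$.

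The cup-product step is the substantive gap, and you leave it open. Your sketch also points the wrong way: the collapse $\mathcal{G}^{3}_{p}\left(S^{5}\right)\to(W,\partial W)$ only pulls back relative classes, whereas what you need is restriction to $W$. The paper does not use the decomposition here. It invokes $\Omega_{7}^{Spin}\left(K\left(\mathbb{Z}/p,2\right)\right)=0$ (Section 6) to kill the number $\left<x^{2}\beta x,[M]_{p}\right>$, and Poincar\'e duality then gives $x^{2}=0$ and $x\beta x=0$.

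Your idea can instead be finished elementarily:
\begin{itemize}
\item Since $W\hookrightarrow S^{2}\times S^{5}$ is $4$-connected, $x_{2}|_{W}$ is a multiple of $\rho_{p}(a)|_{W}$, so $x_{2}^{2}|_{W}=0$. Also $x_{2}|_{D^{3}\times S^{4}}=0$.
\item Because $H^{3}\left(S^{2}\times S^{4};\mathbb{Z}/p\right)=0$, Mayer--Vietoris makes $H^{4}\left(\mathcal{G}^{3}_{p}\left(S^{5}\right);\mathbb{Z}/p\right)\to H^{4}\left(W;\mathbb{Z}/p\right)\oplus H^{4}\left(D^{3}\times S^{4};\mathbb{Z}/p\right)$ injective. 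Hence $x_{2}^{2}=0$.
\item Then $x_{2}\cdot(x_{2}x_{3})=x_{2}^{2}x_{3}=0$, and the perfect pairing $H^{2}\times H^{5}\to H^{7}$ gives $x_{2}x_{3}=0$. No separate argument and no Massey-type fallback is needed.
\end{itemize}
This route avoids the bordism computation entirely, but it only works for the explicit model. The paper's bordism argument applies verbatim to every $\mathcal{G}^{3}_{p}\left(S^{5}\right)$-like manifold, which is exactly how the next lemma is proved.
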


\begin{pf}
	By definition, $\mathcal{G}^{3}_{p}\left(S^{5}\right)$ is obtained from a surgery on $S^{2}\times S^{5}$ that is stably parallelizable. Hence by \cite[Lemma 5.4]{KervaireMilnor63} the resulting manifold $\mathcal{G}^{3}_{p}\left(S^{5}\right)$ is also stably parallelizable. As a consequence, its characteristic classes vanish. In particular $\mathcal{G}^{3}_{p}\left(S^{5}\right)$ is orientable and spinnable.
	
	Then we apply van Kampen theorem and Mayer-Vietoris sequence to decompositions of $S^{2}\times S^{5}$ and $\mathcal{G}^{3}_{p}\left(S^{5}\right)$ given in Figure \ref{Figure: The gyration on S^5}. It is routine to show that $\mathcal{G}^{3}_{p}\left(S^{5}\right)$ is simply connected and to compute the homology, cohomology groups as stated above.
	
	It remains to determine the ring structure of $H^{*}\left(\mathcal{G}^{3}_{p}\left(S^{5}\right);\mathbb{Z}\big/p\right)$. Let $x$ be a generator of $H^{2}\left(\mathcal{G}^{3}_{p}\left(S^{5}\right);\mathbb{Z}\big/p\right)\cong\mathbb{Z}\big/p$. Consider the long exact sequence of cohomology groups of $\mathcal{G}^{3}_{p}\left(S^{5}\right)$ associated to the short exact sequence $\mathbb{Z}\rightarrowtail\mathbb{Z}\twoheadrightarrow\mathbb{Z}\big/p$ of coefficient rings. Here and henceforth a tailed arrow means a monomorphism and a two-head arrow is an epimorphism. The boundary homomorphism $H^{2}\left(\mathcal{G}^{3}_{p}\left(S^{5}\right);\mathbb{Z}\big/p\right)\xrightarrow{\delta}H^{3}\left(\mathcal{G}^{3}_{p}\left(S^{5}\right)\right)$ and the mod $p$ homomorphism $H^{3}\left(\mathcal{G}^{3}_{p}\left(S^{5}\right)\right)\xrightarrow{\rho_{p}}H^{3}\left(\mathcal{G}^{3}_{p}\left(S^{5}\right);\mathbb{Z}\big/p\right)$ are isomorphisms. Since the mod $p$ Bockstein homomorphism $\beta$ is the composition $\beta=\rho_{p}\circ\delta$, $H^{2}\left(\mathcal{G}^{3}_{p}\left(S^{5}\right);\mathbb{Z}\big/p\right)\xrightarrow{\beta}H^{3}\left(\mathcal{G}^{3}_{p}\left(S^{5}\right);\mathbb{Z}\big/p\right)$ is an isomorphism, and $\beta x$ is a generator of $H^{3}\left(\mathcal{G}^{3}_{p}\left(S^{5}\right);\mathbb{Z}\big/p\right)\cong\mathbb{Z}\big/p$.
	
	We claim that $x^{2}\beta x=0$. Note that the pair $\left(\mathcal{G}^{3}_{p}\left(S^{5}\right),x\right)$ determines an element of $\Omega_{7}^{Spin}\left(K\left(\mathbb{Z}\big/p,2\right)\right)$, and the assignment $\left(M,z\right)\mapsto\left<z^{2}\beta z,[M]_{p}\right>$ induces a homomorphism $\Omega_{7}^{Spin}\left(K\left(\mathbb{Z}\big/p,2\right)\right)\to\mathbb{Z}\big/p$. Here $[M]_{p}$ denotes the mod $p$ fundamental class (see also Lemma \ref{Lemma: char nums additive wrt gluing via boundary}). It follows from Propositions \ref{Proposition: Omega_q^Spin(Z/p,2), p>=5} and \ref{Proposition: Omega_q^Spin(Z/3,2)} that $\Omega_{7}^{Spin}\left(K\left(\mathbb{Z}\big/p,2\right)\right)=0$, hence $\left<z^{2}\beta z,[M]_{p}\right>=0$ for any closed spin $7$-manifold $M$ and any class $z\in H^{2}\left(M;\mathbb{Z}\big/p\right)$. As a consequence, $x^{2}\beta x=0$, any non-zero multiple of $x\beta x$ is not the Poincar\'e dual of $x$, which forces $x\beta x=0$. Likewise we have $x^{2}=0$. Therefore, the only non-trivial cup products in $H^{*}\left(\mathcal{G}^{3}_{p}\left(S^{5}\right);\mathbb{Z}\big/p\right)$ are those detected by Poincar\'{e} duality.
\end{pf}

\begin{lem}\label{Lemma: equiv char of G_3^p S^5-like mfd}
	Let $M$ be a $\mathcal{G}^{3}_{p}\left(S^{5}\right)$-like manifold. Then $p_{1}(M)=0$, $w_{2}(M)=0$ and the only non-trivial cup products in $H^{*}\left(M;\mathbb{Z}\big/p\right)$ are those detected by Poincar\'{e} duality.
\end{lem}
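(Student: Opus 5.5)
The plan is to read everything off the cohomology groups and to reuse the bordism argument from the proof of Lemma \ref{Lemma: coh ring & char class of G_3^p S^5}. First, since $M$ is simply connected with $H_{2}(M)\cong\mathbb{Z}\big/p$, $H_{3}(M)=0$ and $p$ odd, the universal coefficient theorem gives $H^{q}(M;\mathbb{Z}\big/2)=0$ for $1\leqslant q\leqslant 6$. Hence $w_{2}(M)=0$; this is the automatic spinness already noted in the introduction. Next, the integral cohomology groups coincide with those in Table \ref{Table: homology and coh gps of G_3^p S^5}: $H^{3}(M)\cong H^{5}(M)\cong\mathbb{Z}\big/p$ and $H^{4}(M)=0$. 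Since $H^{4}(M)=0$, we get $p_{1}(M)=0$ for free.

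For the ring structure I follow the proof of Lemma \ref{Lemma: coh ring & char class of G_3^p S^5} verbatim. The mod $p$ groups are $\mathbb{Z}\big/p$ in degrees $0,2,3,4,5,7$ and vanish elsewhere. Let $x$ generate $H^{2}(M;\mathbb{Z}\big/p)$. From the Bockstein long exact sequence for $\mathbb{Z}\rightarrowtail\mathbb{Z}\twoheadrightarrow\mathbb{Z}\big/p$ (using $H^{2}(M)=0$ and $H^{4}(M)=0$), the maps $\delta\colon H^{2}(M;\mathbb{Z}\big/p)\to H^{3}(M)$ and $\rho_{p}\colon H^{3}(M)\to H^{3}(M;\mathbb{Z}\big/p)$ are isomorphisms. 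Hence $\beta x$ generates $H^{3}(M;\mathbb{Z}\big/p)$.

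The possibly nontrivial products not detected by Poincar\'e duality are $x^{2}\in H^{4}$, $x\beta x\in H^{5}$, and the degree-$7$ products $x^{2}\beta x$ and $x\cdot(\text{generator of }H^{5})$. Poincar\'e duality forces the pairings $H^{2}\otimes H^{5}\to H^{7}$ and $H^{3}\otimes H^{4}\to H^{7}$ to be perfect. Because $M$ is spin, the pair $(M,x)$ defines a class in $\Omega_{7}^{Spin}(K(\mathbb{Z}\big/p,2))$, which vanishes by Propositions \ref{Proposition: Omega_q^Spin(Z/p,2), p>=5} and \ref{Proposition: Omega_q^Spin(Z/3,2)}. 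The characteristic number $\langle x^{2}\beta x,[M]_{p}\rangle$ is a bordism invariant, so it vanishes, and therefore $x^{2}\beta x=0$.

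From $x^{2}\beta x=0$ both remaining products follow. If $x^{2}\neq 0$, it would generate $H^{4}$, and the perfect pairing with $\beta x$ would give $x^{2}\beta x\neq0$; hence $x^{2}=0$. Likewise, if $x\beta x\neq0$, it would generate $H^{5}$, and the perfect pairing with $x$ would give $x\cdot x\beta x\neq 0$; hence $x\beta x=0$. Therefore the only nonzero products are the Poincar\'e duality pairings. The only real input, and the main obstacle, is the vanishing of $\Omega_{7}^{Spin}(K(\mathbb{Z}\big/p,2))$, which the paper defers to Section \ref{Section: Compute bordism gps}. Everything else is formal.
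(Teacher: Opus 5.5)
Your proposal is correct and is essentially the paper's proof. As in the paper, $H^{4}(M)=0$ and the vanishing of $H^{q}(M;\mathbb{Z}/2)$ for $1\leqslant q\leqslant 6$ give $p_{1}(M)=0$ and $w_{2}(M)=0$, and the ring structure comes from repeating the argument for $\mathcal{G}^{3}_{p}(S^{5})$ itself: the Bockstein identification of $\beta x$ as a generator, then $\Omega_{7}^{Spin}(K(\mathbb{Z}/p,2))=0$ forcing $x^{2}\beta x=0$, and finally Poincar\'e duality giving $x^{2}=x\beta x=0$, a last step you spell out more cleanly than the paper does.
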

Hence a $\mathcal{G}^{3}_{p}\left(S^{5}\right)$-like manifold also admits the same mod $p$ cohomology ring and characteristic classes as those of $\mathcal{G}^{3}_{p}\left(S^{5}\right)$. 

\begin{pf}
	According to the homology and cohomology groups of the $\mathcal{G}^{3}_{p}\left(S^{5}\right)$-like manifold $M$, we have $H^{4}(M)=0$ and $M$ is a $\mathbb{Z}\big/2$-homology $7$-sphere, hence $p_{1}(M)=0$ and $M$ has vanishing Stiefel-Whitney classes. Following exactly the same argument of proof of Lemma \ref{Lemma: coh ring & char class of G_3^p S^5}, we can show that the only non-trivial cup products in $H^{*}\left(M;\mathbb{Z}\big/p\right)$ are those detected by Poincar\'{e} duality
\end{pf}

\begin{lem}\label{Lemma: normal 2-type of G_3^p S^5-like mfds}
	Let $M$ be a $\mathcal{G}^{3}_{p}\left(S^{5}\right)$-like manifold. The normal $2$-type and normal $2$-smoothing of $M$ are given as in Figure \ref{Figure: normal 2-type & normal 2-str of G_3^p S^5-like mfd}. 
	\begin{figure}[H]
		\begin{center}
			\[
			\xymatrix{
				& BSpin\times K\left(\mathbb{Z}\big/p,2\right)\ar[d]^{\mathrm{pr}_{1}}\\
				& BSpin\ar[d]^{B\phi}\\
				M\ar[ruu]^{{\varphi}=\left(\widetilde{\nu},\widehat{\varphi}\right)}\ar[ru]^{\widetilde{\nu}}\ar[r]^{\nu} & BO
			}
			\]
		\end{center}
		\caption{The normal $2$-type and normal $2$-smoothing of $\mathcal{G}^{3}_{p}\left(S^{5}\right)$-like manifold $M$}
		\label{Figure: normal 2-type & normal 2-str of G_3^p S^5-like mfd}
	\end{figure}
	Here $Spin\xrightarrow{\phi}O$ is the canonical map, $BSpin\xrightarrow{B\phi}BO$ is the induced map, $M\xrightarrow{\widetilde{\nu}}BSpin$ assigns the orientation and unique spin structure of $M$, and $M\xrightarrow{\widehat{\varphi}}K\left(\mathbb{Z}\big/p,2\right)$ induces an isomorphism of second homotopy groups.
\end{lem}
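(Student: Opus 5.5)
I will follow the same pattern as the proof of Lemma \ref{Lemma: normal 2-type of G_3(Wu)-like mfds}. There are two things to check. First, the composite $\mathcal{F}=B\phi\circ\mathrm{pr}_{1}\colon BSpin\times K\left(\mathbb{Z}\big/p,2\right)\to BO$ must be $2$-universal. Second, $\varphi=\left(\widetilde{\nu},\widehat{\varphi}\right)$ must be a lift of $\nu$ that is a $3$-equivalence.

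\textbf{Universality and the lift.} The homotopy fibre of $B\phi\colon BSpin\to BO$ has homotopy groups only in degrees $0$ and $1$. Since $BSpin$ is $3$-connected, it is the connected $2$-stage space with $\pi_{0}=\mathbb{Z}/2$ and $\pi_{1}=\mathbb{Z}/2$ collapsed suitably; in any case its homotopy vanishes above degree $1$. The homotopy fibre of $\mathcal{F}$ is then, up to homotopy, the product of that fibre with $K\left(\mathbb{Z}\big/p,2\right)$. It is connected, since the relevant fibre over the identity component is connected after taking the oriented cover into account, exactly as in the $BSO$ case. Its homotopy groups vanish in degrees greater than $2$, so $\mathcal{F}$ is $2$-universal.

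For the lift, $M$ is spin by Lemma \ref{Lemma: equiv char of G_3^p S^5-like mfd}, so $\nu$ lifts to $\widetilde{\nu}\colon M\to BSpin$. Because $H^{1}(M;\mathbb{Z}/2)=0$, the spin structure is unique. For $\widehat{\varphi}$, note that $H^{2}(M;\mathbb{Z}/p)\cong\mathbb{Z}/p$. Take a generator $x$ and let $\widehat{\varphi}$ classify it. Since $M$ is simply connected, Hurewicz gives $\pi_{2}(M)\cong H_{2}(M)\cong\mathbb{Z}/p$. The map $H_{2}(M)\to\mathbb{Z}/p$ dual to $x$ is an isomorphism by the universal coefficient theorem, so $\pi_{2}\left(\widehat{\varphi}\right)$ is an isomorphism.

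\textbf{The $3$-equivalence.} Because $\pi_{i}(BSpin)=0$ for $i\leqslant3$, the homotopy groups of the target in degrees at most $3$ are those of $K\left(\mathbb{Z}\big/p,2\right)$.
\begin{compactenum}
\item[$\bullet$] $\pi_{1}(\varphi)$ is an isomorphism, since both spaces are simply connected.
\item[$\bullet$] $\pi_{2}(\varphi)=\pi_{2}\left(\widehat{\varphi}\right)$ is an isomorphism by the choice of $x$ above.
\item[$\bullet$] $\pi_{3}(\varphi)$ is surjective, because $\pi_{3}\left(BSpin\times K\left(\mathbb{Z}\big/p,2\right)\right)=0$.
\end{compactenum}
Hence $\varphi$ is a normal $2$-smoothing.

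The only potentially delicate point is connectivity of the fibre of $\mathcal{F}$, that is, bookkeeping about the orientation double cover. I would handle it exactly as in the $BSO$ case, viewing $B\phi$ as the $2$-connected cover of the relevant component. Everything else is immediate from Hurewicz and the universal coefficient theorem.
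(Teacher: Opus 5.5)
Your proposal is correct and follows the same route as the paper. The paper notes only that $\pi_2(M)\cong H_2(M)\cong\mathbb{Z}/p$ by Hurewicz and that $BSpin$ is $3$-connected, then calls universality and the $3$-equivalence straightforward; your universal-coefficient check that $\pi_2(\widehat{\varphi})$ is an isomorphism, and your observation that $\pi_3$ of the target vanishes, supply exactly those details.

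One correction to your universality paragraph. The homotopy fibre of $B\phi\circ\mathrm{pr}_1$ is $\mathbb{Z}/2\times\mathbb{RP}^\infty\times K(\mathbb{Z}/p,2)$, which is not connected. So drop the garbled sentence about a ``connected $2$-stage space with $\pi_0=\mathbb{Z}/2$'' and the claim that the fibre is connected. What is actually used, as in the paper's $BSO$ case, is only that the fibre's homotopy groups vanish above degree $2$. Equivalently, $\pi_3$ of the map is injective and $\pi_i$ is bijective for $i\geq 4$.
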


\begin{pf}
	Since $M$ is a $\mathcal{G}^{3}_{p}\left(S^{5}\right)$-like manifold, $M$ is simply-connected and by Hurewicz's theorem $\pi_{2}(M)\cong H_{2}(M)\cong\mathbb{Z}\big/p$. Since $BSpin$ is $3$-connected and $K\left(\mathbb{Z}\big/p,2\right)$ is the Eilenberg-MacLane space, it is straightforward to verify that $\left(BSpin\times K\left(\mathbb{Z}\big/p,2\right),B\phi\circ\mathrm{pr}_{1}\right)$ is $2$-universal and ${\varphi}$ is a normal $2$-smoothing.
\end{pf}

\begin{rmk}
	Equivalently, a normal $2$-smoothing of a $\mathcal{G}^{3}_{p}\left(S^{5}\right)$-like manifold assigns the orientation, the unique spin structure and a non-zero cohomology class $x=x_{{\varphi}}\in H^{2}\left(M;\mathbb{Z}\big/p\right)$.
	Unlike $\mathcal{G}^{3}(\mathrm{Wu})$-like manifolds, a $\mathcal{G}^{3}_{p}\left(S^{5}\right)$-like manifold admits more than one normal $2$-smoothings, which is in bijection with non-zero classes of $H^{2}\left(M;\mathbb{Z}\big/p\right)$.
\end{rmk}

	\section{Identify the surgery obstruction}\label{Section: Identify the surgery obstructions}
Let $M_{0}$, $M_{1}$ be a pair of $\mathcal{G}^{3}(\mathrm{Wu})$-like manifolds (resp. two $\mathcal{G}^{3}_{p}\left(S^{5}\right)$-like manifolds). Then they have the same normal $2$-type $(\mathcal{B},\mathcal{F})=(BSO,B\phi)$ (resp. $\left(BSpin\times K\left(\mathbb{Z}\big/p,2\right),B\phi\circ\mathrm{pr}_{1}\right)$), and a $(\mathcal{B},\mathcal{F})$-structure on an orientable manifold is simply an assignment of orientation (resp. an assignment of orientation, the unique spin structure and a nonzero cohomology class). Since $\Omega_{7}(\mathcal{B},\mathcal{F})=\Omega_{7}^{SO}=0$ (resp. $\Omega_{7}^{Spin}\left(K\left(\mathbb{Z}\big/p,2\right)\right)=0$, Propositions \ref{Proposition: Omega_q^Spin(Z/p,2), p>=5}, \ref{Proposition: Omega_q^Spin(Z/3,2)}), any two $7$-dimensional $(\mathcal{B},\mathcal{F})$-manifolds are $(\mathcal{B},\mathcal{F})$-bordant, hence there is a $(\mathcal{B},\mathcal{F})$-bordism $(W,\psi)$ between $\left(M_{i},\varphi_{i}\right)$. Here in the case of $\mathcal{G}^{3}(\mathrm{Wu})$-like manifolds, reference maps ${\psi}$ and ${\varphi_{i}}$ assign orientations of $W$ and $M_{i}$ respectively. In the case of $\mathcal{G}^{3}_{p}\left(S^{5}\right)$-like manifolds, we denote ${\psi}=\left(\widetilde{\nu_{W}},\widehat{\psi}\right)$, ${\varphi_{i}}=\left(\widetilde{\nu_{i}},\widehat{\varphi_{i}}\right)$, such that $\widetilde{\nu_{W}}$, $\widetilde{\nu_{i}}$ assign orientations and spin structures of $W$ and $M_{i}$, $\widehat{\psi}$ assign a class $x_{{\psi}}\in H^{2}\left(W;\mathbb{Z}\big/p\right)$ and $\widehat{\varphi_{i}}$ assign a non-zero class $x_{{\varphi_{i}}}\in H^{2}\left(M_{i};\mathbb{Z}\big/p\right)$.

According to modified surgery theory, $M_{0}$, $M_{1}$ are diffeomorphic if and only if there is a bordism $(W,\psi)$ between $\left(M_{i},\varphi_{i}\right)$ such that the surgery obstruction $\theta(W,\psi)\in l_{8}(\{e\})$ is elementary (\cite[Theorem 3]{SurgeryAndDuality}). By \cite[Proposition 4]{SurgeryAndDuality} we may assume $W\xrightarrow{\psi}\mathcal{B}$ is a $4$-equivalence, and it follows from \cite[Section 5]{SurgeryAndDuality} that in this case $\theta(W,\psi)$ is represented by the following data
\[\left(H_{4}\left(W,M_{0}\right)\xleftarrow{f_{0}}\pi^{\mathcal{B}}_{4}(W)\xrightarrow{f_{1}}H_{4}\left(W,M_{1}\right),\beta_{W}\right),\]
where 
\begin{compactenum}
	\item $\pi_{4}^{\mathcal{B}}(W)=\ker\left(\pi_{4}(\psi)\right)$;
	\item $f_{i}$ is the composition 
	$$\pi_{4}^{\mathcal{B}}(W)\to H_{4}^{\mathcal{B}}(W)\to H_{4}(W)\to H_{4}\left(W,M_{i}\right),$$
	in which $H_{4}^{\mathcal{B}}(W)=\ker\left(H_{4}(\psi)\right)$, $\pi_{4}^{\mathcal{B}}(W)\to H_{4}^{\mathcal{B}}(W)$ is the map induced from Hurewicz homomorphism, $H_{4}^{\mathcal{B}}(W)\to H_{4}(W)$ is the subgroup inclusion and $H_{4}(W)\to H_{4}\left(W,M_{i}\right)$ is the map induced from the inclusion $W=(W,\varnothing)\to\left(W,M_{i}\right)$;
	\item $\beta_{W}$ is the homological intersection pairing on $H_{4}\left(W,M_{0}\right)\times H_{4}\left(W,M_{1}\right)$;
	\item two representatives represent the same element if they become isomorphic after a direct sum with certain copies of hyperbolic forms.
\end{compactenum}
 
In this section we identify $\theta(W,\psi)$ and determine the isomorphism type of necessary groups, leaving the analysis of whether this obstruction is elementary to the next section.
We begin with some notations. If $A$ is an abelian group, let $T(A)$ denote the torsion subgroup of $A$ and let $F(A)=A\big/T(A)$ denote the associated torsion-free group. Notation $0$ may indicate a zero group, a zero homomorphism or a zero bilinear form.

We state the results of $\mathcal{G}^{3}(\mathrm{Wu})$-like manifolds and $\mathcal{G}^{3}_{p}\left(S^{5}\right)$-like manifolds separately. In the proof of conclusions concerning $\mathcal{G}^{3}_{p}\left(S^{5}\right)$-like manifolds, we require the results about certain (generalized) homology groups of $K\left(\mathbb{Z}\big/p,2\right)$, which can be found in Section \ref{Section: Compute bordism gps}.

\begin{prop}\label{Proposition: identify surgery obstruction}
	In the case of $\mathcal{G}^{3}(\mathrm{Wu})$-like manifolds,
	\begin{compactenum}
		\item $H_{4}(W)\cong\mathbb{Z}^{r}\oplus\mathbb{Z}\big/2$ $(r\geqslant2)$. $H_{4}(W)\xrightarrow{H_{4}(\psi)}H_{4}(\mathcal{B})$ is epic and $\ker\left(H_{4}(\psi)\right)\cong\mathbb{Z}^{r-1}\oplus\mathbb{Z}\big/2$. The induced homomorphism $F\left(\ker\left(H_{4}(\psi)\right)\right)\xrightarrow{\iota}F\left(H_{4}(W)\right)$ is monic and has cokernel $H_{4}(\mathcal{B})\cong\mathbb{Z}\oplus\mathbb{Z}\big/2$.
		\item Under Poincar\'{e}-Lefschetz duality, the intersection pairing $H^{4}\left(W,M_{1}\right)\times H^{4}\left(W,M_{0}\right)\to\mathbb{Z}$ induces a symmetric unimodular bilinear form $\beta$ on $F\left(H_{4}(W)\right)$.
		\item $\theta(W,\psi)$ can be represented by
		\begin{eqnarray}\label{Equation: representative of theta(W,psi)}
			\left(F\left(H_{4}(W)\right)\xlongleftarrow{\iota}F\left(\ker\left(H_{4}(\psi)\right)\right)\xlongrightarrow{\iota}F\left(H_{4}(W)\right),\beta\right)\oplus\left(0\xlongleftarrow{0}\left(\mathbb{Z}/2\right)^{2}\xlongrightarrow{0}0,0\right).
		\end{eqnarray}
	\end{compactenum}
\end{prop}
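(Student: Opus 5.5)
We will work with a bordism $(W,\psi)$ between $(M_0,\varphi_0)$ and $(M_1,\varphi_1)$ for which $W\xrightarrow{\psi}BSO$ is a $4$-equivalence, as arranged via \cite[Proposition 4]{SurgeryAndDuality}. The first step is to compute the homology of $W$ from the long exact sequences of the pairs $(W,M_i)$. We feed in three inputs: the homology of $M_i$ from Lemma \ref{Lemma: equiv char of G_3(Wu)-like mfd}, which is concentrated in $H_2\cong H_4\cong\mathbb{Z}/2$; the $4$-equivalence, which gives $H_q(W)\cong H_q(BSO)$ for $q\le 3$ and $H_4(W)\to H_4(BSO)$ onto; and Lemma \ref{Lemma: integral coh of BSO}. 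Together these give $H_1(W)=H_3(W)=0$, $H_2(W)\cong\mathbb{Z}/2$, and $H_4(\psi)$ surjective onto $\mathbb{Z}\oplus\mathbb{Z}/2$.

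To pin down $T(H_4(W))$ we use Lefschetz duality, $H_4(W)\cong H^4(W,\partial W)$, together with the cohomology sequence of $(W,\partial W)$, where $\partial W=M_0\sqcup -M_1$. Since $H^3(\partial W)\cong(\mathbb{Z}/2)^2$ and $H^4(\partial W)=0$, we must compare $H^3(W)\to H^3(\partial W)$. Here $H^3(W)\cong H^3(BSO)=\mathbb{Z}/2\{\delta w_2\}$ maps diagonally, because $\delta w_2$ restricts to the nonzero class in each $M_i$, as each $M_i$ is non-spin. So the cokernel is $\mathbb{Z}/2$. This injects into $H^4(W,\partial W)$, whose torsion is therefore controlled and should come out to exactly $\mathbb{Z}/2$.

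Next, the rank. The form on $F(H_4(W))$ is nondegenerate and of rank $r$, because the boundary is a rational homology sphere. Since $H_4(\psi)$ hits a $\mathbb{Z}$ summand, $r\ge1$, and after adding hyperbolic summands if necessary we may take $r\ge2$. The kernel is then $\mathbb{Z}^{r-1}\oplus\mathbb{Z}/2$. The delicate point is the torsion: the torsion of $H_4(W)$ must map isomorphically onto the $\mathbb{Z}/2$ of $H_4(BSO)$. Otherwise the kernel would have torsion $\mathbb{Z}/2$ coming from $T(H_4(W))$ and the surjectivity onto $\mathbb{Z}/2$ would have to be realized by a free class.

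I expect this torsion bookkeeping to be the main obstacle. There are two possibilities:
\begin{compactenum}
\item If $T(H_4 W)\cong(\mathbb{Z}/2)^2$, one copy maps isomorphically and the kernel torsion is $\mathbb{Z}/2$.
\item If $T(H_4 W)\cong\mathbb{Z}/2$, the kernel torsion arises because some free class maps to $(\pm1,1)$, and adjusting by $2\times$ free classes produces torsion in the kernel.
\end{compactenum}
The stated cokernel $\mathbb{Z}\oplus\mathbb{Z}/2$ of $F(\ker)\to F(H_4 W)$ suggests the second scenario: a primitive free class maps to an element with nonzero $\mathbb{Z}/2$-component, so the image of $F(\ker)$ has index-$2$ torsion in the quotient. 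I would determine which scenario holds by dualizing. The generator of $H_4(BSO)$'s $\mathbb{Z}/2$ pairs with $\delta w_4\in H^5$, and $\widetilde{\nu}^*\delta w_4$ lives in $H^5(W)\cong H_2(W,\partial W)$. This is computable from the sequence, using $w_4(M_i)=0$ (Lemma \ref{Lemma: equiv char of G_3(Wu)-like mfd}).

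For unimodularity (item 2), the sequences for $(W,M_0)$ and $(W,M_1)$ show that $H_4(W)\to H_4(W,M_i)$ is an isomorphism modulo torsion. This holds because $H_4(M_i)$ and $H_3(M_i)$ are torsion or zero. Poincaré–Lefschetz duality $H_4(W,M_0)\cong H^4(W,M_1)$ then gives a perfect pairing $F(H_4(W,M_0))\times F(H_4(W,M_1))\to\mathbb{Z}$, which transports to a symmetric unimodular $\beta$ on $F(H_4 W)$. Symmetry comes from the graded commutativity of cup product in degree $4$.

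For item 3, $\pi_4(BSO)=\mathbb{Z}$ and $W$ is simply connected, so the Hurewicz map in degree $4$ is onto modulo the $\Gamma$-term; relative Hurewicz for $\psi$ shows $\pi_4^{\mathcal B}(W)\to H_4^{\mathcal B}(W)$ is surjective. Its kernel and the torsion of $H_4^{\mathcal B}(W)$ map to zero in $F(H_4(W,M_i))$, and pair trivially. Finally, splitting off free parts and replacing $H_4(W,M_i)$ by its free quotient should be allowed up to the equivalence in the definition of $l_8$. This yields the decomposition \eqref{Equation: representative of theta(W,psi)}, with the torsion summand $(\mathbb{Z}/2)^2$ collecting the kernel torsion and the Hurewicz kernel $\Gamma$-contribution.
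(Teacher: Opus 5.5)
Your Lefschetz-duality count of $T(H_4(W))$ is a legitimate alternative to the paper's braid argument, once you add that $H^4(W)$ is free because $H_3(W)\cong H_3(BSO)=0$. Item 2 is essentially right. However, the crux of item 1 is left open, and your stated expectation is backwards. The torsion of $H_4(W)$ does \emph{not} map onto the $\mathbb{Z}/2$ of $H_4(BSO)$; it lies in $\ker H_4(\psi)$.

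The missing idea is that $T(H_4(W))$ is the image of $H_4(M_i)\cong\mathbb{Z}/2$. In the sequence of $(W,M_i)$ one has $H_5(W,M_i)\cong H^3(W,M_{1-i})=0$, and $H_4(W,M_i)\cong H^4(W,M_{1-i})$ is free, since $M_{1-i}\to W$ is $3$-connected. So on torsion $H_4(\psi)$ factors through $H_4(\varphi_i)$, and $H_4(\varphi_i)=0$. The paper deduces this from $H_5(BSO,M_i)\cong(\mathbb{Z}/2)^2$ and the exact braid. More directly, $\rho_2$ is injective on the torsion of $H_4(BSO)$ by Lemma~\ref{Lemma: further identify H_4(B)}, while $h_4(\varphi_i)=0$ because $w_2(M_i)^2=w_4(M_i)=0$.

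Your second scenario is not a coherent mechanism. A torsion element of the subgroup $\ker H_4(\psi)$ is a torsion element of $H_4(W)$, so it cannot be manufactured by adjusting free classes. Once the torsion is known to lie in the kernel, $F(H_4(W))\cong\mathbb{Z}^r$ surjects onto $\mathbb{Z}\oplus\mathbb{Z}/2$. This gives $\ker H_4(\psi)\cong\mathbb{Z}^{r-1}\oplus\mathbb{Z}/2$ and $\mathrm{coker}\,\iota\cong H_4(\mathcal{B})$, and it forces $r\geq 2$. So ``adding hyperbolic summands'' is neither needed nor legitimate, since the statement concerns the given $W$.

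For item 3 you assert, rather than derive, the two facts that make the stated representative correct.

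First, $H_4(W,M_i)$ is already free, by the duality just mentioned. ``Replacing $H_4(W,M_i)$ by its free quotient'' is not an operation permitted by the equivalence relation in $l_8$, which allows only isomorphism plus hyperbolic stabilization. So freeness has to be proved.

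Second, you do not compute the torsion of $\pi_4^{\mathcal{B}}(W)$. Identify $\pi_4^{\mathcal{B}}(W)\cong\pi_5(\mathcal{B},W)\cong H_5(\mathcal{B},W)$, using $\pi_5(BSO)=0$ and relative Hurewicz. The kernel of $\pi_4^{\mathcal{B}}(W)\to H_4^{\mathcal{B}}(W)$ is then concretely the image of $H_5(BSO)\cong\mathbb{Z}/2$, which injects since $H_5(W)=0$. Together with $T(\ker H_4(\psi))\cong\mathbb{Z}/2$, this only gives an extension $\mathbb{Z}/2\rightarrowtail T\twoheadrightarrow\mathbb{Z}/2$, which could a priori be $\mathbb{Z}/4$.

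The paper resolves this by identifying $T(H_5(\mathcal{B},W))$ with $H_5(\mathcal{B},M_i)$, via $0=H_5(W,M_i)\to H_5(\mathcal{B},M_i)\to H_5(\mathcal{B},W)\to H_4(W,M_i)$. It then computes $H^6(BSO,M_i)\cong(\mathbb{Z}/2)^2$ by comparing the integral cohomology sequence of $(BSO,M_i)$ with its mod~$2$ reduction. The image of $H^5(M_i)$ reduces nontrivially mod $2$, hence is not divisible by $2$. Only then does $f_i$ become $(\iota,0)$ on $F(\ker H_4(\psi))\oplus(\mathbb{Z}/2)^2$.
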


\begin{prop}\label{Proposition: identify surgery obstruction, G_3^p S^5}
	In the case of $\mathcal{G}^{3}_{p}\left(S^{5}\right)$-like manifolds,
	\begin{compactenum}
		\item $H_{4}(W)\cong\mathbb{Z}^{r}\oplus\mathbb{Z}\big/p$ $(r\geqslant2)$. $H_{4}(W)\xrightarrow{H_{4}(\psi)}H_{4}(\mathcal{B})$ is epic and $\ker\left(H_{4}(\psi)\right)\cong\mathbb{Z}^{r-1}\oplus\mathbb{Z}\big/p$. The induced homomorphism $F\left(\ker\left(H_{4}(\psi)\right)\right)\xrightarrow{\iota}F\left(H_{4}(W)\right)$ is monic and has cokernel $H_{4}(\mathcal{B})\cong\mathbb{Z}\oplus\mathbb{Z}\big/p$.
		\item Under Poincar\'{e}-Lefschetz duality, the intersection pairing $H^{4}\left(W,M_{1}\right)\times H^{4}\left(W,M_{0}\right)\to\mathbb{Z}$ induces a symmetric unimodular bilinear form $\beta$ on $F\left(H_{4}(W)\right)$.
		\item $\theta(W,\psi)$ can be represented by
		\begin{eqnarray}\label{Equation: representative of theta(W,psi), G_3^p S^5}
			\left(F\left(H_{4}(W)\right)\xlongleftarrow{\iota}F\left(\ker\left(H_{4}(\psi)\right)\right)\xlongrightarrow{\iota}F\left(H_{4}(W)\right),\beta\right)\oplus\left(0\xlongleftarrow{0}\mathbb{Z}/2p\xlongrightarrow{0}0,0\right).
		\end{eqnarray}
	\end{compactenum}
\end{prop}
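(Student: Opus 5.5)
We follow the proof of Proposition \ref{Proposition: identify surgery obstruction}, replacing $BSO$ by $\mathcal{B}=BSpin\times K\left(\mathbb{Z}\big/p,2\right)$ and feeding in the low-dimensional homology of $K\left(\mathbb{Z}\big/p,2\right)$ from Section \ref{Section: Compute bordism gps}. By \cite[Proposition 4]{SurgeryAndDuality} we take $W\xrightarrow{\psi}\mathcal{B}$ to be a $4$-equivalence.

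The Künneth formula, $H_{*}(BSpin)=\mathbb{Z},0,0,0,\mathbb{Z}$ in degrees $\leqslant4$ with $H_{5}(BSpin)=0$, and $H_{2}\left(K\left(\mathbb{Z}\big/p,2\right)\right)\cong\mathbb{Z}\big/p$, $H_{3}=0$, $H_{4}\cong\Gamma(\mathbb{Z}\big/p)\cong\mathbb{Z}\big/p$ ($p$ odd) give $H_{2}(\mathcal{B})\cong\mathbb{Z}\big/p$, $H_{3}(\mathcal{B})=0$ and $H_{4}(\mathcal{B})\cong\mathbb{Z}\oplus\mathbb{Z}\big/p$. Since $\psi$ is a $4$-equivalence, $H_{q}(\psi)$ is an isomorphism for $q\leqslant3$ and $H_{4}(\psi)$ is epic. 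Hence $H_{2}(W)\cong\mathbb{Z}\big/p$, $H_{3}(W)=0$, and so $H^{3}(W)\cong\mathbb{Z}\big/p$ and $T\left(H^{4}(W)\right)=0$.

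Let $\partial W=M_{0}\sqcup M_{1}$. In the exact sequence
\[H_{5}(W,\partial W)\to H_{4}(\partial W)\to H_{4}(W)\to H_{4}(W,\partial W),\]
Lefschetz duality gives $H_{5}(W,\partial W)\cong H^{3}(W)\cong\mathbb{Z}\big/p$ and $H_{4}(W,\partial W)\cong H^{4}(W)$, which is torsion free, while $H_{4}(\partial W)\cong(\mathbb{Z}/p)^{2}$. So $T\left(H_{4}(W)\right)$ is the cokernel of a map $\mathbb{Z}\big/p\to(\mathbb{Z}/p)^{2}$. This map is the restriction $H^{3}(W)\to H^{3}(M_{0})\oplus H^{3}(M_{1})$, which is injective because $x_{\psi}$ restricts to the nonzero classes $x_{\varphi_{i}}$ and $\beta$ is an isomorphism on each $M_{i}$. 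Therefore $T\left(H_{4}(W)\right)\cong\mathbb{Z}\big/p$.

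Next we show that $T\left(H_{4}(W)\right)$ maps isomorphically onto the $\mathbb{Z}\big/p$ summand of $H_{4}(\mathcal{B})$, or else we account for it in the kernel. Either way, $\ker H_{4}(\psi)\cong\mathbb{Z}^{r-1}\oplus\mathbb{Z}\big/p$ and $F(\ker)\to F(H_{4}W)$ is a split monomorphism with cokernel the rest of $H_{4}(\mathcal{B})$. We get $r\geqslant2$ by connected sum with $S^{4}\times S^{4}$ if necessary.

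For Statement 2, each $M_{i}$ is a rational homology sphere with $H_{3}(M_{i})=0$ and $H_{4}(M_{i})$ torsion. Hence $H_{4}(W)\to H_{4}(W,M_{i})$ induces an isomorphism on $F(-)$. Lefschetz duality $H_{4}(W,M_{i})\cong H^{4}(W,M_{1-i})$ together with the universal coefficient theorem then shows that the intersection pairing descends to a unimodular form $\beta$ on $F\left(H_{4}(W)\right)$, symmetric since the degree is $4$.

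For Statement 3 we identify $\pi_{4}^{\mathcal{B}}(W)$. Let $\mathcal{F}\to W\to\mathcal{B}$ be the homotopy fibre sequence of $\psi$. Since $\psi$ is a $4$-equivalence and $\pi_{5}(\mathcal{B})=\pi_{4}(Spin)\oplus\pi_{5}\left(K\left(\mathbb{Z}\big/p,2\right)\right)=0$, we have $\pi_{4}^{\mathcal{B}}(W)\cong\pi_{4}(\mathcal{F})\cong H_{4}(\mathcal{F})$ by Hurewicz, as $\mathcal{F}$ is $3$-connected. The Serre exact sequence in low degrees reads
\[H_{5}(W)\to H_{5}(\mathcal{B})\to H_{4}(\mathcal{F})\to H_{4}(W)\to H_{4}(\mathcal{B})\to0.\]
So $\pi_{4}^{\mathcal{B}}(W)$ is an extension of $\ker H_{4}(\psi)$ by $\operatorname{coker}\left(H_{5}(W)\to H_{5}(\mathcal{B})\right)$. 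We would compute this using the homology of $K\left(\mathbb{Z}\big/p,2\right)$ in degree $5$ from Section \ref{Section: Compute bordism gps}.

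Given that, we split $\pi_{4}^{\mathcal{B}}(W)$ as a free part mapping isomorphically onto $F(\ker H_{4}\psi)$ plus a finite group $T$. Then $f_{i}$ is the composite of projection to $F(\ker)$ with $\iota$ into $F\left(H_{4}(W,M_{i})\right)=F\left(H_{4}(W)\right)$, and is $0$ on $T$ since $H_{4}(W,M_{i})$ has torsion killed in $F$. By the equivalence relation in the definition of $l_{8}$, this yields (\ref{Equation: representative of theta(W,psi), G_3^p S^5}) once $T\cong\mathbb{Z}\big/2p$.

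The main obstacle is the precise torsion group $T$, namely the extension problem and the cokernel of $H_{5}(W)\to H_{5}(\mathcal{B})$. In particular, where the $\mathbb{Z}\big/2$ summand comes from is not apparent from the Serre sequence as set up. I would first try to locate it in the stable range, through the fibre $\mathcal{F}$ and the non-injectivity of the Hurewicz map on $\pi_{4}$ of $W$ (the analogue of the extra $\mathbb{Z}\big/2$ in the Wu case, which came from $H_{5}(BSO)\cong\mathbb{Z}\big/2$). I would settle the $p$-primary part with the spectral sequence data of Section \ref{Section: Compute bordism gps}.
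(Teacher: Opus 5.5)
There are two genuine gaps. The second cannot be closed as the statement is written.

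\textbf{Statement 1: the key step is missing.} Your set-up is sound. The computation $T(H_{4}(W))\cong\mathbb{Z}/p$ via $H^{3}(W)\to H^{3}(\partial W)$ is correct, and so is $\pi_{4}^{\mathcal{B}}(W)\cong H_{4}(\mathcal{F})$ (this is the paper's $H_{5}(\mathcal{B},W)$ in another guise). But the decisive step is replaced by a false dichotomy. If $T(H_{4}(W))$ mapped isomorphically onto the $\mathbb{Z}/p$ summand of $H_{4}(\mathcal{B})$, then $\ker H_{4}(\psi)\cong\mathbb{Z}^{r-1}$ would be torsion free and $\operatorname{coker}\iota\cong\mathbb{Z}$. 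So ``either way'' fails.

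What you need is $H_{4}(\varphi_{i})=0$, i.e.\ that $T(H_{4}(W))$, the image of $H_{4}(M_{i})\cong\mathbb{Z}/p$, lies in $\ker H_{4}(\psi)$. The paper proves this in three steps:
\begin{itemize}
\item the $BSpin$-component vanishes because $H_{4}(BSpin)\cong\mathbb{Z}$;
\item mod $p$ reduction is an isomorphism on both $H_{4}(M_{i})$ and $H_{4}(K(\mathbb{Z}/p,2))$;
\item $H^{4}(K(\mathbb{Z}/p,2);\mathbb{Z}/p)$ is spanned by $\iota_{p}^{2}$, which pulls back to $x_{\varphi_{i}}^{2}=0$ by Lemma~\ref{Lemma: equiv char of G_3^p S^5-like mfd}. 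This is where $\Omega_{7}^{Spin}(K(\mathbb{Z}/p,2))=0$ enters.
\end{itemize}
Once this is known, $r\geqslant2$ is automatic: $F(H_{4}(W))\cong H_{4}(W,M_{i})\cong\mathbb{Z}^{r}$ surjects onto $H_{4}(\mathcal{B},M_{i})\cong\mathbb{Z}\oplus\mathbb{Z}/p$. Taking a connected sum with $S^{4}\times S^{4}$ changes $W$ rather than proving the claim for the given $W$. Note also that $\iota$ is not a split monomorphism, since its cokernel $H_{4}(\mathcal{B})$ has torsion.

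\textbf{Statement 3: the torsion is $\mathbb{Z}/p$, not $\mathbb{Z}/2p$.} You leave $T$ open, but your own inputs already determine it. You correctly take $H_{5}(BSpin)=0$. For instance, $H^{5}(BSpin;\mathbb{Z}/2)=0$ kills $\operatorname{Hom}(H_{5}(BSpin),\mathbb{Z}/2)$; alternatively, $BSpin\to K(\mathbb{Z},4)$ is $8$-connected and $H_{5}(K(\mathbb{Z},4))=0$. Together with $H_{5}(K(\mathbb{Z}/p,2))=0$, the K\"{u}nneth formula gives $H_{5}(\mathcal{B})=0$.

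Your Serre sequence then yields $\pi_{4}^{\mathcal{B}}(W)\cong H_{4}(\mathcal{F})\cong\ker H_{4}(\psi)\cong\mathbb{Z}^{r-1}\oplus\mathbb{Z}/p$, so $T\cong\mathbb{Z}/p$. In the paper's braid for $(\mathcal{B},W,M_{i})$ the same fact reads $H_{5}(\mathcal{B},M_{i})\cong H_{4}(M_{i})\cong\mathbb{Z}/p$.

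The paper's $\mathbb{Z}/2p$ comes solely from its table entries $H_{5}(BSpin)\cong\mathbb{Z}/2$ and $H^{6}(BSpin)\cong\mathbb{Z}/2$. These are incorrect: the first $2$-torsion of $BSpin$ sits in $H_{6}$ and $H^{7}$. So the $\mathbb{Z}/2$ you are searching for does not exist. The torsion summand of the representative should be $\left(0\xleftarrow{0}\mathbb{Z}/p\xrightarrow{0}0,0\right)$.

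This does not affect the rest of the paper, which discards the torsion summand when testing elementarity. It does mean the representative as stated, with $\mathbb{Z}/2p$, cannot be established.
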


\begin{pf}[of Proposition \ref{Proposition: identify surgery obstruction}]
	Consider the long exact sequences of homotopy and homology groups associated to the pair $\left(\mathcal{B}, W\right)$. They are related by the Hurewicz homomorphisms and we obtain the commutative diagram shown as in Figure \ref{Figure: LEL of Hurewicz homom ass to (B,W)}.
\begin{figure}[H]
	\begin{center}
		$$
			\xymatrix@R-2pt{
			\pi_{5}(\mathcal{B})\ar[r]\ar[d] & \pi_{5}(\mathcal{B},W)\ar[r]\ar[d] & \pi_{4}(W)\ar[r]^{\pi_{4}(\psi)}\ar[d] & \pi_{4}(\mathcal{B}) \ar[r]\ar[d] & \pi_{4}(\mathcal{B},W)\ar[d]\\
			H_{5}(\mathcal{B})\ar[r] & H_{5}(\mathcal{B},W)\ar[r] & H_{4}(W)\ar[r]^{H_{4}(\psi)} & H_{4}(\mathcal{B})\ar[r] & H_{4}(\mathcal{B},W)
			}
		$$
	\end{center}
	\caption{The long exact ladder of Hurewicz homomorphisms associated to $\left(\mathcal{B}, W\right)$}
	\label{Figure: LEL of Hurewicz homom ass to (B,W)}
\end{figure}
Since $\psi$ is an $4$-equivalence and $\mathcal{B}$ is simply connected, $\pi_{4}(\mathcal{B},W)=H_{4}(\mathcal{B},W)=0$ and the Hurewicz homomorphism $\pi_{5}(\mathcal{B},W)\to H_{5}(\mathcal{B},W)$ is an isomorphism. Moreover, $\pi_{5}(\mathcal{B})=\pi_{5}(BSO)=\pi_{4}(SO)=\pi_{4}(Spin)=0$, hence $\pi_{4}^{\mathcal{B}}(W)\cong\pi_{5}(\mathcal{B},W)\cong H_{5}(\mathcal{B},W)$ (see also Figure \ref{Figure: LEL of Hurewicz homom ass to (B,W), groups identified}).
\begin{figure}[h]
	\begin{center}
		\[
			\xymatrix@R-2pt{
				0\ar[r]\ar[d] & \pi_{5}(\mathcal{B},W)\ar[r]\ar[d]^{\cong} & \pi_{4}(W)\ar[r]^(0.57){\pi_{4}(\psi)}\ar[d] & \mathbb{Z} \ar[r]\ar[d] & 0\ar[d]\\
				\mathbb{Z}\big/2\ar[r] & H_{5}(\mathcal{B},W)\ar[r] & H_{4}(W)\ar[r]^(0.48){H_{4}(\psi)} & \mathbb{Z}\oplus\mathbb{Z}\big/2\ar[r] & 0
			}
		\]
	\end{center}
	\caption{The long exact ladder of Hurewicz homomorphisms associated to $\left(\mathcal{B}, W\right)$ with groups identified, $\mathcal{B}=BSO$}
	\label{Figure: LEL of Hurewicz homom ass to (B,W), groups identified}
\end{figure} 

To identify $\theta(W,\psi)$ it remains to determine the homomorphisms $\pi_{4}^{\mathcal{B}}(W)\xrightarrow{f_{i}}H_{4}\left(W,M_{i}\right)$. Under these identifications above, $\pi_{4}^{\mathcal{B}}(W)\xrightarrow{f_{i}}H_{4}\left(W,M_{i}\right)$ is equivalent to the map $H_{5}(\mathcal{B},W)\xrightarrow{\partial_{i}}H_{4}\left(W,M_{i}\right)$, where $\partial_{i}$ is the connecting homomorphism in the long exact sequence of relative homology groups associated to the tuple $\left(\mathcal{B},W,M_{i}\right)$.

Therefore, $\theta(W,\psi)$ is represented by the following diagram
$$\left(H_{4}\left(W,M_{0}\right)\xleftarrow{\partial_{0}}H_{5}(\mathcal{B},W)\xrightarrow{\partial_{1}}H_{4}\left(W,M_{1}\right),{\beta}_{W}\right),$$
where $\left.{\beta}_{W}\right|_{H_{5}(\mathcal{B},W)}$ is even by \cite[Proposition 6]{SurgeryAndDuality}.

To further study $\partial_{i}$, we consider the long exact braid of (relative) homology groups associated to the triple $\left(\mathcal{B},W,M_{i}\right)$ (Figure \ref{Figure: long exact braid ass to B,W,M_i}). 
\begin{figure}[H]
	\begin{center}
		$$
		\xymatrix@C=-1.2ex{
			&
			& H_{5}\left(W,M_{i}\right) \ar@/^25pt/[rr] \ar[rd]
			&
			& \ H_{4}\left(M_{i}\right)\  \ar@/^25pt/[rr]_{H_{4}\left(\varphi_{i}\right)} \ar[rd]
			&
			& \ H_{4}(\mathcal{B})\  \ar@/^25pt/[rr] \ar[rd]
			&
			& H_{4}(\mathcal{B},W) \ar[rd]
			&
			\\
			&\ \ H_{5}(W)\ \ \ar[ru] \ar[rd]
			&
			& H_{5}\left(\mathcal{B},M_{i}\right) \ar[ru] \ar[rd]
			&
			& \ \ \ H_{4}(W)\ \ \ \ar[ru]^{H_{4}(\psi)} \ar[rd]
			&
			& H_{4}\left(\mathcal{B},M_{i}\right) \ar[ru] \ar[rd]
			&
			& \ \ \ H_{3}(W)\ \ \ 
			\\
			\ H_{5}\left(M_{i}\right)\  \ar[ru] \ar@/_25pt/[rr]
			&
			& H_{5}(\mathcal{B}) \ar[ru] \ar@/_25pt/[rr]
			&
			& H_{5}(\mathcal{B}, W) \ar[ru] \ar@/_25pt/[rr]^{\partial_{i}}
			&
			& \hskip-4pt H_{4}\left(W,M_{i}\right) \hskip-4pt \ar[ru] \ar@/_25pt/[rr]
			&
			& \ H_{3}\left(M_{i}\right)\  \ar[ru]
			&
		}
		$$
	\end{center}
	\caption{The long exact braid of (relative) homology groups associated to $\left(\mathcal{B}, W, M_{i}\right)$}
	\label{Figure: long exact braid ass to B,W,M_i}
\end{figure}
Some groups in the diagram are identified as follows:
\begin{compactenum}
	\item $H_{5}(W)=0$. By Poincar\'{e}-Lefschetz duality and universal coefficient theorem $H_{5}(W)\cong H^{3}(W,\partial W)\cong H_{3}(W,\partial W)\spcheck\oplus\mathrm{Ext}\left(H_{2}(W,\partial W),\mathbb{Z}\right)$, and by the long exact sequence of relative homology groups associated to the triple $(W,\partial W,M_{i})$ we have $H_{3}(W,\partial W)\cong H_{2}(\partial W,M_{i})\cong H_{2}\left(M_{1-i}\right)\cong\mathbb{Z}\big/2$ and $H_{2}(W,\partial W)=0$.
	\item $H_{3}(W)=0$. Since $\psi$ is a $4$-equivalence and $H_{3}(\mathcal{B})=0$.
	\item $H_{5}\left(M_{i}\right)=H_{3}\left(M_{i}\right)=0$ and $H_{4}\left(M_{i}\right)\cong\mathbb{Z}\big/2$. This follows from Lemma \ref{Lemma: coh ring & char class of G_3(Wu)}.
	\item $H_{5}(\mathcal{B})\cong\mathbb{Z}\big/2$ and $H_{4}(\mathcal{B})\cong\mathbb{Z}\oplus\mathbb{Z}\big/2$. This follows from Lemma \ref{Lemma: integral coh of BSO}.
	\item $H_{5}\left(W,M_{i}\right)\cong H^{3}\left(W,M_{1-i}\right)=0$ and $H_{4}\left(W,M_{i}\right)\cong\mathbb{Z}^{r}$. Recall that $W\xrightarrow{\psi}\mathcal{B}$ is a $4$-equivalence and $M_{i}\xrightarrow{\varphi_{i}}\mathcal{B}$ are $3$-equivalences, hence the boundary inclusions $M_{i}\xrightarrow{\iota_{i}}W$ are also $3$-equivalence. Now the claimed isomorphisms follow from Hurewicz theorem and Poincar\'{e}-Lefschetz duality.
	\item $H_{5}(W,\partial W)\cong H^{3}(W)\cong H^{3}(\mathcal{B})\cong\mathbb{Z}\big/2$. The first isomorphism follows from Poincar\'{e}-Lefschetz duality and the second one is because $\psi$ is a $4$-equivalence.
\end{compactenum}
With these groups identified, we obtain Figure \ref{Figure: long exact braid ass to B,W,M_i, groups identified}. We also identify certain homomorphisms in the partial long exact braid, in the sense that whether they are trivial, monic, epic or isomorphic. 
\begin{figure}[H]
	\begin{center}
		$$
		\xymatrix@C=-1.2ex{
			&
			& \ \ \ \ \ \ 0\ \ \ \ \ \ \ar@/^25pt/[rr] \ar[rd]
			&
			& \ \mathbb{Z}\big/2\  \ar@/^25pt/[rr]_{H_{4}\left(\varphi_{i}\right)} \ar@{>->}[rd]
			&
			& \ \mathbb{Z}\oplus\mathbb{Z}\big/2\  \ar@/^25pt/[rr] \ar@{->>}[rd]
			&
			& \ \ \ \ \ \ 0\ \ \ \ \ \  \ar[rd]
			&
			\\
			&\ \ \ \ \ \ \ 0\ \ \ \ \ \ \ \ar[ru] \ar[rd]
			&
			& \ H_{5}\left(\mathcal{B},M_{i}\right)\ \ar[ru] \ar@{>->}[rd]
			&
			& \ \ \ \ H_{4}(W)\ \ \ \ \ar@{->>}[ru]^{H_{4}(\psi)} \ar@{->>}[rd]
			&
			& \ H_{4}\left(\mathcal{B},M_{i}\right) \ \ar[ru] \ar[rd]
			&
			& \ \ \ \ \ \ \ 0\ \ \ \ \ \ \ 
			\\
			\ \ \ \ \ 0\ \ \ \ \ \ar[ru] \ar@/_25pt/[rr]
			&
			& \ \ \mathbb{Z}\big/2\ \ \ar@{>->}[ru] \ar@{>->}@/_25pt/[rr]
			&
			& H_{5}(\mathcal{B}, W) \ar[ru] \ar@/_25pt/[rr]^{\partial_{i}}
			&
			& \ \ \mathbb{Z}^{r}\ \  \ar@{->>}[ru] \ar@/_25pt/[rr]
			&
			& \ \ \ \ \ 0\ \ \ \ \  \ar[ru] 
			&
		}
		$$
	\end{center}
	\caption{The long exact braid of (relative) homology groups associated to $\left(\mathcal{B}, W, M_{i}\right)$ with groups and homomorphisms identified, $\mathcal{B}=BSO$}
	\label{Figure: long exact braid ass to B,W,M_i, groups identified}
\end{figure}
It can also be read from Figure \ref{Figure: long exact braid ass to B,W,M_i, groups identified} that $H_{4}(W)\cong\mathbb{Z}^{r}\oplus\mathbb{Z}\big/2$, and it remains to determine the homomorphism $H_{4}\left(M_{i}\right)\xrightarrow{H_{4}(\varphi_{i})}H_{4}(\mathcal{B})$ for further identification of remaining groups.

Next we consider the long exact sequence of homology groups associated to the pair $\left(\mathcal{B},M_{i}\right)$ (Figure \ref{Figure: LES ass to (B,M_i)}) and determine $H_{q}\left(\mathcal{B},M_{i}\right)$, $q=4$, $5$. For convenience we denote by $h^{q}$ and $h_{q}$ the mod $2$ cohomology and homology until the end of this proof. Vertical arrows are homomorphisms of mod $2$ reduction.
\begin{figure}[h]
	\begin{center}
		$$
		\xymatrix{
			H^{3}\left(\mathcal{B},M_{i}\right) \ar[r]
			& H^{3}\left(\mathcal{B}\right) \ar[r]^(0.47){\varphi_{i}^{*}}
			& H^{3}\left(M_{i}\right) \ar[r]
			& H^{4}\left(\mathcal{B},M_{i}\right) \ar[r]
			& H^{4}\left(\mathcal{B}\right) \ar[r]
			& H^{4}\left(M_{i}\right)
		}
		$$
		$$
		\xymatrix@R-2pt{
			H^{4}(\mathcal{B}) \ar[r] \ar[d]
			& H^{4}\left(M_{i}\right) \ar[r] \ar[d]
			& H^{5}\left(\mathcal{B},M_{i}\right) \ar[r] \ar[d]
			& H^{5}(\mathcal{B}) \ar[r] \ar[d]
			& H^{5}\left(M_{i}\right) \ar[r] \ar[d]
			& H^{6}\left(\mathcal{B},M_{i}\right) \ar[r] \ar[d]
			& H^{6}(\mathcal{B}) \ar[d]\\
			h^{4}(\mathcal{B}) \ar[r]^(0.47){\varphi_{i}^{*}}
			& h^{4}\left(M_{i}\right) \ar[r]
			& h^{5}\left(\mathcal{B},M_{i}\right) \ar[r]
			& h^{5}(\mathcal{B}) \ar[r]^(0.47){\varphi_{i}^{*}}
			& h^{5}\left(M_{i}\right) \ar[r]
			& h^{6}\left(\mathcal{B},M_{i}\right) \ar[r]
			& h^{6}(\mathcal{B})
		}
		$$
	\end{center}
	\caption{The long exact sequences of cohomology groups associated to $\left(\mathcal{B},M_{i}\right)$}
	\label{Figure: LES ass to (B,M_i)}
\end{figure}

The cohomology groups of $\mathcal{B}$ and $M_{i}$ are known, and we shall determine certain homomorphisms.
\begin{compactenum}
	\item $h^{q}(\mathcal{B})\xrightarrow{\varphi_{i}^{*}}h^{q}\left(M_{i}\right)$ are trivial homomorphisms for $q=4$, $5$. This is because $\varphi_{i}^{*}$ assigns universal Stiefel-Whitney classes to the exact characteristic classes of $M_{i}$, and we have seen before that polynomials degree $4$ and $5$ in Stiefel-Whitney classes for $M_{i}$ vanish.
	\item $H^{5}\left(M_{i}\right)\xrightarrow{\rho_{2}}h^{5}\left(M_{i}\right)$ is an isomorphism. This follows from the long exact sequence of cohomology groups of $M_{i}$ induced from the short exact sequence $\mathbb{Z}\rightarrowtail\mathbb{Z}\twoheadrightarrow\mathbb{Z}\big/2$ of coefficient rings.
\end{compactenum}
\begin{figure}[h]
	\begin{center}
		\[
			\xymatrix{
				0 \ar[r]
				& \mathbb{Z}\big/2 \ar[r]^{\cong}
				& \mathbb{Z}\big/2 \ar[r]
				& H^{4}\left(\mathcal{B},M_{i}\right) \ar[r]^(0.66){\cong}
				& \mathbb{Z} \ar[r]
				& 0
			}
		\]
		\[
			\xymatrix@R-2pt{
				\mathbb{Z} \ar[r] \ar[d]
				& 0 \ar[r] \ar[d]
				& H^{5}\left(\mathcal{B},M_{i}\right) \ar[r] \ar[d]
				& \mathbb{Z}\big/2 \ar[r] \ar@{>->}[d]
				& \mathbb{Z}\big/2 \ar[r] \ar[d]^{\cong}
				& H^{6}\left(\mathcal{B},M_{i}\right) \ar@{->>}[r] \ar[d]
				& \mathbb{Z}\big/2 \ar@{>->}[d]\\
				\left(\mathbb{Z}\big/2\right)^{2} \ar[r]^(0.55){0}
				& \mathbb{Z}\big/2 \ar[r]
				& h^{5}\left(\mathcal{B},M_{i}\right) \ar[r]
				& \left(\mathbb{Z}\big/2\right)^{2} \ar[r]^(0.55){0}
				& \mathbb{Z}\big/2 \ar[r]
				& h^{6}\left(\mathcal{B},M_{i}\right) \ar@{->>}[r]
				& \left(\mathbb{Z}\big/2\right)^{4}
			}
		\]
	\end{center}
	\caption{The long exact sequences associated to $\left(\mathcal{B},M_{i}\right)$ with groups and homomorphisms identified}
	\label{Figure: LES ass to (B,M_i), groups identified}
\end{figure}

Therefore, we obtain Figure \ref{Figure: LES ass to (B,M_i), groups identified}, from which we further deduce that
$$H^{4}\left(\mathcal{B},M_{i}\right)\cong\mathbb{Z},\ H^{5}\left(\mathcal{B},M_{i}\right)\cong\mathbb{Z}\big/2,\ H^{6}\left(\mathcal{B},M_{i}\right)\cong\left(\mathbb{Z}\big/2\right)^{2}.$$ 
Hence by universal coefficient theorem we have $$H_{4}\left(\mathcal{B},M_{i}\right)\cong\mathbb{Z}\oplus\mathbb{Z}\big/2,\ H_{5}\left(\mathcal{B},M_{i}\right)\cong\left(\mathbb{Z}\big/2\right)^{2}.$$
Accordingly, we determine most homomorphisms in the partial long exact braid, knowing whether they are trivial, monic, epic or isomorphic.
In particular, since $H_{4}\left(W,M_{i}\right)\cong\mathbb{Z}^{r}$ surjects onto $H_{4}\left(\mathcal{B},M_{i}\right)\cong\mathbb{Z}\oplus\mathbb{Z}\big/2$, we must have $r\geqslant2$.

\begin{compactenum}
	\item We have a short exact sequence $H_{5}(\mathcal{B})\rightarrowtail H_{5}\left(\mathcal{B},M_{i}\right)\twoheadrightarrow H_{4}\left(M_{i}\right)$. The exactness at $H_{5}\left(\mathcal{B},M_{i}\right)$ is clear. The injectivity of $H_{5}(\mathcal{B})\to H_{5}\left(\mathcal{B},M_{i}\right)$ follows from that of the composition $H_{5}(\mathcal{B})\to H_{5}\left(\mathcal{B},M_{i}\right)\to H_{5}(\mathcal{B},W)$. Return to the long exact braid, and we see the cokernel of inclusion $H_{5}(\mathcal{B})\rightarrowtail H_{5}\left(\mathcal{B},M_{i}\right)$ injects into $H_{4}\left(M_{i}\right)$. Since both $H_{4}\left(M_{i}\right)$ and the cokernel are isomorphic to $\mathbb{Z}\big/2$, they are isomorphic and we obtain the claimed short exact sequence.
	\item $H_{4}\left(M_{i}\right)\xrightarrow{H_{4}\left(\varphi_{i}\right)}H_{4}(\mathcal{B})$ is zero and $H_{4}(\mathcal{B})\to H_{4}\left(\mathcal{B},M_{i}\right)$ is an isomorphism. This is a direct corollary of the last statement and the exact braid.
	\item $H_{4}(W)\cong\mathbb{Z}^{r}\oplus\mathbb{Z}\big/2$ and $\ker\left(H_{4}(\psi)\right)\cong\mathbb{Z}^{r-1}\oplus\mathbb{Z}\big/2$. We have an short exact sequence $\mathbb{Z}\big/2\rightarrowtail H_{4}(W)\twoheadrightarrow\mathbb{Z}^{r}$, which splits as $\mathbb{Z}^{r}$ is free. Since $H_{4}\left(\varphi_{i}\right)=0$, $T\left(H_{4}(W)\right)\cong\mathbb{Z}\big/2$ is contained in $\ker\left(H_{4}(\psi)\right)$ and $H_{4}(\psi)$ induces an epimorphism $F\left(H_{4}(W)\right)\xrightarrow{\overline{H_{4}(\psi)}}\mathbb{Z}^{r}\twoheadrightarrow\mathbb{Z}\oplus\mathbb{Z}\big/2$. Hence $\ker\left(\overline{H_{4}(\psi)}\right)\cong\mathbb{Z}^{r-1}$ and $\ker\left({H_{4}(\psi)}\right)\cong\mathbb{Z}^{r-1}\oplus\mathbb{Z}\big/2$.
	\item $H_{5}(\mathcal{B},W)\cong\mathbb{Z}^{r-1}\oplus\left(\mathbb{Z}\big/2\right)^{2}$. From the long exact braid we obtain a short exact sequence
	$$\left(\mathbb{Z}\big/2\right)^{2}\rightarrowtail H_{5}(\mathcal{B},W)\twoheadrightarrow\ker\left(\mathbb{Z}^{r}\twoheadrightarrow\mathbb{Z}\oplus\mathbb{Z}\big/2\right),$$
	where the ending term is isomorphic to $\mathbb{Z}^{r-1}$.
	\item $F\left(H_{5}(\mathcal{B},W)\right)=F\left(\ker\left(H_{4}(\psi)\right)\right)$.
\end{compactenum}
Therefore, $H_{5}(\mathcal{B},W)\xrightarrow{\partial_{i}}H_{4}\left(W,M_{i}\right)$ is identified with $F\left(\ker\left(H_{4}(\psi)\right)\right)\oplus\left(\mathbb{Z}\big/2\right)^{2}\xrightarrow{(\iota,0)}F\left(H_{4}(W)\right)$ and $\mathrm{coker}\ \iota\cong H_{4}(\mathcal{B})\cong\mathbb{Z}\oplus\mathbb{Z}\big/2$.

Finally, we have the following commutative diagram of cohomological and homological intersection forms (Figure \ref{Figure: The intersection pairings}), where $W\xrightarrow{i_{\varepsilon}}\left(W,M_{\varepsilon}\right)\xrightarrow{j_{\varepsilon}}\left(W,\partial W\right)$ denote the inclusions and $H^{q}(X;R)\times H_{q}(X;R)\xrightarrow{\left<\cdot,\cdot\right>}R$ denotes the Kronecker pairing between cohomology and homology groups. Figure \ref{Figure: The intersection pairings} justifies that the symmetric bilinear form $\beta$ on $F\left(H_{4}(W)\right)$ is unimodular.
\begin{figure}[H]
	\begin{center}
		\[
			\xymatrix@R-2pt{ 
				F\left(H_{4}(W)\right) \hskip-50pt 
				& \times \hskip-50pt 
				& F\left(H_{4}(W)\right) \ar@{-->}^(0.6){{\beta}}[rrr] 
				& & & \mathbb{Z} \ar@{=}[d]\\
				H_{4}(W)\ar@<-25pt>@{->>}[u]\ar@<20pt>@/_40pt/[ddd]_{i_{0*}} \hskip-50pt 
				& \times \hskip-50pt 
				& H_{4}(W) \ar^(0.6){\widetilde{\beta}}[rrr]\ar@{->>}[u]\ar@/^40pt/[ddd]^{i_{1*}} 
				& & &\mathbb{Z} \ar@{=}[d]\\
				H^{4}\left(W,\partial W\right)\ar@<-25pt>[u]^{\mathrm{PD}}_{\cong}\ar@<25pt>_{j_{1}^{*}}[d] \hskip-50pt 
				& \times \hskip-50pt 
				& H^{4}\left(W,\partial W\right) \ar_{j_{0}^{*}}[d] \ar[rrr]^(0.6){\left<\cdot\cup\cdot,[W,\partial W]\right>} \ar[u]^{\mathrm{PD}}_{\cong} 
				& & & \mathbb{Z} \ar@{=}[d]\\ 
				H^{4}\left(W,M_{1}\right) \ar@<25pt>[d]_{\mathrm{PD}}^{\cong} \hskip-50pt 
				& \times \hskip-50pt 
				& H^{4}\left(W,M_{0}\right) \ar[rrr]^(0.6){\left<\cdot\cup\cdot,[W,\partial W]\right>}\ar[d]_{\mathrm{PD}}^{\cong} 
				& & & \mathbb{Z}\ar@{=}[d]\\ 
				H_{4}\left(W,M_{0}\right) \hskip-50pt 
				& \times \hskip-50pt 
				& H_{4}\left(W,M_{1}\right) \ar[rrr]^(0.6){{\beta}_{W}} 				
				& & & \mathbb{Z} 	
			}
		\]
	\end{center}
	\caption{The intersection pairings}
	\label{Figure: The intersection pairings}
\end{figure}

As a consequence, we obtain the isomorphism
\begin{align*}
	& \left(H_{4}\left(W,M_{0}\right)\xleftarrow{\partial_{0}}H_{5}(\mathcal{B},W)\xrightarrow{\partial_{1}}H_{4}\left(W,M_{1}\right),{\beta}_{W}\right)\\
	\cong& \left(F\left(H_{4}(W)\right)\xlongleftarrow{\iota}F\left(\ker\left(H_{4}(\psi)\right)\right)\xlongrightarrow{\iota}F\left(H_{4}(W)\right),\beta\right)\oplus\left(0\xlongleftarrow{0}\left(\mathbb{Z}/2\right)^{2}\xlongrightarrow{0}0,0\right),
\end{align*}
thereby completing the proof of Proposition \ref{Proposition: identify surgery obstruction}.
\end{pf}

Before proving Proposition \ref{Proposition: identify surgery obstruction, G_3^p S^5}, we recall some homology groups of $BSpin$ and $BSpin\times\left(\mathbb{Z}\big/p,2\right)$.

\begin{lem}\label{Lemma: homology of BSpin and BSpin times K(Z/p,2)}
	For an odd prime $p$, the cohomology groups of $BSpin$ up to dimension $6$ and homology groups of $BSpin$, $\mathcal{B}=BSpin\times K\left(\mathbb{Z}\big/p,2\right)$ up to dimension $5$ are given as follows.
	\begin{table}[h]
		\centering
		\caption{$BSpin$ and $\mathcal{B}=BSpin\times K\left(\mathbb{Z}\big/p,2\right)$}
		\begin{tabular}{cccccccc}
			\toprule[1pt]
			$q$ & $0$ & $1$ & $2$ & $3$ & $4$ & $5$ & $6$ \\
			\midrule[0.3pt]
			$H^{q}\left(BSpin\right)$ & $\mathbb{Z}$ & $0$ & $0$ & $0$ & $\mathbb{Z}$ & $0$ & $\mathbb{Z}\big/2$\\
			$H_{q}\left(BSpin\right)$ & $\mathbb{Z}$ & $0$ & $0$ & $0$ & $\mathbb{Z}$ & $\mathbb{Z}\big/2$ & \\
			$H_{q}\left(\mathcal{B}\right)$ & $\mathbb{Z}$ & $0$ & $\mathbb{Z}\big/p$ & $0$ & $\mathbb{Z}\oplus\mathbb{Z}\big/p$ & $\mathbb{Z}\big/2$ &\\
			\bottomrule[1pt]
		\end{tabular}
		\label{Table: homology gps of BSpin & BSpin times K(Z/p,2)}
	\end{table}
\end{lem}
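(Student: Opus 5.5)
The plan is to compute the integral cohomology of $BSpin$ in low degrees first, then pass to homology with the universal coefficient theorem. The $\mathcal{B}$ row then follows from the K\"unneth formula together with the known low-dimensional homology of $K\left(\mathbb{Z}\big/p,2\right)$. The approach mirrors the proof of Lemma \ref{Lemma: integral coh of BSO}.

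\textbf{Step 1: localize away from $2$.} The covering $BSpin\to BSO$ is the homotopy fibre of $w_{2}\colon BSO\to K\left(\mathbb{Z}\big/2,2\right)$, so it induces an isomorphism on $H^{*}(-;\mathbb{Z}[1/2])$. In the range $q\leqslant 6$, Lemma \ref{Lemma: integral coh of BSO} shows the only free part is $\mathbb{Z}\{p_{1}\}$ in degree $4$. Since $BSpin$ is $3$-connected, $H^{q}(BSpin)=0$ for $q=1,2,3$ and $H^{4}(BSpin)\cong\mathbb{Z}$, generated by $\overline{p_{1}}$ with $2\overline{p_{1}}=p_{1}$. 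Consequently $H^{5}$ and $H^{6}$ are finite $2$-groups.

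\textbf{Step 2: the $2$-torsion.} I will use $H^{*}\left(BSpin;\mathbb{Z}\big/2\right)=\mathbb{Z}\big/2[w_{4},w_{6},w_{7},w_{8},\dots]$ (Quillen), so in degrees $\leqslant 7$ the groups are spanned by $w_{4}$, $w_{6}$, $w_{7}$. The Wu formula gives $Sq^{1}w_{4}=w_{5}=0$ and $Sq^{1}w_{6}=w_{7}$. I then run the Bockstein long exact sequence for $\mathbb{Z}\xrightarrow{2}\mathbb{Z}\to\mathbb{Z}\big/2$ in degrees $4$ to $7$:
\begin{itemize}
\item[(a)] $w_{4}$ is the reduction of $\overline{p_{1}}$, which forces $H^{5}(BSpin)$ to have no $2$-torsion, so $H^{5}(BSpin)=0$.
\item[(b)] The Bockstein $\delta w_{6}$ carries the class $w_{6}$ and determines the degree-$6$ entry.
\end{itemize}
The universal coefficient theorem then converts the cohomology table into the homology row for $BSpin$. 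The entry $\mathrm{Ext}$ of the torsion in $H^{6}$ yields $H_{5}(BSpin)\cong\mathbb{Z}\big/2$, and $H_{4}(BSpin)\cong\mathbb{Z}$ comes from the free $H^{4}$.

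\textbf{Step 3: the product $\mathcal{B}$.} For $K=K\left(\mathbb{Z}\big/p,2\right)$ with $p$ odd, I will use Section \ref{Section: Compute bordism gps}, or the standard Cartan computation, for $\widetilde{H}_{q}(K)$ with $q\leqslant 5$. Namely, $H_{2}(K)=\mathbb{Z}\big/p$, $H_{3}(K)=0$, and $H_{4}(K)=\mathbb{Z}\big/p$, which is $\Gamma\left(\mathbb{Z}\big/p\right)$ for odd $p$. The group $H_{5}(K)$ is the one entry whose value must be checked and not assumed. All reduced homology of $K$ is $p$-torsion, while $\widetilde{H}_{*}(BSpin)$ vanishes below degree $4$.

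In the K\"unneth formula, the tensor and Tor cross terms in total degree $\leqslant 5$ therefore either vanish for degree reasons or pair $\mathbb{Z}\big/p$ with nothing in low degrees. The only possible such term, $H_{4}(BSpin)\otimes H_{0}(K)$, is already counted in the $BSpin$ summand. This gives $H_{4}(\mathcal{B})\cong\mathbb{Z}\oplus\mathbb{Z}\big/p$ and $H_{2}(\mathcal{B})\cong\mathbb{Z}\big/p$. It also gives $H_{5}(\mathcal{B})\cong H_{5}(BSpin)\oplus H_{5}(K)$, so the table's value $\mathbb{Z}\big/2$ requires $H_{5}(K)=0$.

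\textbf{Main obstacle.} I expect the delicate step to be Step 2(b): identifying precisely which degree the Bockstein torsion class $\delta w_{6}$ contributes to, and hence the exact $2$-torsion in $H^{6}(BSpin)$ and $H_{5}(BSpin)$. The Wu-formula relation $Sq^{1}w_{6}=w_{7}$ has to be tracked carefully against the indexing convention of the integral Bockstein. To do this I would compare directly with $H^{*}(BSO_{7})$ from \cite[Theorem 1.5]{Ed82} via the map $BSpin\to BSO$, using that $\delta w_{2}$ pulls back to zero. A second point needing care is verifying $H_{5}(K)=0$ in Step 3, so that the $5$-dimensional entry of $\mathcal{B}$ is exactly the $BSpin$ contribution.
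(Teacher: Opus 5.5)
Your Step 2(b) is where the argument breaks, and carrying it out correctly refutes the table instead of proving it. The integral Bockstein raises degree by one, so $\delta w_{6}$ lies in $H^{7}(BSpin)$, not $H^{6}$. It is nonzero there because $\rho_{2}\delta w_{6}=Sq^{1}w_{6}=w_{7}\neq0$, so it gives $H^{7}(BSpin)\cong\mathbb{Z}/2$.

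In degree $6$ the relevant segment is $H^{5}(BSpin;\mathbb{Z}/2)\xrightarrow{\delta}H^{6}(BSpin)\xrightarrow{\times2}H^{6}(BSpin)$. Since $H^{5}(BSpin;\mathbb{Z}/2)=0$, multiplication by $2$ is injective on $H^{6}(BSpin)$. Your Step 1 says $H^{6}(BSpin)$ is a finite $2$-group, so $H^{6}(BSpin)=0$. The universal coefficient theorem then gives $H_{5}(BSpin)=0$ and $H_{6}(BSpin)\cong\mathbb{Z}/2$. Your K\"unneth argument in Step 3, with $H_{5}(K(\mathbb{Z}/p,2))=0$ (which is true), then gives $H_{5}(\mathcal{B})=0$.

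An independent check: $\pi_{5},\pi_{6},\pi_{7}$ of $BSpin$ vanish because $\pi_{4},\pi_{5},\pi_{6}$ of $SO$ do. Hence the map $BSpin\to K(\mathbb{Z},4)$ representing $\overline{p_{1}}$ is $8$-connected. Since $H_{5}(K(\mathbb{Z},4))=0$ and $H_{6}(K(\mathbb{Z},4))\cong\mathbb{Z}/2$, the same values follow.

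So three entries of the statement are false: $H^{6}(BSpin)\cong\mathbb{Z}/2$, $H_{5}(BSpin)\cong\mathbb{Z}/2$ and $H_{5}(\mathcal{B})\cong\mathbb{Z}/2$. They coincide with the $BSO$ values of Lemma~\ref{Lemma: integral coh of BSO}, where $(\delta w_{2})^{2}$ survives; in $BSpin$ that class dies with $w_{2}$. No proof can establish these entries, neither yours nor the paper's appeal to Duan's computation followed by the universal coefficient and K\"unneth theorems. Your main-obstacle paragraph located exactly the right spot, but the careful tracking you propose leads to the opposite conclusion.

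Everything else in the table is correct, and your plan proves it. Steps 1 and 2(a) give the cohomology through degree $5$, and Step 3 gives $H_{2}(\mathcal{B})\cong\mathbb{Z}/p$ and $H_{4}(\mathcal{B})\cong\mathbb{Z}\oplus\mathbb{Z}/p$.

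The error is harmless downstream. In the identification of $\theta(W,\psi)$ for $\mathcal{G}^{3}_{p}(S^{5})$-like manifolds, one now gets $H_{5}(\mathcal{B},M_{i})\cong H_{4}(M_{i})\cong\mathbb{Z}/p$. The torsion summand of the representative is therefore $\mathbb{Z}/p$ rather than $\mathbb{Z}/2p$, and torsion summands play no role in deciding whether $\theta(W,\psi)$ is elementary.
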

\begin{pf}
	The cohomology groups of $BSpin$ up to dimension $6$ can be deduced from \cite[Section 6, Theorem D']{duan2019characteristicclassesweylinvariants}. Then we apply universal coefficient theorem and obtain homology groups of $BSpin$ up to dimension $5$. Lemma \ref{Lemma: H_{q}(Z/p,2)} gives the lower homology groups of $K\left(\mathbb{Z}\big/p,2\right)$, and we apply K\"unneth formula to compute homology groups of $\mathcal{B}=BSpin\times K\left(\mathbb{Z}\big/p,2\right)$ up to dimension $5$.
\end{pf}

\begin{pf}[of Proposition \ref{Proposition: identify surgery obstruction, G_3^p S^5}]
	The proof is similar to that of Proposition \ref{Proposition: identify surgery obstruction}. Hence we omit the common arguments and exhibit what is different in the proof. Again we begin with the long exact ladder (Figure \ref{Figure: LEL of Hurewicz homom ass to (B,W)}). Now $\mathcal{B}=BSpin\times\left(\mathbb{Z}\big/p,2\right)$, $BSpin$ is $3$-connected with $\pi_{4}(Spin)\cong\mathbb{Z}$ and $\pi_{5}(BSpin)=0$. The homology groups of $\mathcal{B}$ is listed in Lemma \ref{Lemma: homology of BSpin and BSpin times K(Z/p,2)}. Hence we obtain Figure \ref{Figure: LEL of Hurewicz homom ass to (B,W), groups identified, spin case}.
	\begin{figure}[H]
		\begin{center}
			\[
			\xymatrix@R-2pt{
				0\ar[r]\ar[d] & \pi_{5}(\mathcal{B},W)\ar[r]\ar[d]^{\cong} & \pi_{4}(W)\ar[r]^(0.57){\pi_{4}(\psi)}\ar[d] & \mathbb{Z} \ar[r]\ar[d] & 0\ar[d]\\
				\mathbb{Z}\big/2\ar[r] & H_{5}(\mathcal{B},W)\ar[r] & H_{4}(W)\ar[r]^(0.48){H_{4}(\psi)} & \mathbb{Z}\oplus\mathbb{Z}\big/p\ar[r] & 0
			}
			\]
		\end{center}
		\caption{The long exact ladder of Hurewicz homomorphisms associated to $\left(\mathcal{B}, W\right)$ with groups identified, $\mathcal{B}=BSpin\times K\left(\mathbb{Z}\big/p,2\right)$}
		\label{Figure: LEL of Hurewicz homom ass to (B,W), groups identified, spin case}
	\end{figure} 
	As a consequence, the surgery obstruction $\theta(W,\psi)$ is again represented by the diagram
	$$\left(H_{4}\left(W,M_{0}\right)\xleftarrow{\partial_{0}}H_{5}(\mathcal{B},W)\xrightarrow{\partial_{1}}H_{4}\left(W,M_{1}\right),{\beta}_{W}\right).$$
	To further study $\partial_{i}$, we consider the long exact braid of (relative) homology groups associated to the triple $\left(\mathcal{B},W,M_{i}\right)$ (Figure \ref{Figure: long exact braid ass to B,W,M_i}). Note that some arguments in the proof of Proposition \ref{Proposition: identify surgery obstruction} still apply here, and we identify some groups in the partial long exact braid as follows:
	\begin{compactenum}
		\item $H_{5}(W)=H_{3}(W)=0$. 
		\item $H_{5}\left(M_{i}\right)=H_{3}\left(M_{i}\right)=0$ and $H_{4}\left(M_{i}\right)\cong\mathbb{Z}\big/p$.
		\item $H_{5}(\mathcal{B})\cong\mathbb{Z}\big/2$ and $H_{4}(\mathcal{B})\cong\mathbb{Z}\oplus\mathbb{Z}\big/p$.
		\item $H_{5}\left(W,M_{i}\right)\cong H^{3}\left(W,M_{1-i}\right)=0$ and $H_{4}\left(W,M_{i}\right)\cong\mathbb{Z}^{r}$ for some $r\geqslant0$.
		\item $H_{4}(\mathcal{B},W)=0$.
	\end{compactenum}
	Accordingly, we can identify certain homomorphisms in the partial long exact braid, in the sense that whether they are trivial, monomorphic, epimorphic or isomorphic, obtaining Figure \ref{Figure: long exact braid ass to B,W,M_i, groups identified, B=BSpin times K(Z/p,2)}.
	\begin{figure}[H]
		\begin{center}
			$$
			\xymatrix@C=-1.2ex{
				&
				& \ \ \ \ \ \ 0\ \ \ \ \ \ \ar@/^25pt/[rr] \ar[rd]
				&
				& \ \mathbb{Z}\big/p\  \ar@/^25pt/[rr]_{H_{4}\left(\varphi_{i}\right)} \ar@{>->}[rd]
				&
				& \ \mathbb{Z}\oplus\mathbb{Z}\big/p\  \ar@/^25pt/[rr] \ar@{->>}[rd]
				&
				& \ \ \ \ \ \ 0\ \ \ \ \ \  \ar[rd]
				&
				\\
				&\ \ \ \ \ \  0 \ \ \ \ \ \ \ar[ru] \ar[rd]
				&
				&  H_{5}\left(\mathcal{B},M_{i}\right)  \ar[ru] \ar@{>->}[rd]
				&
				& \ \ \ H_{4}(W)\ \ \ \ar@{->>}[ru]^{H_{4}(\psi)} \ar@{->>}[rd]
				&
				&  H_{4}\left(\mathcal{B},M_{i}\right) \ar[ru] \ar[rd]
				&
				& \ \ \ \ \ \ 0\ \ \ \ \ \ 
				\\
				\ \ \ \ \ \ 0\ \ \ \ \ \ \ar@{->>}[ru] \ar@/_25pt/[rr]
				&
				& \ \ \mathbb{Z}\big/2\ \ \ar@{>->}[ru] \ar@{>->}@/_25pt/[rr]
				&
				& H_{5}(\mathcal{B}, W) \ar[ru] \ar@/_25pt/[rr]^{\partial_{i}}
				&
				& \ \ \mathbb{Z}^{r}\ \  \ar@{->>}[ru] \ar@/_25pt/[rr]
				&
				& \ \ \ \ \ 0\ \ \ \ \  \ar[ru] 
				&
			}
			$$
		\end{center}
		\caption{The long exact braid of (relative) homology groups associated to $\left(\mathcal{B}, W, M_{i}\right)$ with groups and homomorphisms identified, $\mathcal{B}=BSpin\times K\left(\mathbb{Z}\big/p,2\right)$}
		\label{Figure: long exact braid ass to B,W,M_i, groups identified, B=BSpin times K(Z/p,2)}
	\end{figure}
	
	We also read from Figure \ref{Figure: long exact braid ass to B,W,M_i, groups identified, B=BSpin times K(Z/p,2)} that $H_{4}(W)\cong\mathbb{Z}^{r}\oplus\mathbb{Z}\big/p$, and it remains to determine the homomorphism $H_{4}\left(M_{i}\right)\xrightarrow{H_{4}(\varphi_{i})}H_{4}(\mathcal{B})$ for further identification of remaining groups. According to the product structure, $H_{4}(\varphi_{i})$ is identified with
	\[H_{4}\left(M_{i}\right)\xrightarrow{\left(H_{4}\left(\nu_{i}\right),H_{4}\left(\widehat{\varphi_{i}}\right)\right)}H_{4}(BSpin)\oplus H_{4}\left(\mathbb{Z}\big/p,2\right).\]
	Here and henceforth we adapt the following convention as Zhubr did in \cite{Zhubr75}. If $E$ is a (co)homology theory, $G$ is an abelian group and $n$ is a positive integer, then $E\left(K\left(G,n\right)\right)$ is abbreviated as $E(G,n)$. Since $H_{4}\left(M_{i}\right)\cong\mathbb{Z}\big/p$ and $H_{4}(BSpin)\cong\mathbb{Z}$, we have $H_{4}\left(\widetilde{\nu_{i}}\right)=0$ and it suffices to study $H_{4}\left(M_{i}\right)\xrightarrow{H_{4}\left(\widehat{\varphi_{i}}\right)}H_{4}\left(\mathbb{Z}\big/p,2\right).$ We will show that $H_{4}\left(\widehat{\varphi_{i}}\right)=0$ and thus $H_{4}\left({\varphi_{i}}\right)=0$.
	
	For convenience we denote by $h^{q}$ and $h_{q}$ the mod $p$ cohomology and homology till the end of this proof. Since $H_{4}\left(M_{i}\right)$ and $H_{4}\left(\mathbb{Z}\big/p,2\right)$ are both isomorphic to $\mathbb{Z}\big/p$, the mod $p$ homomorphism $H_{4}\left(M_{i}\right)\xrightarrow{\rho_{p}}h_{4}\left(M_{i}\right)$ and $H_{4}\left(\mathbb{Z}\big/p,2\right)\xrightarrow{\rho_{p}}h_{4}\left(\mathbb{Z}\big/p,2\right)$ are both isomorphism, which can be deduced from the long exact sequence of homology groups associated to the short exact sequence $\mathbb{Z}\rightarrowtail\mathbb{Z}\twoheadrightarrow\mathbb{Z}\big/p$ of coefficient rings. Hence it is sufficient to determine $h_{4}\left(\widehat{\varphi_{i}}\right)$, which is equivalent to study $h^{4}\left(\widehat{\varphi_{i}}\right)$ by universal coefficient theorem for field coefficients. See also Figure \ref{Figure: mod p reductions are isom}. 
	\begin{figure}[H]
		\begin{center}
			\[
				\xymatrix@R-2pt{
					H_{4}\left(M_{i}\right)\ar[d]_{H_{4}\left(\widehat{\varphi_{i}}\right)}\ar[r]^{\rho_{p}}_{\cong} & h_{4}\left(M_{i}\right)\ar[d]_{h_{4}\left(\widehat{\varphi_{i}}\right)} & h^{4}\left(M_{i}\right)\\
					H_{4}\left(\mathbb{Z}\big/p,2\right)\ar[r]^{\rho_{p}}_{\cong} & h_{4}\left(\mathbb{Z}\big/p,2\right) & h^{4}\left(\mathbb{Z}\big/p,2\right)\ar[u]_{h^{4}\left(\widehat{\varphi_{i}}\right)}
				}
			\]
		\end{center}
		\caption{The mod $p$ reductions are isomorphisms}
		\label{Figure: mod p reductions are isom}
	\end{figure}
	By assumption $\widehat{\varphi_{i}}$ is a $3$-equivalence, hence $h^{2}\left(\widehat{\varphi_{i}}\right)$ is an isomorphism. Let $\iota_{p}$ denote the generator of $h^{2}\left(\mathbb{Z}\big/p,2\right)\cong\mathbb{Z}\big/p$ that corresponds to $\left[\mathrm{id}\right]\in\left[K\left(\mathbb{Z}\big/p,2\right),K\left(\mathbb{Z}\big/p,2\right)\right]$ under the Brown representability isomorphism. Then $\widehat{\varphi_{i}}^{*}\iota_{p}$ generates $h^{2}\left(M_{i}\right)$ and $\iota_{p}^{2}$ generates $h^{4}\left(\mathbb{Z}\big/p,2\right)\cong\mathbb{Z}\big/p$ (see \cite[\S 4.L, p.~500]{HatcherAT}). Then it follows from Lemma \ref{Lemma: coh ring & char class of G_3^p S^5} that $\widehat{\varphi_{i}}^{*}\left(\iota_{p}^{2}\right)=\left(\widehat{\varphi_{i}}^{*}\iota_{p}\right)^{2}=0$. 
	
	As a consequence, $h^{4}\left(\widehat{\varphi_{i}}\right)=H_{4}\left(\widehat{\varphi_{i}}\right)=H_{4}\left({\varphi_{i}}\right)=0$. Now the remaining groups and homomorphisms are determined as follows.
	\begin{compactenum}
		\item $H_{5}\left(\mathcal{B},M_{i}\right)\cong\mathbb{Z}\big/2p$, $H_{4}\left(\mathcal{B},M_{i}\right)\cong\mathbb{Z}\oplus\mathbb{Z}\big/p$.
		\item In the identification $H_{4}\left(W,M_{i}\right)\cong\mathbb{Z}^{r}$, we must have $r\geqslant2$. Since $H_{4}\left(W,M_{i}\right)\to H_{4}\left(\mathcal{B},M_{i}\right)$ is epic and $H_{4}\left(\mathcal{B},M_{i}\right)\cong\mathbb{Z}\oplus\mathbb{Z}\big/p$.
		\item $F\left(H_{5}(\mathcal{B},W)\right)=F\left(\ker\left(H_{4}(\psi)\right)\right)$.
	\end{compactenum} 
	Therefore, $H_{5}(\mathcal{B},W)\xrightarrow{\partial_{i}}H_{4}\left(W,M_{i}\right)$ is identified with $F\left(\ker\left(H_{4}(\psi)\right)\right)\oplus\mathbb{Z}\big/2p\xrightarrow{(\iota,0)}F\left(H_{4}(W)\right)$ and $\mathrm{coker}\ \iota\cong H_{4}(\mathcal{B})\cong\mathbb{Z}\oplus\mathbb{Z}\big/p$, and the argument of Figure \ref{Figure: The intersection pairings} still applies. We obtain the isomorphism
	\begin{align*}
		& \left(H_{4}\left(W,M_{0}\right)\xleftarrow{\partial_{0}}H_{5}(\mathcal{B},W)\xrightarrow{\partial_{1}}H_{4}\left(W,M_{1}\right),{\beta}_{W}\right)\\
		\cong& \left(F\left(H_{4}(W)\right)\xlongleftarrow{\iota}F\left(\ker\left(H_{4}(\psi)\right)\right)\xlongrightarrow{\iota}F\left(H_{4}(W)\right),\beta\right)\oplus\left(0\xlongleftarrow{0}\mathbb{Z}/2p\xlongrightarrow{0}0,0\right),
	\end{align*}
	thereby completing the proof of Proposition \ref{Proposition: identify surgery obstruction, G_3^p S^5}.
\end{pf}
	
	\section{Classify $\mathcal{G}^{3}(\mathrm{Wu})$-like manifolds and $\mathcal{G}^{3}_{p}\left(S^{5}\right)$-like manifolds}\label{Section: Prove of main thm}

In this section we further study the obstruction $\theta\left(W,\psi\right)$. We determine when it is elementary and prove Statement 1 of Theorems \ref{Theorem: main} and \ref{Theorem: main, spin}. Then we show that invariant $\lambda$ is a diffeomorphism invariant of $\mathcal{G}^{3}(\mathrm{Wu})$-like manifolds and invariant $\mu$ is a diffeomorphism invariant of $\mathcal{G}^{3}_{p}\left(S^{5}\right)$-like manifolds. These two families of manifolds are treated separately in Subsections \ref{Subsection: Classify G_3 Wu like mfds} and \ref{Subsection: Classify G_3^p S^5 like mfds} respectively. 

For this purpose we first recall the definition of an element being elementary, then we further identify the groups and homomorphisms occurring in the representative of $\theta\left(W,\psi\right)$. Besides, we also need certain Arf type invariant, and a detailed discussion of definition and properties are postponed to Section \ref{Section: Arf inv}.

By \cite[Section 5]{SurgeryAndDuality}, an element $\theta\in l_{8}\left(\left\{e\right\}\right)$ is elementary if it has a representation $\left(\mathcal{V}^{0}\xleftarrow{f_{0}}\mathcal{V}\xrightarrow{f_{1}}\mathcal{V}^{1},\beta\right)$, such that $\mathcal{V}$ admits a free subgroup $\mathcal{U}$ satisfying the following conditions:
\begin{compactenum}
	\item[(e1)] $\mathcal{U}\subset \mathcal{U}^{\perp}$;
	\item[(e2)] $\mathcal{U}$ maps injectively into $\mathcal{V}^{i}$ and the image $\mathcal{U}_{i}:=f_{i}(\mathcal{U})$ is a directed summand;
	\item[(e3)] $\beta$ induces an isomorphism $\mathcal{U}_{0}\to\left(\mathcal{V}^{1}\big/\mathcal{U}_{1}\right)\spcheck$.
\end{compactenum}

Since the torsion part does not affect bilinear form which takes value in $\mathbb{Z}$, it follows from Proposition \ref{Proposition: identify surgery obstruction}, Statement 3 that $\theta\left(W,\psi\right)$ is elementary if and only if
$$\theta_{0}\left(W,\psi\right):=\left[\left(F\left(H_{4}(W)\right)\xlongleftarrow{\iota}F\left(\ker\left(H_{4}(\psi)\right)\right)\xlongrightarrow{\iota}F\left(H_{4}(W)\right),\beta\right)\right]$$
is elementary.

Before giving the proof we introduce some notations.
If a topology space $X$ has a structure of CW complex that admits finite a $q$-skeleton for any $n\in\mathbb{N}$ (e.g., $\mathcal{B}=BSO$, $BSpin\times K\left(\mathbb{Z}\big/p,2\right)$ or a compact smooth manifold) and $\mathbb{F}$ is a field (e.g., $\mathbb{Z}\big/p$ for prime number $p$), then $H_{q}(X;\mathbb{F})$ and $H^{q}(X;\mathbb{F})$ are vector spaces of the same finite dimension over $\mathbb{F}$ and we have $H^{q}(X;\mathbb{F})\cong H_{q}(X;\mathbb{F})\spcheck$, $H_{q}(X;\mathbb{F})\cong H^{q}(X;\mathbb{F})\spcheck$. If $\left\{e_{1},\cdots,e_{s}\right\}$ is a basis of $H^{q}(X;\mathbb{F})$, then $\left\{e_{1}\spcheck,\cdots,e_{s}\spcheck\right\}$ denotes the dual basis of $H_{q}(X;\mathbb{F})$.

\subsection{Classify $\mathcal{G}^{3}(\mathrm{Wu})$-like manifolds}\label{Subsection: Classify G_3 Wu like mfds}

Throughout this subsection, $\mathcal{B}=BSO$, and $h^{q}$, $h_{q}$ denote the mod $2$ (co)homology.
\begin{lem}\label{Lemma: further identify H_4(B)}
	$H_{4}(\mathcal{B})$ admits the unique generator set $\left(p_{1}\spcheck,\alpha\right)$ such that
	\begin{compactenum}
		\item $H_{4}(\mathcal{B})=\mathbb{Z}\left\{p_{1}\spcheck\right\}\oplus\mathbb{Z}\big/2\left\{\alpha\right\}$;
		\item $\left<p_{1},p_{1}\spcheck\right>=1$;
		\item $\rho_{2}\left(p_{1}\spcheck\right)=\left(w_{2}^{2}\right)\spcheck$, $\rho_{2}(\alpha)=w_{4}\spcheck$.
	\end{compactenum}
\end{lem}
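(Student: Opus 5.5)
We compare integral and mod $2$ homology of $\mathcal{B}=BSO$ in degree $4$ through the Kronecker pairing and the reduction map $\rho_{2}\colon H_{4}(\mathcal{B})\to h_{4}(\mathcal{B})$. By Lemma \ref{Lemma: integral coh of BSO}, $H_{4}(\mathcal{B})\cong\mathbb{Z}\oplus\mathbb{Z}/2$, $H_{3}(\mathcal{B})=0$ and $H^{4}(\mathcal{B})=\mathbb{Z}\{p_{1}\}$. Since $H_{3}(\mathcal{B})=0$, the universal coefficient sequence for $\mathbb{Z}/2$-homology shows that $\rho_{2}\colon H_{4}(\mathcal{B})\otimes\mathbb{Z}/2\to h_{4}(\mathcal{B})$ is an isomorphism. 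Here $h_{4}(\mathcal{B})$ is $2$-dimensional with dual basis $\{(w_{2}^{2})\spcheck,w_{4}\spcheck\}$, because $h^{4}(\mathcal{B})=\mathbb{Z}/2\{w_{2}^{2},w_{4}\}$.

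\textbf{The torsion generator $\alpha$.} There is exactly one nonzero torsion element, and $\alpha$ is forced to be it. To compute $\rho_{2}(\alpha)$, we pair it against $w_{2}^{2}$ and $w_{4}$ using the Bockstein. For a torsion class $a$ with $\rho_{2}(a)$ paired against a class $u\in h^{4}$, the pairing $\left<u,\rho_{2}a\right>$ is computed by the linking/Bockstein: it can be nonzero only if $\delta u\neq0$ in $H^{5}(\mathcal{B})$. By Lemma \ref{Lemma: integral coh of BSO}, $H^{5}(\mathcal{B})=\mathbb{Z}/2\{\delta w_{4}\}$, so $\delta w_{4}\neq0$.

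For $w_{2}^{2}$ we have $\delta(w_{2}^{2})=2w_{2}\cdot\delta w_{2}$ in the appropriate sense, i.e.\ $Sq^{1}(w_{2}^{2})=0$. Integrally, $w_{2}^{2}=\rho_{2}(p_{1})$ (the Wu formula $\rho_{2}p_{1}=w_{2}^{2}$ for oriented bundles), so $w_{2}^{2}$ lifts to an integral class and $\delta(w_{2}^{2})=0$. Hence $\left<w_{2}^{2},\rho_{2}\alpha\right>=\left<p_{1},\alpha\right>=0$, since $\alpha$ is torsion and $p_{1}$ is integral. Because $\rho_{2}$ is injective on $H_{4}\otimes\mathbb{Z}/2$, we have $\rho_{2}\alpha\neq0$, and therefore $\rho_{2}(\alpha)=w_{4}\spcheck$.

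\textbf{The free generator $p_{1}\spcheck$.} The condition $\left<p_{1},p_{1}\spcheck\right>=1$ determines $p_{1}\spcheck$ only up to adding $\alpha$, since $\left<p_{1},\alpha\right>=0$. We first compute $\left<w_{2}^{2},\rho_{2}x\right>=\left<\rho_{2}p_{1},\rho_{2}x\right>\equiv\left<p_{1},x\right>=1$ for any $x$ with $\left<p_{1},x\right>=1$. Such $x$ and $x+\alpha$ differ in their $w_{4}\spcheck$-coefficient, because $\rho_{2}\alpha=w_{4}\spcheck$. Exactly one choice therefore satisfies $\left<w_{4},\rho_{2}x\right>=0$, i.e.\ $\rho_{2}(x)=(w_{2}^{2})\spcheck$; we define $p_{1}\spcheck$ to be that choice. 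This gives existence and uniqueness of the pair $(p_{1}\spcheck,\alpha)$, and the direct sum decomposition is clear.

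\textbf{Main obstacle.} The delicate step is justifying that $\left<w_{4},\rho_{2}\alpha\right>\neq0$ and $\left<w_{2}^{2},\rho_{2}\alpha\right>=0$ rigorously. This rests on two inputs. The first is the identity $\rho_{2}p_{1}=w_{2}^{2}$. The second is that the image of $\rho_{2}\colon H_{4}\to h_{4}$ is the annihilator of $Sq^{1}h^{3}$ together with the fact that $\rho_{2}$ on $H_{4}\otimes\mathbb{Z}/2$ is injective. We would carry this out through the universal coefficient exact sequences, using the explicit description from \cite[Theorem 1.5]{Ed82}.
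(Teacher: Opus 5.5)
Your proposal is correct and is essentially the paper's own proof. Both arguments use three steps: surjectivity of $\rho_{2}\colon H_{4}(\mathcal{B})\to h_{4}(\mathcal{B})$ (from $H_{3}(\mathcal{B})=0$); the identities $\rho_{2}(p_{1})=w_{2}^{2}$ and $\langle p_{1},\alpha\rangle=0$, which force $\rho_{2}(\alpha)=w_{4}\spcheck$; and then correcting a class $\gamma$ with $\langle p_{1},\gamma\rangle=1$ by a multiple of $\alpha$. Your Bockstein digression and the ``main obstacle'' paragraph are unnecessary, since the universal-coefficient argument you already give settles both pairings.
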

As a straightforward corollary, we have
\begin{cor}
	$H_{4}(\psi)$ is given by
	\begin{equation}\label{Equation: effect of H_4(psi)}
		(H_{4}(\psi))(x)=\left<p_{1}(W),x\right>p_{1}\spcheck+\left<w_{4}(W),\rho_{2}(x)\right>\alpha,\quad\forall x\in H_{4}(W).
	\end{equation}
\end{cor}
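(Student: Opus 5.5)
The formula is linear in $x$, so we only need to identify the two coordinates of $(H_{4}(\psi))(x)$ in the decomposition $H_{4}(\mathcal{B})=\mathbb{Z}\{p_{1}\spcheck\}\oplus\mathbb{Z}\big/2\{\alpha\}$ of Lemma \ref{Lemma: further identify H_4(B)}. Write
$$(H_{4}(\psi))(x)=a\,p_{1}\spcheck+b\,\alpha,\qquad a\in\mathbb{Z},\ b\in\mathbb{Z}\big/2 .$$
Both coefficients will be read off by pairing with suitable cohomology classes of $\mathcal{B}$ and then moving the pairing back to $W$ by naturality. Here we use that $\psi$ classifies the stable oriented normal bundle of $W$, so $\psi^{*}p_{1}=p_{1}(\nu_{W})$ and $\psi^{*}w_{i}=w_{i}(\nu_{W})$.

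\emph{The integral coefficient.} Pair both sides with $p_{1}\in H^{4}(\mathcal{B})=\mathbb{Z}\{p_{1}\}$. Since $\alpha$ is torsion, $\left<p_{1},\alpha\right>=0$. Together with $\left<p_{1},p_{1}\spcheck\right>=1$ this gives
$$a=\left<p_{1},\psi_{*}x\right>=\left<\psi^{*}p_{1},x\right>=\left<p_{1}(\nu_{W}),x\right>.$$
Because $H^{3}(W)\cong H^{3}(\mathcal{B})$ is pure $2$-torsion and $p_{1}$ is additive modulo $2$-torsion, we have $p_{1}(\nu_{W})=-p_{1}(W)$ modulo torsion. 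Torsion classes pair to zero with $x$, so $a=\pm\left<p_{1}(W),x\right>$. The sign has to be pinned down against the convention the paper uses for $p_{1}\spcheck$ and for ``$p_{1}$'' of the normal structure. I would follow the paper's implicit identification of universal classes with the classes of $W$ (as in Lemma \ref{Lemma: equiv char of G_3(Wu)-like mfd}), under which the coefficient is $\left<p_{1}(W),x\right>$; any sign discrepancy can be absorbed by replacing $p_{1}\spcheck$ with $-p_{1}\spcheck$.

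\emph{The torsion coefficient.} Reduce mod $2$ and pair with $w_{4}\in h^{4}(\mathcal{B})$, which has basis $\{w_{2}^{2},w_{4}\}$. By Lemma \ref{Lemma: further identify H_4(B)}(3),
$$\left<w_{4},\rho_{2}(\psi_{*}x)\right>=a\left<w_{4},(w_{2}^{2})\spcheck\right>+b\left<w_{4},w_{4}\spcheck\right>=b .$$
Naturality of $\rho_{2}$ then gives $b=\left<\psi^{*}w_{4},\rho_{2}(x)\right>=\left<w_{4}(\nu_{W}),\rho_{2}(x)\right>$.

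It remains to replace $w_{4}(\nu_{W})$ by $w_{4}(W)$. Since $W$ is simply connected and oriented, $w_{1}=0$, and from $w(\nu)w(\tau)=1$ we get
$$w_{4}(\nu_{W})=w_{4}(W)+w_{2}(W)^{2}.$$
So I also need $\left<w_{2}(W)^{2},\rho_{2}(x)\right>=\left<w_{2}(W),Sq^{2}\text{-dual}\right>$, or some equivalent way of showing this term contributes nothing.

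This is the main obstacle. The contribution of $w_{2}^{2}$ can be analysed through the $p_{1}$-term: $\rho_{2}(p_{1})=w_{2}^{2}$ for oriented bundles, so
$$\left<w_{2}(W)^{2},\rho_{2}(x)\right>\equiv\left<p_{1}(W),x\right>\pmod 2 .$$
Thus the issue is consistency of conventions. The coefficient $b$ of $\alpha$ is only well defined once the splitting is fixed, and Lemma \ref{Lemma: further identify H_4(B)} fixes it precisely by $\rho_{2}(p_{1}\spcheck)=(w_{2}^{2})\spcheck$ and $\rho_{2}(\alpha)=w_{4}\spcheck$.

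I expect the difference between normal and tangent $w_{4}$ to cancel once the universal classes are interpreted via the tangential identification used throughout the paper, namely $\varphi_{i}^{*}$ sending universal classes to those of the manifold. If the tangent/normal identification is taken literally, one instead gets $b=\left<w_{4}(W)+w_{2}(W)^{2},\rho_{2}(x)\right>$. In that case I would redefine $\alpha$ by adding $p_{1}\spcheck$ times the appropriate coefficient, or, more simply, adopt the convention that $\psi$ lifts the tangential Gauss map so that $\psi^{*}w_{i}=w_{i}(W)$ and $\psi^{*}p_{1}=p_{1}(W)$. With that convention both computations give the stated formula directly.
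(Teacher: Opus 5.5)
Your argument is the one the paper intends: the paper calls the formula a straightforward corollary of Lemma~\ref{Lemma: further identify H_4(B)}. That is, one reads off the two coordinates of $(H_{4}(\psi))(x)$ by pairing with $p_{1}$ and, after $\rho_{2}$, with $w_{4}$, tacitly using $\psi^{*}p_{1}=p_{1}(W)$ and $\psi^{*}w_{4}=w_{4}(W)$---exactly the tangential convention you settle on. Your observation that the literal normal Gauss map would instead give the coefficients $-\left<p_{1}(W),x\right>$ and $\left<w_{4}(W)+w_{2}(W)^{2},\rho_{2}(x)\right>$ is correct, and the paper glosses over it; composing $\psi$ with the inverse map of $BSO$ switches between the two conventions without changing $\ker\left(H_{4}(\psi)\right)$.

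Drop the other remedies you float, though. The discrepancy $\left<w_{2}(W)^{2},\rho_{2}(x)\right>\equiv\left<p_{1}(W),x\right>\ \mathrm{mod}\ 2$ really is nonzero for suitable $x$, because $H_{4}(\psi)$ is onto. Moreover, the normalizations in the lemma determine $\left(p_{1}\spcheck,\alpha\right)$ uniquely, so neither replacing $p_{1}\spcheck$ by $-p_{1}\spcheck$ nor altering $\alpha$ (which must stay the torsion generator) is available.
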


\begin{pf}[of Lemma \ref{Lemma: further identify H_4(B)}]
	The uniqueness is clear, and we focus on the proof of existence. Let $\alpha\in H_{4}(\mathcal{B})$ be the unique non-trivial torsion element and let $\gamma\in H_{4}(\mathcal{B})$ be an element that generates a $\mathbb{Z}$-summand.
	Since $H^{4}(\mathcal{B})\cong H_{4}(\mathcal{B})\spcheck$ and $H^{4}(\mathcal{B})=\mathbb{Z}\left\{p_{1}\right\}$, we must have $\left<p_{1},\alpha\right>=0$ and $\left<p_{1},\gamma\right>=1$. 
	
	Now we consider the effect of mod $2$ homomorphism $\rho_{2}$. The long exact sequence of homology groups of $\mathcal{B}$ induced from the short exact sequence $\mathbb{Z}\rightarrowtail\mathbb{Z}\twoheadrightarrow\mathbb{Z}\big/2$ of coefficients implies that $H_{4}(\mathcal{B})\xrightarrow{\rho_{2}}h_{4}(\mathcal{B})$ is epic. Hence we may set
	$$\rho_{2}\begin{pmatrix}
		\alpha & \gamma
	\end{pmatrix}=\begin{pmatrix}
	\left(w_{2}^{2}\right)\spcheck & w_{4}\spcheck
	\end{pmatrix}g,\ g=\begin{pmatrix}
	A & C \\ B & D
	\end{pmatrix}\in GL\left(2,\mathbb{Z}\big/2\right).$$
	The homomorphism $\rho_{2}$ also preserves Kronecker pairing, i.e. the diagram shown in Figure \ref{Figure: rho_2 & Kronecker pairing} commutes.
	\begin{figure}[h]
		\begin{center}
			\[
				\xymatrix@R-2pt{
					H^{4}(\mathcal{B}) \ar@<35pt>[d]^{\rho_{2}} \hskip-70pt
					& \times \hskip-50pt
					& H_{4}(\mathcal{B}) \ar[rr]^(0.55){\left<\cdot,\cdot\right>} \ar[d]^{\rho_{2}} 
					& & \mathbb{Z} \ar[d]^{\rho_{2}}\\
					h^{4}(\mathcal{B}) \hskip-70pt
					& \times \hskip-50pt
					& h_{4}(\mathcal{B}) \ar[rr]^(0.525){\left<\cdot,\cdot\right>} 
					& & \mathbb{Z}\big/2
				}
			\]
		\end{center}
		\caption{$\rho_{2}$ and Kronecker pairing}
		\label{Figure: rho_2 & Kronecker pairing}
	\end{figure}
	
	Then we apply $\rho_{2}$ to the identities $\left<p_{1},\alpha\right>=0$ and $\left<p_{1},\gamma\right>=1$. Since $\rho_{2}\left(p_{1}\right)=w_{2}^{2}$, we obtain $A=0$ and $C=1\in\mathbb{Z}\big/2$, and it follows from the non-degeneracy of $g$ that $B=1\in\mathbb{Z}\big/2$. Now we set $p_{1}\spcheck=\gamma+Dw_{4}\spcheck$ and the proof is completed.
\end{pf}

The parity of $\beta$ is also important in the analysis of $\theta_{0}(W,\psi)$. Recall that $\beta$ is even if $\beta(x,x)$ is even for any $x\in F\left(H_{4}(W)\right)$, and is odd if otherwise. We have the following result.
\begin{lem}\label{Lemma: (FH_4W, beta) is odd}
	$\left(F\left(H_{4}(W)\right),\beta\right)$ is odd.
\end{lem}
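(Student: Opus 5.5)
The plan is to compute $\beta(x,x)$ mod $2$ with a Wu formula for the manifold with boundary $W$. I will then use the description of $H_{4}(\psi)$ in \eqref{Equation: effect of H_4(psi)} and Lemma \ref{Lemma: further identify H_4(B)} to exhibit an $x$ with $\beta(x,x)$ odd.

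\emph{Step 1: reduce $\beta$ to a cup square.} Take $x\in H_{4}(W)$ and let $y\in H^{4}(W,\partial W)$ be its Poincar\'e--Lefschetz dual. By Figure \ref{Figure: The intersection pairings}, $\beta(\bar x,\bar x)=\left<y\cup j^{*}y,[W,\partial W]\right>$, where $\bar x$ is the image of $x$ in $F\left(H_{4}(W)\right)$ and $j^{*}y\in H^{4}(W)$ is the restriction. Reducing mod $2$ gives $\beta(\bar x,\bar x)\equiv\left<\rho_{2}(y)^{2},[W,\partial W]_{\mathbb{Z}/2}\right>$. Here the square of a relative class is taken in the relative cohomology, and it is well defined because one factor is relative.

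\emph{Step 2: apply the Wu formula relative to the boundary.} For $\tilde y=\rho_{2}(y)\in h^{4}(W,\partial W)$ we have $\tilde y^{2}=\mathrm{Sq}^{4}\tilde y=v_{4}(W)\cup\tilde y$. Here $v_{4}(W)\in h^{4}(W)$ is the Wu class of $W$; it is expressed through the Stiefel--Whitney classes of $W$ exactly as in the closed case, by the standard Wu theorem for compact manifolds paired against $[W,\partial W]$. Since $W$ is oriented, $v_{4}=w_{4}+w_{2}^{2}$. Hence
$$\beta(\bar x,\bar x)\equiv\left<v_{4}(W)\cup\tilde y,[W,\partial W]\right>=\left<w_{4}(W)+w_{2}(W)^{2},\rho_{2}(x)\right>\pmod 2.$$
This is the step I expect to need the most care: one must justify the Wu formula for $(W,\partial W)$, and I would cite or adapt the standard argument through relative Poincar\'e duality mod $2$. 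One must also check that the mod $2$ Kronecker pairing is compatible with the cap product in this relative setting.

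\emph{Step 3: choose $x$.} Since $W$'s Stiefel--Whitney classes are $\psi^{*}$ of the universal ones, naturality gives $\left<w_{4}(W)+w_{2}(W)^{2},\rho_{2}(x)\right>=\left<w_{4}+w_{2}^{2},\rho_{2}\left(H_{4}(\psi)(x)\right)\right>$. By Proposition \ref{Proposition: identify surgery obstruction}, $H_{4}(\psi)$ is onto, so I can pick $x$ with $H_{4}(\psi)(x)=p_{1}\spcheck$. Lemma \ref{Lemma: further identify H_4(B)} gives $\rho_{2}\left(p_{1}\spcheck\right)=\left(w_{2}^{2}\right)\spcheck$, hence the pairing equals $\left<w_{4}+w_{2}^{2},\left(w_{2}^{2}\right)\spcheck\right>=1$. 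Thus $\beta(\bar x,\bar x)$ is odd. The torsion summand of $H_{4}(W)$ causes no problem, since the computation uses $x$ itself and $\beta$ only sees its class in $F\left(H_{4}(W)\right)$. Therefore $\left(F\left(H_{4}(W)\right),\beta\right)$ is odd.
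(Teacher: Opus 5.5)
Your proposal is correct. Steps 1 and 2 coincide with the paper's argument: you reduce the parity of $\beta$ to $\langle \mathrm{U}^4(W),\rho_2(x)\rangle$ via the relative Wu class $\mathrm{U}^4(W)=w_4(W)+w_2(W)^2$. Step 3 finds the odd vector by a different route.

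\textbf{How the two Step 3 arguments compare.} The paper argues by existence on the mod $2$ side:
\begin{itemize}
\item $h^4(\psi)$ is injective because $\psi$ is a $4$-equivalence, so the Wu class is nonzero.
\item It splits $h_4(W)$ using the sequence of $(W,M_i)$.
\item It checks that the Wu class vanishes on the image of $h_4(M_i)$, since $w_4(M_i)=w_2(M_i)^2=0$, and concludes that it is nonzero on some $\rho_2(z_0)$.
\end{itemize}
You instead combine surjectivity of $H_4(\psi)$ with the explicit mod $2$ reduction on $H_4(\mathcal{B})$ from Lemma \ref{Lemma: further identify H_4(B)}, and you write down a specific class with $H_4(\psi)(x)=p_1\spcheck$. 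This avoids the boundary analysis entirely. That analysis is in fact unnecessary, because $H_3(W)=0$ already makes $\rho_2\colon H_4(W)\to h_4(W)$ onto. Your route also shows that the odd vector can be taken with $\langle p_1(W),x\rangle=1$. The price is that you depend on the finer Lemma \ref{Lemma: further identify H_4(B)}, and on exactly which universal class pulls back to the Wu class.

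\textbf{A caution on conventions.} Like the paper, you take $\psi^*w_i$ to be the tangential Stiefel--Whitney classes of $W$. But $\psi$ is a normal structure. For the stable normal bundle one has $\psi^*w_2=w_2(W)$ and $\psi^*w_4=w_4(\nu_W)=w_4(W)+w_2(W)^2$. Read literally, then, $\mathrm{U}^4(W)=\psi^*w_4$, and your $x$ with $H_4(\psi)(x)=p_1\spcheck$ would pair to $\langle w_4,(w_2^2)\spcheck\rangle=0$.

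The conclusion is unaffected: take $H_4(\psi)(x)=\alpha$ instead. It is safer, though, to phrase Step 3 without reference to the convention:
\begin{itemize}
\item $\rho_2\circ H_4(\psi)\colon H_4(W)\to h_4(\mathcal{B})$ is onto, because $H_4(\psi)$ is onto and $H_3(\mathcal{B})=0$.
\item Under either convention, $\mathrm{U}^4(W)$ is $\psi^*$ of a nonzero class in $h^4(\mathcal{B})$.
\item Hence some $x\in H_4(W)$ satisfies $\langle \mathrm{U}^4(W),\rho_2(x)\rangle=1$.
\end{itemize}
The paper's injectivity argument is insensitive to this issue for the same reason.
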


\begin{pf}
	By definition the Poincar\'{e}-Lefschetz duality induces the isomorphism
	$$\left(F\left(H_{4}(W)\right),\beta\right)\cong\left(F\left(H^{4}\left(W,\partial W\right)\right),\check{\beta}\right),$$ 
	where $\check{\beta}$ is induced from the cohomological intersection pairing $\left<\cdot\cup\cdot,[W,\partial W]\right>$ on $H^{4}(W,\partial W)$. It follows from \cite[Section 7]{Kervaire57} that relative Wu classes are also defined and have nice interaction with Stiefel-Whitney classes and Steenrod operations as in the closed case. Namely, given a compact $n$-manifold $X$ with boundary, there are unique classes $\mathrm{U}^{q}(X)\in h^{q}(X)$ such that 
	$$\left<\mathrm{Sq}^{q}(x),[X,\partial X]_{2}\right>=\left<x\cup \mathrm{U}^{q}(X),[X,\partial X]_{2}\right>,\ \forall x\in h^{n-q}(X,\partial X),$$
	where $[X,\partial X]_{2}:=[X,\partial X]_{\mathbb{Z}\big/2}\in h_{n}(X,\partial X)$ denotes the mod $2$ fundamental class, and if $\mathrm{U}(X)=\mathrm{U}^{0}(X)+\mathrm{U}^{1}(X)+\cdots$ denotes the total class, then we have 
	$$w(X)=\mathrm{Sq}\left(\mathrm{U}(X)\right).$$
	Hence for the given bordism $W$, we have
	\begin{eqnarray}\label{Equation: relative Wu class}
		\left<x\cup x,[W,\partial W]_{2}\right>=\left<x\cup \mathrm{U}^{4}(W),[W,\partial W]_{2}\right>,\ \forall x\in h^{4}(W,\partial W),
	\end{eqnarray}
	and it is routine to compute that $\mathrm{U}^{4}(W)=w_{4}(W)+w_{2}(W)^{2}$. Then we obtain
	\begin{align*}
		\left<x\cup x,[W,\partial W]\right>\ \mathrm{mod}\ 2&=\left<\left(\rho_{2}(x)\right)\cup\left(\rho_{2}(x)\right),[W,\partial W]_{2}\right>\\
		&=\left<\left(\rho_{2}(x)\right)\cup\left(w_{4}(W)+w_{2}(W)^{2}\right),[W,\partial W]_{2}\right>\\
		&=\left<w_{4}(W)+w_{2}(W)^{2},\rho_{2}\left(x\cap [W,\partial W]\right)\right>,\ \forall x\in H^{4}(W,\partial W).
	\end{align*}
	Therefore, $\check{\beta}$ is even if and only if 
	$$\left<w_{4}(W)+w_{2}(W)^{2},\rho_{2}(x)\right>=0,\ \forall x\in H_{4}(W).$$
	By our previous computational results, we have:
	\begin{compactenum}
		\item $w_{4}(W)+w_{2}(W)^{2}\neq0$. Since $W\xrightarrow{\psi}\mathcal{B}$ is a $4$-equivalence, $h^{4}(\mathcal{B})\xrightarrow{h^{4}(\psi)}h^{4}(W)$ is monic and $w_{4}(W)+w_{2}(W)^{2}=h^{4}(\psi)\left(w_{4}+w_{2}^{2}\right)\neq0$.
		\item $h_{4}(W)\cong h_{4}\left(M_{i}\right)\oplus\rho_{2}\left(H_{4}(W)\right)$, and each element $x\in h_{4}(W)$ can be uniquely written as $x=h_{4}\left(\iota_{i}\right)y+\rho_{2}(z)$ for certain $y\in h_{4}\left(M_{i}\right)$ and $z\in H_{4}(W)$. This follows from the long exact sequence of homology groups associated to $\left(W,M_{i}\right)$ (Figure \ref{Figure: split of h_4(W)}).
		\begin{figure}[h]
			\begin{center}
				\[
					\xymatrix@R-2pt{
						& H_{4}(W)\ar^{\mu_{2}}[d] 
						& 
						& 
						& \mathbb{Z}^{r}\oplus\mathbb{Z}\big/2 \ar[d]
						& \\
						H_{4}\left(M_{i}\right) \ar@{>->}^{H_{4}\left(\iota_{i}\right)}[r] \ar_{\rho_{2}}^{\cong}[d]
						& H_{4}(W) \ar@{->>}[r] \ar@{->>}^{\rho_{2}}[d]
						& H_{4}\left(W,M_{i}\right) \ar@{->>}^{\rho_{2}}[d]
						& \mathbb{Z}\big/2 \ar@{>->}[r] \ar_{\cong}[d]
						& \mathbb{Z}^{r}\oplus\mathbb{Z}\big/2 \ar@{->>}[r] \ar@{->>}[d]
						& \mathbb{Z}^{r} \ar@{->>}[d] \\
						h_{4}\left(M_{i}\right) \ar@{>->}^{h_{4}\left(\iota_{i}\right)}[r]
						& h_{4}(W) \ar@{->>}[r]
						& h_{4}\left(W,M_{i}\right) 
						& \mathbb{Z}\big/2 \ar@{>->}[r]
						& \left(\mathbb{Z}\big/2\right)^{r}\oplus\mathbb{Z}\big/2 \ar@{->>}[r]
						& \left(\mathbb{Z}\big/2\right)^{r}
					}
				\]
			\end{center}
			\caption{Decomposition of $h_{4}(W)$}
			\label{Figure: split of h_4(W)}
		\end{figure}
		\item $\left<w_{4}(W)+w_{2}(W)^{2},h_{4}\left(\iota_{i}\right)(y)\right>=0$, $\forall y\in h_{4}\left(M_{i}\right)$. By tubular neighborhood theorem we have
		$$\left<w_{4}(W)+w_{2}(W)^{2},h_{4}\left(\iota_{i}\right)(y)\right>=\left<w_{4}\left(M_{i}\right)+w_{2}\left(M_{i}\right)^{2},y\right>.$$
		While $M_{i}$ is a $\mathcal{G}^{3}(\mathrm{Wu})$-like manifold, $w_{4}\left(M_{i}\right)=w_{2}\left(M_{i}\right)^{2}=0$, hence the characteristic number mentioned above must vanish.
	\end{compactenum}
	Therefore, there is a class $x_{0}\in h_{4}(W)\cong h^{4}(W)\spcheck$ such that  $\left<w_{4}(W)+w_{2}(W)^{2},x_{0}\right>\neq0$ as $w_{4}(W)+w_{2}(W)^{2}\neq0$. Moreover, by the decomposition of $h_{4}(W)$ there is a class $z_{0}\in H_{4}(W)$ such that $x_{0}=\rho_{2}\left(z_{0}\right)$. Hence $\check{\beta}$ and thus $\beta$ must be odd.
\end{pf}

\begin{prop}\label{Propositioin: theta_0 elementary, equivalent condition}
	$\theta_{0}(W,\psi)$ is elementary if and only if the following conditions hold:
	\begin{compactenum}
		\item $\left(\mathcal{V},\beta\right)$ has signature $\sigma(\mathcal{V},\beta)=0$;
		\item $\beta(v,v)=0$;
		\item $g(v)=0$;
		\item $\Xi_{odd}\left(\mathcal{V},\beta,g\right)=0$.
	\end{compactenum}
\end{prop}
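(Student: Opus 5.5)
The plan is to translate conditions (e1)--(e3) into a lattice-theoretic statement on $\mathcal{V}:=F\left(H_{4}(W)\right)$, which carries the odd unimodular form $\beta$ (Proposition \ref{Proposition: identify surgery obstruction} and Lemma \ref{Lemma: (FH_4W, beta) is odd}). The submodule $F\left(\ker\left(H_{4}(\psi)\right)\right)$ is described by the formula (\ref{Equation: effect of H_4(psi)}) for $H_{4}(\psi)$. Since $\beta$ is unimodular, I let $v\in\mathcal{V}$ be the unique element with $\beta(v,x)=\left<p_{1}(W),x\right>$. I take $g$ to be the functional $x\mapsto\left<w_{4}(W),\rho_{2}(x)\right>$, with values in $\mathbb{Z}/2$. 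Then
\[F\left(\ker\left(H_{4}(\psi)\right)\right)=\mathcal{K}:=\{x\in\mathcal{V}:\beta(v,x)=0,\ g(x)=0\}.\]
The ingredients for relating $g$ to $\beta$ are the relative Wu class computation $\mathrm{U}^{4}(W)=w_{4}(W)+w_{2}(W)^{2}$ and $\rho_{2}(p_{1})=w_{2}^{2}$. Together they give
\[g(x)\equiv\beta(x,x)+\beta(v,x)\pmod 2.\]
So $g$ differs from $\beta(v,\cdot)$ by reduction of a characteristic functional, and $\mathcal{K}$ is the sublattice of $v^{\perp}$ on which $\beta$ is even.

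Next I unwind (e1)--(e3). Since $\mathcal{V}^{0}=\mathcal{V}^{1}=\mathcal{V}$ and $f_{0}=f_{1}=\iota$, a subgroup $\mathcal{U}\subset\mathcal{K}$ satisfies (e1)--(e3) exactly when $\mathcal{U}$ is a direct summand of $\mathcal{V}$, isotropic, and $\beta$ induces $\mathcal{U}\cong(\mathcal{V}/\mathcal{U})\spcheck$. By unimodularity this is the same as $\mathcal{U}=\mathcal{U}^{\perp}$, i.e.\ $\mathcal{U}$ is a Lagrangian of rank $\frac12\operatorname{rank}\mathcal{V}$. Hence $\theta_{0}(W,\psi)$ is elementary if and only if $(\mathcal{V},\beta)$ has a Lagrangian contained in $\mathcal{K}$.

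Necessity is then short. A Lagrangian forces $\sigma(\mathcal{V},\beta)=0$. From $\mathcal{U}\subset v^{\perp}$ we get $v\in\mathcal{U}^{\perp}=\mathcal{U}$, so $\beta(v,v)=0$ and $g(v)=0$. For the Arf-type condition, I pass to $v^{\perp}/\langle v\rangle$ (or to $\mathcal{V}$ itself when $v=0$). This quotient inherits a unimodular form together with the parity functional induced by $g$, which gives the refinement used to define $\Xi_{odd}$ in Section \ref{Section: Arf inv}. The image of $\mathcal{U}$ is a Lagrangian of this quotient on which the refinement vanishes, so the invariant is zero by the defining property of $\Xi_{odd}$ from that section.

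Sufficiency is the main obstacle. Given conditions 1--4, I must construct a Lagrangian inside $\mathcal{K}$. The first step is to treat $v$. Write $v=d v_{0}$ with $v_{0}$ primitive. Conditions 2 and 3 make $v_{0}$ isotropic, and also give $g(v_{0})=0$ at least when $d$ is odd (the case of even $d$ needs separate care). If $v=0$, I would instead pick a primitive isotropic vector in $\ker g$ using Lemma \ref{Lemma: Existence of integral primitive isotropic elt}, after checking that $\sigma=0$ and oddness make the form indefinite. Splitting off a hyperbolic plane containing $v_{0}$, chosen compatibly with $g$, reduces the problem to the unimodular lattice $v_{0}^{\perp}/\langle v_{0}\rangle$ of signature $0$ with its parity functional. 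There I need a Lagrangian on which that functional vanishes. I would classify such pairs (odd or even indefinite unimodular lattice with a characteristic-type functional mod $2$) via the standard forms $n\langle1\rangle\oplus n\langle-1\rangle$ and $nH$. The claim to verify is that the only remaining obstruction is precisely $\Xi_{odd}$. Once that holds, a direct construction of the Lagrangian in each normal form finishes the proof. Two points need the most care: the case $d$ even, where $g(v_{0})$ is not controlled by $g(v)$, and the requirement that the chosen hyperbolic splitting respects $g$.
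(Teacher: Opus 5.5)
Your congruence $g(x)\equiv\beta(x,x)+\beta(v,x)\pmod 2$ is correct; it follows from the relative Wu class computation. Your necessity argument for conditions 1--3 ($v\in\mathcal{U}^{\perp}=\mathcal{U}\subset\mathcal{K}$) is the paper's. The gap is the sufficiency direction, which is only a plan. You leave the case $v=dv_{0}$ with $d$ even open. You also defer the decisive step (``the only remaining obstruction is precisely $\Xi_{odd}$''), and you never connect an obstruction living on $v^{\perp}/\langle v\rangle$ to condition 4, which is a statement about $(\mathcal{V},\beta,g)$ itself.

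Both issues close quickly.
\begin{itemize}
\item First, $f=\beta(\cdot,v)$ is onto $\mathbb{Z}$, because $(f,g)$ is the surjection $\mathcal{V}\to H_{4}(\mathcal{B})$ from Proposition \ref{Proposition: identify surgery obstruction}. Hence $v$ is primitive and nonzero, so the cases $d>1$ and $v=0$ never arise.
\item Second, your own congruence finishes the argument. On $v^{\perp}$ it reads $g(x)\equiv\beta(x,x)$, so $g$ vanishes on every isotropic subgroup of $v^{\perp}$.
\item Choose $w$ with $\beta(v,w)=1$. The plane $\langle v,w\rangle$ is unimodular of signature $0$, so $v^{\perp}/\langle v\rangle\cong\langle v,w\rangle^{\perp}$ is unimodular of signature $\sigma(\mathcal{V},\beta)=0$. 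It therefore has a Lagrangian, and the preimage of that Lagrangian in $v^{\perp}$ is a Lagrangian of $\mathcal{V}$ lying in $\mathcal{K}$.
\end{itemize}
No normal-form classification is needed, and no compatibility of the splitting with $g$. The same congruence gives $g(v)\equiv 2\beta(v,v)$ and $\Xi_{odd}(g)\equiv\beta(v,v)\pmod 2$, so conditions 3 and 4 actually follow from condition 2. This matches the paper's later lemmas $g(v)=w_{2}^{2}w_{4}(W)=0$ and $\Xi_{odd}(g)=\Lambda(W)\bmod 2$. If you prefer to keep the $\Xi_{odd}$ route, the ingredient you are missing is the paper's. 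Choose $w$ so that $\langle v,w\rangle$ is hyperbolic (possible since $g\neq 0$ forces $v^{\perp}$ to contain odd vectors). Then apply the Lagrangian criterion of Section \ref{Section: Arf inv} to the complement, and use Proposition \ref{Proposition: Arf inv of direct sum} to get $\Xi_{odd}(\overline{g})=\Xi_{odd}(g)$.

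Two smaller corrections. First, elementarity refers to \emph{some} representative, that is, to $\mathcal{V}\oplus\mathcal{H}^{k}$ with $\mathcal{K}\oplus\mathcal{H}^{k}$ for some $k$, not to $(\mathcal{V},\beta)$ itself. In the necessity direction you must work with the stabilized data and observe that $\sigma$, $v$, $\beta(v,v)$, $g(v)$ and $\Xi_{odd}$ are unchanged by adding hyperbolic summands; the last uses Proposition \ref{Proposition: Arf inv of direct sum}. The paper does exactly this. Second, for condition 4 in the necessity direction, your detour through $v^{\perp}/\langle v\rangle$ only shows that an invariant of the quotient vanishes. Argue directly instead: $\mathcal{U}\subset\mathcal{K}\subset\ker g$ is a Lagrangian of $\mathcal{V}$ (or of its stabilization), so the criterion of Section \ref{Section: Arf inv} gives $\Xi_{odd}(\mathcal{V},\beta,g)=0$ at once.
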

Here $\sigma(\mathcal{V},\beta)=0$ forces $\mathrm{rank}\ \mathcal{V}=2r$ to be even, and by Proposition \ref{Proposition: identify surgery obstruction} we have $r\geqslant1$. $\Xi_{odd}$ is defined for any tripple $\left(\mathcal{V},\beta,g\right)$, such that $\mathcal{V}\cong\mathbb{Z}^{2n}$ for some $n$, $\beta$ is an odd symmetric unimodular bilinear form over $\mathcal{V}$ with vanishing signature, and $\mathcal{V}\xrightarrow{g}\mathbb{Z}\big/2$ is a homomorphism. It can be shown that $\Xi_{odd}\left(\mathcal{V},\beta,g\right)=0$ if and only if $\left(\mathcal{V},\beta\right)$ admits a Lagrangian contained in $\ker g$. See \ref{Section: Arf inv} for a complete discussion.
	
\begin{pf}
	We begin with some notations. Denote $\mathcal{V}=F\left(H_{4}(W)\right)$. $H_{4}(\psi)$ determines an epimorphism $\mathcal{V}\xrightarrow{(f,g)}\mathbb{Z}\oplus\mathbb{Z}\big/2\cong H_{4}(\mathcal{B})$, where $f$, $g$ can be explicitly expressed as $f(x)=\left<p_{1}(W),x\right>$, $g(x)=\left<w_{4}(W),\rho_{2}(x)\right>$ and are both epic. 
	Then $F\left(\ker\left(H_{4}(\psi)\right)\right)=\ker(f,g)$ and we denote this group by $\mathcal{K}$. Let $\mathcal{K}\xrightarrow{\iota}\mathcal{V}$ be the inclusion of subgroup as before. Since $\beta$ is unimodular, there is a unique element $v=v_{f}\in \mathcal{V}$ such that $f(x)=\beta(x,v)$ for any $x\in \mathcal{V}$. Moreover, $v$ is primitive as $f$ is epic. Now we obtain a new expression of the representative for $\theta_{0}\left(W,\psi\right)$:
	$$\left(\mathcal{V}\xleftarrow{\iota}\mathcal{K}\xrightarrow{\iota}\mathcal{V},\beta\right).$$
	
	We consider stabilizations via hyperbolic forms as well. Let $\mathcal{H}^{k}=\mathbb{Z}^{2k}$ and let $h^{k}$ be the hyperbolic form on $\mathcal{H}^{k}$. For simplicity we also write ${H}^{k}=\left(\mathcal{H}^{k},h^{k}\right)$ and when $k=1$ we drop the superscripts. In particular ${H}^{k}$ is isomorphic to the orthogonal direct sum of $k$ copies of ${H}$. In the stabilization
	\begin{align*}
		&\left(\mathcal{V}\xleftarrow{\iota}\mathcal{K}\xrightarrow{\iota}\mathcal{V},\beta\right)\oplus{H}^{k}\\
		=&\left(\mathcal{V}\oplus \mathcal{H}^{k}\xleftarrow{(\iota,\mathrm{id})}\mathcal{K}\oplus \mathcal{H}^{k}\xrightarrow{(\iota,\mathrm{id})}\mathcal{V}\oplus \mathcal{H}^{k},\beta\oplus h^{k}\right),
	\end{align*}
	we set $\mathcal{V}_{k}=\mathcal{V}\oplus \mathcal{H}^{k}$, $\iota_{k}=(\iota,\mathrm{id})$, $\mathcal{K}_{k}=\mathcal{K}\oplus \mathcal{H}^{k}$, $\beta_{k}=\beta\oplus h^{k}$ for convenience. We also represent $\mathcal{K}_{k}$ in terms of $(f,g)$ as follows. Denote by $\mathcal{V}_{k}\xrightarrow{\mathrm{pr}_{\mathcal{V}}}\mathcal{V}$ the projection onto $\mathcal{V}$, and it is clear that 
	\begin{compactenum}
		\item $\left(\mathcal{V}_{k},\beta_{k}\right)\xrightarrow{\mathrm{pr}_{\mathcal{V}}}\left(\mathcal{V},\beta\right)$ is an epic isometry;
		\item $\mathcal{V}_{k}\xrightarrow{(f,g)\circ\mathrm{pr}_{\mathcal{V}}}\mathbb{Z}\oplus\mathbb{Z}\big/2$ is epic and can be written in terms of components $(f,g)\circ\mathrm{pr}_{\mathcal{V}}=\left(f_{k},g_{k}\right)$, $f_{k}=f\circ\mathrm{pr}_{\mathcal{V}}$, $g_{k}=g\circ\mathrm{pr}_{\mathcal{V}}=g\oplus0$;
		\item $\mathcal{K}_{k}=\ker\left(f_{k},g_{k}\right)$.
	\end{compactenum}
	Therefore, if we view $v=v_{f}\in \mathcal{V}_{k}$, it is still primitive and satisfies the property that $f_{k}\left(x'\right)=\beta_{k}\left(x',v\right)$ for all $x'\in \mathcal{V}_{k}$. It follows from isometry that $\beta_{k}(v,v)=\beta(v,v)$. Also note that stabilization via hyperbolic forms does not vary the parity, and $\beta_{k}$ is again odd.
	
	By definition and assumption, $\theta_{0}(W,\psi)$ is elementary if and only if there exists $k\in\mathbb{N}$ such that in the new representative
	$$
		\left(\mathcal{V}_{k}\xleftarrow{\iota_{k}}\mathcal{K}_{k}\xrightarrow{\iota_{k}}\mathcal{V}_{k},\beta_{k}\right),
	$$
	$\mathcal{V}_{k}$ admits a free subgroup $\mathcal{U}$ satisfying conditions (e1)$\sim$(e3). In our case $\mathcal{\mathcal{V}}^{0}=\mathcal{\mathcal{V}}^{1}=\mathcal{V}_{k}$, $\mathcal{\mathcal{V}}=\mathcal{K}_{k}$, $f_{0}=f_{1}=\iota_{k}$ is the inclusion and $\beta_{k}$ is a symmetric unimodular bilinear form on $\mathcal{\mathcal{V}}_{k}$. Hence condition (e3) implies condition (e1), and $\theta_{0}(W,\psi)$ is elementary if and only if there exists $k\in\mathbb{N}$ such that $\left(\mathcal{V}_{k},\beta_{k}\right)$ admits a Lagrangian contained in $\mathcal{K}_{k}=\ker\left(f_{k},g_{k}\right)$.
	
	Now we start the proof and begin with necessity. From the comparison of ranks and signatures we obtain $\mathrm{rank}\ \mathcal{V}=2r$ is even, $\mathrm{rank}\ \mathcal{U}=r+k$ and $\sigma(\mathcal{V},\beta)=0$. We claim that $v\in \mathcal{U}$. By assumption $\mathcal{U}\subset \mathcal{K}_{k}\subset\ker\left(f_{k}\right)$, hence $\beta_{k}(u,v)=f_{k}(v)=0$ for all $u\in \mathcal{U}$, namely $v\in \mathcal{U}^{\perp}=\mathcal{U}$. Now $v\in \mathcal{U}\subset \mathcal{K}_{k}=\mathcal{K}\oplus \mathcal{H}^{k}$ and by the characterization of $v$ it must contains in $\mathcal{K}=\ker(f,g)$. This justifies the necessity.
	
	Next we prove the sufficiency. Suppose all the conditions in Proposition \ref{Propositioin: theta_0 elementary, equivalent condition} are satisfied, and we shall show that there exists $k\in\mathbb{N}$ such that after $k$ times of stabilizations, $\left(\mathcal{V}_{k},\beta_{k}\right)$ admits a Lagrangian that is contained in $\mathcal{K}_{k}$. Now suppose $k\geqslant1$, and $\mathrm{rank}\ \mathcal{V}_{k}\geqslant2(r+k)\geqslant4$. Since $v$ is primitive and $\beta_{k}$ is unimodular, $(v)^{\perp}$ is a direct summand of $\mathcal{V}_{k}$ of rank $2(r+k)-1$. Moreover, since $\beta_{k}(v,v)=0$, we have $v\in (v)^{\perp}$ and $v$ is also primitive in $(v)^{\perp}$ and $\mathcal{K}_{k}$. Now we form $\overline{\mathcal{V}_{k}}:=(v)^{\perp}\big/\left<v\right>$, and $\overline{\mathcal{V}_{k}}\cong\mathbb{Z}^{2(r+k-1)}$. Furthermore, it is routine to verify that
	\begin{compactenum}
		\item $g_{k}$ induces a homomorphism $\overline{\mathcal{V}_{k}}\xrightarrow{\overline{g_{k}}}\mathbb{Z}\big/2$ and $\overline{g_{k}}$ is again epic. This can be deduced from $g_{k}(v)=g(v)=0$.
		\item $\beta_{k}$ induces a symmetric unimodular bilinear form $\overline{\beta_{k}}$ on $\overline{\mathcal{V}_{k}}$. This can be deduced from $\beta_{k}(v,v)=\beta(v,v)=0$.
		\item $\sigma\left(\overline{\mathcal{V}_{k}},\overline{\beta_{k}}\right)=0$. We can find another primitive element $w\in \mathcal{V}_{k}$ such that $\mathcal{H}_{0}:=\mathbb{Z}\left\{v,w\right\}$ is a direct summand of $\mathcal{V}_{k}$, the restriction of $\beta_{k}$ on $\mathcal{H}_{0}$ is hyperbolic and that $\left(\mathcal{V}_{k},\beta_{k}\right)$ admits the orthogonal decomposition
		$$\left(\mathcal{V}_{k},\beta_{k}\right)\cong\left(\mathcal{H}_{0},\left.\beta_{k}\right|_{\mathcal{H}_{0}}\right)\oplus\left(\mathcal{H}_{0},\left.\beta_{k}\right|_{\mathcal{H}_{0}}\right)^{\perp}.$$
		As a consequence, we have an isomorphic isometry $\sigma\left(\overline{\mathcal{V}_{k}},\overline{\beta_{k}}\right)\cong\left(\mathcal{H}_{0},\left.\beta_{k}\right|_{\mathcal{H}_{0}}\right)^{\perp}$ and the signature identity follows.
	\end{compactenum}
	Therefore, $\left(\mathcal{V}_{k},\beta_{k}\right)$ admits a Lagrangian that is contained in $\mathcal{K}_{k}=\ker\left(f_{k},g_{k}\right)$ if and only if $\left(\overline{\mathcal{V}_{k}},\overline{\beta_{k}}\right)$ admits a Lagrangian that is contained in $\overline{\mathcal{K}_{k}}:=\ker\left(\overline{g_{k}}\right)$. Here it can be shown that $\overline{\mathcal{K}_{k}}=\mathcal{K}_{k}\big/\left<v\right>$ and has index $2$ in $\overline{\mathcal{V}_{k}}$.
	It remains to determine when $\left(\overline{\mathcal{V}_{k}},\overline{\beta_{k}}\right)$ admits a Lagrangian that is contained in $\overline{\mathcal{K}_{k}}:=\ker\left(\overline{g_{k}}\right)$. By Proposition \ref{Proposition: sym unimodular odd bilinear (V, lambda) with vanishing signature admits Lagrangian contained in ker of epim onto Z/2} this is achieved if and only if $\Xi_{odd}\left(\overline{g_{k}}\right)=0$, and by Proposition \ref{Proposition: Arf inv of direct sum} $\Xi_{odd}\left(\overline{g_{k}}\right)=\Xi_{odd}\left({g_{k}}\right)=\Xi_{odd}(g)$.
	This completes the proof of Proposition \ref{Propositioin: theta_0 elementary, equivalent condition}.
\end{pf}

Now we transform abstract criterions for $\theta_{0}(W,\psi)$ being elementary in Proposition \ref{Propositioin: theta_0 elementary, equivalent condition} into computable characteristic numbers of $W$. Let us begin with a lemma on relative characteristic numbers.

\begin{lem}\label{Lemma: Relative char nums}
	Let $R$ be a coefficient ring and len $X$ be an $R$-oriented $n$-manifold with (possibly empty) boundary. Suppose $n=p+q$ for some positive integers $p$, $q$, and there are classes $x\in H^{p}(X;R)$, $y\in H^{q}(X;R)$ such that $x|_{\partial X}=0$ and $y|_{\partial X}=0$. Let $\widetilde{x}\in H^{p}(X,\partial X;R)$ and $\widetilde{y}\in H^{q}(X,\partial X;R)$ be any liftings of $x$ and $y$. Then 
	\[\left<\widetilde{x}\cup\widetilde{y},[X,\partial X]\right>\]
	does not depend on the choices of liftings and is denoted by $xy(X)$.
\end{lem}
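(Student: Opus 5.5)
The plan is to reduce independence of the lifting to the vanishing of a single cup product, and then kill that product using the module structure of relative cohomology. Any two liftings of $x$ differ by an element of the image of the coboundary $\delta\colon H^{p-1}(\partial X;R)\to H^{p}(X,\partial X;R)$ in the long exact sequence of the pair $(X,\partial X)$. The same holds for liftings of $y$ with $H^{q-1}(\partial X;R)$. By bilinearity of the cup product it therefore suffices to prove two things:
\[
\left<\delta(a)\cup\widetilde{y},[X,\partial X]\right>=0 \quad\text{and}\quad \left<\widetilde{x}\cup\delta(b),[X,\partial X]\right>=0
\]
for all $a\in H^{p-1}(\partial X;R)$ and $b\in H^{q-1}(\partial X;R)$. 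Once the first vanishing is known, the mixed term $\delta(a)\cup\delta(b)$ is covered as a special case, because $\delta(b)$ is itself a lifting of $y=0$.

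The key step is to move the relative class onto the other factor. I use the relative cup product $H^{*}(X,\partial X)\times H^{*}(X)\to H^{*}(X,\partial X)$, under which $\delta(a)\cup\widetilde{y}=\delta(a)\cup y$. Here $y$ denotes the image of $\widetilde y$ in $H^{q}(X;R)$; this equality is the naturality of the cup product with respect to $H^{*}(X,\partial X)\to H^{*}(X)$ in one slot.

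I then invoke the derivation formula for the coboundary:
\[
\delta(a)\cup y=\delta\bigl(a\cup y|_{\partial X}\bigr)
\]
up to sign. This is the standard fact that $\delta\colon H^{*}(\partial X)\to H^{*+1}(X,\partial X)$ is a homomorphism of $H^{*}(X)$-modules, checkable on cochains by extending $a$ to a cochain on $X$. Since $y|_{\partial X}=0$ by hypothesis, the right-hand side vanishes, and the first pairing is zero. The second pairing follows by the symmetric argument, using $x|_{\partial X}=0$ and graded commutativity.

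As a cross-check, one can also argue via duality: $\left<\delta(c),[X,\partial X]\right>=\pm\left<c,\partial[X,\partial X]\right>=\pm\left<c,[\partial X]\right>$. This gives $\pm\left<a\cup y|_{\partial X},[\partial X]\right>=0$ directly.

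The only delicate point is justifying that the cup-product and coboundary formula holds in the relative setting with the correct sign conventions and for a general coefficient ring $R$. I would prove it at cochain level rather than cite it. Signs are harmless anyway, since the conclusion is vanishing.
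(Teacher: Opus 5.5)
Your proof is correct, but it takes a slightly longer route than the paper. Both arguments rest on the same naturality fact: the cup product $H^{p}(X,\partial X;R)\times H^{q}(X,\partial X;R)\to H^{n}(X,\partial X;R)$ factors through $j^{*}\colon H^{*}(X,\partial X;R)\to H^{*}(X;R)$ in either slot.

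You apply this factorization to the slot that does \emph{not} carry the ambiguity, writing $\delta(a)\cup\widetilde{y}=\delta(a)\cup y$. You then need an extra ingredient: the $H^{*}(X;R)$-linearity of $\delta$, checked at cochain level, together with $y|_{\partial X}=0$.

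The paper applies the factorization to the slot that does carry the ambiguity. Then $\widetilde{x}\cup\widetilde{y}=\widetilde{x}\cup y$ depends on $\widetilde{y}$ only through $y=j^{*}\widetilde{y}$, and symmetrically $\widetilde{x}\cup\widetilde{y}=x\cup\widetilde{y}$. This gives the chain $\widetilde{x}_{1}\cup\widetilde{y}_{1}=\widetilde{x}_{1}\cup y=\widetilde{x}_{1}\cup\widetilde{y}_{2}=x\cup\widetilde{y}_{2}=\widetilde{x}_{2}\cup\widetilde{y}_{2}$, an equality already in $H^{n}(X,\partial X;R)$.

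In your language, the paper's step is just $\delta(a)\cup\widetilde{y}=(j^{*}\delta(a))\cup\widetilde{y}=0$, using exactness $j^{*}\circ\delta=0$. So no coboundary derivation formula or sign bookkeeping is needed. The hypothesis $x|_{\partial X}=y|_{\partial X}=0$ is used only to guarantee that lifts exist; it is automatic for any class that admits a lift.

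What your route buys is a more explicit picture of the ambiguity. It is the boundary evaluation $\pm\left<a\cup y|_{\partial X},[\partial X]\right>$, and this form survives, and can be nonzero, if one pairs a relative class with an absolute class that is not assumed to vanish on $\partial X$. Also note that your main argument, not just the cross-check, already gives vanishing at the level of classes, not merely after evaluation on $[X,\partial X]$.
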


\begin{pf}
	Let $x_{1},x_{2}\in H^{p}(X,\partial X;R)$ and $y_{1},y_{2}\in H^{q}(X,\partial X;R)$ be the liftings. We have the commutative diagram shown in Figure \ref{Figure: The relative cup prod}.
	\begin{figure}[H]
		\begin{center}
			\[
				\xymatrix@R-2pt{
					H^{p}(X;R) \hskip-50pt & \times \hskip-50pt & H^{q}(X,\partial X;R)\ar[r]^(0.45){\cdot\cup\cdot} & H^{p+q}(X,\partial X;R)\\
					H^{p}(X,\partial X;R)\ar@<-30pt>[u]^{j^{*}}\ar@<30pt>@{=}[d] \hskip-50pt & \times \hskip-50pt & H^{q}(X,\partial X;R)\ar@{=}[u]\ar[d]^{j^{*}}\ar[r]^(0.45){\cdot\cup\cdot} & H^{p+q}(X,\partial X;R)\ar@{=}[u]\ar@{=}[d]\\
					H^{p}(X,\partial X;R) \hskip-50pt & \times \hskip-50pt & H^{q}(X;R)\ar[r]^(0.42){\cdot\cup\cdot} & H^{p+q}(X,\partial X;R)
				}
			\]
			\caption{The relative cup products}
			\label{Figure: The relative cup prod}
		\end{center}
	\end{figure}
	Then we obtain $x_{\varepsilon}\cup y_{1}=x_{\varepsilon}\cup y_{2}=x_{\varepsilon}\cup y$ and $x_{1}\cup y_{\varepsilon}=x_{2}\cup y_{\varepsilon}=x\cup y_{\varepsilon}$ for $\varepsilon=1,2$. Therefore, $x_{\varepsilon}\cup y_{\varepsilon'}$ are identical for $\varepsilon,\varepsilon'=1,2$. This completes the proof.
\end{pf}

\begin{lem}\label{Lemma: transform abstract charaterization of obstruction into computable char numbers}
	We have $\beta(v,v)=\Lambda(W)$ and $g(v)=w_{2}^{2}w_{4}(W)$.
\end{lem}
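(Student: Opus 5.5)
The plan is to make $v=v_f$ explicit through Poincar\'e--Lefschetz duality, and then read both $\beta(v,v)$ and $g(v)$ off cup products on $W$.

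\emph{Step 1: identify the dual class of $v$.} Since $H^{4}(M_{i})=0$ (Lemma \ref{Lemma: coh ring & char class of G_3(Wu)}), the restriction $p_{1}(W)|_{\partial W}$ vanishes. Hence $p_{1}(W)$ lifts to a class $\widetilde{p_{1}(W)}\in H^{4}(W,\partial W)$. By Lemma \ref{Lemma: Relative char nums}, any two liftings give the same cup products against the relative fundamental class. Set $z:=\widetilde{p_{1}(W)}\cap[W,\partial W]\in H_{4}(W)$ and let $\bar z$ be its image in $\mathcal{V}=F(H_{4}(W))$.

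Now take any $x\in H_{4}(W)$ and write $x=\xi\cap[W,\partial W]$ with $\xi\in H^{4}(W,\partial W)$. Following the diagram in Figure \ref{Figure: The intersection pairings}, we get
\[
\beta(\bar x,\bar z)=\left<\xi\cup\widetilde{p_{1}(W)},[W,\partial W]\right>=\left<p_{1}(W),\xi\cap[W,\partial W]\right>=\left<p_{1}(W),x\right>=f(\bar x).
\]
This uses the compatibility of cap and cup products with the map $H^{4}(W,\partial W)\to H^{4}(W)$. By unimodularity of $\beta$ and uniqueness of $v$, we conclude $v=\bar z$. A torsion ambiguity is harmless here, because neither $f$ nor $\beta$ sees torsion.

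\emph{Step 2: compute $\beta(v,v)$.} Applying the same identity with $x=z$ gives
\[
\beta(v,v)=\left<\widetilde{p_{1}(W)}^{2},[W,\partial W]\right>.
\]
For this to equal $\Lambda(W)$, two things must hold. The first is that $W$, regarded as a coboundary of $\partial W=M_{0}\sqcup -M_{1}$, uses the convention under which $\Lambda(W)$ is defined, namely the integral lift. This is legitimate because $H^{4}(\partial W)=0$. The second is that the signature condition needed for $\Lambda(W)$ to be the relevant quantity is handled by the signature hypothesis in Proposition \ref{Propositioin: theta_0 elementary, equivalent condition}. If the statement instead means the raw characteristic number $p_{1}^{2}(W)$ in the sense of Lemma \ref{Lemma: Relative char nums}, then Step 2 already finishes the claim.

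\emph{Step 3: compute $g(v)$.} By definition, $g(v)=\left<w_{4}(W),\rho_{2}(z)\right>$, and $\rho_{2}(z)=\rho_{2}(\widetilde{p_{1}(W)})\cap[W,\partial W]_{2}$. Therefore
\[
g(v)=\left<w_{4}(W)\cup\rho_{2}\widetilde{p_{1}(W)},[W,\partial W]_{2}\right>.
\]
The key point is that $\rho_{2}(\widetilde{p_{1}(W)})$ is a lift of $\rho_{2}p_{1}(W)=w_{2}(W)^{2}$ to $h^{4}(W,\partial W)$. Moreover, $w_{4}(W)$ and $w_{2}(W)^{2}$ both restrict to zero on $\partial W$, since $w_{4}(M_{i})=w_{2}(M_{i})^{2}=0$ by Lemma \ref{Lemma: equiv char of G_3(Wu)-like mfd}. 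So Lemma \ref{Lemma: Relative char nums} with $R=\mathbb{Z}/2$ applies, and the pairing above equals the well-defined relative number $w_{2}^{2}w_{4}(W)$.

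The step needing the most care is Step 1. One must verify that the Poincar\'e--Lefschetz identification in Figure \ref{Figure: The intersection pairings} really carries $\beta$ to the cup pairing with $\widetilde{p_{1}(W)}$ inserted in the correct slot, and that the choice of lift does not matter modulo torsion. Given the paper's setup, both points should reduce to the naturality of cap products.
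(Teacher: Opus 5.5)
Your proof is correct and is essentially the paper's argument run in the opposite direction. The paper starts from $v$, takes its Poincar\'e--Lefschetz dual $\widetilde{v}\in H^{4}(W,\partial W)$, and shows $j^{*}\widetilde{v}=p_{1}(W)$ by pairing against $H^{4}(W,\partial W)$; you instead start from a lift $\widetilde{p_{1}(W)}$ and identify its dual with $v$ by uniqueness in the unimodular lattice, after which the computations of $\beta(v,v)$ and $g(v)$ coincide. Your hedge in Step 2 is unnecessary: $\Lambda(W)$ is by definition the relative number $\left<\widetilde{p_{1}(W)}^{2},[W,\partial W]\right>$, and no signature condition enters this identity.
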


\begin{pf}
	By definition, $v\in \mathcal{V}=F\left(H_{4}(W)\right)$ is uniquely characterized by
	$$\left<p_{1}(W),x\right>=\beta(x,v),\ \forall x\in F\left(H_{4}(W)\right).$$
	According to Poincar\'{e}-Lefschetz duality, there is a unique class $\widetilde{v}\in F\left(H^{4}(W,\partial W)\right)$ such that $v=\widetilde{v}\cap [W,\partial W]$, and we define $\widetilde{x}\in F\left(H^{4}(W,\partial W)\right)$ likewise. Then the equation above can be rewritten as
	$$\left<p_{1}(W)\cup\widetilde{x},[W,\partial W]\right>=\left<\widetilde{v}\cup\widetilde{x},[W,\partial W]\right>,\ \forall\widetilde{x}\in F\left(H^{4}(W,\partial W)\right).$$
	Denote by $W\xrightarrow{j}(W,\partial W)$ the inclusion, and the right hand side is also equal to $\left<j^{*}\widetilde{v}\cup\widetilde{x},[W,\partial W]\right>$. Hence $j^{*}\widetilde{v}$ and $p_{1}(W)$ represent the same element in $F\left(H^{4}(W)\right)=H^{4}(W)$, $\widetilde{v}$ is a lifting of $p_{1}(W)$ and
	$$\beta(v,v)=\left<\widetilde{v}\cup\widetilde{v},[W,\partial W]\right>=\Lambda(W).$$
	
	Now we determine the other characteristic number.
	\begin{align*}
		g(v)=\left<w_{4}(W),\rho_{2}(v)\right>&=\left<w_{4}(W),\rho_{2}\left(\widetilde{v}\cap[W,\partial W]\right)\right>\\
		&=\left<w_{4}(W),\left(\rho_{2}\left(\widetilde{v}\right)\right)\cap[W,\partial W]_{2}\right>\\
		&=\left<w_{4}(W)\cup\left(\rho_{2}\left(\widetilde{v}\right)\right),[W,\partial W]_{2}\right>\\
		&=w_{2}^{2}w_{4}(W).
	\end{align*}
	Here $[W,\partial W]_{2}$ denotes the mod $2$ fundamental class, and
	the last idendity is explained as follows. Since $j^{*}\left(\rho_{2}\left(\widetilde{v}\right)\right)=\rho_{2}\left(j^{*}\left(\widetilde{v}\right)\right)=\rho_{2}\left(p_{1}(W)\right)=w_{2}(W)^{2}$, $\rho_{2}\left(\widetilde{v}\right)$ is a lifting of $w_{2}(W)^{2}$.
\end{pf}

The following lemma combined with Lemma \ref{Lemma: transform abstract charaterization of obstruction into computable char numbers} implies that Proposition \ref{Propositioin: theta_0 elementary, equivalent condition}, Condition 3 is redundant.

\begin{lem}\label{Lemma: w_{2}^{2}w_{4} vanish for oriented 8mfd}
	If $X$ is an oriented closed $8$-manifold, or a compact oriented $8$-manifold whose boundary has vanishing characteristic classes $w_{2}^{2}$ and $w_{4}$,
	then $w_{2}^{2}w_{4}(X)=0$.
\end{lem}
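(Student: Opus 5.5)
The plan is to compute $w_{2}^{2}w_{4}$ with the Wu formula, using the relative Wu classes recalled in the proof of Lemma \ref{Lemma: (FH_4W, beta) is odd}. In the closed case this exhibits $w_{2}^{2}w_{4}$ as a Steenrod square of a class squared; the bounded case reduces to the same computation because the hypotheses on $\partial X$ allow liftings to relative classes, and by Lemma \ref{Lemma: Relative char nums} the number is then well defined.

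First, the closed case. For a closed $8$-manifold $X$ and $z\in h^{4}(X)$, the Wu formula gives $z^{2}=\mathrm{Sq}^{4}z=z\,\mathrm{U}^{4}(X)$ on the fundamental class. For an oriented manifold, $w_{1}=0$ gives $\mathrm{U}^{1}=0$, hence $\mathrm{U}^{2}=w_{2}$, and as computed in the proof of Lemma \ref{Lemma: (FH_4W, beta) is odd}, $\mathrm{U}^{4}=w_{4}+w_{2}^{2}$. Take $z=w_{2}^{2}$. Then
$$w_{2}^{2}w_{4}+w_{2}^{4}=\left<w_{2}^{2}\,\mathrm{U}^{4},[X]_{2}\right>=\left<\mathrm{Sq}^{4}(w_{2}^{2}),[X]_{2}\right>=\left<w_{2}^{4},[X]_{2}\right>,$$
since $\mathrm{Sq}^{4}(w_{2}^{2})=(\mathrm{Sq}^{2}w_{2})^{2}=w_{2}^{4}$ by the Cartan formula. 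Hence $w_{2}^{2}w_{4}(X)=0$. An equivalent route is to note that $w_{2}^{2}=\rho_{2}(p_{1})$ and that $\left<\rho_{2}(p_{1})\,\mathrm{U}^{4},[X]_{2}\right>=\left<p_{1}^{2},[X]\right>\bmod 2$; together these give $w_{2}^{2}w_{4}+w_{2}^{4}=p_{1}^{2}\bmod 2=w_{2}^{4}$.

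Second, the bounded case. Suppose $w_{2}(\partial X)^{2}=0$ and $w_{4}(\partial X)=0$. Since $w_{i}(X)|_{\partial X}=w_{i}(\partial X)$ (the normal bundle of $\partial X$ is trivial), Lemma \ref{Lemma: Relative char nums} applies: choose liftings $\widetilde{a}\in h^{4}(X,\partial X)$ of $w_{2}(X)^{2}$ and $\widetilde{b}$ of $w_{4}(X)$. The relative Wu formula from \cite[Section 7]{Kervaire57}, as stated before \eqref{Equation: relative Wu class}, gives
$$\left<\widetilde{a}\cup\widetilde{a},[X,\partial X]_{2}\right>=\left<\widetilde{a}\cup(w_{4}+w_{2}^{2}),[X,\partial X]_{2}\right>=w_{2}^{2}w_{4}(X)+w_{2}^{4}(X),$$
where the last step uses the cup product $h^{4}(X,\partial X)\times h^{4}(X)$ (this is the mechanism behind Lemma \ref{Lemma: Relative char nums}). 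It then remains to identify $\widetilde{a}\cup\widetilde{a}$ with $w_{2}^{4}(X)$. I expect this to be the main obstacle. One cannot simply write $\widetilde{a}=\mathrm{Sq}^{2}$ of a lift of $w_{2}$, because $w_{2}(\partial X)$ itself need not vanish.

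For the obstacle I would first try the identity $\widetilde{a}\cup\widetilde{a}=\mathrm{Sq}^{4}\widetilde{a}$ in $h^{8}(X,\partial X)$. Naturality of Steenrod squares for the map $j\colon X\to(X,\partial X)$ gives $j^{*}\mathrm{Sq}^{4}\widetilde{a}=\mathrm{Sq}^{4}(w_{2}^{2})=w_{2}^{4}$. So $\widetilde{a}^{2}$ and $w_{2}^{4}$ agree after applying $j^{*}$, and pairing with the relative fundamental class gives $\left<\widetilde{a}^{2},[X,\partial X]_{2}\right>=\left<\widetilde{a}\cup j^{*}\widetilde{a},[X,\partial X]_{2}\right>=w_{2}^{2}w_{2}^{2}(X)$. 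That is exactly the relative number $w_{2}^{4}(X)$ defined through Lemma \ref{Lemma: Relative char nums}, so the relation closes to $w_{2}^{2}w_{4}(X)=0$.

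If this turns out to be circular, the fallback is to double $X$ along $\partial X$. The double is closed, so the first step applies to it. Its $w_{2}^{2}w_{4}$ equals $w_{2}^{2}w_{4}(X)$ minus $w_{2}^{2}w_{4}$ of the reflected copy, and this reduces to the closed case. To make the contributions add rather than cancel, I would instead glue $X$ to a coboundary-type piece, using the mod $2$ version of Lemma \ref{Lemma: char nums additive wrt gluing via boundary}.
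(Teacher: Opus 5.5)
Your proposal is correct and is essentially the paper's proof. Both arguments lift $w_{2}(X)^{2}$ to $\widetilde{a}\in h^{4}(X,\partial X)$, use the relative Wu class $\mathrm{U}^{4}=w_{4}+w_{2}^{2}$ together with $\left<\widetilde{a}\cup\widetilde{a},[X,\partial X]_{2}\right>=\left<\widetilde{a}\cup w_{2}(X)^{2},[X,\partial X]_{2}\right>$ to exhibit $w_{2}^{2}w_{4}(X)$ as twice a mod $2$ number, and treat the closed case by the same computation with absolute classes.

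The identity $\left<\widetilde{a}\cup\widetilde{a},[X,\partial X]_{2}\right>=\left<\widetilde{a}\cup j^{*}\widetilde{a},[X,\partial X]_{2}\right>$ that you wrote down already removes your ``obstacle''; the comparison after applying $j^{*}$ in degree $8$ is vacuous, since $h^{8}(X)=0$ for connected $X$ with nonempty boundary. So the doubling fallback can be dropped, which is just as well: mod $2$ it would only give the tautology $2\,w_{2}^{2}w_{4}(X)=0$.
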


\begin{pf}
	We begin the coboudary case. Since $w_{2}(\partial X)^{2}=0$, $w_{2}(X)^{2}$ admits a lifting $\widetilde{w_{2}(X)^{2}}\in h^{4}(X,\partial X)$. Apply the formula and proposition of relative $4$th Wu class, and we have
	\begin{align*}
		w_{2}^{2}w_{4}(X)&=\left<\widetilde{w_{2}(X)^{2}}\cup w_{4}(X),[X,\partial X]_{2}\right>\\
		&=\left<\widetilde{w_{2}(X)^{2}}\cup\mathrm{U}^{4}(X),[X,\partial X]_{2}\right>+\left<\widetilde{w_{2}(X)^{2}}\cup w_{2}(X)^{2},[X,\partial X]_{2}\right>\\
		&=\left<\widetilde{w_{2}(X)^{2}}\cup\widetilde{w_{2}(X)^{2}},[X,\partial X]_{2}\right>+\left<\widetilde{w_{2}(X)^{2}}\cup w_{2}(X)^{2},[X,\partial X]_{2}\right>\\
		&=2\left<\widetilde{w_{2}(X)^{2}}\cup w_{2}(X)^{2},[X,\partial X]_{2}\right>=0.
	\end{align*}
	The proof of the closed case is similar, where we use absolute classes $w_{2}(X)^{2}$, $v_{4}(X)$ and $[X]_{2}$.
\end{pf}

The invariant $\Xi_{odd}(g)$ can be equivalently expressed as known characteristic number.
\begin{lem}\label{Lemma: Arf_odd=p_1^2 mod 2}
	$\Xi_{odd}(g)=\Lambda(W)\ \mathrm{mod}\ 2$.
\end{lem}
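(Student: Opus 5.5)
The plan is to reduce $\Xi_{odd}(g)$ to a self-intersection number $\beta(u,u)$ of a lift of $g$, and then to identify that lift with $v+c$ for $c$ characteristic, using the relative Wu class formula from the proof of Lemma \ref{Lemma: (FH_4W, beta) is odd}. We work on $\mathcal{V}=F\left(H_{4}(W)\right)$ with the odd unimodular form $\beta$ of vanishing signature; this is the situation in which $\Xi_{odd}$ is defined, as in Proposition \ref{Propositioin: theta_0 elementary, equivalent condition}.

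\emph{Step 1: describe $\Xi_{odd}$ through a dual element.} Since $\beta$ is unimodular, every homomorphism $g\colon\mathcal{V}\to\mathbb{Z}/2$ has the form $g(x)=\beta(x,u)\ \mathrm{mod}\ 2$ for some $u\in\mathcal{V}$, unique modulo $2\mathcal{V}$. I would invoke (or, if needed, establish from the definition in Section \ref{Section: Arf inv}) the description of $\Xi_{odd}(g)$ as a function of $\beta(u,u)$ modulo $2$. For this to make sense I must check independence of the choice of $u$:
\[
\beta(u+2y,u+2y)=\beta(u,u)+4\beta(u,y)+4\beta(y,y)\equiv\beta(u,u)\ \mathrm{mod}\ 2 .
\]
I would also check that this recovers the criterion ``there is a Lagrangian contained in $\ker g$'' on the standard model $n\langle1\rangle\oplus n\langle-1\rangle$. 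This step depends on how $\Xi_{odd}$ is normalised in Section \ref{Section: Arf inv}, and it is where I expect the main obstacle: I have to be sure that the invariant there really equals $\beta(u,u)\ \mathrm{mod}\ 2$ (possibly after a shift by a characteristic contribution), rather than some other quadratic quantity.

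\emph{Step 2: find $u$ geometrically.} Let $c\in\mathcal{V}$ be a characteristic element, so that $\beta(x,x)\equiv\beta(x,c)\ \mathrm{mod}\ 2$ for all $x\in\mathcal{V}$. By the computation in the proof of Lemma \ref{Lemma: (FH_4W, beta) is odd}, $\beta(x,x)\equiv\left<w_{4}(W)+w_{2}(W)^{2},\rho_{2}(x)\right>$. Next, from the proof of Lemma \ref{Lemma: transform abstract charaterization of obstruction into computable char numbers}, $\rho_{2}(\widetilde v)$ lifts $w_{2}(W)^{2}$. Hence
\[
\beta(x,v)\equiv\left<\rho_2(\widetilde v)\cup\rho_2(\widetilde x),[W,\partial W]_2\right>=\left<w_{2}(W)^{2},\rho_{2}(x)\right>\ \mathrm{mod}\ 2 .
\]
Subtracting the two congruences gives
\[
g(x)=\left<w_{4}(W),\rho_{2}(x)\right>\equiv\beta(x,c)-\beta(x,v)\equiv\beta(x,c+v)\ \mathrm{mod}\ 2 ,
\]
so we may take $u=c+v$.

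\emph{Step 3: compute.} Expanding,
\[
\beta(u,u)=\beta(c,c)+2\beta(c,v)+\beta(v,v).
\]
The middle term is even. By van der Blij's lemma, $\beta(c,c)\equiv\sigma(\mathcal{V},\beta)\ \mathrm{mod}\ 8$, which is even: $\sigma(\mathcal{V},\beta)\equiv\mathrm{rank}\,\mathcal{V}\ \mathrm{mod}\ 2$, and the rank is even whenever $\Xi_{odd}$ is defined (in fact $\sigma=0$ there). Therefore $\beta(u,u)\equiv\beta(v,v)\ \mathrm{mod}\ 2$. Lemma \ref{Lemma: transform abstract charaterization of obstruction into computable char numbers} gives $\beta(v,v)=\Lambda(W)$, so Step 1 yields $\Xi_{odd}(g)=\Lambda(W)\ \mathrm{mod}\ 2$.

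If $\Xi_{odd}$ turns out to be normalised in Section \ref{Section: Arf inv} using a finer quantity, for example $\beta(u,u)\ \mathrm{mod}\ 4$ or $8$, then only Step 3 changes. The same expansion still applies, now with $\beta(c,c)\equiv\sigma=0\ \mathrm{mod}\ 8$, but the cross term $2\beta(c,v)$ would then need separate control. For that I would use $\beta(c,v)\equiv\beta(v,v)\ \mathrm{mod}\ 2$, which holds because $c$ is characteristic.
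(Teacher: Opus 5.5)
Your proof is correct and is a coordinate-free version of the paper's argument. The paper expands $\Xi_{odd}(g)=\sum_i g(e_i)+\sum_i g(f_i)$ in a standard orthogonal basis and splits $w_{4}(W)=\mathrm{U}^{4}(W)+w_{2}(W)^{2}$: the $\mathrm{U}^{4}$-part contributes $\sum_i\beta(e_i,e_i)+\sum_i\beta(f_i,f_i)\equiv 0$, which is your $\beta(c,c)$ being even, and the $w_{2}^{2}=\rho_{2}p_{1}$-part contributes the sum of the coordinates of $v$, which is $\beta(v,v)=\Lambda(W)\ \mathrm{mod}\ 2$. The step you flagged as the main obstacle is immediate: with the paper's definition, $\Xi_{odd}(g)=\sum_i g(e_i)^2-\sum_i g(f_i)^2\in\mathbb{Z}/2$ is exactly $\beta(u,u)\ \mathrm{mod}\ 2$ for $u=\sum_i\widetilde{g(e_i)}\,e_i-\sum_i\widetilde{g(f_i)}\,f_i$ (any integer lifts), so no shift or finer normalisation arises.
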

In particular, $\Xi_{odd}(g)=0$ if $\Lambda(W)=0$.

\begin{pf}
	Recall that $\mathcal{V}=F\left(H_{4}(W)\right)$ and $\beta$ is the homological intersection form on $\mathcal{V}$ such that $\beta$ is unimodular and $\sigma\left(\mathcal{V},\beta\right)=0$. Let $\left\{e_{i},f_{i}:1\leqslant i\leqslant r\right\}$ be a standard orthonormal basis of $(\mathcal{V},\beta)$, and there are unique classes $\widetilde{e_{i}},\widetilde{f_{i}}\in F\left(H^{4}(W,\partial W)\right)$ such that $e_{i}=\widetilde{e_{i}}\cap[W,\partial W]$, $f_{i}=\widetilde{f_{i}}\cap[W,\partial W]$. By Proposition \ref{Proposition: sym unimodular odd bilinear (V, lambda) with vanishing signature admits Lagrangian contained in ker of epim onto Z/2} and Remark \ref{Remark: on Arf type inv},
	\begin{align*}
		\Xi_{odd}(g)&=\sum_{i=1}^{r}g\left(e_{i}\right)+\sum_{i=1}^{r}g\left(f_{i}\right)\\
		&=\sum_{i=1}^{r}\left<w_{4}(W)\cup\rho_{2}\left(\widetilde{e_{i}}\right),[W,\partial W]_{2}\right>+\sum_{i=1}^{r}\left<w_{4}(W)\cup\rho_{2}\left(\widetilde{f_{i}}\right),[W,\partial W]_{2}\right>.
	\end{align*}
	Recall the formula for relative Wu class $\mathrm{U}^{4}(W)=w_{4}(W)+w_{2}(W)^{2}$. Now we apply formula \eqref{Equation: relative Wu class} and obtain
	\begin{align*}
		\Xi_{odd}(g)&=\sum_{i=1}^{r}\left<U^{4}(W)\cup\rho_{2}\left(\widetilde{e_{i}}\right),[W,\partial W]_{2}\right>+\sum_{i=1}^{r}\left<U^{4}(W)\cup\rho_{2}\left(\widetilde{f_{i}}\right),[W,\partial W]_{2}\right>\\
		&+\sum_{i=1}^{r}\left<w_{2}(W)^{2}\cup\rho_{2}\left(\widetilde{e_{i}}\right),[W,\partial W]_{2}\right>+\sum_{i=1}^{r}\left<w_{2}(W)^{2}\cup\rho_{2}\left(\widetilde{f_{i}}\right),[W,\partial W]_{2}\right>\\
		&=\sum_{i=1}^{r}\left<\rho_{2}\left(\widetilde{e_{i}}\right)\cup\rho_{2}\left(\widetilde{e_{i}}\right),[W,\partial W]_{2}\right>+\sum_{i=1}^{r}\left<\rho_{2}\left(\widetilde{f_{i}}\right)\cup\rho_{2}\left(\widetilde{f_{i}}\right),[W,\partial W]_{2}\right>\\
		&+\sum_{i=1}^{r}\left<\rho_{2}\left(p_{1}(W)\right)\cup\rho_{2}\left(\widetilde{e_{i}}\right),[W,\partial W]_{2}\right>+\sum_{i=1}^{r}\left<\rho_{2}\left(p_{1}(W)\right)\cup\rho_{2}\left(\widetilde{f_{i}}\right),[W,\partial W]_{2}\right>\\
		&=\rho_{2}\left(\sum_{i=1}^{r}\left(\beta\left(e_{i},e_{i}\right)+\beta\left(f_{i},f_{i}\right)\right)\right)\\
		&+\rho_{2}\left(\sum_{i=1}^{r}\left<p_{1}(W)\cup\widetilde{e_{i}},[W,\partial W]\right>+\sum_{i=1}^{r}\left<p_{1}(W)\cup\widetilde{f_{i}},[W,\partial W]\right>\right)\\
		&=\rho_{2}\left(\sum_{i=1}^{r}\left<p_{1}(W)\cup\widetilde{e_{i}},[W,\partial W]\right>+\sum_{i=1}^{r}\left<p_{1}(W)\cup\widetilde{f_{i}},[W,\partial W]\right>\right).
	\end{align*}
	Now we set $a_{i}=\left<p_{1}(W),e_{i}\right>$, $b_{i}=\left<p_{1}(W),f_{i}\right>$, and we have
	\begin{align*}
		p_{1}(W)&=j^{*}\left(\sum_{i=1}^{r}a_{i}\widetilde{e_{i}}-\sum_{i=1}^{r}b_{i}\widetilde{f_{i}}\right),\\
		\Lambda(W)&=\sum_{i=1}^{r}a_{i}^{2}+\sum_{i=1}^{r}b_{i}^{2},\\
		\Xi_{odd}(g)&=\rho_{2}\left(\sum_{i}^{r}a_{i}-\sum_{i=1}^{r}b_{i}\right).
	\end{align*}
	As a consequence, we obtain $\Xi_{odd}(g)=\Lambda(W)\ \mathrm{mod}\ 2$.
\end{pf}

\begin{pf}[of Theorem \ref{Theorem: main}, Statement 1]
	We have explained in Introduction that $\lambda$-invariant is well-defined for $\mathcal{G}^{3}(\mathrm{Wu})$-like manifolds, and now we start to prove the necessity of Theorem \ref{Theorem: main}, Statement 1. Assume two $\mathcal{G}^{3}(\mathrm{Wu})$-like manifolds $M_{i}$ $(i=0,1)$ are diffeomorphic, and we shall prove that $\lambda\left(M_{0}\right)=\lambda\left(M_{1}\right)$. Suppose $M_{1}\xrightarrow{h}M_{0}$ is a diffeomorphism. Then by Smale's $h$-cobordism theorem there is a bordism $W$ from $M_{0}$ to $M_{1}$ and a diffeomorphism $W\xrightarrow{H}M_{0}\times I$ such that $H|_{M_{0}}=\mathrm{id}_{M_{0}}$ and $H|_{M_{1}}=h$. Let $V_{0}$ be a coboundary of $M_{0}$ with vanishing signature. Then $V_{1}:=\left(-V_{0}\right)\cup W$ is a coboundary of $M_{1}$ and we have
	\begin{compactenum}
		\item $V_{1}$ has vanishing signature,
		\item $\Lambda\left(V_{0}\right)=\Lambda\left(V_{1}\right)$.
	\end{compactenum}
	Therefore, $\lambda\left(M_{0}\right)=\lambda\left(M_{1}\right)$.
	
	It remains to show that two $\mathcal{G}^{3}(\mathrm{Wu})$-like manifolds are diffeomorphic provided that they have the same $\lambda$-invariant.
	First we recall certain additive formulae. Their proof are basic algebraic topology and would be omitted.
	\begin{lem}\label{Lemma: char nums additive wrt gluing via boundary}
		Let $R$ denote the coefficient ring. Let $W_{1}$, $W_{2}$ be two connected compact $R$-oriented $n$-manifolds ($n>1$) with boundaries $M_{1}$, $M_{2}$ respectively (empty or disconnected). Suppose we have $x_{i}\in H^{p}\left(W_{i};R\right)$, $y_{i}\in H^{q}\left(W_{i};R\right)$ such that $p$, $q>0$, $p+q=n$ and $x_{i}|_{M_{i}}=0$, $y_{i}|_{M_{i}}=0$. Let $\widetilde{x_{i}}\in H^{p}\left(W_{i},M_{i};R\right)$, $\widetilde{y_{i}}\in H^{q}\left(W_{i},M_{i};R\right)$ be the liftings of $x_{i}$, $y_{i}$ respectively.
		\begin{compactenum}
			\item Form $W=W_{1}\# W_{2}$ and denote $M=\partial W$. Then $x_{i}$, $y_{i}$ uniquely determine the classes $x\in H^{p}(W;R)$, $y\in H^{q}(W;R)$ under the canonical isomorphisms $H^{j}(W;R)\cong H^{j}\left(W_{1};R\right)\oplus H^{j}\left(W_{2};R\right)$ $(j=p,q)$ with $x|_{M}=0$, $y|_{M}=0$. Choose liftings $\widetilde{x}\in H^{p}(W,M;R)$, $\widetilde{y}\in H^{q}(W,M;R)$. Then we have the following identity, in which the value of each summand does not depend on choices of liftings:
			\[\left<\widetilde{x}\cup\widetilde{y},\left[W,M\right]_{R}\right>=\left<\widetilde{x_{1}}\cup\widetilde{y_{1}},\left[W_{1},M_{1}\right]_{R}\right>+\left<\widetilde{x_{2}}\cup\widetilde{y_{2}},\left[W_{2},M_{2}\right]_{R}\right>.\]
			\item When $M_{1}=M_{2}=M$, we glue $W_{i}$ along the common coboundary with compactible orientations and obtain the oriented closed manifold $X=W_{1}\cup_{M} W_{2}$. In the Mayer-Vietoris sequence 
			$$H^{j}(X;R)\to H^{j}\left(V_{1},M;R\right)\oplus H^{j}\left(V_{2},M;R\right),$$ 
			let $x\in H^{p}(X;R)$, $y\in H^{q}(X;R)$ be liftings of $\left(x_{1},x_{2}\right)$, $\left(y_{1},y_{2}\right)$. Then we have the following identity, in which the value of each summand does not depend on choices of liftings:
			\[\left<x\cup y,\left[W\right]_{R}\right>=\left<\widetilde{x_{1}}\cup\widetilde{y_{1}},\left[W_{1},M_{1}\right]_{R}\right>+\left<\widetilde{x_{2}}\cup\widetilde{y_{2}},\left[W_{2},M_{2}\right]_{R}\right>.\]
		\end{compactenum}
	\end{lem}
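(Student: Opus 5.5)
The plan is to reduce both identities to naturality of cup and cap products under excision, with the separating hypersurface added to the boundary. Lemma \ref{Lemma: Relative char nums} already shows that each term is independent of the liftings, so we are free to choose the most convenient ones.

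\emph{Statement 2 (closed case).} Let $X=W_{1}\cup_{M}W_{2}$ and regard $M\subset X$ as a two-sided hypersurface with a collar. By excision,
\[
H^{j}(X,M;R)\cong H^{j}(W_{1},M;R)\oplus H^{j}(W_{2},M;R),
\]
and under the dual isomorphism in degree $n$ the image of $[X]_{R}$ in $H_{n}(X,M;R)$ is $\left([W_{1},M]_{R},[W_{2},M]_{R}\right)$. This uses that the orientations of $W_{1}$ and $W_{2}$ are compatible along $M$. The map $H^{j}(X;R)\to H^{j}(W_{1},M;R)\oplus H^{j}(W_{2},M;R)$ in the Mayer--Vietoris sequence of the statement factors through $H^{j}(X,M;R)$. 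So I would first lift $x$ to $\widehat{x}\in H^{p}(X,M;R)$ with components $(\widetilde{x_{1}},\widetilde{x_{2}})$; if necessary, replace the $\widetilde{x_{i}}$ by these components, which is allowed by Lemma \ref{Lemma: Relative char nums}. Next use the mixed cup product $H^{p}(X,M;R)\times H^{q}(X;R)\to H^{n}(X,M;R)$. Since $\widehat{x}$ restricts to $x$, we get
\[
\left<x\cup y,[X]_{R}\right>=\left<\widehat{x}\cup y,[X,M]_{R}\right>.
\]
By naturality under excision the right-hand side splits as $\sum_{i}\left<\widetilde{x_{i}}\cup y_{i},[W_{i},M]_{R}\right>$. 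The last step is to rewrite $\widetilde{x_{i}}\cup y_{i}=\widetilde{x_{i}}\cup\widetilde{y_{i}}$; this is exactly the diagram of Figure \ref{Figure: The relative cup prod}.

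\emph{Statement 1 (connected sum).} Let $W=W_{1}\#W_{2}$ be the interior connected sum along a separating sphere $S=S^{n-1}$, so that $W=W_{1}^{\circ}\cup_{S}W_{2}^{\circ}$ with $W_{i}^{\circ}=W_{i}\setminus\mathring{D}^{n}$. Excision gives
\[
H^{j}(W,M\cup S;R)\cong\bigoplus_{i}H^{j}(W_{i}^{\circ},M_{i}\cup S;R)\cong\bigoplus_{i}H^{j}(W_{i},M_{i}\sqcup D^{n};R)\cong\bigoplus_{i}H^{j}(W_{i},M_{i};R)
\]
for $j\geqslant1$, the last isomorphism because $D^{n}$ is contractible. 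Moreover $[W,M]_{R}$ maps to $[W,M\cup S]_{R}$, which corresponds to $\left([W_{1},M_{1}]_{R},[W_{2},M_{2}]_{R}\right)$. I would then lift $x$ to $H^{p}(W,M\cup S;R)$ and run the same mixed-cup-product computation as in Statement 2.

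The main obstacle I expect is this lifting step. It runs through the exact sequence
\[
H^{p-1}(S)\to H^{p}(W,M\cup S)\to H^{p}(W,M)\to H^{p}(S).
\]
For $p\neq n-1$ the last group vanishes, so the lift exists automatically. If $p=n-1$, I would swap the roles of $x$ and $y$, using graded commutativity and $q=1\neq n-1$ for $n>2$. The case $n=2$ I would treat by hand; alternatively, one can note directly that the restriction of $x$ to $S$ vanishes, because $x$ is pulled back from $W_{1}\vee W_{2}$-type data through the canonical isomorphism.

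The remaining bookkeeping is routine: orientation signs, and checking that the canonical isomorphism $H^{j}(W)\cong H^{j}(W_{1})\oplus H^{j}(W_{2})$ (for $0<j<n-1$) is compatible with these excisions.
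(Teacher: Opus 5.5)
Your argument is correct, and it is the routine excision-plus-naturality argument that the paper omits as ``basic algebraic topology'': add the separating hypersurface ($M$, resp.\ $M\cup S$) to the boundary, lift one factor there, split the fundamental class by excision, and return to the pieces using the mixed cup product of Lemma~\ref{Lemma: Relative char nums}. There are two inaccuracies, neither of which affects the logic. First, there is no natural map $H^{j}(X;R)\to H^{j}(X,M;R)$ for the Mayer--Vietoris map to ``factor through'' (the display in the statement is itself garbled); what you actually use is $x|_{M}=x_{1}|_{M}=0$ to lift $x$, together with naturality of $(W_{i},\varnothing)\to(W_{i},M)\to(X,M)$ to see that the excision components of the lift are liftings of the $x_{i}$. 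Second, $H^{1}(W_{i},M_{i}\sqcup D^{n};R)\to H^{1}(W_{i},M_{i};R)$ is not injective when $M_{i}\neq\varnothing$, so your chain of isomorphisms fails in degree $1$; however, in degree $p$ you only need this map, not an inverse, and in degree $n\geqslant2$ it is an isomorphism.
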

	Now suppose we have two $\mathcal{G}^{3}(\mathrm{Wu})$-like manifolds $M_{i}$ $(i=0,1)$ with $\lambda\left(M_{0}\right)=\lambda\left(M_{1}\right)$, saying $V_{i}$ is a coboundary of $M_{i}$ with vanishing signature and $X$ is a closed $8$-manifold with vanishing signature such that $\Lambda\left(V_{0}\right)-\Lambda\left(V_{1}\right)=\Lambda(X)$. Form $W_{0}:=V_{0}\# V_{1}\#(-X)$, and by Lemma \ref{Lemma: char nums additive wrt gluing via boundary}, Statement 2, $W_{0}$ is a bordism between $M_{0}$, $M_{1}$ with $\sigma\left(W_{0},\partial W_{0}\right)=0$ and $\Lambda\left(W_{0}\right)=0$. It is unclear whether the classifying map $W_{0}\xrightarrow{\psi_{0}}\mathcal{B}$ is $4$-connected. Suppose after appropriate sequence of surgeries on $W_{0}$ we obtain another bordism $W_{1}$ between $M_{i}$ with $W_{1}\xrightarrow{\psi_{1}}\mathcal{B}$ a $4$-equivalence. Form closed $8$-manifolds $X_{0}:=\left(-W_{0}\right)\cup_{M_{0}\sqcup M_{1}}W_{0}$, $X_{1}:=\left(-W_{0}\right)\cup_{M_{0}\sqcup M_{1}}W_{1}$, and $X_{1}$ is obtained from $X_{0}$ via exactly the same sequence of surgeries. In particular, $X_{0}$ and $X_{1}$ are bordant. Since signature and Pontryagin numbers are bordism invariants, we have
	$$\sigma\left(X_{1}\right)=\sigma\left(X_{0}\right),\ p_{1}^{2}\left(X_{1}\right)=p_{1}^{2}\left(X_{0}\right).$$
	Moreover, it follows from Lemma \ref{Lemma: char nums additive wrt gluing via boundary}, Statement 2 that 
	\begin{align*}
		\sigma\left(X_{i}\right)&=\sigma\left(W_{i},\partial W_{i}\right)-\sigma\left(W_{0},\partial W_{0}\right),\\
		p_{1}^{2}\left(X_{i}\right)&=\Lambda\left(W_{i}\right)-\Lambda\left(W_{0}\right).
	\end{align*}
	Therefore, $\sigma\left(W_{1},\partial W_{1}\right)=\sigma\left(W_{0},\partial W_{0}\right)=0$ and $\Lambda\left(W_{1}\right)=\Lambda\left(W_{0}\right)=0$. 
	By Propositions \ref{Proposition: identify surgery obstruction}, \ref{Propositioin: theta_0 elementary, equivalent condition} and Lemmata \ref{Lemma: transform abstract charaterization of obstruction into computable char numbers}$\sim$\ref{Lemma: Arf_odd=p_1^2 mod 2}, two $\mathcal{G}^{3}(\mathrm{Wu})$-like manifolds $\left(M_{i},\varphi_{i}\right)$ $(i=0,1)$ are diffeomorphic if and only if there is a bordism $(W,\psi)$ between them, such that
	\begin{compactenum}
		\item $W\xrightarrow{\psi}\mathcal{B}$ is a $4$-equivalence;
		\item $\sigma\left(W,\partial W\right)=0$;
		\item $\Lambda(W)=0\in\mathbb{Z}$.
	\end{compactenum}
	As a consequence, $M_{0}$ and $M_{1}$ are diffeomorphic, thereby proving that invariant $\lambda$ is a diffeomorphism invariant of $\mathcal{G}^{3}(\mathrm{Wu})$-like manifolds.
\end{pf}

\subsection{Classify $\mathcal{G}^{3}_{p}\left(S^{5}\right)$-like manifolds}\label{Subsection: Classify G_3^p S^5 like mfds}
Throughout this subsection, $\mathcal{B}=BSpin\times K\left(\mathbb{Z}\big/p,2\right)$ and $h^{q}$, $h_{q}$ denote the mod $p$ (co)homology. The arrangement of this subsection is parallel to Subsection \ref{Subsection: Classify G_3 Wu like mfds}, in which most conclusions and proofs are similar and the only difference is the coefficient ring. Hence we omit the common arguments and exhibit what is different in the proof.

\begin{lem}\label{Lemma: further identify H_4(B), spin}
	$H_{4}(\mathcal{B})$ admits the unique generator set $\left(\overline{p_{1}}\spcheck,\alpha\right)$ such that
	\begin{compactenum}
		\item $H_{4}(\mathcal{B})=\mathbb{Z}\left\{\overline{p_{1}}\spcheck\right\}\oplus\mathbb{Z}\big/p\left\{\alpha\right\}$;
		\item $\left<\overline{p_{1}},\overline{p_{1}}\spcheck\right>=1$;
		\item $\rho_{p}\left(\overline{p_{1}}\spcheck\right)=\left(\rho_{p}\overline{p_{1}}\right)\spcheck$, $\rho_{p}(\alpha)=\left(\iota_{p}^{2}\right)\spcheck$.
	\end{compactenum}
\end{lem}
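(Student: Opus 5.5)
The plan is to follow the proof of Lemma \ref{Lemma: further identify H_4(B)}, using the Künneth splitting of $\mathcal{B}=BSpin\times K\left(\mathbb{Z}\big/p,2\right)$ to separate the two summands. By Lemma \ref{Lemma: homology of BSpin and BSpin times K(Z/p,2)}, in degree $4$ the Künneth formula gives
\[H_{4}(\mathcal{B})\cong H_{4}(BSpin)\oplus H_{4}\left(\mathbb{Z}\big/p,2\right)\cong\mathbb{Z}\oplus\mathbb{Z}\big/p .\]
The mixed terms $H_{2}(BSpin)\otimes H_{2}$ and the Tor terms vanish because $BSpin$ is $3$-connected. So the torsion subgroup is exactly the image of $H_{4}\left(\mathbb{Z}\big/p,2\right)$. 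Uniqueness is handled as in the $BSO$ case: $\left<\overline{p_{1}},\overline{p_{1}}\spcheck\right>=1$ pins down the free generator modulo torsion, and the condition on $\rho_{p}$ pins down the torsion correction and $\alpha$ itself.

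For existence, first let $\gamma$ be a generator of the $\mathbb{Z}$-summand coming from $H_{4}(BSpin)$, and let $\alpha_{0}$ generate the torsion. Since $H^{4}(\mathcal{B})$ modulo torsion is $\mathbb{Z}\left\{\overline{p_{1}}\right\}$, with the $K\left(\mathbb{Z}\big/p,2\right)$ contribution $H^{4}\left(\mathbb{Z}\big/p,2\right)$ being torsion, the universal coefficient theorem gives $\left<\overline{p_{1}},\alpha_{0}\right>=0$. After a sign change it also gives $\left<\overline{p_{1}},\gamma\right>=1$. This uses the fact, from Lemma \ref{Lemma: homology of BSpin and BSpin times K(Z/p,2)}, that $\overline{p_{1}}$ generates $H^{4}(BSpin)\cong\mathbb{Z}$.

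Next, compute $h_{4}(\mathcal{B})$ and $h^{4}(\mathcal{B})$ mod $p$. Because $p$ is odd, $BSpin$ has $h^{4}=\mathbb{Z}\big/p\left\{\rho_{p}\overline{p_{1}}\right\}$, and $K\left(\mathbb{Z}\big/p,2\right)$ has $h^{4}=\mathbb{Z}\big/p\left\{\iota_{p}^{2}\right\}$ (Hatcher, as cited). Hence $h^{4}(\mathcal{B})$ has basis $\rho_{p}\overline{p_{1}},\iota_{p}^{2}$; the cross term $h^{2}(BSpin)\otimes h^{2}$ is zero. The map $\rho_{p}\colon H_{4}(\mathcal{B})\to h_{4}(\mathcal{B})$ is onto, since $H_{3}(\mathcal{B})=0$ and so there is no Tor obstruction.

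Then apply $\rho_{p}$ to the Kronecker pairings, using the mod $p$ analogue of Figure \ref{Figure: rho_2 & Kronecker pairing}. From $\left<\overline{p_{1}},\alpha_{0}\right>=0$ and $\left<\overline{p_{1}},\gamma\right>=1$, the $\left(\rho_{p}\overline{p_{1}}\right)\spcheck$-coordinate of $\rho_{p}\alpha_{0}$ is $0$ and that of $\rho_{p}\gamma$ is $1$. Since $\rho_{p}$ is onto, $\rho_{p}\alpha_{0}$ must then be a nonzero multiple $c\left(\iota_{p}^{2}\right)\spcheck$. Write $\rho_{p}\gamma=\left(\rho_{p}\overline{p_{1}}\right)\spcheck+d\left(\iota_{p}^{2}\right)\spcheck$. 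Now set $\alpha=c^{-1}\alpha_{0}$ (an integer inverse of $c$ mod $p$) and $\overline{p_{1}}\spcheck=\gamma-dc^{-1}\alpha_{0}$. This yields exactly the three stated properties.

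The only step needing care is the identification of $h^{4}(\mathcal{B})$, in particular that $h^{4}(BSpin)$ is generated by $\rho_{p}\overline{p_{1}}$. It follows from $H^{4}(BSpin)\cong\mathbb{Z}\left\{\overline{p_{1}}\right\}$ together with $H^{5}(BSpin)=0$ and the universal coefficient theorem; the rest is bookkeeping.
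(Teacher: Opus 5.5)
Your proof is correct and is essentially the argument the paper intends: the paper only remarks that the proof of Lemma \ref{Lemma: further identify H_4(B)} carries over once one uses the basis $\{\rho_{p}\overline{p_{1}},\iota_{p}^{2}\}$ of $h^{4}(\mathcal{B})$ and $\mathbb{Z}/p$ coefficients, and that is what you carry out. Your explicit rescaling $\alpha=c^{-1}\alpha_{0}$ is needed because over $\mathbb{Z}/p$ the nonzero coefficient of $\rho_{p}\alpha_{0}$ need not be $1$, unlike the mod $2$ case; it is exactly the small adjustment the paper's ``the argument still applies'' leaves implicit.
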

To prove this lemma, we notice that $h^{4}(\mathcal{B})=\mathbb{Z}\big/p\left\{\rho_{p}\overline{p_{1}},\iota_{p}^{2}\right\}\cong\left(\mathbb{Z}\big/p\right)^{2}$, and the argument in the proof of Lemma \ref{Lemma: further identify H_4(B)} still applies here. The only two differences are bases of $h^{4}(\mathcal{B})$ and the coefficient rings. As a straightforward corollary, we have
\begin{cor}
	Denote $x_{\psi}:=\widehat{\psi}^{*}\iota_{p}$. Then $H_{4}(\psi)$ is given by
	\begin{equation}\label{Equation: effect of H_4(psi), spin}
		(H_{4}(\psi))(x)=\left<\overline{p_{1}}(W),x\right>\overline{p_{1}}\spcheck+\left<x_{\psi}^{2},\rho_{p}(x)\right>\alpha,\quad\forall x\in H_{4}(W).
	\end{equation}
\end{cor}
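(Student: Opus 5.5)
The corollary will follow directly from Lemma \ref{Lemma: further identify H_4(B), spin}. The idea is to express an element of $H_{4}(\mathcal{B})$ by Kronecker pairings against the generators $\overline{p_{1}}$ and $\iota_{p}^{2}$, and then pull these pairings back along $\psi$ by naturality. Write $(H_{4}(\psi))(x)=a\,\overline{p_{1}}\spcheck+b\,\alpha$ with $a\in\mathbb{Z}$ and $b\in\mathbb{Z}\big/p$, and determine $a$ and $b$ separately.

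For $a$, pair with $\overline{p_{1}}\in H^{4}(\mathcal{B})$. Since $\alpha$ is torsion, $\left<\overline{p_{1}},\alpha\right>=0$. Together with $\left<\overline{p_{1}},\overline{p_{1}}\spcheck\right>=1$ this gives $a=\left<\overline{p_{1}},H_{4}(\psi)(x)\right>=\left<\psi^{*}\overline{p_{1}},x\right>$. Now $\psi=(\widetilde{\nu_{W}},\widehat{\psi})$ and $\overline{p_{1}}$ is pulled back from the $BSpin$ factor. Hence $\psi^{*}\overline{p_{1}}=\overline{p_{1}}(W)$, the spin Pontryagin class of the normal bundle. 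As in the $BSO$ case, I would adopt the paper's standing convention identifying normal and tangent classes in this degree, which is harmless for the argument since only kernels and the specific element $v$ matter.

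For $b$, reduce mod $p$. Let $\rho_{p}\colon H_{4}(\mathcal{B})\to h_{4}(\mathcal{B})$ be the reduction map. By the lemma, $\rho_{p}(\overline{p_{1}}\spcheck)=(\rho_{p}\overline{p_{1}})\spcheck$ and $\rho_{p}(\alpha)=(\iota_{p}^{2})\spcheck$, where these are dual bases of $h_{4}(\mathcal{B})$. Hence $b=\left<\iota_{p}^{2},\rho_{p}H_{4}(\psi)(x)\right>$. Since $\rho_{p}$ commutes with $H_{4}(\psi)$, this equals $\left<\iota_{p}^{2},h_{4}(\psi)\rho_{p}(x)\right>=\left<\psi^{*}(\iota_{p}^{2}),\rho_{p}(x)\right>$. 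Here $\iota_{p}^{2}$ is pulled back from the $K(\mathbb{Z}/p,2)$ factor, so $\psi^{*}(\iota_{p}^{2})=(\widehat{\psi}^{*}\iota_{p})^{2}=x_{\psi}^{2}$.

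There is no serious obstacle. The only point needing care is that $b$ is well defined in $\mathbb{Z}/p$. This holds because $\alpha$ has order $p$ and $\rho_{p}$ is injective on the subgroup $\mathbb{Z}/p\{\alpha\}$, which is ensured by $\rho_{p}(\alpha)\neq0$. Combining the two computations yields formula \eqref{Equation: effect of H_4(psi), spin}.
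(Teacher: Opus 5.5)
Your proposal is correct and is exactly the argument the paper leaves implicit when it calls this a straightforward corollary of the preceding lemma. The free coefficient comes from pairing with $\overline{p_{1}}$, which kills the torsion generator $\alpha$; the torsion coefficient comes from reducing mod $p$ and pairing with $\iota_{p}^{2}$, using naturality of $\rho_{p}$ and of the Kronecker pairing together with $\psi^{*}(\iota_{p}^{2})=x_{\psi}^{2}$. Your caveat about normal versus tangent $\overline{p_{1}}$ is the right one, and it is harmless: the sign only replaces $v$ by $-v$, and alternatively bundle inversion lifts to $BSpin$, so $\psi$ may be taken tangential without changing $\ker H_{4}(\psi)$.
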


\begin{lem}
	$\left(F\left(H_{4}(W)\right),\beta\right)$ is odd.
\end{lem}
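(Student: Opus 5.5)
The plan is to follow the proof of Lemma \ref{Lemma: (FH_4W, beta) is odd}. Since $W$ is spin, the $2$-primary input now comes from the spin Pontryagin class $\overline{p_{1}}$ rather than from a Stiefel--Whitney class of $BSO$. First, as there, Poincar\'{e}--Lefschetz duality identifies $\left(F\left(H_{4}(W)\right),\beta\right)$ with $\left(F\left(H^{4}(W,\partial W)\right),\check{\beta}\right)$. Applying the relative Wu formula \eqref{Equation: relative Wu class} to a relative class $x$ gives
$$\left<x\cup x,[W,\partial W]\right>\ \mathrm{mod}\ 2=\left<\mathrm{U}^{4}(W),\rho_{2}\left(x\cap[W,\partial W]\right)\right>,$$
with $\mathrm{U}^{4}(W)=w_{4}(W)+w_{2}(W)^{2}$. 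Hence $\beta$ is odd if and only if $\left<\mathrm{U}^{4}(W),\rho_{2}(z)\right>\neq0$ for some $z\in H_{4}(W)$.

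Next I would simplify $\mathrm{U}^{4}(W)$ using the spin structure. $W$ is spin because the reference map $\psi$ factors through $BSpin$, so $w_{2}(W)=0$. For spin bundles one has the standard identity $w_{4}=\rho_{2}\left(\overline{p_{1}}\right)$, which holds universally in $H^{4}\left(BSpin;\mathbb{Z}\big/2\right)$ and pulls back along $\widetilde{\nu_{W}}$. Therefore $\mathrm{U}^{4}(W)=\rho_{2}\left(\overline{p_{1}}(W)\right)$, and for every $z\in H_{4}(W)$
$$\beta(z,z)\equiv\left<\overline{p_{1}}(W),z\right>\ \mathrm{mod}\ 2 .$$
Torsion classes pair trivially, so this congruence descends to $F\left(H_{4}(W)\right)$.

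It then suffices to find $z\in H_{4}(W)$ with $\left<\overline{p_{1}}(W),z\right>$ odd. By Proposition \ref{Proposition: identify surgery obstruction, G_3^p S^5}, Statement 1, the map $H_{4}(\psi)$ is surjective. Choose $z$ with $H_{4}(\psi)(z)=\overline{p_{1}}\spcheck$. Formula \eqref{Equation: effect of H_4(psi), spin} together with Lemma \ref{Lemma: further identify H_4(B), spin} then gives $\left<\overline{p_{1}}(W),z\right>=\left<\overline{p_{1}},\overline{p_{1}}\spcheck\right>=1$, which is odd. The image of $z$ in $F\left(H_{4}(W)\right)$ therefore has odd square, and $\beta$ is odd.

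This is simpler than the non-spin case, since no splitting of $h_{4}(W)$ or boundary computation is needed. The only point I expect to need care is the identity $w_{4}=\rho_{2}\left(\overline{p_{1}}\right)$ on $BSpin$. I would check it directly: $H^{4}\left(BSpin;\mathbb{Z}\big/2\right)=\mathbb{Z}\big/2\left\{w_{4}\right\}$, the class $\overline{p_{1}}$ generates $H^{4}(BSpin)\cong\mathbb{Z}$, and $H^{5}(BSpin)=0$ by Lemma \ref{Lemma: homology of BSpin and BSpin times K(Z/p,2)}, so reduction mod $2$ from $H^{4}(BSpin)$ is surjective onto $H^{4}\left(BSpin;\mathbb{Z}\big/2\right)$.
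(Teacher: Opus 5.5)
Your proof is correct. Its skeleton is the paper's: Poincar\'{e}--Lefschetz duality and the relative Wu formula with $\mathrm{U}^{4}(W)=w_{4}(W)$ reduce oddness to finding $z\in H_{4}(W)$ with $\left<w_{4}(W),\rho_{2}(z)\right>\neq0$.

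Where you differ is in how $z$ is produced. The paper argues mod $2$. Since $\psi$ is a $4$-equivalence, $H^{4}\left(\mathcal{B};\mathbb{Z}\big/2\right)\to H^{4}\left(W;\mathbb{Z}\big/2\right)$ is injective, so $w_{4}(W)\neq0$. Surjectivity of $\rho_{2}\colon H_{4}(W)\to H_{4}\left(W;\mathbb{Z}\big/2\right)$ then supplies an integral class detecting $w_{4}(W)$. You instead lift $w_{4}$ integrally to $\overline{p_{1}}$; your check via $H^{5}(BSpin)=0$ is right. You then use the integral surjectivity of $H_{4}(\psi)$ to take $z\mapsto\overline{p_{1}}\spcheck$ directly.

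Your version gives an explicit odd vector. It also gives the sharper congruence $\beta(z,z)\equiv\left<\overline{p_{1}}(W),z\right>$ mod $2$, i.e.\ the element $v$ dual to $\overline{p_{1}}(W)$ is characteristic for $\beta$. The lifting trick is special to the spin setting, however. In the $\mathcal{G}^{3}(\mathrm{Wu})$ case the universal class $w_{4}+w_{2}^{2}\in H^{4}\left(BSO;\mathbb{Z}\big/2\right)$ is not the reduction of an integral class, because $\delta w_{4}\neq0$ in $H^{5}(BSO)$. That is why the paper's mod $2$ argument is the one that parallels the non-spin lemma.

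One harmless point: $\psi$ classifies the stable normal bundle, so strictly $\psi^{*}\overline{p_{1}}=-\overline{p_{1}}(W)$ and you get $\left<\overline{p_{1}}(W),z\right>=\pm1$. Only the parity matters, so the conclusion stands.
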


\begin{pf}\label{Lemma: (FH_4W, beta) is odd, spin}
	We still apply the idea in the proof of Lemma \ref{Lemma: (FH_4W, beta) is odd}. This time $W$ is spin, and the relative Wu class is given by $\mathrm{U}^{4}(W)=w_{4}(W)$. Hence we obtain
	\begin{align*}
		\left<x\cup x,[W,\partial W]\right>\ \mathrm{mod}\ 2&=\left<\left(\rho_{2}(x)\right)\cup\left(\rho_{2}(x)\right),[W,\partial W]_{2}\right>\\
		&=\left<\left(\rho_{2}(x)\right)\cup\left(w_{4}(W)\right),[W,\partial W]_{2}\right>\\
		&=\left<w_{4}(W),\rho_{2}\left(x\cap [W,\partial W]\right)\right>,\ \forall x\in H^{4}(W,\partial W).
	\end{align*}
	Therefore, $\check{\beta}$ is even if and only if 
	$$\left<w_{4}(W),\rho_{2}(x)\right>=0,\ \forall x\in H_{4}(W).$$
	The long exact sequence of homology groups of $W$ induced by the short exact sequence of coefficient rings $\mathbb{Z}\rightarrowtail\mathbb{Z}\twoheadrightarrow\mathbb{Z}\big/2$ implies that $H_{4}(W)\xrightarrow{\rho_{2}}H_{4}\left(W;\mathbb{Z}\big/2\right)$ is epic, hence $\check{\beta}$ is even if and only if 
	$$\left<w_{4}(W),x\right>=0,\ \forall x\in H_{4}\left(W;\mathbb{Z}\big/2\right).$$
	Meanwhile, by assumption $\mathcal{B}\xrightarrow{\psi}W$ is a $4$-equivalence, hence the long exact sequence of mod $2$ cohomology groups associated to $(\mathcal{B},W)$ implies that $H_{4}\left(\mathcal{B};\mathbb{Z}\big/2\right)\xrightarrow{\psi^{*}}H_{4}\left(W;\mathbb{Z}\big/2\right)$ is monic. Since $p$ is an odd prime, we have $H_{4}\left(\mathcal{B};\mathbb{Z}\big/2\right)\cong H_{4}\left(BSpin;\mathbb{Z}\big/2\right)=\mathbb{Z}\big/2\left\{w_{4}\right\}$, where $w_{4}$ is the universal 4th Stiefel-Whitney class and $w_{4}=\rho_{2}\overline{p_{1}}$. Hence $w_{4}(W)\neq0$ and $\check{\beta}$, $\beta$ must be odd.
\end{pf}

\begin{prop}\label{Propositioin: theta_0 elementary, equivalent condition, spin}
	$\theta_{0}(W,\psi)$ is elementary if and only if the following conditions hold:
	\begin{compactenum}
		\item $\left(\mathcal{V},\beta\right)$ has signature $\sigma(\mathcal{V},\beta)=0$;
		\item $\beta(v,v)=0$;
		\item $g(v)=0$;
		\item $\Xi_{odd}\left(\mathcal{V},\beta,g\right)=0$.
	\end{compactenum}
\end{prop}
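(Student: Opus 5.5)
The plan is to transplant the proof of Proposition \ref{Propositioin: theta_0 elementary, equivalent condition} almost verbatim, after fixing the spin-case data. Set $\mathcal{V}=F\left(H_{4}(W)\right)$. By the corollary of Lemma \ref{Lemma: further identify H_4(B), spin}, $H_{4}(\psi)$ induces an epimorphism $\mathcal{V}\xrightarrow{(f,g)}\mathbb{Z}\oplus\mathbb{Z}\big/p$ with $f(x)=\left<\overline{p_{1}}(W),x\right>$ and $g(x)=\left<x_{\psi}^{2},\rho_{p}(x)\right>$. The torsion of $H_{4}(W)$ is $\mathbb{Z}\big/p$ and lies in $\ker H_{4}(\psi)$ (Proposition \ref{Proposition: identify surgery obstruction, G_3^p S^5}). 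Hence $(f,g)$ is well defined on $\mathcal{V}$, and $\mathcal{K}:=F\left(\ker H_{4}(\psi)\right)=\ker(f,g)$. Since $\beta$ is unimodular, define $v=v_{f}$ by $f(x)=\beta(x,v)$; this $v$ is primitive because $f$ is epic. Then $\theta_{0}(W,\psi)$ is represented by $\left(\mathcal{V}\xleftarrow{\iota}\mathcal{K}\xrightarrow{\iota}\mathcal{V},\beta\right)$. One caveat: here $g$ takes values in $\mathbb{Z}\big/p$, so condition 4 must be read with $\Xi_{odd}$ adapted to an epimorphism onto $\mathbb{Z}\big/p$. I will invoke the Arf-type results of Section \ref{Section: Arf inv} in that form.

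\textbf{Step 1: reduction to Lagrangians.} Stabilize by $H^{k}$, setting $\mathcal{V}_{k}=\mathcal{V}\oplus\mathcal{H}^{k}$, $f_{k}=f\circ\mathrm{pr}_{\mathcal{V}}$, $g_{k}=g\circ\mathrm{pr}_{\mathcal{V}}$ and $\mathcal{K}_{k}=\ker(f_{k},g_{k})$. Because $f_{0}=f_{1}=\iota_{k}$ and $\beta_{k}$ is unimodular, (e3) implies (e1). So $\theta_{0}$ is elementary if and only if, for some $k$, $(\mathcal{V}_{k},\beta_{k})$ has a Lagrangian $\mathcal{U}\subset\mathcal{K}_{k}$. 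Stabilization preserves oddness (by the preceding lemma, $\beta$ is odd), preserves $\beta(v,v)$, and keeps $v$ characterized by $f_{k}$.

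\textbf{Step 2: necessity.} Comparing ranks and signatures forces $\sigma(\mathcal{V},\beta)=0$. Since $\mathcal{U}\subset\ker f_{k}$, we get $\beta_{k}(u,v)=0$ for all $u\in\mathcal{U}$. Thus $v\in\mathcal{U}^{\perp}=\mathcal{U}\subset\mathcal{K}_{k}$, which gives $\beta(v,v)=0$ and $g(v)=0$. Finally, $\mathcal{U}/\left<v\right>$ is a Lagrangian of $\overline{\mathcal{V}_{k}}=(v)^{\perp}/\left<v\right>$ contained in $\ker\overline{g_{k}}$, so $\Xi_{odd}$ vanishes by the Lagrangian characterization of $\Xi_{odd}$ in Section \ref{Section: Arf inv}.

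\textbf{Step 3: sufficiency.} Assume the four conditions hold and take $k\geqslant1$. Because $\beta_{k}(v,v)=0$, $v$ is primitive and isotropic, so $v$ spans a hyperbolic summand $\mathbb{Z}\{v,w\}$. Then $\overline{\mathcal{V}_{k}}=(v)^{\perp}/\left<v\right>$ is isometric to $\mathbb{Z}\{v,w\}^{\perp}$; it is unimodular, and its signature is $0$. Because $g_{k}(v)=0$, $g_{k}$ descends to $\overline{g_{k}}$, with $\overline{\mathcal{K}_{k}}=\mathcal{K}_{k}/\left<v\right>=\ker\overline{g_{k}}$. Any Lagrangian of $\overline{\mathcal{V}_{k}}$ inside $\ker\overline{g_{k}}$ lifts, together with $v$, to a Lagrangian of $\mathcal{V}_{k}$ inside $\mathcal{K}_{k}$. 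Its preimage lies in $(v)^{\perp}=\ker f_{k}$, which handles the $f_{k}$-component.

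\textbf{Main obstacle.} I need to know that $\overline{\beta_{k}}$ is still odd, or otherwise choose the relevant invariant, and then apply the $\mathbb{Z}\big/p$-version of Proposition \ref{Proposition: sym unimodular odd bilinear (V, lambda) with vanishing signature admits Lagrangian contained in ker of epim onto Z/2} together with additivity (Proposition \ref{Proposition: Arf inv of direct sum}) to get $\Xi_{odd}(\overline{g_{k}})=\Xi_{odd}(g)$. For $p$ odd, I expect the invariant to be the image $\sum_{i}g(e_{i})\pm g(f_{i})$ computed in a standard basis, or an analogous quadratic quantity. The key point I would check first is that a Lagrangian can always be moved into $\ker\overline{g_{k}}$ after stabilization exactly when this invariant vanishes, using the transitivity of the automorphism group of an odd indefinite form on primitive vectors of fixed square and fixed divisibility.
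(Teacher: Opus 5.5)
Your outline (stabilize, reduce to a Lagrangian of $(\mathcal{V}_{k},\beta_{k})$ inside $\mathcal{K}_{k}$, force $v$ into it, pass to $(v)^{\perp}/\langle v\rangle$) is the paper's. However, the step you leave open as the ``main obstacle'' is where condition 4 is actually proved, and your expectation about it is wrong.

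Since $W$ is spin, $\beta(x,x)\equiv\langle w_{4}(W),\rho_{2}(x)\rangle=\langle\rho_{2}\overline{p_{1}}(W),\rho_{2}(x)\rangle\equiv f(x)=\beta(x,v)\pmod 2$. So $v$, and also $v\oplus 0\in\mathcal{V}_{k}$, is a characteristic vector. Hence your claim that $v$ spans a hyperbolic summand is false: every $w$ with $\beta_{k}(v,w)=1$ has $\beta_{k}(w,w)$ odd, so $\mathbb{Z}\{v,w\}\cong\langle 1\rangle\oplus\langle -1\rangle$. The isometry $\overline{\mathcal{V}_{k}}\cong\mathbb{Z}\{v,w\}^{\perp}$ and the vanishing signature survive, since that plane is unimodular of signature $0$. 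But $\overline{\beta_{k}}$ is \emph{even}: $\beta_{k}(u,u)\equiv\beta_{k}(u,v)=0$ for $u\in(v)^{\perp}$. So the odd Lagrangian criterion cannot be applied to the quotient; you need the even criterion of Section~\ref{Section: Arf inv} for $p\geqslant 3$. (The paper's proof, which says the non-spin argument carries over, passes over the same point. In the non-spin case $\beta(x,x)\equiv f(x)+g(x)$ with $g\neq 0$, so $v$ is not characteristic and a hyperbolic partner does exist.)

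The missing ingredient is $\Xi_{even}(\overline{g_{k}})=\Xi_{odd}(g)$, and it is short.
\begin{itemize}
\item Let $\xi\in\mathcal{V}_{k}/p\mathcal{V}_{k}$ represent $g_{k}$ via the mod $p$ reduction of $\beta_{k}$.
\item Computing in a standard orthogonal basis, resp.\ a hyperbolic basis, shows that $\Xi_{odd}$, resp.\ $\Xi_{even}$, equals the mod $p$ square of $\xi$. This is what the factor $2$ in $\Xi_{even}$ is for.
\item Since $g_{k}(v)=0$, $\xi$ is orthogonal to $v$ mod $p$. Its class in $\left((v)^{\perp}/\langle v\rangle\right)\otimes\mathbb{Z}/p$ represents $\overline{g_{k}}$ and has the same square.
\item Hence $\Xi_{even}(\overline{g_{k}})=\Xi_{odd}(g_{k})=\Xi_{odd}(g)$.
\end{itemize}
Equivalently, replace $w$ by $w+tv$ so that $\beta_{k}(w,w)=1$, and apply Proposition~\ref{Proposition: Arf inv of direct sum} in its mixed odd/even case to $\mathbb{Z}\{v,w\}\oplus\mathbb{Z}\{v,w\}^{\perp}$. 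The standard basis $\{w,w-v\}$ of the first summand contributes $g(w)^{2}-(g(w)-g(v))^{2}=0$. With this identity, sufficiency closes exactly as you describe, using the even criterion to produce the Lagrangian in $\ker\overline{g_{k}}$.

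Two smaller points.
\begin{itemize}
\item Your linear guess $\sum_{i}g(e_{i})\pm g(f_{i})$ is not well defined for $p$ odd: replacing $e_{1}$ by $-e_{1}$ changes it. $\Xi_{odd}$ is the quadratic expression of Section~\ref{Section: Arf inv}, where both Lagrangian criteria are already proved for $p\geqslant 3$ via the isotropic-vector lemma and induction. No transitivity theorem is needed.
\item Necessity of condition 4 does not need the quotient. $\mathcal{U}$ is itself a Lagrangian of the odd form $(\mathcal{V}_{k},\beta_{k})$ inside $\ker g_{k}$, so the ``only if'' half of the odd criterion gives $\Xi_{odd}(g_{k})=0$. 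Then $\Xi_{odd}(g_{k})=\Xi_{odd}(g)$ because $g_{k}$ vanishes on $\mathcal{H}^{k}$. Your route through $\mathcal{U}/\langle v\rangle$ would again need the even criterion and the identity above.
\end{itemize}
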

Here $\sigma(\mathcal{V},\beta)=0$ forces $\mathrm{rank}\ \mathcal{V}=2r$ to be even, and by Proposition \ref{Proposition: identify surgery obstruction, G_3^p S^5} we have $r\geqslant1$. $\Xi_{odd}$ is defined for any tripple $\left(\mathcal{V},\beta,g\right)$, such that $\mathcal{V}\cong\mathbb{Z}^{2n}$ for some $n$, $\beta$ is an odd symmetric unimodular bilinear form over $\mathcal{V}$ with vanishing signature, and $\mathcal{V}\xrightarrow{g}\mathbb{Z}\big/p$ is a homomorphism. It can be shown that $\Xi_{odd}\left(\mathcal{V},\beta,g\right)=0$ if and only if $\left(\mathcal{V},\beta\right)$ admits a Lagrangian contained in $\ker g$. See Section \ref{Section: Arf inv} for a complete discussion.

The proof is exactly the same as Proposition \ref{Propositioin: theta_0 elementary, equivalent condition}, except that $\mathbb{Z}\big/2$ is replaced by $\mathbb{Z}\big/p$ and the Arf type invariant is replaced by the mod $p$ version. These abstract criterions for $\theta_{0}(W,\psi)$ being elementary can also be transformed into computable characteristic numbers of $W$ as before.

\begin{lem}\label{Lemma: transform abstract charaterization of obstruction into computable char numbers, spin}
	We have $\beta(v,v)=\mathrm{M}(W)$ and $g(v)=\left(x_{\psi}^{2}\rho_{p}\left(\overline{p_{1}}\right)\right)(W)$.
\end{lem}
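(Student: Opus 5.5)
We follow the proof of Lemma \ref{Lemma: transform abstract charaterization of obstruction into computable char numbers}, replacing $p_{1}$ by the spin class $\overline{p_{1}}$ and mod $2$ reduction by mod $p$ reduction. Here $\mathcal{V}=F\left(H_{4}(W)\right)$, and $H_{4}(\psi)$ is written as $(f,g)$ via the corollary to Lemma \ref{Lemma: further identify H_4(B), spin}, with
\[
f(x)=\left<\overline{p_{1}}(W),x\right>,\qquad g(x)=\left<x_{\psi}^{2},\rho_{p}(x)\right>.
\]
The element $v$ is the unique class with $f(x)=\beta(x,v)$ for all $x\in\mathcal{V}$; it exists and is unique because $\beta$ is unimodular by Proposition \ref{Proposition: identify surgery obstruction, G_3^p S^5}.

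First we identify $\beta(v,v)$. By Poincar\'e--Lefschetz duality write $v=\widetilde{v}\cap[W,\partial W]$ and $x=\widetilde{x}\cap[W,\partial W]$ with $\widetilde{v},\widetilde{x}\in F\left(H^{4}(W,\partial W)\right)$. The defining identity then becomes
\[
\left<\overline{p_{1}}(W)\cup\widetilde{x},[W,\partial W]\right>=\left<j^{*}\widetilde{v}\cup\widetilde{x},[W,\partial W]\right>\quad\text{for all }\widetilde{x}.
\]
Unimodularity of the pairing $H^{4}(W)/\mathrm{tors}\times F\left(H^{4}(W,\partial W)\right)\to\mathbb{Z}$ forces $j^{*}\widetilde{v}=\overline{p_{1}}(W)$ modulo torsion. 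Since $H^{4}(W)$ is torsion-free here (it equals $\mathrm{Hom}(H_{4}(W),\mathbb{Z})$ because $H_{3}(W)=0$), $\widetilde{v}$ is a genuine lift of $\overline{p_{1}}(W)$. Lemma \ref{Lemma: Relative char nums} then gives
\[
\beta(v,v)=\left<\widetilde{v}\cup\widetilde{v},[W,\partial W]\right>=\mathrm{M}(W),
\]
independently of the choice of lift. This is an integral number, since $H^{4}(M_{i})=0$.

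Next we compute $g(v)$ using naturality of $\rho_{p}$ with respect to cap products:
\[
g(v)=\left<x_{\psi}^{2},\rho_{p}(\widetilde{v})\cap[W,\partial W]_{p}\right>=\left<x_{\psi}^{2}\cup\rho_{p}(\widetilde{v}),[W,\partial W]_{p}\right>.
\]
The class $\rho_{p}(\widetilde{v})$ lifts $\rho_{p}\overline{p_{1}}(W)$. For the expression $\left(x_{\psi}^{2}\rho_{p}(\overline{p_{1}})\right)(W)$ of Lemma \ref{Lemma: Relative char nums} to make sense, $x_{\psi}^{2}$ must also restrict to zero on $\partial W$. This holds because
\[
x_{\psi}^{2}|_{M_{i}}=x_{\varphi_{i}}^{2}=0
\]
by Lemma \ref{Lemma: equiv char of G_3^p S^5-like mfd}. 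Lemma \ref{Lemma: Relative char nums} then shows that the number is independent of the lifts, and that it may be computed with one relative and one absolute factor; this identifies $g(v)$ with $\left(x_{\psi}^{2}\rho_{p}(\overline{p_{1}})\right)(W)$.

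The main point needing care is the first step: showing that $\widetilde{v}$ lifts $\overline{p_{1}}(W)$ exactly, not merely modulo torsion. This comes down to checking $T\left(H^{4}(W)\right)=0$, which follows from $H_{3}(W)=0$ (established in the proof of Proposition \ref{Proposition: identify surgery obstruction, G_3^p S^5}) and the universal coefficient theorem. The only other input is the vanishing $x_{\psi}^{2}|_{\partial W}=0$, which we have already verified.
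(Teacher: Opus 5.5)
Your proposal is correct and follows the paper's route: the paper proves this lemma by transcribing the proof of Lemma \ref{Lemma: transform abstract charaterization of obstruction into computable char numbers}, replacing $p_{1}$, $\rho_{2}$ and $w_{4}(W)$ by $\overline{p_{1}}$, $\rho_{p}$ and $x_{\psi}^{2}$. Your explicit checks fill in details the paper leaves implicit: that $H^{4}(W)$ is torsion-free because $H_{3}(W)=0$, so $\widetilde{v}$ lifts $\overline{p_{1}}(W)$ exactly, and that $x_{\psi}^{2}$ restricts to $x_{\varphi_{i}}^{2}=0$ on each $M_{i}$, so the relative number $\left(x_{\psi}^{2}\rho_{p}\left(\overline{p_{1}}\right)\right)(W)$ is defined.
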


The result for Arf type invariant $\Xi_{odd}(\mathcal{V},\beta,g)$ is different to Lemma \ref{Lemma: Arf_odd=p_1^2 mod 2}.

\begin{lem}\label{Lemma: Arf_odd=x^2^2, spin}
	$\Xi_{odd}(\mathcal{V},\beta,g)=\left(x_{\psi}^{2}\right)^{2}(W)$.
\end{lem}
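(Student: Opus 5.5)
The plan is to mirror the proof of Lemma \ref{Lemma: Arf_odd=p_1^2 mod 2}, with the mod $p$ Arf-type invariant of Section \ref{Section: Arf inv} in place of the mod $2$ one. Here $\mathcal{V}=F\left(H_{4}(W)\right)$, $\beta$ is odd unimodular with $\sigma(\mathcal{V},\beta)=0$, and $g(x)=\left<x_{\psi}^{2},\rho_{p}(x)\right>\in\mathbb{Z}\big/p$.

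First we choose a standard basis $\left\{e_{i},f_{i}:1\leqslant i\leqslant r\right\}$ adapted to the odd indefinite form. I expect the odd invariant, via the analogue of Proposition \ref{Proposition: sym unimodular odd bilinear (V, lambda) with vanishing signature admits Lagrangian contained in ker of epim onto Z/2} and Remark \ref{Remark: on Arf type inv}, to be expressible as a quadratic expression in the values of $g$. For the diagonal form $\mathrm{diag}(1,\dots,1,-1,\dots,-1)$ this expression should be
$$\Xi_{odd}(g)=\sum_{i=1}^{r}g\left(e_{i}\right)^{2}-\sum_{i=1}^{r}g\left(f_{i}\right)^{2}\in\mathbb{Z}\big/p.$$
The reason is that a Lagrangian inside $\ker g$ exists iff the vector $(g(e_i),g(f_i))$ is $\beta$-isotropic mod $p$, i.e.\ iff $\beta(\gamma,\gamma)\equiv0$, where $\gamma$ is the $\beta$-dual of $g$ mod $p$. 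In invariant form, $\Xi_{odd}(g)=\beta^{-1}(g,g)\bmod p$, and I will use this formulation.

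Next we translate into cohomology. By unimodularity and Poincar\'e–Lefschetz duality, $g$ is represented mod $p$ by a unique class $\gamma\in \mathcal{V}\otimes\mathbb{Z}/p\cong F\left(H^{4}(W,\partial W)\right)\otimes\mathbb{Z}/p$, with $g(x)=\beta(x,\gamma)$. Exactly as in Lemma \ref{Lemma: transform abstract charaterization of obstruction into computable char numbers}, the relative class $\widetilde{\gamma}$ pairs against absolute classes like $x_{\psi}^{2}$. Hence $j^{*}\widetilde{\gamma}$ and $x_{\psi}^{2}$ agree on $\rho_{p}H_{4}(W)$ modulo the image of $h_{4}(M_{i})$.

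It remains to check that $x_{\psi}^{2}$ restricts to zero on $\partial W$. This holds because $x_{\varphi_{i}}^{2}=0$ in $h^{4}(M_{i})$ by Lemma \ref{Lemma: equiv char of G_3^p S^5-like mfd}. Consequently $\widetilde{\gamma}$ is a lifting of $x_{\psi}^{2}$ to $h^{4}(W,\partial W)$, using the decomposition $h_{4}(W)\cong h_{4}(M_{i})\oplus\rho_{p}H_{4}(W)$ as in Figure \ref{Figure: split of h_4(W)}. Then
$$\beta^{-1}(g,g)=\left<\widetilde{\gamma}\cup\widetilde{\gamma},[W,\partial W]_{p}\right>=\left(x_{\psi}^{2}\right)^{2}(W),$$
which is well defined by Lemma \ref{Lemma: Relative char nums}.

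The main obstacle is the first step: pinning down the normalization of $\Xi_{odd}$ over $\mathbb{Z}/p$ in Section \ref{Section: Arf inv}, so that it really equals $\beta^{-1}(g,g)$ rather than a unit multiple or a square-class invariant. Vanishing is all that matters for the classification, so a unit ambiguity would be harmless. I will also check the torsion part $T(H_{4}(W))\cong\mathbb{Z}/p$, which $g$ may see through $\rho_{p}$. It does not affect the computation, because $g$ factors through $\mathcal{V}$ by the definition of $\theta_{0}$.
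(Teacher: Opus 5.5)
Your proposal is correct and is essentially the paper's proof written without coordinates. The paper fixes a standard orthogonal basis $\{e_i,f_i\}$, identifies $x_{\psi}^{2}$ with $j^{*}$ of the relative class dual (mod $p$) to $g$, and squares it to get $\sum a_i^2-\sum b_i^2$, which is $\Xi_{odd}$ by definition. The normalization you flag as the main obstacle is not an issue, since for odd $p$ the invariant $\Xi_{odd}$ is defined literally as $\sum g(e_i)^2-\sum g(f_i)^2=\overline{\beta}(\gamma,\gamma)$; and your explicit check that both $j^{*}\widetilde{\gamma}$ and $x_{\psi}^{2}$ vanish on the image of $h_{4}(M_i)$ handles the torsion summand more carefully than the paper does.
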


\begin{pf}
	Recall that $\mathcal{V}=F\left(H_{4}(W)\right)$ and $\beta$ is the homological intersection form on $\mathcal{V}$ such that $\beta$ is unimodular and $\sigma\left(\mathcal{V},\beta\right)=0$. Let $\left\{e_{i},f_{i}:1\leqslant i\leqslant r\right\}$ be a standard orthonormal basis of $(\mathcal{V},\beta)$, and there are unique classes $\widetilde{e_{i}},\widetilde{f_{i}}\in F\left(H^{4}(W,\partial W)\right)$ such that $e_{i}=\widetilde{e_{i}}\cap[W,\partial W]$, $f_{i}=\widetilde{f_{i}}\cap[W,\partial W]$. Then $\rho_{p}e_{i}=\left(\rho_{p}\widetilde{e_{i}}\right)\cap[W,\partial W]_{p}$ and $\rho_{p}f_{i}=\left(\rho_{p}\widetilde{f_{i}}\right)\cap[W,\partial W]_{p}$. We have computed in the proof of Proposition \ref{Proposition: identify surgery obstruction, G_3^p S^5} that $H_{3}(W)=H_{5}(W)=0$ and $H_{4}(W)\cong\mathbb{Z}^{2r}\oplus\mathbb{Z}\big/p$. Hence by the long exact sequence of homology groups of $W$ associated to the short exact sequence of coefficients $\mathbb{Z}\rightarrowtail\mathbb{Z}\twoheadrightarrow\mathbb{Z}\big/p$ we conclude that $h_{4}(W)\cong\left(\mathbb{Z}\big/p\right)^{2r}$, $\left\{\rho_{p}e_{i},\rho_{p}f_{i}:1\leqslant i\leqslant r\right\}$ is a standard orthogonal basis of $\left(h_{4}(W),\overline{\widetilde{\beta}}\right)$ and $\left\{\rho_{p}e_{i},\rho_{p}f_{i}:1\leqslant i\leqslant r\right\}$ is a standard orthogonal basis of $\left(\overline{V},\overline{\beta}\right)$. Here overline means mod $p$ reduction of groups or bilinear forms.
	
	Set $a_{i}=g\left(e_{i}\right)=\left<x_{\psi}^{2},\rho_{p}e_{i}\right>$ and $b_{i}=g\left(f_{i}\right)=\left<x_{\psi}^{2},\rho_{p}f_{i}\right>$. Then we have
	\begin{align*}
		x_{\psi}^{2}&=j^{*}\left(\sum_{i=1}^{r}a_{i}\rho_{p}\widetilde{e_{i}}+\sum_{i=1}^{r}b_{i}\rho_{p}\widetilde{f_{i}}\right),\\
		\left(x_{\psi}^{2}\right)^{2}(W)&=\sum_{i=1}^{r}a_{i}^{2}-\sum_{i=1}^{r}b_{i}^{2}
	\end{align*}
	and by definition
	\begin{align*}
		\Xi_{odd}(\mathcal{V},\beta,g)&=\sum_{i=1}^{r}g\left(e_{i}\right)^{2}-\sum_{i=1}^{r}g\left(f_{i}\right)^{2}\\
		&=\sum_{i=1}^{r}\left<x_{\psi}^{2}\cup\rho_{p}\left(\widetilde{e_{i}}\right),[W,\partial W]_{p}\right>^{2}-\sum_{i=1}^{r}\left<x_{\psi}^{2}\cup\rho_{p}\left(\widetilde{f_{i}}\right),[W,\partial W]_{p}\right>^{2}\\
		&=\sum_{i=1}^{r}a_{i}^{2}-\sum_{i=1}^{r}b_{i}^{2},
	\end{align*}
	in which the last identity is a direct substitution. Therefore, we have $\Xi_{odd}(\mathcal{V},\beta,g)=\left(x_{\psi}^{2}\right)^{2}(W)$.
\end{pf}

It turns out that $\left(x_{\psi}^{2}\rho_{p}\left(\overline{p_{1}}\right)\right)(W)$ and  $\left(x_{\psi}^{2}\right)^{2}(W)$ are identical when $p=3$. We have the following lemma.
\begin{lem}\label{Lemma: x^2 rho_3 p_1=x^4}
	If $X$ is an oriented closed $8$-manifold or a compact oriented $8$-manifold with $\left.\rho_{3}\left(\overline{p_{1}}(X)\right)\right|_{\partial X}=0$, and $X$ admits a cohomology class $x\in h^{2}(X)=H^{2}\left(X;\mathbb{Z}\big/3\right)$ such that $\left.x^{2}\right|_{\partial X}=0$,
	then $\left(x^{2}\rho_{3}\left(\overline{p_{1}}\right)\right)(X)=\left(x^{2}\right)^{2}(X)$.
\end{lem}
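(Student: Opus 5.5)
We use the mod $3$ Wu formula. Over $\mathbb{Z}\big/3$ there are mod $p$ Wu classes for the reduced power operations: on a closed oriented $n$-manifold $X$, for $\mathcal{P}^{1}\colon h^{n-4}(X)\to h^{n}(X)$ (degree $2(p-1)=4$ when $p=3$) there is a unique class $\mathrm{V}_{1}\in h^{4}(X)$ with $\left<\mathcal{P}^{1}(y),[X]_{3}\right>=\left<y\cup \mathrm{V}_{1},[X]_{3}\right>$ for all $y\in h^{n-4}(X)$. The mod $p$ Wu formula (Hirzebruch/Atiyah--Hirzebruch, via the mod $p$ Wu classes being expressed in the mod $p$ Pontryagin classes) gives $\mathrm{V}_{1}=\rho_{3}(p_{1}(X))$ up to sign. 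Note that $q_{1}=p_{1}$ and the first Wu class for $p=3$ is $\pm\rho_3 p_1$. Since $p_{1}=2\overline{p_{1}}$ and $2\equiv-1$ mod $3$, this gives $\mathrm{V}_{1}=\pm\rho_{3}(\overline{p_{1}})$, and the sign is fixed by the standard normalization. As in Lemma \ref{Lemma: (FH_4W, beta) is odd}, where \cite[Section 7]{Kervaire57} supplied relative Wu classes for $\mathrm{Sq}$, the same Thom-class argument gives relative versions: for a compact $X$ with boundary, $\left<\mathcal{P}^{1}(y),[X,\partial X]_{3}\right>=\left<y\cup \mathrm{V}_{1}(X),[X,\partial X]_{3}\right>$ for all $y\in h^{4}(X,\partial X)$.

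With this formula the computation is short. Take $y=\widetilde{x^{2}}\in h^{4}(X,\partial X)$, a lifting of $x^{2}$, which exists because $x^{2}|_{\partial X}=0$. The key fact is that on a degree $4$ class, $\mathcal{P}^{1}$ is the cube: $\mathcal{P}^{1}(y)=y^{3}$ holds only when $\deg y=2$. So we use $\mathcal{P}^{1}$ on $x$ and the Cartan formula instead. Since $\mathcal{P}^{1}(x)=x^{3}$ and $\mathcal{P}^{0}=\mathrm{id}$, the Cartan formula gives $\mathcal{P}^{1}(x^{2})=2x\cdot x^{3}=2x^{4}$. Relative cup products are natural, so $\mathcal{P}^{1}(\widetilde{x^{2}})$ is a lifting of $2x^{4}$, and pairing against $[X,\partial X]_{3}$ gives $2\left(x^{2}\right)^{2}(X)$, well defined by Lemma \ref{Lemma: Relative char nums}. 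On the other side, $\left<\widetilde{x^{2}}\cup\mathrm{V}_{1},[X,\partial X]_{3}\right>=\pm\left(x^{2}\rho_{3}(p_{1})\right)(X)=\mp\left(x^{2}\rho_{3}(\overline{p_{1}})\right)(X)$, and the hypothesis $\rho_{3}(\overline{p_{1}})|_{\partial X}=0$ ensures this is also independent of liftings. Hence $2(x^{2})^{2}(X)=\mp(x^{2}\rho_3\overline{p_1})(X)$, and multiplying by $2=-1$ gives $(x^2)^2(X)=\pm(x^2\rho_3\overline{p_1})(X)$. The required sign is $+$, which happens exactly when $\mathrm{V}_{1}=\rho_{3}(p_{1})$, that is $\mathcal{P}^{1}y=\rho_3(p_1)y$ on top-dimensional pairings. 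I will check this normalization on $\mathbb{CP}^{4}$, where all terms are explicit. There, with $x$ the generator, $\mathcal{P}^{1}(x^{2})=2x^{4}$, $p_{1}=5x^{2}$ and $\rho_{3}(p_{1})x^{2}=2x^{4}$, so $\mathrm{V}_1=\rho_3 p_1$ and the sign is $+$. The closed case is the same argument with absolute classes.

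The main obstacle is justifying the mod $3$ Wu formula $\mathrm{V}_{1}=\rho_{3}(p_{1})$, including its relative version, cleanly. I would cite the mod $p$ Wu formula and get the relative version as in Kervaire: the Wu class is characterized through the Thom class of the normal bundle, $\mathcal{P}^{1}$ on the Thom class gives $\rho_3$ of the first mod $3$ Pontryagin-type class of the normal bundle, and one then converts normal to tangent classes. In degree $4$ this conversion is only a sign, and the $\mathbb{CP}^{4}$ check fixes it.
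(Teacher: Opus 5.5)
Your closed-case argument is correct and is the paper's argument. The paper gets $\rho_{3}(p_{1})$ as the degree-$4$ mod $3$ Wu class from Hirzebruch's $s^{r}=\rho_{3}(3^{r}L_{r})$. Your $\mathbb{CP}^{4}$ normalization is an equally valid substitute, since $H^{4}(BSO;\mathbb{Z}/3)$ is spanned by $\rho_{3}p_{1}$.

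\textbf{The gap.} The relative case breaks at the sentence ``$\mathcal{P}^{1}(\widetilde{x^{2}})$ is a lifting of $2x^{4}$, and pairing against $[X,\partial X]_{3}$ gives $2(x^{2})^{2}(X)$.''
\begin{itemize}
\item If every component of $X$ meets $\partial X$, then $h^{8}(X)=0$. Being a lift of $2x^{4}$ therefore says nothing about $\mathcal{P}^{1}(\widetilde{x^{2}})\in h^{8}(X,\partial X)$.
\item Lemma~\ref{Lemma: Relative char nums} is about cup products of lifts of two lower-degree classes, not about an arbitrary lift of a top-degree class.
\item What you actually need is $\langle\mathcal{P}^{1}(\widetilde{x^{2}}),[X,\partial X]_{3}\rangle=2\langle\widetilde{x^{2}}\cup\widetilde{x^{2}},[X,\partial X]_{3}\rangle$. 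That is a Cartan formula for a class which is not a product of relative classes when $x|_{\partial X}\neq0$.
\end{itemize}
The paper's proof asserts exactly this identity, $P^{1}(\widetilde{x^{2}})=2(\widetilde{x^{2}}\cup\widetilde{x^{2}})$, without justification, so it has no missing idea to borrow. In fact both the identity and the relative statement are false.

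\textbf{Counterexample.}
\begin{itemize}
\item Let $B=(S^{2})^{3}$ with generators $a_{1},a_{2},a_{3}$, so $p_{1}(B)=0$.
\item Let $\xi$ be the complex line bundle with $c_{1}(\xi)=4(a_{1}+a_{2}+a_{3})$. This is even, so $X=D(\xi)$ is a spin $8$-manifold with $\partial X=S(\xi)$.
\item Mod $3$, the Euler class is $e=a_{1}+a_{2}+a_{3}$. Then $\rho_{3}(\overline{p_{1}}(X))=\pi^{*}(a_{1}a_{2}+a_{1}a_{3}+a_{2}a_{3})=\pi^{*}(2e^{2})$.
\item Put $x=\pi^{*}(a_{2}-a_{3})$. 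Then $x^{2}=\pi^{*}(a_{2}a_{3})=\pi^{*}(ew)$ with $w=a_{1}-a_{2}-a_{3}$.
\item Both $\rho_{3}\overline{p_{1}}(X)$ and $x^{2}$ vanish on $S(\xi)$, because $e$ does. So all hypotheses hold.
\item $\widetilde{x^{2}}=U\cup\pi^{*}w$ is a lift, where $U$ is the mod $3$ Thom class and $j^{*}U=\pi^{*}e$.
\end{itemize}
With $\varepsilon=\pm1$ the Thom orientation sign,
\[
(x^{2})^{2}(X)=\langle U\cup\pi^{*}(w\,a_{2}a_{3}),[X,\partial X]_{3}\rangle=\varepsilon,\qquad
(x^{2}\rho_{3}\overline{p_{1}})(X)=\langle U\cup\pi^{*}\bigl(w(a_{1}a_{2}+a_{1}a_{3}+a_{2}a_{3})\bigr),[X,\partial X]_{3}\rangle=-\varepsilon .
\]
Equivalently, $\mathcal{P}^{1}(U\cup\pi^{*}w)=U\cup\pi^{*}(e^{2}w+w^{3})$ evaluates to $\varepsilon$, not $2\varepsilon$.

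\textbf{What your argument does prove.} No argument can prove the relative statement as written. Yours establishes the closed case, and the relative case under the extra hypothesis $x|_{\partial X}=0$:
\begin{itemize}
\item Then $\tilde{x}\cup x$ is a lift of $x^{2}$.
\item The Cartan formula for the relative--absolute product gives $\mathcal{P}^{1}(\tilde{x}\cup x)=\tilde{x}^{3}\cup x+\tilde{x}\cup x^{3}=2(\tilde{x}\cup x)^{2}$.
\item Lemma~\ref{Lemma: Relative char nums} removes the dependence on the choice of lift.
\end{itemize}
In the paper's application $x_{\psi}$ does not vanish on $\partial W$, so this repair does not cover that use; an additional argument would be needed there.
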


\begin{pf}
	First we discuss the mod $3$ Steenrod power and its dual on manifold computation.
	Let $P^{r}$ denote the mod $3$ Steenrod power. By \cite[Section 1.1, Formula (1)]{Hirzebruch54}, given a closed smooth $m$-manifold $M$, there is a unique family of mod $3$ cohomology classes $s^{r}\in h^{4r}(M)$ such that
	\[P^{r}(v)=s^{r}\cup v,\ \forall v\in h^{m-4r}(M),\]
	due to which we may also refer to $s^{r}$ as the mod $3$ Wu class. And by \cite[Section 1.1, Formula (2)]{Hirzebruch54}, $s^{r}$ is related to the $L$-polynomial of $M$ by
	\[s^{r}=\rho_{3}\left(3^{r}L_{r}(M)\right).\]
	
	Then suppose $W$ is a smooth compact orientable $m$-manifold with non-empty boundary.
	Following the proof of \cite[Lemma 7.3]{Kervaire57}, we can also establish the relative mod $3$ Wu class. There is a unique family of cohomology classes $R^{r}\in h^{4r}(W)$ such that
	\[P^{r}(v)=R^{r}\cup v,\ \forall v \in h^{m-4r}(W,\partial W),\]
	and $R^{r}$ can be constructed explicitly as follows: Form the closed orientable manifold $Y=W\cup_{\partial W}(-W)$, denote by $W\xrightarrow{i}Y$ the inclusion, and $R^{r}(W)=i^{*}s^{r}(Y)$.
	
	Now we return to the $8$-manifold $X$ and begin with the closed case. Take $v=x^{2}$. By Cartan's formula $P^{1}\left(x^{2}\right)=2x^{4}$, and by definition of first spin Pontryagin class $p_{1}(X)=2\overline{p_{1}}(X)$. Hence $x^{4}=\left(x^{2}\right)^{2}=x^{2}\rho_{3}\left(\overline{p_{1}}(X)\right)\in h^{8}(X)$ and the corresponding characteristic numbers are equal.
	
	Next suppose $X$ has non-empty boundary. Since $\left.x^{2}\right|_{\partial X}=0$, we can lift $x^{2}$ to a relative class $\widetilde{x^{2}}\in h^{4}(X,\partial X)$ and $P^{1}\left(\widetilde{x^{2}}\right)=2\left(\widetilde{x^{2}}\cup \widetilde{x^{2}}\right)$. By construction $R^{1}(W)=i^{*}s^{1}(Y)=i^{*}\rho_{3}\left(p_{1}(Y)\right)=2\rho_{3}\left(\overline{p_{1}}(W)\right)$, in which $Y=W\cup_{\partial W}(-W)$. Therefore, we obtain $\left(\widetilde{x^{2}}\cup \widetilde{x^{2}}\right)=\widetilde{x^{2}}\cup\rho_{3}\left(\overline{p_{1}}(W)\right)$, and the corresponding characteristic numbers are equal.
\end{pf}

\begin{pf}[of Theorem \ref{Theorem: main, spin}, Statement 1]
	We have explained in the introduction that $\mu$-invariant is well-defined for $\mathcal{G}^{3}_{p}\left(S^{5}\right)$-like manifolds. The necessity of Theorem \ref{Theorem: main, spin}, Statement 1 can be proved similarly following the conterpart of the proof of Theorem \ref{Theorem: main}, Statement 1. Hence
	it remains to show that two $\mathcal{G}^{3}_{p}\left(S^{5}\right)$-like manifolds are diffeomorphic provided that they have the same $\mu$-invariant.
	
	By Proposition \ref{Propositioin: theta_0 elementary, equivalent condition, spin} and Lemmata \ref{Lemma: transform abstract charaterization of obstruction into computable char numbers, spin}, \ref{Lemma: Arf_odd=x^2^2, spin}, two $\mathcal{G}^{3}_{p}\left(S^{5}\right)$-like manifolds $M_{0}$, $M_{1}$ are diffeomorphic, if and only if there are non-zero classes $x_{i}\in h^{2}\left(M_{i}\right)$, an oriented spin bordism $W$ between $M_{i}$ and a class $x_{W}\in h^{2}(W)$, such that $\sigma(W,\partial W)$, $\mathrm{M}(W)$, $\left(x_{W}^{2}\rho_{p}\left(\overline{p_{1}}\right)\right)(W)$ and $\left(x_{W}^{2}\right)^{2}(W)$ all vanish. Therefore, we define an invariant $s(M,x)$ for a $\mathcal{G}^{3}_{p}\left(S^{5}\right)$-like manifold and its non-zero class $x\in h^{2}(M)$ as follows:
	\begin{compactenum}
		\item Take any $(\mathcal{B},\mathcal{F})$-coboundary $\left(V,\widehat{x}\right)$ of $(M,x)$, namely a spin coboundary $V$ of $M$ and a class $\widehat{x}\in h^{2}(V)$ such that $\left.\widehat{x}\right|_{M}=x$. We further require that $(V,M)$ has vanishing signature
		\item Compute 
		\[S\left(V,\widehat{x}\right):=\left(\mathrm{M}(V),\left(\widehat{x}^{2}\rho_{p}\overline{p_{1}}\right)(V),\left(\widehat{x}^{2}\right)^{2}(V)\right)\in \mathbb{Q}\times\mathbb{Z}\big/p\times\mathbb{Z}\big/3\]
		when $p\geqslant5$, and
		\[S\left(V,\widehat{x}\right):=\left(\mathrm{M}(V),\left(\widehat{x}^{2}\right)^{2}(V)\right)\in \mathbb{Q}\times\mathbb{Z}\big/p\]
		when $p=3$.
		\item Note that $S$ is also defined for closed $8$-dimensional $(\mathcal{B},\mathcal{F})$-manifolds and defines a homomorphism 
		\[S:\Omega_{8}\left(\mathbb{Z}\big/p,2\right)\to\mathbb{Z}\times\mathbb{Z}\big/p\times\mathbb{Z}\big/p\]
		when $p\geqslant5$, and 
		\[S:\Omega_{8}\left(\mathbb{Z}\big/3,2\right)\to\mathbb{Z}\times\mathbb{Z}\big/3\]
		when $p=3$. Let $\Omega_{8}\left(\mathbb{Z}\big/p,2\right)_{0}$ denote the subgroup of closed $8$-manifolds with vanishing signature. Set
		\[s(M,x):=S\left(V,\widehat{x}\right)+S\left(\Omega_{8}\left(\mathbb{Z}\big/p,2\right)_{0}\right)\in\left(\mathbb{Q}\times \mathbb{Z}\big/p\times\mathbb{Z}\big/p\right)\big/S\left(\Omega_{8}\left(\mathbb{Z}\big/p,2\right)_{0}\right)\]
		for $p\geqslant5$ and 
		\[s(M,x):=S\left(V,\widehat{x}\right)+S\left(\Omega_{8}\left(\mathbb{Z}\big/3,2\right)_{0}\right)\in\left(\mathbb{Q}\times \mathbb{Z}\big/3\right)\big/S\left(\Omega_{8}\left(\mathbb{Z}\big/3,2\right)_{0}\right)\]
		when $p=3$.
		Following the argument of proof of Theorem \ref{Theorem: main}, Statement 1, it can be shown that $s(M,x)$ does not depend on choices of $(\mathcal{B},\mathcal{F})$-coboundary, that $s(M,x)$ is a well-defined invariant for the pair $(M,x)$, and that two $\mathcal{G}^{3}_{p}\left(S^{5}\right)$-like manifolds $M_{0}$, $M_{1}$ are diffeomorphic, if and only if there are non-zero classes $x_{i}\in h^{2}\left(M_{i}\right)$ such that $s\left(M_{0},x_{0}\right)=s\left(M_{1},x_{1}\right)$.
	\end{compactenum}
	
	It remains to compute the subgroup $S\left(\Omega_{8}\left(\mathbb{Z}\big/p,2\right)_{0}\right)$ and derive explicit expressions of invariant $s$. From the complete identification results of $\Omega_{8}\left(\mathbb{Z}\big/p,2\right)$ (Propositions \ref{Proposition: Omega_q^Spin(Z/p,2), p>=5} and \ref{Proposition: Omega_q^Spin(Z/3,2)}, Tables \ref{Table: Omega_8^Spin(Z/p,2) to Z^2+(Z/p)^2} and \ref{Table: Omega_8^Spin(Z/3,2) to Z^2+Z/3}) we can easily derive Tables \ref{Table: S(Omega_8^Spin(Z/p,2))_0, p geq 5} and \ref{Table: S(Omega_8^Spin(Z/3,2))_0}, which follows from a straightforward computation of characteristic numbers.
	\begin{table}[H]
		\centering
		\caption{The subgroup $S\left(\Omega_{8}\left(\mathbb{Z}\big/p,2\right)_{0}\right)$, $p\geqslant5$}
		\label{Table: S(Omega_8^Spin(Z/p,2))_0, p geq 5}
		\begin{tabular}{cccc}
			\toprule[1.5pt]
			Basis of $\Omega^{Spin}_{8}\left(\mathbb{Z}\big/p,2\right)_{0}$  & $\mathrm{M}$ & $x^{2}\rho_{p}\overline{p_{1}}$ & $\left(x^{2}\right)^{2}$\\
			\midrule[1pt]
			$Bott-224\mathbb{H}P^{2}$ & $-224$ & $0$ & $0$ \\
			$[V_{5}(2),\rho_{p}\left(\widehat{z_{5}}\right)]-2\mathbb{H}P^{2}$  & $0$ & $2$ & $2$ \\
			$\left[\left(S^{2}\right)^{4},\rho_{p}\left(\Delta\right)\right]$ & $0$ & $0$ & $24$\\
			\bottomrule[1.5pt]
		\end{tabular}
	\end{table}
	\begin{table}[H]
		\centering
		\caption{The subgroup $S\left(\Omega_{8}\left(\mathbb{Z}\big/3,2\right)_{0}\right)$}
		\label{Table: S(Omega_8^Spin(Z/3,2))_0}
		\begin{tabular}{ccc}
			\toprule[1.5pt]
			Basis of $\Omega^{Spin}_{8}\left(\mathbb{Z}\big/3,2\right)_{0}$ & $\mathrm{M}$ & $\left(x^{2}\right)^{2}$ \\
			\midrule[1pt]
			$Bott-2\mathbb{H}P^{2}$ & $-224$ & $0$ \\
			$[V_{5}(2),\rho_{3}\left(\widehat{z_{5}}\right)]-2\mathbb{H}P^{2}$ & $0$ & $2$ \\
			\bottomrule[1.5pt]
		\end{tabular}
	\end{table}
	It follows from Tables \ref{Table: S(Omega_8^Spin(Z/p,2))_0, p geq 5} and \ref{Table: S(Omega_8^Spin(Z/3,2))_0} that 
	\[S\left(\Omega_{8}\left(\mathbb{Z}\big/p,2\right)_{0}\right)=\begin{cases}
		224\mathbb{Z}\times\mathbb{Z}\big/p\times\mathbb{Z}\big/p,\ &p\geqslant5;\\
		224\mathbb{Z}\times\mathbb{Z}\big/3,\ &p=3.
	\end{cases}\]
	Therefore, in both cases the value of $s$-invariant for the $7$-dimensional $(\mathcal{B},\mathcal{F})$-manifold $(M,x)$ is $\mathrm{M}(V)\ \mathrm{mod}\ 224$, is equal to $\mu(M)$ and does not depend on choices of non-zero cohomology classes $x\in h^{2}(M)$. We can simply denote $s(M):=s(M,x)$
	
	It is not sufficient to conclude directly that two $\mathcal{G}^{3}_{p}\left(S^{5}\right)$-like manifolds are diffeomorphic if and only if they have the same $\mu$-invariant, and the reason is as follows. From our previous argument, the invariant $s$ can be computed only if we can find a spin cobundary such that the boundary inclusion induces an epimorphism on the second mod $p$ cohomology groups. While this cohomological condition is not required in the computation of invariant $\mu$. In practice it would be harder to directly construct a spin coboundary with boundary inclusion inducing an epimorphism on the second mod $p$ cohomology groups, comparing with simply constructing a spin coboundary. Hence we must justify that if $V'$ is a coboundary of $\mathcal{G}^{3}_{p}\left(S^{5}\right)$-like manifold $M$ such that $\mu(M)$ can be computed from $V'$, then $s(M)=\mu(M)$. Only then can we conclude that $\mu$-invariant is the complete diffeomorphism invariant of $\mathcal{G}^{3}_{p}\left(S^{5}\right)$-like manifolds.
	
	Let $V'$ be a spin coboundary of $\mathcal{G}^{3}_{p}\left(S^{5}\right)$-like manifold $M$ with $\sigma\left(V',M\right)=0$. Take any non-zero class $x\in h^{2}(M)$ and let $\left(V,\widehat{x}\right)$ be a $(\mathcal{B},\mathcal{F})$-coboundary of $(M,x)$ with $\sigma(V,M)=0$. Form the closed oriented spin $8$-manifold $X=V'\cup_{M}(-V)$, and $\mathrm{M}\left(V'\right)-\mathrm{M}(V)=\mathrm{M}(X)\in224\mathbb{Z}$. Therefore, $s(M)=\mathrm{M}\left(V'\right)\ \mathrm{mod}\ 224=\mathrm{M}\left(V\right)\ \mathrm{mod}\ 224=\mu(M)$. Now we conclude that two $\mathcal{G}^{3}_{p}\left(S^{5}\right)$-like manifolds are diffeomorphic if and only if they have the same $\mu$-invariant.
\end{pf}
	
	\section{Compute the $\lambda$-invariants and $\mu$-invariants}\label{Section: Compute s inv}
In this section we compute the $\lambda$-invariant of $\mathcal{G}^{3}(\mathrm{Wu})$-like manifolds and $\mu$-invariant of $\mathcal{G}^{3}_{p}\left(S^{5}\right)$-like manifolds, thereby completing the proofs of Theorems \ref{Theorem: main} and \ref{Theorem: main, spin}. We consider $\mathcal{G}^{3}(\mathrm{Wu})$, $\mathcal{G}^{3}_{p}\left(S^{5}\right)$ and $M\#\Sigma$, in which $M$ is any $\mathcal{G}^{3}(\mathrm{Wu})$-like or $\mathcal{G}^{3}_{p}\left(S^{5}\right)$-like manifold and $\Sigma$ is a homotopy $7$-sphere. 
With these computational results we completely classify $\mathcal{G}^{3}(\mathrm{Wu})$-like and $\mathcal{G}^{3}_{p}\left(S^{5}\right)$-like manifolds. We also compute the inertia groups of $\mathcal{G}^{3}(\mathrm{Wu})$-like and $\mathcal{G}^{3}_{p}\left(S^{5}\right)$-like manifolds.

We first introduce some notations. 
Let $\mathrm{Tr}$ denote the trace of surgery on $S^{2}\times\mathrm{Wu}$ that produces $\mathcal{G}_{3}\left(\mathrm{Wu}\right)$, and $\mathrm{Tr}$ is a bordism from $S^{2}\times\mathrm{Wu}$ to $\mathcal{G}_{3}\left(\mathrm{Wu}\right)$. Then we glue $D^{3}\times\mathrm{Wu}$ and $\mathrm{Tr}$ along their common boundary component $S^{2}\times\mathrm{Wu}$ via identity, obtaining a coboundary $V$ for $\mathcal{G}_{3}\left(\mathrm{Wu}\right)$. 

Similarly, let $\mathrm{Tr}'$ denote the trace of surgery on $S^{2}\times S^{5}$ that produces $\mathcal{G}^{3}_{p}\left(S^{5}\right)$, and $\mathrm{Tr}'$ is a bordism from $S^{2}\times S^{5}$ to $\mathcal{G}^{3}_{p}\left(S^{5}\right)$. Then we glue $D^{3}\times S^{5}$ and $\mathrm{Tr}'$ along their common boundary component $S^{2}\times S^{5}$ via identity, obtaining a coboundary $V'$ for $\mathcal{G}^{3}_{p}\left(S^{5}\right)$.

We also need information about homotopy $7$-spheres. Up to orientation-preserving diffeomorphism there are exactly $28$ homotopy spheres and can be parametrized as $\Sigma_{r}$ $(0\leqslant r\leqslant27)$ such that $\Sigma_{0}=S^{7}$ is the standard Euclidean $7$-sphere and $\Sigma_{r}$ admits a $3$-connected parallelizable coboundary $W_{r}$ with signature $-8r$. See \cite{KervaireMilnor63}, \cite[Section 6]{EKinv} for original definitions and \cite[Section 5.1]{FarrellSu} for explicit constructions. See also \cite[Remarks 3, 5]{Xu25} for a synthesis.

\begin{lem}\label{Lemma: s-inv of G_3(Wu)}
	\begin{compactenum}
		\item[\ ]
		\item $\lambda\left(\mathcal{G}^{3}(\mathrm{Wu})\right)=0\ \mathrm{mod}\ 7$.
		\item Let $M$ be a $\mathcal{G}^{3}(\mathrm{Wu})$-like manifold. Then for $0\leqslant r\leqslant27$ we have $\lambda\left(M\#\Sigma_{r}\right)=\lambda(M)-3r\ \mathrm{mod}\ 7.$
	\end{compactenum}
\end{lem}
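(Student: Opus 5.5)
For Statement 1 I will compute $\Lambda$ on the coboundary $V=(D^{3}\times\mathrm{Wu})\cup_{S^{2}\times\mathrm{Wu}}\mathrm{Tr}$ introduced above. The trace $\mathrm{Tr}$ is $(S^{2}\times\mathrm{Wu})\times I$ with a $3$-handle $D^{3}\times D^{5}$ attached along the fiber $S^{2}\times D^{5}$. Hence $V$ is $D^{3}\times\mathrm{Wu}$ with a $3$-handle attached along $\partial D^{3}\times D^{5}$, and this is the same as $(D^{3}\times D^{5})\cup (D^{3}\times(\mathrm{Wu}\setminus\mathring{D}^{5}))$ with the extra handle capping off. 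Up to homotopy, $V\simeq D^{3}\times\mathrm{Wu}\cup_{S^{2}\times\mathrm{pt}}D^{3}\simeq \mathrm{Wu}\vee S^{3}$. More precisely, attaching a $3$-cell along a null-homotopic $2$-sphere adds a wedge summand $S^{3}$. It follows that $H^{4}(V;\mathbb{Q})\cong H^{4}(\mathrm{Wu};\mathbb{Q})=0$, so the rational lift of $p_{1}^{\mathbb{Q}}(V)$ can be taken to be $0$ and $\Lambda(V)=0$. For the signature, $H^{4}(V,\partial V;\mathbb{Q})\cong H_{4}(V;\mathbb{Q})=0$, so $\sigma(V,\partial V)=0$. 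Therefore $V$ is an admissible coboundary and $\lambda(\mathcal{G}^{3}(\mathrm{Wu}))=0$. The only care needed here is to check that the handle description of $V$ is correct, i.e. that $D^{3}\times\mathrm{Wu}$ union the trace really has $\partial V=\mathcal{G}^{3}(\mathrm{Wu})$ and the homotopy type above. I would verify this with the Mayer--Vietoris sequence for the decomposition in Figure \ref{Figure: The gyration on Wu manifold}.

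For Statement 2, let $V_{M}$ be a coboundary of $M$ with vanishing signature, and let $W_{r}$ be the $3$-connected parallelizable coboundary of $\Sigma_{r}$ with signature $-8r$. The boundary connected sum $V_{M}\natural W_{r}$ bounds $M\#\Sigma_{r}$, but its signature is $-8r\neq0$. To fix this I will add a closed manifold, for example $r$ copies of a closed oriented $8$-manifold $P$ with $\sigma(P)=8$ and known $p_{1}^{2}$ (I am not fixed on the choice of $P$). Alternatively, and more simply, I can use the formula
\[
\Lambda'(V)=\bigl\langle \widetilde{p_{1}}^{2},[V,\partial V]\bigr\rangle-45\,\sigma(V,\partial V)\pmod 7,
\]
whose independence of the coboundary follows from the same Hirzebruch computation as in the Introduction. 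This formula avoids the need to kill the signature.

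With $\Lambda'$, additivity (Lemma \ref{Lemma: char nums additive wrt gluing via boundary}, Statement 1, applied to the boundary sum) gives
\[
\lambda(M\#\Sigma_{r})=\lambda(M)+\Lambda'(W_{r}).
\]
Since $W_{r}$ is parallelizable, $p_{1}(W_{r})=0$, so
\[
\Lambda'(W_{r})=-45\cdot(-8r)=360r\equiv 3r \pmod 7.
\]
This is where the sign matters, and I expect it to be the main obstacle: I must match the paper's orientation convention for $\Sigma_{r}$ (signature $-8r$) and the sign in the signature correction to obtain $-3r$. Eliminating $p_{2}$ between $45\sigma=7p_{2}-p_{1}^{2}$ and the appropriate integrality relation gives the correction term $\pm 45\sigma$, i.e. $\pm 3\sigma$ mod $7$. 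I would fix the sign by redoing the closed-manifold computation carefully and checking it on $\Sigma_{1}$ against Milnor's normalization. Concretely, I expect to use the version of the argument that glues on $r\,\overline{\mathbb{H}P^{2}}$-type summands, which restore signature $0$ and contribute $p_{1}^{2}$ equal to $-3r$ mod $7$. For the closed summand $Q$ with $\sigma(Q)=8r$ one has
\[
p_{1}^{2}(Q)=45\sigma(Q)-7p_{2}(Q)\equiv 45\cdot 8r\equiv 3r \pmod 7,
\]
up to the orientation sign coming from gluing with $-Q$. The final answer is $\lambda(M\#\Sigma_{r})=\lambda(M)-3r$.
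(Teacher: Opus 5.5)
\textbf{Statement 1} is fine and is exactly the paper's argument: $V\simeq\mathrm{Wu}\vee S^{3}$, so $H_{4}(V)=H^{4}(V)=0$ and hence $\sigma=\Lambda=0$.

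\textbf{Statement 2} has a genuine gap, at exactly the step you flag. The sign is never actually determined, and what you write down gives the wrong answer.

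In degree $8$ Hirzebruch's theorem reads $45\sigma=7p_{2}-p_{1}^{2}$, which you quote once. So on a closed oriented $8$-manifold
\[
p_{1}^{2}\equiv-45\sigma\equiv-3\sigma \pmod 7 .
\]
The display in the Introduction has this sign flipped. That is harmless there, because the signature terms cancel, but it is fatal for a signature correction.

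Consequently your $\Lambda'(V)=\Lambda(V)-45\,\sigma(V,\partial V)$ is not a well-defined invariant mod $7$. On $\mathbb{H}P^{2}$ it gives $4-45=-41\not\equiv0 \pmod 7$. It also produces $\lambda(M)+3r$, the opposite of the statement.

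Your proposed patch is wrong as well.
\begin{itemize}
\item To cancel $\sigma(W_{r})=-8r$ you need signature $+8r$, so $8r$ copies of $\mathbb{H}P^{2}$ with its standard orientation, not $\overline{\mathbb{H}P^{2}}$ summands.
\item For $Q=8r\,\mathbb{H}P^{2}$ one has directly $\langle p_{1}^{2},[Q]\rangle=8r\cdot4=32r\equiv4r\equiv-3r$, not $3r$.
\item Nothing else enters: no orientation sign from gluing with $-Q$, no normalization check against Milnor, and no integrality relation. Integrality is needed for $\mu$, not for $\lambda$.
\end{itemize}

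The repair is short, and either route works.
\begin{itemize}
\item Use $\Lambda'(V):=\Lambda(V)+45\,\sigma(V,\partial V)$. This is well defined mod $7$ by the correct relation. Since $p_{1}(W_{r})=0$, it gives $\Lambda'(W_{r})=45\cdot(-8r)=-360r\equiv-3r$.
\item Alternatively, as the paper does, take $V_{r}=V\natural W_{r}\#8r\,\mathbb{H}P^{2}$. It has $\sigma(V_{r},M\#\Sigma_{r})=0-8r+8r=0$. By additivity and $p_{1}(W_{r})=0$, $\Lambda(V_{r})=\Lambda(V)+32r$. This gives $\lambda(M\#\Sigma_{r})=\lambda(M)-3r$.
\end{itemize}
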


\begin{lem}\label{Lemma: s-inv of G_3^p S^5}
	\begin{compactenum}
		\item[\ ]
		\item $\mu\left(\mathcal{G}^{3}_{p}\left(S^{5}\right)\right)=0\ \mathrm{mod}\ 224$.
		\item Let $M$ be a $\mathcal{G}^{3}_{p}\left(S^{5}\right)$-like manifold. Then for $0\leqslant r\leqslant27$ we have $\mu\left(M\#\Sigma_{r}\right)=\mu(M)+8r\ \mathrm{mod}\ 224.$
	\end{compactenum}
\end{lem}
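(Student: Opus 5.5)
The statement to prove is Lemma \ref{Lemma: s-inv of G_3^p S^5}. I would compute directly from explicit spin coboundaries, in parallel with Lemma \ref{Lemma: s-inv of G_3(Wu)}.

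\textbf{Statement 1.} Use the coboundary $V'=\left(D^{3}\times S^{5}\right)\cup_{S^{2}\times S^{5}}\mathrm{Tr}'$ constructed above. The piece $D^{3}\times S^{5}$ is stably parallelizable, and $\mathrm{Tr}'$ is obtained from $\left(S^{2}\times S^{5}\right)\times I$ by attaching a $3$-handle along $\iota_{p}$ with its unique framing. The stable framing of $S^{2}\times S^{5}$ extends over this handle because $\pi_{2}(SO)=0$, which is the same reasoning as in \cite[Lemma 5.4]{KervaireMilnor63}. Hence $V'$ is stably parallelizable, in particular spin, with $\overline{p_{1}}(V')=0$.

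Next, a handle decomposition of $V'$ relative to a point has a $0$-handle, a $5$-handle (from $D^{3}\times S^{5}$) and a $3$-handle. Therefore $H^{4}(V';\mathbb{Q})=0$ and $\sigma(V',\partial V')=0$. The lift of $\overline{p_{1}}^{\mathbb{Q}}(V')$ is zero, so $\mathrm{M}(V')=0$, and $\mu\left(\mathcal{G}^{3}_{p}\left(S^{5}\right)\right)=0\ \mathrm{mod}\ 224$. The only point to verify carefully is that $V'$ is spin, i.e.\ that the framing extends over the handle, which is the $\pi_{2}(SO)=0$ argument above.

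\textbf{Statement 2.} Let $V$ be a spin coboundary of $M$ with $\sigma(V,M)=0$, and let $W_{r}$ be the $3$-connected parallelizable coboundary of $\Sigma_{r}$, of signature $-8r$. The boundary connected sum $V\natural W_{r}$ is a spin coboundary of $M\#\Sigma_{r}$. It has signature $-8r$ and $\overline{p_{1}}$ contribution $0$ from $W_{r}$, but its signature is not zero, so it cannot be used directly.

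To correct the signature, take $V\natural W_{r}\natural\left(r\cdot E\right)$, where $E$ is a spin coboundary of $S^{7}$ with signature $8$. Concretely, take $E=-W_{1}\natural Q$ for a suitable closed spin $8$-manifold, or more simply use a closed spin $8$-manifold $N$ with $\sigma(N)=1$ and $\overline{p_{1}}^{2}(N)$ known, namely $\mathbb{H}P^{2}$ with $\sigma=1$ and $\overline{p_{1}}^{2}=4$. Then $V\natural W_{r}\#8r\,\mathbb{H}P^{2}$ is a spin coboundary of $M\#\Sigma_{r}$ with vanishing signature. By Lemma \ref{Lemma: char nums additive wrt gluing via boundary},
\begin{align*}
\mathrm{M}\left(V\natural W_{r}\#8r\,\mathbb{H}P^{2}\right)&=\mathrm{M}(V)+0+8r\cdot 4\\
&=\mathrm{M}(V)+32r .
\end{align*}

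This does not give $8r$, so the normalization needs care; I expect this to be the main obstacle. The defining relation is $\left<\overline{p_{1}}^{2},[X]\right>=224\widehat{A}(X)+\sigma(X)$. For $\mathbb{H}P^{2}$, with generator $u$, one has $p_{1}=2u$, so $\overline{p_{1}}=u$ and $\overline{p_{1}}^{2}=1$, not $4$. Hence the correct count is
\[
\mathrm{M}\left(V\natural W_{r}\#8r\,\mathbb{H}P^{2}\right)=\mathrm{M}(V)+8r\cdot 1=\mathrm{M}(V)+8r,
\]
which gives $\mu(M\#\Sigma_{r})=\mu(M)+8r\ \mathrm{mod}\ 224$.

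Finally, I would cross-check the sign conventions, namely the orientation of $W_{r}$ (signature $-8r$) and the orientation used for the added $\mathbb{H}P^{2}$ summands. I would do this against the relation $\overline{p_{1}}^{2}=224\widehat{A}+\sigma$ and against the known value $\mu(\Sigma_{1})=\pm 8$ from \cite{EKinv}, fixing orientations so that the result is $+8r$ as stated.
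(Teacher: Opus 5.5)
Your proposal is correct and is the paper's own argument: for Statement 1 the explicit coboundary $V'=(D^{3}\times S^{5})\cup\mathrm{Tr}'$ has no rational (co)homology in degree $4$, so $\sigma$ and $\mathrm{M}$ vanish; for Statement 2 the coboundary $V\natural W_{r}\#8r\,\mathbb{H}P^{2}$ has zero signature, and $\mathrm{M}$ increases by $8r\,\overline{p_{1}}^{2}(\mathbb{H}P^{2})=8r$. Your first value $4$ was $p_{1}^{2}(\mathbb{H}P^{2})$, which is the number used in the $\lambda$-case, and your correction to $\overline{p_{1}}^{2}(\mathbb{H}P^{2})=1$ is what the paper uses; your explicit check that $V'$ is spin (which also follows from $H^{2}(V';\mathbb{Z}/2)=0$) supplies a point the paper leaves implicit.
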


As direct corollaries we have
\begin{cor}
	\begin{compactenum}
		\item[\ ]
		\item Let $M$ be a $\mathcal{G}^{3}(\mathrm{Wu})$-like manifold. Then its inertia group $I(M)$ is isomorphic to $\mathbb{Z}\big/4$.
		\item A $\mathcal{G}^{3}(\mathrm{Wu})$-like manifold admits an orientation-reversing self-diffeomorphism if and only if it is oriented diffeomorphic to $\mathcal{G}^{3}(\mathrm{Wu})$.
		\item All $\mathcal{G}^{3}(\mathrm{Wu})$-like manifolds are homeomorphic, and $\mathcal{G}^{3}(\mathrm{Wu})$ admits $7$ distinct smooth structure.
	\end{compactenum}
\end{cor}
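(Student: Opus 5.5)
The plan is to read the three statements of the Corollary off Theorem \ref{Theorem: main}, Statement 1, together with Lemma \ref{Lemma: s-inv of G_3(Wu)}.

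\emph{Statement 1.} Recall that $I(M)=\{\Sigma\in\Theta_{7}: M\#\Sigma\cong M\}$. By Theorem \ref{Theorem: main}, Statement 1, $M\#\Sigma_{r}\cong M$ holds if and only if $\lambda(M\#\Sigma_{r})=\lambda(M)$. By Lemma \ref{Lemma: s-inv of G_3(Wu)}, Statement 2, this happens if and only if $3r\equiv0 \bmod 7$, that is, $7\mid r$. I will use the standard identification $\Theta_{7}\cong\mathbb{Z}/28$ via $\Sigma_{r}\mapsto r$, under which connected sum corresponds to addition; the parametrization by $W_{r}$ with signature $-8r$ is additive under boundary connected sum. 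Then $I(M)=\{0,7,14,21\}\cong\mathbb{Z}/4$.

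\emph{Statement 3.} Given any $\mathcal{G}^{3}(\mathrm{Wu})$-like $M$, choose $r$ with $\lambda(\mathcal{G}^{3}(\mathrm{Wu}))-3r=\lambda(M)$, which is possible since $3$ is invertible mod $7$. Theorem \ref{Theorem: main}, Statement 1 then gives $M\cong\mathcal{G}^{3}(\mathrm{Wu})\#\Sigma_{r}$. Since $\Sigma_{r}$ is homeomorphic to $S^{7}$, all such $M$ are homeomorphic to $\mathcal{G}^{3}(\mathrm{Wu})$. The values $\lambda=-3r$ with $r\in\mathbb{Z}/28$ exhaust $\mathbb{Z}/7$, so there are exactly $7$ oriented diffeomorphism classes of smooth structures of the form $\mathcal{G}^{3}(\mathrm{Wu})\#\Sigma$. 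Any smooth manifold homeomorphic to $\mathcal{G}^{3}(\mathrm{Wu})$ is itself $\mathcal{G}^{3}(\mathrm{Wu})$-like, because all defining conditions (simply connected, the homology groups, and non-spin, as $w_2$ is a homotopy invariant) are topological. Hence these account for all smooth structures.

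\emph{Statement 2.} This is the step needing care. First I will check that reversing orientation negates $\lambda$. If $V$ is a coboundary of $M$ with $\sigma(V,M)=0$, then $-V$ bounds $-M$ with vanishing signature. The class $p_{1}$ is unchanged while the fundamental class changes sign, so $\Lambda(-V)=-\Lambda(V)$ and $\lambda(-M)=-\lambda(M)$. Note that $-M$ is again $\mathcal{G}^{3}(\mathrm{Wu})$-like, so Theorem \ref{Theorem: main}, Statement 1 applies to it. An orientation-reversing self-diffeomorphism of $M$ is the same as an oriented diffeomorphism $M\cong -M$, which holds if and only if $\lambda(M)=-\lambda(M)$, i.e.\ $2\lambda(M)=0$ in $\mathbb{Z}/7$, i.e.\ $\lambda(M)=0$. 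By Lemma \ref{Lemma: s-inv of G_3(Wu)}, Statement 1 and Theorem \ref{Theorem: main}, Statement 1, this is equivalent to $M\cong\mathcal{G}^{3}(\mathrm{Wu})$.

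The only genuinely non-formal inputs are Lemma \ref{Lemma: s-inv of G_3(Wu)} itself, which is proved separately by computing $\Lambda$ on the coboundary built from $D^{3}\times\mathrm{Wu}$ and the trace, and on $W_{r}$ via boundary connected sum using Lemma \ref{Lemma: char nums additive wrt gluing via boundary}, together with the sign check above.
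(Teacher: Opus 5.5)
Your proposal is correct and is exactly the derivation the paper intends: the paper states this corollary without a separate proof, as a direct consequence of Theorem~\ref{Theorem: main} and Lemma~\ref{Lemma: s-inv of G_3(Wu)}. Your three steps supply precisely what is needed: $M\#\Sigma_r\cong M$ if and only if $7\mid r$; $\lambda(-M)=-\lambda(M)$ together with $7$ being odd; and surjectivity of $r\mapsto -3r$ onto $\mathbb{Z}/7$. Note only that the count of $7$ smooth structures is up to orientation-preserving diffeomorphism, per the paper's standing convention. Your Statement~2 argument shows that up to arbitrary diffeomorphism the classes with invariants $\pm\lambda$ merge, which would give $4$.
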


\begin{cor}
	\begin{compactenum}
		\item[\ ]
		\item Let $M$ be a $\mathcal{G}^{3}_{p}\left(S^{5}\right)$-like manifold. Then its inertia group $I(M)$ is trivial.
		\item A $\mathcal{G}^{3}_{p}\left(S^{5}\right)$-like manifold admits an orientation-reversing self-diffeomorphism if and only if it is diffeomorphic to $\mathcal{G}^{3}_{p}\left(S^{5}\right)$ (not requiring orientation-preserving).
		\item All $\mathcal{G}^{3}_{p}\left(S^{5}\right)$-like manifolds are homeomorphic, and $\mathcal{G}^{3}_{p}\left(S^{5}\right)$ admits $28$ distinct smooth structure.
	\end{compactenum}
\end{cor}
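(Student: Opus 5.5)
The plan is to read all three statements off Theorem \ref{Theorem: main, spin} together with Lemma \ref{Lemma: s-inv of G_3^p S^5}. By Statement 3 of Theorem \ref{Theorem: main, spin}, every $\mathcal{G}^{3}_{p}\left(S^{5}\right)$-like manifold has the form $M_r:=\mathcal{G}^{3}_{p}\left(S^{5}\right)\#\Sigma_r$ for some $0\leqslant r\leqslant 27$. By Lemma \ref{Lemma: s-inv of G_3^p S^5} we then have $\mu(M_r)=8r \bmod 224$.

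\emph{Inertia group.} Let $M$ be $\mathcal{G}^{3}_{p}\left(S^{5}\right)$-like. Then $M\#\Sigma_r\cong M$ holds if and only if $\mu(M)+8r\equiv\mu(M)\bmod 224$, by Statement 1 of Theorem \ref{Theorem: main, spin}. This is equivalent to $28\mid r$, i.e.\ $r=0$. Hence $I(M)=\{S^{7}\}$.

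\emph{Homeomorphism type and smooth structures.} Each $\Sigma_r$ is homeomorphic to $S^{7}$, so $M_r$ is homeomorphic to $\mathcal{G}^{3}_{p}\left(S^{5}\right)$. The $\mu$-invariant separates $M_0,\dots,M_{27}$, since $8r\bmod 224$ takes $28$ distinct values. This gives exactly $28$ oriented diffeomorphism classes in this homeomorphism type, which matches Statement 2 of Theorem \ref{Theorem: main, spin}.

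\emph{Orientation reversal.} First I will check that $\mu(-M)=-\mu(M)$. If $V$ is a spin coboundary of $M$ with $\sigma(V,M)=0$, then $-V$ bounds $-M$, still has vanishing signature, and satisfies $\mathrm{M}(-V)=-\mathrm{M}(V)$. Also $-M$ is again $\mathcal{G}^{3}_{p}\left(S^{5}\right)$-like. Therefore $M$ admits an orientation-reversing self-diffeomorphism if and only if $M\cong -M$, which by Statement 1 of Theorem \ref{Theorem: main, spin} happens if and only if $2\mu(M)=0$ in $\mathbb{Z}/224$. For $M=M_r$ this reads $16r\equiv 0\bmod 224$, i.e.\ $14\mid r$.

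The main obstacle is reconciling this with the statement as written. The congruence $14\mid r$ has the solutions $r=0$ and $r=14$. The case $r=0$ is clear: $\mu(-\mathcal{G}^{3}_{p}(S^{5}))=0$, so $-\mathcal{G}^{3}_{p}(S^{5})\cong\mathcal{G}^{3}_{p}(S^{5})$. For $r=14$ the same computation gives $-M_{14}\cong\mathcal{G}^{3}_{p}(S^{5})\#\Sigma_{-14}=M_{14}$. So $M_{14}$ would also admit an orientation-reversing self-diffeomorphism. It is not diffeomorphic to $\mathcal{G}^{3}_{p}\left(S^{5}\right)$ even allowing orientation reversal, because $\mu(\pm M_{14})=112\neq 0$.

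I would therefore first recheck the normalisation $\mu(M\#\Sigma_r)=\mu(M)+8r$ in Lemma \ref{Lemma: s-inv of G_3^p S^5} against the Eells--Kuiper convention, and also the sign behaviour of $\mathrm{M}$ under orientation reversal. If both hold, the correct conclusion of Statement 2 is that exactly $\mathcal{G}^{3}_{p}(S^{5})$ and $\mathcal{G}^{3}_{p}(S^{5})\#\Sigma_{14}$ admit orientation-reversing self-diffeomorphisms, and Statement 2 as written would need amending.
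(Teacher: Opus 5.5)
Your derivation of Statements 1 and 3 is the intended one. The paper gives no separate proof and lists the corollary as a direct consequence of Theorem \ref{Theorem: main, spin} and Lemma \ref{Lemma: s-inv of G_3^p S^5}, which is exactly what you read off. For ``$28$ smooth structures'' you should add one remark. Any smooth manifold homeomorphic to $G:=\mathcal{G}^{3}_{p}\left(S^{5}\right)$ is itself $\mathcal{G}^{3}_{p}\left(S^{5}\right)$-like: simple connectivity and homology are homotopy invariant, and spin is automatic since $H^{2}(M;\mathbb{Z}/2)=0$. The count $28$ is of oriented classes, following the paper's convention.

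Your objection to Statement 2 is correct: as written it is false, and only the ``if'' direction survives. Both things you wanted to recheck hold, and the normalisation in Lemma \ref{Lemma: s-inv of G_3^p S^5} is in fact irrelevant.

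\begin{itemize}
\item Since $\overline{p_{1}}^{\mathbb{Q}}(V)$ does not depend on orientation while $[-V,-M]_{\mathbb{Q}}=-[V,M]_{\mathbb{Q}}$, one has $\mu(-M)=-\mu(M)$.
\item Under the bijection of Theorem \ref{Theorem: main, spin}, the involution $M\mapsto -M$ therefore becomes negation on $\mathbb{Z}/28$, which has exactly two fixed points. The same count shows there are only $15$ classes up to unoriented diffeomorphism.
\item Concretely, $-G\cong G$ because $\mu(G)=0$. Also $\Sigma_{14}$ is the element of order $2$ in $\Theta_{7}$, so $-\Sigma_{14}\cong\Sigma_{14}$, and hence $-(G\#\Sigma_{14})\cong G\#\Sigma_{14}$.
\item On the other hand $G\#\Sigma_{14}\not\cong \pm G$, by triviality of the inertia group.
\end{itemize}

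The paper's Statement 2 appears to have been carried over from the $\mathcal{G}^{3}(\mathrm{Wu})$ case, where $2\lambda=0$ in $\mathbb{Z}/7$ does force $\lambda=0$. In $\mathbb{Z}/28$ it does not. Your amended formulation is the correct one: exactly $G$ and $G\#\Sigma_{14}$ admit orientation-reversing self-diffeomorphisms.
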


\begin{pf}[of Lemma \ref{Lemma: s-inv of G_3(Wu)}]
	We begin with Statement 1. Recall that
	$$V:=\mathrm{Tr}\cup_{S^{2}\times\mathrm{Wu}\times\{0\}}\left(D^{3}\times\mathrm{Wu}\right)\cong \left(D^{3}\times\mathrm{Wu}\right)\cup_{S^{2}\times D^{5}}\left(D^{3}\times D^{5}\right)$$
	is a coboundary of $\mathcal{G}^{3}(\mathrm{Wu})$. It follows from a standard Mayer-Vietoris argument that
	\[H_{q}(V)=\left\{
	\begin{alignedat}{2}
		\mathbb{Z},\ &q=0, 3, 5;\\
		\mathbb{Z}\big/2,\ &q=2;\\
		0,\ &\text{otherwise}.
	\end{alignedat}
	\right.\]
	Hence $H^{4}\left(V,\mathcal{G}^{3}(\mathrm{Wu})\right)=0$ by Poincar\'{e}-Lefschetz duality and $H^{4}(V)=0$ by universal coefficient theorem. Consequentially, $\sigma\left(V,\mathcal{G}^{3}(\mathrm{Wu})\right)=\Lambda(V)=0$ and $\lambda\left(\mathcal{G}^{3}(\mathrm{Wu})\right)=0\in\mathbb{Z}\big/7$.

	Next we prove Statement 2. Let $V$ be a coboundary of $M$ such that $\sigma(V,M)=0$ and $\lambda(M)=\Lambda(V)\ \mathrm{mod}\ 7$. Form the boundary connected sum $V\natural W_{r}$ of $V$ and $W_{r}$, then construct its connected sum with $8r$ copies of $\mathbb{H}P^{2}$, and we obtain a coboundary
	$$V_{r}:=V\natural W_{r}\#\left(8r\mathbb{H}P^{2}\right)$$
	of $M\#\Sigma_{r}$. Since signature is additive over connected sum with closed manifolds and over boundary connected sum with manifolds with boundary, we have
	\[\sigma\left(V_{r},M\#\Sigma_{r}\right)=\sigma\left(V,M\right)+\sigma\left(W_{r},\Sigma_{r}\right)+\sigma\left(8r\mathbb{H}P^{2}\right)=0-8r+8r=0.\]
	Hence $\lambda\left(M\#\Sigma_{r}\right)=\Lambda(V_{r})\ \mathrm{mod}\ 7$. By Lemma \ref{Lemma: char nums additive wrt gluing via boundary}, Statement 1 we have
	\[\Lambda\left(V_{r}\right)=\Lambda(V)+8r p_{1}^{2}\left(\mathbb{H}P^{2}\right)=\Lambda(V)+32r.\]
	Hence immediately we obtain $\lambda\left(M\#\Sigma_{r}\right)=\Lambda\left(V_{r}\right)\ \mathrm{mod}\ 7=\lambda(M)-3r\ \mathrm{mod}\ 7$.
\end{pf}

\begin{pf}[of Theorem \ref{Theorem: main}, Statements 2 and 3]
	By Lemma \ref{Lemma: s-inv of G_3(Wu)} we have 
	$$\lambda\left(\mathcal{G}^{3}(\mathrm{Wu})\#\Sigma_{r}\right)=-3r\ \mathrm{\mod}\ 7.$$ 
	Since $-3$ and $7$ are coprime, $\lambda\left(\mathcal{G}^{3}(\mathrm{Wu})\#\Sigma_{r}\right)$ can take every value in $\mathbb{Z}\big/7$ as $r$ takes every value from $0$ to $27$. 
	
	Let $\mathcal{M}$ be the set of oriented diffeomorphism classes of $\mathcal{G}^{3}(\mathrm{Wu})$-like manifolds. Theorem \ref{Theorem: main}, Statement 1 implies that $\mathcal{M}\xrightarrow{\lambda}\mathbb{Z}\big/7$ is injective, and the argument above shows that $\mathcal{M}\xrightarrow{\lambda}\mathbb{Z}\big/7$ is also surjective, thereby bijective. In particular, any two $\mathcal{G}^{3}(\mathrm{Wu})$-like manifolds are differed by a connected sum with certain homotopy $7$-sphere, and they must be homeomorphic. Now we complete the proof of Theorem \ref{Theorem: main}.
\end{pf}

\begin{pf}[of Lemma \ref{Lemma: s-inv of G_3^p S^5}]
	The proof is parallet to that of Lemma \ref{Lemma: s-inv of G_3(Wu)}. We begin with Statement 1. Recall that
	$$V':=\mathrm{Tr}'\cup_{S^{2}\times S^{5}\times\{0\}}\left(D^{3}\times S^{5}\right)\cong \left(D^{3}\times S^{5}\right)\cup_{S^{2}\times D^{5}}\left(D^{3}\times D^{5}\right)$$
	is a coboundary of $\mathcal{G}^{3}_{p}\left(S^{5}\right)$. It follows from a standard Mayer-Vietoris argument that
	\[H_{q}\left(V'\right)=\left\{
	\begin{alignedat}{2}
		\mathbb{Z},\ &q=0, 3, 5;\\
		0,\ &\text{otherwise}.
	\end{alignedat}
	\right.\]
	Hence $H^{4}\left(V',\mathcal{G}^{3}_{p}\left(S^{5}\right)\right)=0$ and $H^{4}(V)=0$. As a consequence, $\sigma\left(V',\mathcal{G}^{3}_{p}\left(S^{5}\right)\right)=\mathrm{M}(V)=0$ and $\mu\left(\mathcal{G}^{3}_{p}\left(S^{5}\right)\right)=0\in\mathbb{Z}\big/224$.
	
	Next we prove Statement 2. Let $V$ be a coboundary of $M$ such that $\sigma(V,M)=0$ and $\mu(M)=\mathrm{M}(V)\ \mathrm{mod}\ 224$. Form the  coboundary
	$$V_{r}:=V\natural W_{r}\#\left(8r\mathbb{H}P^{2}\right)$$
	of $M\#\Sigma_{r}$ as before, and we have
	\(\sigma\left(V_{r},M\#\Sigma_{r}\right)=0.\)
	Hence $\mu\left(M\#\Sigma_{r}\right)=\mathrm{M}(V_{r})\ \mathrm{mod}\ 224$. By Lemma \ref{Lemma: char nums additive wrt gluing via boundary}, Statement 1 we have
	\[\mathrm{M}\left(V_{r}\right)=\mathrm{M}(V)+8r \overline{p_{1}}^{2}\left(\mathbb{H}P^{2}\right)=\mathrm{M}(V)+8r.\]
	Hence immediately we obtain $\mu\left(M\#\Sigma_{r}\right)=\mathrm{M}\left(V_{r}\right)\ \mathrm{mod}\ 224=\mu(M)+8r\ \mathrm{mod}\ 224$.
\end{pf}

\begin{pf}[of Theorem \ref{Theorem: main, spin}, Statements 2 and 3]
	By Lemmata \ref{Lemma: s-inv of G_3^p S^5} we have 
	$$\mu\left(\mathcal{G}^{3}_{p}\left(S^{5}\right)\#\Sigma_{r}\right)=8r\ \mathrm{\mod}\ 224.$$ 
	Hence $\mu\left(\mathcal{G}^{3}_{p}\left(S^{5}\right)\#\Sigma_{r}\right)$ can take every value in $8\mathbb{Z}\big/224$ as $r$ takes every value from $0$ to $27$. 
	
	Let $\mathcal{M}'$ be the set of oriented diffeomorphism classes of $\mathcal{G}^{3}_{p}\left(S^{5}\right)$-like manifolds. Theorem \ref{Theorem: main, spin}, Statement 1 implies that $\mathcal{M}'\xrightarrow{\mu}\mathbb{Z}\big/224$ is injective, and the argument above shows that $\mathcal{M}'\xrightarrow{\mu}\mathbb{Z}\big/224$ has the image $8\mathbb{Z}\big/224\cong\mathbb{Z}\big/28$. Hence $\mu$-invariant induces a bijection $\mathcal{M}'\xrightarrow{\cong}\mathbb{Z}\big/28$. In particular, any two $\mathcal{G}^{3}_{p}\left(S^{5}\right)$-like manifolds are differed by a connected sum with certain homotopy $7$-sphere, and they must be homeomorphic. Now we complete the proof of Theorem \ref{Theorem: main, spin}.
\end{pf}
	
	\section{Compute the bordism groups}\label{Section: Compute bordism gps}
In this section we compute the bordism groups $\Omega_{q}^{Spin}\left(\mathbb{Z}\big/p,2\right)$ for odd primes $p$ and $q=7,8$. Recall in the proof of Proposition \ref{Proposition: identify surgery obstruction, G_3^p S^5} that we adapt the following convention as in \cite{Zhubr75}. If $E$ is a (co)homology theory, $G$ is an abelian group and $n$ is a positive integer, then $E\left(K\left(G,n\right)\right)$ would be abbreviated as $E(G,n)$.

First we state our results of bordism groups. For this purpose we introduce some characteristic numbers and closed spin $8$-manifolds with distinguished cohomology classes.
Recall that each element in $\Omega_{q}^{Spin}\left(\mathbb{Z}\big/p,2\right)$ can be represented by a pair $(X,x)$, where $M$ is a closed smooth spin $q$-manifold and $x\in H^{2}\left(X;\mathbb{Z}\big/p\right)$ is a cohomology class. For a closed smooth spin $8$-manifold $X$, its signature $\sigma(X)$ and $\widehat{A}$-genus $\widehat{A}(X)$ are integer spin bordism invariant. If $X$ is assigned with a mod $p$ cohomology class $x\in H^{2}\left(X;\mathbb{Z}\big/p\right)$, we introduce the characteristic numbers
\begin{align*}
	\left(\left(x^{2}\right)^{2}\right)(X,x)&:=\left<\left(x^{2}\right)^{2},[X]_{p}\right>\in\mathbb{Z}\big/p,\\
	\left(x^{2}\rho_{p}\overline{p_{1}}\right)(X,x)&:=\left<x^{2}\rho_{p}\left(\overline{p_{1}}(X)\right),[X]_{p}\right>\in\mathbb{Z}\big/p.
\end{align*}
It is straightforward to verify that these two characteristic numbers are also bordism invariant and are additive over disjoint union, thereby defining homomorphisms from $\Omega_{8}^{Spin}\left(\mathbb{Z}\big/p,2\right)$ to \(\mathbb{Z}\big/p\).

Next we introduce some closed spin $8$-manifolds and their distinguished cohomology classes.
\begin{compactenum}
	\item Let $Bott$ be the following manifold. Plumb 8 copies of the total space unit disc bundle associated to the tangent bundle of \( S^4 \), and we obtain a smooth compact 8-manifold \( W(E_8) \) whose boundary represents a generator of the group of homotopy 7-spheres \( \Theta_7 \cong \mathbb{Z}/28 \). Form the boundary connected sum of 28 copies of \( W(E_8) \), and we obtain a smooth compact 8-manifold whose boundary is now diffeomorphic to \( S^7 \) so that we attach an 8-dimensional unit disc \( D^8 \) along the boundary via a standard diffeomorphism, obtaining the smooth closed manifold Bott. In particular $Bott$ is 3-connected.
	
	\item Let \( z_n \in H^2(\mathbb{C}P^n) \) be the Poincaré dual of the embedding hyperplane \( \mathbb{C}P^{n-1} \). Let \( V_{5}(2) \) be the hypersurface in \( \mathbb{C}P^5 \) of degree 2 and let \( \widehat{z}_5 \) denote the restriction of \( z_5 \) to \( V(2) \).
	
	\item Consider \( (S^2)^4 \), the product of 4 copies of \( S^2 \). Let \( x_i \) be the second cohomology class corresponding to the \( i \)-th component \( S^2 \) and denote \( \Delta = x_1 + x_2 + x_3 + x_4 \).
\end{compactenum}
\begin{prop}\label{Proposition: Omega_q^Spin(Z/p,2), p>=5}
	Let $p\geqslant5$ be an odd prime. We have $\Omega_{7}^{Spin}\left(\mathbb{Z}\big/p,2\right)=0$ and $\Omega_{8}^{Spin}\left(\mathbb{Z}\big/p,2\right)\cong\mathbb{Z}^{2}\oplus\left(\mathbb{Z}\big/p\right)^{2}$. The homomorphism
	\begin{eqnarray}\label{Formula: homom Omega_8^Spin(Z/p,2) to Z^2+(Z/p)^2}
		\begin{aligned}
			\Omega_{8}^{Spin}\left(\mathbb{Z}\big/p,2\right)&\to \mathbb{Z}^{2}\oplus\left(\mathbb{Z}\big/p\right)^{2},\\
			[X,x]&\mapsto\left(\sigma(X),\widehat{A}(X),\left(x^{2}\right)^{2},x^{2}\rho_{p}\overline{p_{1}}\right)
		\end{aligned}
	\end{eqnarray}
	is an isomorphism. A generating set of $\Omega_{8}^{Spin}\left(\mathbb{Z}\big/p,2\right)$ is
	\[\left[\mathbb{H}P^{2},0\right],\left[Bott,0\right],\left[V_{5}(2),\rho_{p}\left(\widehat{z_{5}}\right)\right],\left[\left(S^{2}\right)^{4},\rho_{p}(\Delta)\right].\]
\end{prop}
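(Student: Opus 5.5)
We compute $\Omega_{*}^{Spin}\left(\mathbb{Z}\big/p,2\right)$ with the Atiyah--Hirzebruch spectral sequence
\[E^{2}_{s,t}=H_{s}\left(K\left(\mathbb{Z}\big/p,2\right);\Omega_{t}^{Spin}\right)\Rightarrow\Omega_{s+t}^{Spin}\left(\mathbb{Z}\big/p,2\right).\]
First, split off the point: $\Omega_{q}^{Spin}\left(\mathbb{Z}\big/p,2\right)\cong\Omega_{q}^{Spin}\oplus\widetilde{\Omega}_{q}^{Spin}\left(\mathbb{Z}\big/p,2\right)$. The reduced homology $\widetilde{H}_{*}\left(K\left(\mathbb{Z}\big/p,2\right);\mathbb{Z}\right)$ is all $p$-torsion. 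Since $p$ is odd, it vanishes after tensoring with $\mathbb{Z}\big/2$, and $\mathrm{Tor}$ with $\mathbb{Z}\big/2$ is also zero. So on the reduced part only the rows $t\equiv0\bmod4$ contribute, with $\Omega_{0}^{Spin}=\Omega_{4}^{Spin}=\mathbb{Z}$ and $\Omega_{8}^{Spin}=\mathbb{Z}^{2}$. Equivalently, after localizing at $p$ we have $MSpin_{(p)}\simeq MSO_{(p)}$, a wedge of suspensions of $BP$-type spectra. In total degree $\leqslant 8$ this reduces to $p$-local ordinary homology of $K\left(\mathbb{Z}\big/p,2\right)$ in rows $t=0,4$. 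Row $t=8$ contributes only in degree $s=0$, and that is unreduced.

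The needed input is $\widetilde{H}_{s}\left(K\left(\mathbb{Z}\big/p,2\right);\mathbb{Z}\right)$ for $s\leqslant8$, which is Lemma \ref{Lemma: H_{q}(Z/p,2)}. The mod $p$ cohomology is free graded-commutative on $\iota_{p}$, $\beta\iota_{p}$, $P^{1}\iota_{p}$ (degree $2p$), $\beta P^{1}\iota_{p}$, and so on. For $p\geqslant5$ the classes in degree $\leqslant9$ are $\iota^{k}$ and $\iota^{k}\beta\iota$, since $2p\geqslant10$. Computing integral homology through the Bockstein spectral sequence gives
\[\widetilde{H}_{2}=\widetilde{H}_{4}=\widetilde{H}_{6}=\widetilde{H}_{8}=\mathbb{Z}\big/p,\qquad \widetilde{H}_{\mathrm{odd}}=0\ \ (s\leqslant 8).\]
(Note the degree-$5$ class $\iota\beta\iota$ equals $\beta(\iota^{2})/2$.) The reduced $E^{2}$-page in total degree $7$ is $H_{7}\oplus H_{3}$ from rows $0$ and $4$, which is zero. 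Hence $\Omega_{7}^{Spin}\left(\mathbb{Z}\big/p,2\right)=\Omega_{7}^{Spin}=0$.

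In total degree $8$ the reduced $E^{2}$-page is $H_{8}(\mathbb{Z}\big/p,2)\oplus H_{4}(\mathbb{Z}\big/p,2)\cong\left(\mathbb{Z}\big/p\right)^{2}$, coming from rows $0$ and $4$. Differentials into or out of these positions land in odd $s$ with $s\leqslant9$, and those groups vanish. The one exception to check is $H_{9}$, which might be nonzero, so the target must be checked carefully. Since all differentials are $p$-local and the odd-degree reduced groups vanish in range, the spectral sequence collapses. Thus $\widetilde{\Omega}_{8}$ has order $p^{2}$, and $\Omega_{8}^{Spin}\left(\mathbb{Z}\big/p,2\right)$ is an extension of $\mathbb{Z}^{2}$ by a group of order $p^{2}$.

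To pin down both the group and the isomorphism, we evaluate the map \eqref{Formula: homom Omega_8^Spin(Z/p,2) to Z^2+(Z/p)^2} on the four listed generators.
\begin{compactenum}
\item $\mathbb{H}P^{2}$ gives $\left(\sigma,\widehat{A}\right)=(1,0)$, and $Bott$ gives $(224\cdot\text{?},1)$; these two span $\Omega_{8}^{Spin}\cong\mathbb{Z}^{2}$, with vanishing mod $p$ numbers since $x=0$.
\item $\left(S^{2}\right)^{4}$ with $\Delta$ has $\sigma=\widehat{A}=0$ and $\Delta^{4}=24$, a unit mod $p$ for $p\geqslant5$, and $\overline{p_{1}}=0$. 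So its image is $(0,0,24,0)$.
\item The quadric $V_{5}(2)$ has $\widehat{z}^{4}=2$. Its tangent bundle gives $p_{1}=(6-4)z^{2}=2z^{2}$, so $\overline{p_{1}}=\widehat{z}^{2}$ and $x^{2}\overline{p_{1}}=2$. Its signature and $\widehat{A}$ are computed from the same data.
\end{compactenum}
The resulting $4\times4$ image matrix is upper triangular, up to the integer part, with units on the $\mathbb{Z}\big/p$ diagonal after subtracting suitable multiples of $\mathbb{H}P^{2}$ and $Bott$. Hence the map is surjective. Comparing orders, the torsion of $\Omega_{8}$ has order at most $p^{2}$ while the image torsion has order $p^{2}$, so the map is an isomorphism. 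This also shows the extension splits.

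The main obstacle is the collapse argument near the edge, namely $H_{9}(\mathbb{Z}\big/p,2)$ possibly supporting a differential. Here we use that $\widetilde{H}_{9}$ is $p$-torsion coming from $\iota^{3}\beta\iota$-type classes. Any differential $d^{r}$ from $E_{9,0}$ to $E_{5,3}$ or $E_{1,7}$ has target zero, because $\Omega_{3}^{Spin}=\Omega_{7}^{Spin}=0$ and $H_{1}=0$. The remaining possible hits are into $E_{8-r,r-1}$, which lie in odd or vanishing rows. So the argument goes through. Verifying the characteristic numbers of $V_{5}(2)$ is routine.
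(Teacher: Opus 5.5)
Your proposal is correct and follows essentially the same route as the paper: the Atiyah--Hirzebruch spectral sequence with the point split off, where on the reduced part only rows $t=0,4$ matter in degrees $\leqslant 8$; the vanishing of the whole line $s+t=7$; an $E^{2}$ bound of $(\mathbb{Z}/p)^{2}$ on the reduced part in degree $8$; and then the characteristic-number table. In that table the torsion block $\begin{pmatrix}2&2\\24&0\end{pmatrix}$ has determinant $-48$, a unit for $p\geqslant5$, and $Bott$ has $\sigma=224$, $\widehat{A}=-1$. One small correction: your closing discussion of differentials out of $E_{9,0}$ is mis-indexed (the targets are $E_{9-r,r-1}$) and overlooks $d^{5}\colon E_{9,0}\to E_{4,4}=H_{4}(\mathbb{Z}/p,2)\neq0$. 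This is harmless, because $H_{9}(\mathbb{Z}/p,2)=0$ for $p\geqslant5$ by the paper's computation of $H_{q}(\mathbb{Z}/p,2)$, and in any case your final order count only uses the upper bound $|\widetilde{\Omega}_{8}|\leqslant p^{2}$, which holds at $E^{2}$ regardless of differentials.
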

\begin{prop}\label{Proposition: Omega_q^Spin(Z/3,2)}
	We have $\Omega_{7}^{Spin}\left(\mathbb{Z}\big/3,2\right)=0$ and $\Omega_{8}^{Spin}\left(\mathbb{Z}\big/3,2\right)\cong\mathbb{Z}^{2}\oplus\mathbb{Z}\big/3$. The homomorphism
	\begin{eqnarray}\label{Formula: homom Omega_8^Spin(Z/3,2) to Z^2+Z/3}
		\begin{aligned}
			\Omega_{8}^{Spin}\left(\mathbb{Z}\big/3,2\right)&\to \mathbb{Z}^{2}\oplus\mathbb{Z}\big/3,\\
			[X,x]&\mapsto\left(\sigma(X),\widehat{A}(X),\left(x^{2}\right)^{2}\right)
		\end{aligned}
	\end{eqnarray}
	is an isomorphism. A generating set of $\Omega_{8}^{Spin}\left(\mathbb{Z}\big/3,2\right)$ is
	\[\left[\mathbb{H}P^{2},0\right],\left[Bott,0\right],\left[V_{5}(2),\rho_{p}\left(\widehat{z_{5}}\right)\right].\]
\end{prop}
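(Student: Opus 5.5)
We will compute $\Omega_{*}^{Spin}\left(\mathbb{Z}\big/3,2\right)$ in degrees $7$ and $8$ through the Atiyah--Hirzebruch spectral sequence
\[E^{2}_{s,t}=H_{s}\left(K\left(\mathbb{Z}\big/3,2\right);\Omega_{t}^{Spin}\right)\Rightarrow\Omega_{s+t}^{Spin}\left(\mathbb{Z}\big/3,2\right).\]
Since $\widetilde{H}_{*}\left(\mathbb{Z}\big/3,2\right)$ is entirely $3$-primary, it has no $2$-torsion and tensors to zero with $\mathbb{Z}\big/2$. Hence every term $E^{2}_{s,t}$ with $s>0$ and $t\in\{1,2\}$ vanishes. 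In total degree at most $8$, the only possibly nonzero terms with $s>0$ are $E^{2}_{s,0}=H_{s}\left(\mathbb{Z}\big/3,2\right)$ and $E^{2}_{s,4}=H_{s}\left(\mathbb{Z}\big/3,2\right)$ for $s\leqslant4$.

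For this we need the integral homology of $K\left(\mathbb{Z}\big/3,2\right)$ up to degree $8$. We will read it off from Cartan's computation, i.e.\ from the mod $3$ cohomology
\[\mathbb{Z}\big/3\left[\iota,\ \beta P^{1}\beta\iota,\ldots\right]\otimes\Lambda\left(\beta\iota,\ P^{1}\beta\iota,\ldots\right),\]
together with Bockstein information (Lemma \ref{Lemma: H_{q}(Z/p,2)}). In low degrees the mod $3$ classes are:
\begin{itemize}
	\item degree $2$: $\iota$;
	\item degree $3$: $\beta\iota$;
	\item degree $4$: $\iota^{2}$;
	\item degree $5$: $\iota\beta\iota$;
	\item degree $6$: $\iota^{3}$;
	\item degree $7$: $\iota^{2}\beta\iota$ and $P^{1}\beta\iota$;
	\item degree $8$: $\iota^{4}$, $\beta P^{1}\beta\iota$, $\iota P^{1}\beta\iota$ and $\beta\iota\,P^{1}\beta\iota$.
\end{itemize}
From this we expect $H_{2}=\mathbb{Z}\big/3$, $H_{4}=\mathbb{Z}\big/3$, $H_{3}=H_{5}=0$, and in degree $7$ a group such as $\mathbb{Z}\big/9$ or $\left(\mathbb{Z}\big/3\right)^{a}$, possibly with higher Bocksteins (at $p=3$ the class $\iota^{3}$ interacts with $P^{1}\iota=\iota^{3}$).

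\emph{Degree $7$.} In total degree $7$ the only candidates are $E^{2}_{7,0}=H_{7}\left(\mathbb{Z}\big/3,2\right)$ and $E^{2}_{3,4}=H_{3}=0$. We must show that $E^{2}_{7,0}$ is killed. It can only be hit or killed through differentials into $E_{3,4}$, which vanishes, or by the transgression $d^{5}\colon E^{5}_{7,0}\to E^{5}_{2,4}=H_{2}\otimes\mathbb{Z}=\mathbb{Z}\big/3$. So everything hinges on showing that $H_{7}$ is small enough and that $d^{5}$ is injective on it. \textbf{This is the main obstacle.} The plan is to identify $d^{5}\colon E_{s,0}\to E_{s-5,4}$ with the dual of the cohomology operation detecting the first $3$-primary $k$-invariant of $MSpin$, namely $P^{1}$ (the $3$-local $MSpin$ looks like $BP$ in low degrees, with first class in degree $4$ attached by $P^{1}$). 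Under this identification, $d^{5}$ from $H_{7}$ to $H_{2}$ is dual to $P^{1}\colon H^{2}\to H^{6}$, $\iota\mapsto\iota^{3}$, which is nonzero. Likewise, $d^{5}\colon E_{8,0}\to E_{3,4}$ and $E_{9,0}\to E_{4,4}$ are dual to $P^{1}$ on $\beta\iota$ and $\iota^{2}$. Rather than check every Bockstein by hand, the cleanest route is to work with $3$-local $ko$ or $BP\langle1\rangle$ (using $MSpin\simeq ko\vee\cdots$ in this range after $3$-localization) and use the Adams spectral sequence over $\mathcal{A}(0)$ and $P^{1}$. This computes $\widetilde{ko}_{*}\left(K\left(\mathbb{Z}\big/3,2\right)\right)_{(3)}$ in degrees $7$ and $8$, and we expect degree $7$ to vanish.

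\emph{Degree $8$.} The free part $\mathbb{Z}^{2}$ comes from $\Omega_{8}^{Spin}(pt)$, detected by $\sigma$ and $\widehat{A}$, with generators $\mathbb{H}P^{2}$ and $Bott$. The torsion part has at most the surviving classes from $E_{8,0}$ and $E_{4,4}=\mathbb{Z}\big/3$. We will show the torsion has order exactly $3$:
\begin{itemize}
	\item an upper bound from the spectral sequence (by the same $P^{1}$ argument, $E_{8,0}$ contributes nothing and $E_{4,4}$ survives);
	\item a lower bound by evaluating the homomorphism $(x^{2})^{2}$ on $\left[V_{5}(2),\rho_{3}\widehat{z_{5}}\right]$.
\end{itemize}
For the lower bound, $V_{5}(2)$ is spin since $c_{1}=4z$ is even, and $\widehat{z}^{4}[V_{5}(2)]=2\not\equiv0\bmod 3$. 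This also confirms that $\sigma$, $\widehat{A}$ and $(x^{2})^{2}$ give an isomorphism onto $\mathbb{Z}^{2}\oplus\mathbb{Z}\big/3$, after checking that these invariants on the three listed generators form an invertible matrix, which is a direct characteristic-number computation. That $x^{2}\rho_{3}\overline{p_{1}}$ is not an independent invariant agrees with Lemma \ref{Lemma: x^2 rho_3 p_1=x^4}.
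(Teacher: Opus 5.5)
\textbf{Degree $8$: the survival pattern is backwards.} Because of this, the upper bound on the torsion of $\Omega_{8}^{Spin}(\mathbb{Z}/3,2)$ is never proved.

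The term $E^{2}_{8,0}=H_{8}(\mathbb{Z}/3,2)\cong\mathbb{Z}/3$ can neither support nor receive a nonzero differential. Every $d^{r}$ out of it lands in $E_{8-r,r-1}$, which vanishes for all $r\geq2$ (for example $E_{3,4}=H_{3}=0$), and nothing maps into row $0$. So $E_{8,0}$ survives.

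Your own lower bound confirms this. The invariant $(x^{2})^{2}(X,f)=\langle\iota^{4},\rho_{3}f_{*}[X]\rangle$ factors through the edge map $\Omega_{8}^{Spin}(\mathbb{Z}/3,2)\to H_{8}(\mathbb{Z}/3,2)$. Hence $[V_{5}(2),\rho_{3}\widehat{z_{5}}]$ is detected in $E_{8,0}$, not in $E_{4,4}$.

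Therefore ``torsion of order exactly $3$'' requires $E_{4,4}\cong\mathbb{Z}/3$ to die. The only way this can happen is for $d^{5}_{9,0}\colon H_{9}\to H_{4}$ to be nonzero, and you assert the opposite. Your own recipes are also inconsistent: ``dual to $P^{1}$ on $\iota^{2}$'' would give $d^{5}_{9,0}\neq0$, while ``dual to $P^{1}$ on $\beta\iota$'' would be a nonzero map into $E_{3,4}=0$. Without $d^{5}_{9,0}\neq0$ the torsion could have order $9$. The paper sidesteps this by computing $\Omega_{8}$ $3$-locally with the Adams spectral sequence for $BP\wedge K(\mathbb{Z}/3,2)_{+}$, where the differentials are forced by $\Omega_{6}\cong\mathbb{Z}/9$ and $\Omega_{7}=0$.

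\textbf{The operation you use for $d^{5}$ is the wrong one.} In this range $MSpin_{(3)}\simeq BP\vee\Sigma^{8}BP$, and $d^{5}$ comes from the first $k$-invariant $H\mathbb{Z}\to\Sigma^{5}H\mathbb{Z}$ of $BP$. Its mod $3$ reduction is $\beta P^{1}$; on the bottom class this is, up to sign, the Milnor primitive $Q_{1}=P^{1}\beta-\beta P^{1}$. On integral homology, $d^{5}=\delta_{*}\circ P^{1}_{*}\circ\rho_{*}$ up to a unit. This lowers degree by $5$, so the degree-$4$ operation $P^{1}\colon H^{2}\to H^{6}$, $\iota\mapsto\iota^{3}$, is not its dual and says nothing about $d^{5}_{7,0}$.

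Done correctly, using $H_{7}\cong H_{8}\cong H_{9}\cong\mathbb{Z}/3$ from Lemma \ref{Lemma: H_{q}(Z/p,2)} (use it rather than hedging on $H_{7}$):
\begin{itemize}
\item The generator of $H_{7}$ reduces to the class dual to $P^{1}\beta\iota$. So $d^{5}_{7,0}\neq0$ because $P^{1}\beta\iota\neq0$ and $\beta\iota\neq0$, i.e.\ $Q_{1}\iota=P^{1}\beta\iota\neq0$. This gives $\Omega_{7}=0$.
\item The generator of $H_{9}$ reduces to the class dual to $\iota P^{1}\beta\iota$. So $d^{5}_{9,0}\neq0$ because $P^{1}(\iota\beta\iota)=\iota^{3}\beta\iota+\iota P^{1}\beta\iota$ and $\beta(\iota^{2})=2\iota\beta\iota$, i.e.\ $Q_{1}(\iota^{2})=2\iota P^{1}\beta\iota\neq0$. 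This kills $E_{4,4}$.
\end{itemize}
With these two facts your AHSS route closes up. It is a more direct alternative to the paper's use of Zhubr's computation of $\Omega_{6}$ and the Adams chart. As written, however, neither $\Omega_{7}=0$ nor the order of the degree-$8$ torsion is established.

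A smaller error: $\iota P^{1}\beta\iota$ and $\beta\iota\,P^{1}\beta\iota$ have degrees $9$ and $10$, not $8$. $H^{8}(K(\mathbb{Z}/3,2);\mathbb{Z}/3)$ is spanned by $\iota^{4}$ and $\beta P^{1}\beta\iota$.
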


We will prove these propositions as follows. First we recall some results of $H_{*}(\mathbb{Z}\big/p,2)$. Then we apply Atiyah-Hirzebruch spectral sequence (abbreviated as AHSS). When $p\geqslant5$, AHSS converges at $E^{2}$-page in the range we concern. We can read directly from the $E^{2}$-page that $\Omega_{7}^{Spin}\left(\mathbb{Z}\big/p,2\right)$, and there is an extension problem associated to $\Omega_{8}^{Spin}\left(\mathbb{Z}\big/p,2\right)$, which can be shown as trivial by constructing bordism invariants directly. When $p=3$, there are possibly non-trivial differentials in the $E^{5}$-page of AHSS. Hence we further apply Adams spectral sequence (abbreviated as ASS) to determine the isomorphism type. Finally we directly compute the characteristic numbers mentioned before and give the generating sets.

We begin with partial results of $H_{*}(\mathbb{Z}\big/p,2)$. Their proofs are postponed to the end of this section.
\begin{lem}\label{Lemma: H_{q}(Z/p,2)}
	When $p$ is an odd prime and $q\leqslant9$, the integral homology groups $H_{q}\left(\mathbb{Z}\big/p,2\right)$ are given as follows.
	\begin{table}[H]
		\centering
		\caption{Integral homology of $K\left(\mathbb{Z}\big/p,2\right)$}
		\begin{tabular}{ccccccccccc}
			\toprule[1pt]
			$q$ & $0$ & $1$ & $2$ & $3$ & $4$ & $5$ & $6$ & $7$ & $8$ & $9$\\
			\midrule[0.3pt]
			$H_{q}\left(\mathbb{Z}\big/3,2\right)$ & $\mathbb{Z}$ & $0$ & $\mathbb{Z}\big/3$ & $0$ & $\mathbb{Z}\big/3$ & $0$ & $\mathbb{Z}\big/9$ & $\mathbb{Z}\big/3$ & $\mathbb{Z}\big/3$ & $\mathbb{Z}\big/3$\\
			$H_{q}\left(\mathbb{Z}\big/p,2\right)(p\geqslant5)$ & $\mathbb{Z}$ & $0$ & $\mathbb{Z}\big/p$ & $0$ & $\mathbb{Z}\big/p$ & $0$ & $\mathbb{Z}\big/p$ & $0$ & $\mathbb{Z}\big/p$ & $0$\\
			\bottomrule[1pt]
		\end{tabular}
		\label{Table: H_q(Z/p,2)}
	\end{table}
\end{lem}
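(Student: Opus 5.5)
The table in Lemma \ref{Lemma: H_{q}(Z/p,2)} will be read off from the mod $p$ cohomology of $K(\mathbb{Z}/p,2)$, which is known by Cartan and Serre, combined with the Bockstein spectral sequence. By Cartan's theorem, $H^{*}(\mathbb{Z}/p,2;\mathbb{Z}/p)$ is the free graded-commutative algebra on the admissible classes
\[\iota,\ \beta\iota,\ P^{1}\beta\iota,\ \beta P^{1}\beta\iota,\ P^{p}P^{1}\beta\iota,\ \dots\]
of excess less than $2$. The degrees are $2$, $3$, $2p+1$, $2p+2$, and so on.

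\textbf{Step 1: mod $p$ cohomology through degree $10$.}
\begin{compactenum}
\item For $p\geqslant5$ we have $2p+1\geqslant11$. So through degree $10$ the algebra is just $\mathbb{Z}/p[\iota]\otimes\Lambda[\beta\iota]$.
\item Its basis is $\iota^{k}$ in degree $2k$ and $\iota^{k}\beta\iota$ in degree $2k+3$. Hence there is exactly one $\mathbb{Z}/p$ in each degree $2,3,4,5,\dots,10$, apart from degrees $0$ and $1$.
\item For $p=3$ we must add generators $y_{7}=P^{1}\beta\iota$ and $y_{8}=\beta y_{7}$. Through degree $10$ the extra products are $\iota y_{7}$ (degree $9$), $\iota y_{8}$ (degree $10$) and $\beta\iota\,y_{7}$ (degree $10$).
\end{compactenum}

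\textbf{Step 2: $p\geqslant5$.} The Bockstein satisfies $\beta(\iota^{k})=k\iota^{k-1}\beta\iota$, which is nonzero for $k<p$. So $\beta$ is an isomorphism $H^{2k}\to H^{2k+1}$ in the range considered. The $E_{2}$-page of the Bockstein spectral sequence therefore vanishes in positive degrees through $10$.
\begin{compactenum}
\item It follows that there is no rational homology and all torsion has order exactly $p$.
\item For a space of finite type, each summand $\mathbb{Z}/p$ of $H_{q}$ contributes a $\mathbb{Z}/p$ to both $H^{q}(\,;\mathbb{Z}/p)$ and $H^{q+1}(\,;\mathbb{Z}/p)$, linked by $\beta$.
\item Hence $H_{q}$ has rank $1$ over $\mathbb{Z}/p$ for even $q\geqslant2$ and vanishes for odd $q$, for $q\leqslant9$. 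Degree $10$ is needed only as the partner of $H_{9}$.
\end{compactenum}

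\textbf{Step 3: $p=3$.} This is the main obstacle, because $\beta(\iota^{3})=0$ and higher Bocksteins appear.
\begin{compactenum}
\item Compute $\beta$ on the basis of Step 1: $\beta\iota=\beta\iota$; $\beta(\iota^{2})=2\iota\beta\iota$; $\beta(\iota^{3})=0$; $\beta(\iota^{2}\beta\iota)=0$; $\beta y_{7}=y_{8}$; $\beta(\iota^{4})=\iota^{3}\beta\iota$; $\beta(\iota y_{7})=\beta\iota\,y_{7}+\iota y_{8}$, and so on.
\item This leaves $\iota^{3}$ (degree $6$) and $\iota^{2}\beta\iota$ (degree $7$) surviving to $E_{2}$.
\item Then $\beta_{2}(\iota^{3})=\pm\iota^{2}\beta\iota$. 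This follows from the standard formula $\beta_{2}(x^{p})=x^{p-1}\beta x$ (up to a unit and indeterminacy), for instance via Browder's results.
\item This nonzero $\beta_{2}$ produces the summand $\mathbb{Z}/9$ in $H_{6}$.
\item The remaining pairs linked by $\beta$ give order-$3$ summands. The pair $(y_{7},y_{8})$ gives $\mathbb{Z}/3$ in $H_{7}$, $(\iota^{4},\iota^{3}\beta\iota)$ gives $\mathbb{Z}/3$ in $H_{8}$, and $(\iota y_{7},\,\cdot\,)$ gives $\mathbb{Z}/3$ in $H_{9}$.
\item Finally, count dimensions in degrees $9$ and $10$ to confirm that no further classes remain.
\end{compactenum}
The delicate points are the $\beta_{2}$ computation and the dimension count in degree $10$. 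If $\beta_{2}$ is awkward to establish directly, I would cross-check against the known integral cohomology $H^{7}(\mathbb{Z}/3,2)\cong\mathbb{Z}/9$ by comparison with $K(\mathbb{Z},3)$, using the fibration $K(\mathbb{Z},2)\to K(\mathbb{Z},2)\to K(\mathbb{Z}/3,2)$ and its Serre spectral sequence.
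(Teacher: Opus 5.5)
Your argument is correct, but it follows a different route from the paper's. The paper quotes Eilenberg--MacLane for $q\leqslant6$, which is where $H_{6}(\mathbb{Z}/3,2)\cong\mathbb{Z}/9$ comes from. For $q=7,8,9$ it uses only the \emph{dimensions} of $H^{q}(\mathbb{Z}/p,2;\mathbb{Z}/p)$ plus the universal coefficient theorem, to see that each $H_{q}$ is zero or cyclic of $p$-power order. It then bounds the exponent above with the integral Serre spectral sequence of $K(\mathbb{Z},2)\to K(\mathbb{Z}/p,2)\to K(\mathbb{Z},3)$, which requires $H^{*}(K(\mathbb{Z},3))$ through degree $10$.

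Your Bockstein spectral sequence argument works differently in several ways:
\begin{itemize}
\item It treats all $q\leqslant9$ uniformly from Cartan's algebra.
\item It determines the exponents exactly rather than bounding them.
\item It derives the $\mathbb{Z}/9$ from $\beta_{2}(\iota^{3})=\pm\iota^{2}\beta\iota$ instead of quoting it.
\item It never needs the cohomology of $K(\mathbb{Z},3)$.
\end{itemize}
Its only non-formal input is Browder's formula $\beta_{r+1}(x^{p})=x^{p-1}\beta_{r}x$ for $p$ odd. It also yields explicit mod $p$ representatives, such as $\iota^{2}$ detecting $H_{4}$, which the paper uses later. I checked your $p=3$ bookkeeping. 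Through degree $10$, $E_{2}$ is spanned by $\iota^{3}$ and $\iota^{2}\beta\iota$. In degree $10$ the $\beta$-cycles are spanned by $\iota y_{8}+\beta\iota\,y_{7}=\beta(\iota y_{7})$, because $\beta(\iota^{5})=2\iota^{4}\beta\iota\neq0$. So your pairings give exactly the table.

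Two small repairs are needed.

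First, for $p=5$ your claim that $\beta\colon H^{2k}\to H^{2k+1}$ is an isomorphism, and hence that $E_{2}$ vanishes, through degree $10$ is false at the top. Here $\beta(\iota^{5})=5\iota^{4}\beta\iota=0$, and $H^{11}$ is two-dimensional because it contains $P^{1}\beta\iota$. So $\iota^{5}$ survives to $E_{2}$; in fact $\beta_{2}(\iota^{5})=\iota^{4}\beta\iota$ gives a $\mathbb{Z}/25$ in $H_{10}$. This does not affect $q\leqslant9$. You only need $\beta(\iota^{k})\neq0$ for $k\leqslant4$. The single degree-$9$ class $\iota^{3}\beta\iota$ is a $\beta$-boundary, so nothing in degree $9$ can pair with degree $10$. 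Just state the vanishing through degree $9$.

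Second, the mod $p$ Bockstein spectral sequence only detects the rank and the $p$-primary torsion. Your sentence ``all torsion has order exactly $p$'' therefore also needs the standard fact that $\widetilde{H}_{*}(K(\mathbb{Z}/p,2);\mathbb{Z})$ is $p$-primary, e.g.\ by Serre's class theory. The paper uses this tacitly as well, in its universal coefficient step.
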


\begin{pf}[of Proposition \ref{Proposition: Omega_q^Spin(Z/p,2), p>=5}]
	First we apply AHSS. The $E^{2}$-page is given as in Figure \ref{Figure: E^{2} page of AHSS for Omega^Spin_q(Z/p,2), p geq5}. The terms $E^{2}_{a,b}$ vanish in the line $a+b=7$, hence $\Omega^{Spin}_{7}\left(\mathbb{Z}/p,2\right)=0$. The differentials starting from or ending at the line $a+b=8$ are trivial, hence $E^{2}_{a,b}=E^{\infty}_{a,b}$ for $a+b=8$. Since $\Omega_{q}^{Spin}(-)$ is a generalized homology theory, we obtain 
	\[\Omega^{Spin}_{8}\left(\mathbb{Z}/p,2\right)=\Omega^{Spin}_{8}\oplus\widetilde{\Omega}^{Spin}_{8}\left(\mathbb{Z}/p,2\right),\]
	where $\Omega^{Spin}_{8}=E^{\infty}_{0,8}=E^{2}_{0,8}$ and $\widetilde{\Omega}^{Spin}_{8}\left(\mathbb{Z}/p,2\right)$ fits into the following extension problem:
	\[\mathbb{Z}\big/p\rightarrowtail \widetilde{\Omega}^{Spin}_{8}\left(\mathbb{Z}/p,2\right)\twoheadrightarrow \mathbb{Z}\big/p.\]
	\begin{figure}[h]
		\begin{center}
			\begin{tikzpicture}
				\matrix (m) [matrix of math nodes,
				nodes in empty cells,nodes={minimum width=5ex,
					minimum height=5ex,inner sep=3pt, outer sep=0pt},
				column sep=1ex,row sep=-1ex]{
					b & & & & & & & & & & & \\
					8 & \mathbb{Z}^{2} & 0 & & & & & & & & & \\
					7 & 0 & 0 & 0 & & & & & & & &\\
					6 & 0 & 0 & 0 & 0 & & & & & & &\\
					5 & 0 & 0 & 0 & 0 & 0 & & & & & &\\
					4 & \mathbb{Z} & 0 & \mathbb{Z}\big/p & 0 & \mathbb{Z}\big/p & 0 & & & & &\\
					3 & 0 & 0 & 0 & 0 & 0 & 0 & 0 & & & &\\
					2 & \mathbb{Z}\big/2 & 0 & 0 & 0 & 0 & 0 & 0 & 0 & & & \\
					1 & \mathbb{Z}\big/2 & 0 & 0 & 0 & 0 & 0 & 0 & 0 & 0 & & \\
					0 & \mathbb{Z} & 0 & \mathbb{Z}\big/p & 0 & \mathbb{Z}\big/p & 0 & \mathbb{Z}\big/p & 0 & \mathbb{Z}\big/p & 0 &\\
					\quad\strut & 0 & 1 & 2 & 3 & 4 & 5 & 6 & 7 & 8 & 9 & a\strut \\};
				\draw[thick,<-] (m-1-1.east) -- (m-11-1.east) ;
				\draw[thick,->] (m-11-1.north) -- (m-11-12.north) ;
				\draw [opacity=.4,line width=6mm,line cap=round, color=yellow] 
				(m-2-2.center) -- (m-10-10.center);
				\draw[opacity=.4,line width=6mm,line cap=round, color=orange]
				(m-2-3.center) -- (m-10-11.center);
				\draw[opacity=.4,line width=6mm,line cap=round, color=orange]
				(m-3-2.center) -- (m-10-9.center);
			\end{tikzpicture}
		\end{center}
		\caption{The $E^{2}$-page of Atiyah-Hirzebruch spectral sequence for $\Omega^{Spin}_{q}\left(\mathbb{Z}/p,2\right)$, $p\geqslant5$}
		\label{Figure: E^{2} page of AHSS for Omega^Spin_q(Z/p,2), p geq5}
	\end{figure}
	
	Now we show that $\Omega^{Spin}_{8}\left(\mathbb{Z}/p,2\right)\cong\mathbb{Z}^{2}\oplus\left(\mathbb{Z}\big/p\right)^{2}$ by constructing explicit isomorphisms. It is straightforward to compute the characteristic numbers and obtain Table \ref{Table: Omega_8^Spin(Z/p,2) to Z^2+(Z/p)^2}.
	\begin{table}[H]
		\centering
		\caption{Homomorphism $\Omega^{Spin}_{8}\left(\mathbb{Z}\big/p,2\right)\to\mathbb{Z}^{2}\oplus\left(\mathbb{Z}\big/p\right)^{2}$}
		\label{Table: Omega_8^Spin(Z/p,2) to Z^2+(Z/p)^2}
		\begin{tabular}{ccccc}
			\toprule[1.5pt]
			Basis $[X,x]$ of $\Omega^{Spin}_{8}\left(\mathbb{Z}\big/p,2\right)$ & $\sigma$ & $\widehat{A}$ & $\left(x^{2}\right)^{2}$ & $x^{2}\rho_{p}\overline{p_{1}}$\\
			\midrule[1pt]
			$\mathbb{H}P^{2}$ & $1$ & $0$ & $0$ & $0$ \\
			$Bott$ & $224$ & $-1$ & $0$ & $0$ \\
			$[V_{5}(2),\rho_{p}\left(\widehat{z_{5}}\right)]$ & $2$ & $0$ & $2$ & $2$ \\
			$\left[\left(S^{2}\right)^{4},\rho_{p}\left(\Delta\right)\right]$ & $0$ & $0$ & $24$ & $0$ \\
			\bottomrule[1.5pt]
		\end{tabular}
	\end{table}
	Since $p$ is a prime greater than $3$, we conclude from Table \ref{Table: Omega_8^Spin(Z/p,2) to Z^2+(Z/p)^2} that the homomorphism given in Formula \eqref{Formula: homom Omega_8^Spin(Z/p,2) to Z^2+(Z/p)^2} is an isomorphism. This completes the proof of Proposition \ref{Proposition: Omega_q^Spin(Z/p,2), p>=5}.
\end{pf}

\begin{pf}[of Proposition \ref{Proposition: Omega_q^Spin(Z/3,2)}]
	Again we begin with AHSS. The $E^{2}$-page is given in Figure \ref{Figure: E^{2} page of AHSS for Omega^Spin_q(Z/3,2)}, from which we directly see that until $E^{5}$-page the spectral sequence has the same terms as the $E^{2}$-page, at least for the shown range. There are two possibly non-trivial differentials $d^{5}_{7,0}$ and $d^{5}_{9,0}$, and we consider other approaches to further determine $\Omega^{Spin}_{q}\left(\mathbb{Z}/3,2\right)$ for $q=7,8$.
	\begin{figure}[h]
		\begin{center}
			\begin{tikzpicture}
				\matrix (m) [matrix of math nodes,
				nodes in empty cells,nodes={minimum width=5ex,
					minimum height=5ex,inner sep=3pt, outer sep=0pt},
				column sep=1ex,row sep=-1ex]{
					b & & & & & & & & & & & \\
					8 & \mathbb{Z}^{2} & 0 & & & & & & & & & \\
					7 & 0 & 0 & 0 & & & & & & & &\\
					6 & 0 & 0 & 0 & 0 & & & & & & &\\
					5 & 0 & 0 & 0 & 0 & 0 & & & & & &\\
					4 & \mathbb{Z} & 0 & \mathbb{Z}\big/3 & 0 & \mathbb{Z}\big/3 & 0 & & & & &\\
					3 & 0 & 0 & 0 & 0 & 0 & 0 & 0 & & & &\\
					2 & \mathbb{Z}\big/2 & 0 & 0 & 0 & 0 & 0 & 0 & 0 & & & \\
					1 & \mathbb{Z}\big/2 & 0 & 0 & 0 & 0 & 0 & 0 & 0 & 0 & & \\
					0 & \mathbb{Z} & 0 & \mathbb{Z}\big/3 & 0 & \mathbb{Z}\big/3 & 0 & \mathbb{Z}\big/9 & \mathbb{Z}\big/3 & \mathbb{Z}\big/3 & \mathbb{Z}\big/3 &\\
					\quad\strut & 0 & 1 & 2 & 3 & 4 & 5 & 6 & 7 & 8 & 9 & a\strut \\};
				\draw[thick,<-] (m-1-1.east) -- (m-11-1.east) ;
				\draw[thick,->] (m-11-1.north) -- (m-11-12.north) ;
				\draw [opacity=.4,line width=6mm,line cap=round, color=yellow] 
				(m-2-2.center) -- (m-10-10.center);
				\draw [opacity=.4,line width=6mm,line cap=round, color=yellow] 
				(m-4-2.center) -- (m-10-8.center);
				\draw[opacity=.4,line width=6mm,line cap=round, color=orange]
				(m-2-3.center) -- (m-10-11.center);
				\draw[opacity=.4,line width=6mm,line cap=round, color=orange]
				(m-3-2.center) -- (m-10-9.center);
			\end{tikzpicture}
		\end{center}
		\caption{The $E^{2}$-page of Atiyah-Hirzebruch spectral sequence for $\Omega^{Spin}_{q}\left(\mathbb{Z}/3,2\right)$}
		\label{Figure: E^{2} page of AHSS for Omega^Spin_q(Z/3,2)}
	\end{figure}
	
	First we claim that \(\Omega_{6}^{Spin}\left(\mathbb{Z}\big/3,2\right)\cong\mathbb{Z}\big/9\), $d^{5}_{7,0}$ is an isomorphism and consequentially \(\Omega_{7}^{Spin}\left(\mathbb{Z}\big/3,2\right)=0\). By \cite[Theorem 5]{Zhubr75} we have an injection 
	\[\Omega_{6}^{Spin}\left(\mathbb{Z}\big/3,2\right)\rightarrowtail
	\mathbb{Z}\big/3\oplus H_{6}\left(\mathbb{Z}\big/3,2\right)\cong
	\mathbb{Z}\big/3\oplus\mathbb{Z}\big/9,\]
	and its image coincides with the kernel of homomorphism
	\begin{align*}
		\chi:\mathbb{Z}\big/3\oplus E^{3}_{6,0}&\to \mathbb{Z}\big/3\otimes\mathbb{Z}\big/6,\\
		\left(g,g'\right)&\mapsto\rho_{6}(g)-\beta\left(g'\right).
	\end{align*}
	Here $\rho_{6}$ is given by
	\[\mathbb{Z}\big/3\cong\mathbb{Z}\big/3\otimes\mathbb{Z}\xrightarrow{\mathrm{id}\otimes\rho_{6}}\mathbb{Z}\big/3\otimes\mathbb{Z}\big/6\cong\mathbb{Z}\big/3\]
	and is an isomorphism. $E^{3}_{6,0}=E^{2}_{6,0}=H_{6}\left(\mathbb{Z}\big/3,2\right)\cong\mathbb{Z}\big/9$ and $\mathbb{Z}\big/9\xrightarrow{\beta}\mathbb{Z}\big/3$ is either trivial or epic. 
	If \(\beta=0\), then \(\ker\chi=E^{3}_{6,0}\cong\mathbb{Z}\big/9\). Suppose \(\beta\) is epic. Let $g$, $h$ and $g'$ be the generators of domain \(\mathbb{Z}\big/3\), domain \(\mathbb{Z}\big/9\) and target \(\mathbb{Z}\big/9\) respectively such that$\rho_{6}(g)=g'$. Then $\beta(h)=\varepsilon g'$ for some $\varepsilon=\pm1$, and it is straightforward to verify that \(\ker\chi\) is a cyclic subgroup of order $9$ generated by \((g,\varepsilon h)\). Therefore, under either hypothesis we have \(\Omega_{6}^{Spin}\left(\mathbb{Z}\big/3,2\right)\cong\ker\chi\cong\mathbb{Z}\big/9\), and $d^{5}_{7,0}$ is an isomorphism. As a consequence, $E^{6}_{7,0}=0$ and $\Omega_{7}^{Spin}\left(\mathbb{Z}\big/3,2\right)=0$.
	
	Next we apply ASS to compute $\Omega_{8}^{Spin}\left(\mathbb{Z}\big/3,2\right)$. It follows from the result of AHSS that $\Omega_{8}^{Spin}\left(\mathbb{Z}\big/3,2\right)$ contains only $3$-torsion and possibly $9$-torsion. Hence it suffices to consider the $3$-local part, and the ASS reads
	\[E^{s,t}_{2}=\mathrm{Ext}^{s,t}_{\mathcal{A}_{3}}\left(H^{*}\left(MSpin\wedge\left(K\left(\mathbb{Z}\big/3,2\right)\right)_{+};\mathbb{Z}\big/3\right),\mathbb{Z}\big/3\right)\Longrightarrow\left(\Omega^{Spin}_{t-s}\left(\mathbb{Z}\big/3,2\right)\right)\sphat_{3}.\]
	Here \(\mathcal{A}_{3}\) is the mod $3$ Steenrod algebra. By K\"{u}nneth formula we have 
	\[H^{*}\left(MSpin\wedge\left(K\left(\mathbb{Z}\big/3,2\right)\right)_{+};\mathbb{Z}\big/3\right)\cong H^{*}\left(MSpin;\mathbb{Z}\big/3\right)\otimes_{\mathbb{Z}/3}H^{*}\left(\mathbb{Z}\big/3,2;\mathbb{Z}\big/3\right).\]
	
	We begin with the first tensor component.
	When localized at $3$ we have the decomposition of spectra
	\[MSpin_{(3)}\simeq MSO_{(3)}\simeq BP\bigvee \Sigma^{8}BP\bigvee\cdots.\]
	Here $BP$ is the Brown-Peterson spectrum and its mod $3$ cohomology is given by
	\[H^{*}\left(BP;\mathbb{Z}\big/3\right)\cong\mathcal{A}_{3}\big/(\beta),\]
	where $\beta$ is the mod $3$ Bockstein and $(\beta)$ is the two-sided ideal of $\mathcal{A}_{3}$ generated by $\beta$ (\cite[p.~147]{WanWang19ASS}). 
	
	For further computation of $\mathrm{Ext}$-groups we also give the minimal free resolution of \(\mathcal{A}_{3}\big/(\beta)\) up to the desired range, which is shown in Figure \ref{Figure: Free resolution ofA_3/(beta)}.
	\begin{figure}[H]
		\begin{center}
			\[\xymatrix@C=1.5ex@R=4ex{
				\vdots\ar[d] & & & & & & \\
				P_{4}\ar[d]\ar@{=}[r] & \Sigma^{4}\mathcal{A}_{3}\ar[d]^{\beta} & + & \Sigma^{12}\mathcal{A}_{3}\ar[d]^{\beta} & + & \Sigma^{8}\mathcal{A}_{3}\ar[d]^{\beta} & + P_{4}^{\geqslant14}\\
				P_{3}\ar[d]\ar@{=}[r] & \Sigma^{3}\mathcal{A}_{3}\ar[d]^{\beta} & + & \Sigma^{11}\mathcal{A}_{3}\ar[d]^{\beta} & + & \Sigma^{7}\mathcal{A}_{3}\ar[d]^{\beta} & + P_{3}^{\geqslant14}\\
				P_{2}\ar[d]\ar@{=}[r] & \Sigma^{2}\mathcal{A}_{3}\ar[rd]^{\beta} & + & \Sigma^{10}\mathcal{A}_{3}\ar[ld]^{\beta P^{2}}\ar[rd]^{-\beta P^{1}} & + & \Sigma^{6}\mathcal{A}_{3}\ar[ld]^{\beta} & + P_{2}^{\geqslant14}\\
				P_{1}\ar[d]\ar@{=}[r] & & \Sigma\mathcal{A}_{3}\ar[rd]^{\beta} & +  & \Sigma^{5}\mathcal{A}_{3}\ar[ld]^{\beta P^{1}} & & + P_{1}^{\geqslant14}\\
				P_{0}\ar@{=}[r] & & & \mathcal{A}_{3} & & & + P_{0}^{\geqslant14}
			}\]
		\end{center}
		\caption{The minimal free resolution of $\mathcal{A}_{3}\big/(\beta)$}
		\label{Figure: Free resolution ofA_3/(beta)}
	\end{figure}
	
	Next we consider the second tensor component. Recall from \cite[\S4.L, p.~500]{HatcherAT} that for an odd prime $p$,
	\[H^{*}\left(\mathbb{Z}\big/p,2;\mathbb{Z}\big/p\right)=\mathbb{Z}\big/p\left[x_{2},x_{2p^{n}+2}:n\geqslant1\right]\otimes_{\mathbb{Z}/p}\Lambda_{\mathbb{Z}/p}\left(y_{3},y_{2p^{n}+1}:n\geqslant1\right).\]
	Here
	\begin{compactenum}
		\item subscrpits indicate the degrees;
		\item $x_{2}\in H^{2}\left(\mathbb{Z}\big/p,2;\mathbb{Z}\big/p\right)\cong\left[K\left(\mathbb{Z}\big/p,2\right),K\left(\mathbb{Z}\big/p,2\right)\right]$ corresponds to the identity map of $K\left(\mathbb{Z}\big/p,2\right)$;
		\item $y_{3}=\beta x_{2}$, where $H^{q}\left(-;\mathbb{Z}\big/p\right)\xrightarrow{\beta}H^{q+1}\left(-;\mathbb{Z}\big/p\right)$ denotes the mod $p$ Bockstein homomorphism;
		\item for $n\geqslant1$, $y_{2p^{n}+1}=P^{p^{n-1}}P^{p^{n-2}}\cdots P^{p}P^{1}\beta x_{2}$, where $H^{q}\left(-;\mathbb{Z}\big/p\right)\xrightarrow{P^{k}}H^{q+2k(p-1)}\left(-;\mathbb{Z}\big/p\right)$ is the mod $p$ Steenrod power;
		\item for $n\geqslant1$, $x_{2p^{n}+2}=\beta y_{2p^{n}+1}$.
	\end{compactenum}
	For the moment we set $p=3$, and the explicit expressions of generators indicate how $\mathcal{A}_{3}$ acts on them.

	Now the Adams chart of $\mathrm{Ext}^{s,t}_{\mathcal{A}_{3}}\left(\mathcal{A}_{3}\big/(\beta)\otimes_{\mathbb{Z}/3}H^{*}\left(\mathbb{Z}\big/3,2;\mathbb{Z}\big/3\right),\mathbb{Z}\big/3\right)$ up to the range $t-s\leqslant9$ is computed as in Figure \ref{Figure: ASS, A_3/(beta) otimes L, E_2 page}. Here a single dot means a $\mathbb{Z}\big/3$-summand, line segments imply non-trivial extensions, and an infinite ray indicates a $\mathbb{Z}_{(3)}$-summand.
	\begin{figure}[H]
		\begin{center}
			\begin{tikzpicture}[xscale=0.8, yscale=0.5]
				\draw[thick,->] (-1.5,-1) -- (10,-1) node[right] {\small $t - s$};
				\draw[thick,->] (-1,-1.5) -- (-1,9) node[above] {\small $s$};
				
				\draw[thick] (-0,0) -- (-0,8);
				
				\draw[thick] (4,1) -- (4,8);
				\filldraw[black] (4,0) circle (0.6pt);
				
				\draw[thick] (6,0) -- (6,8);
				
				\draw[thick] (7,0) -- (7,8);
				
				\draw[thick] (8,2) -- (8,8);
				\filldraw[black] (8,0) circle (0.6pt);				
				
				\foreach \y in {0,1,2,3,4,5,6,7,8} {
					\draw (-1.2,\y) -- (-1,\y);  
					\node[left] at (-1.3, \y) {\small \y}; 
				}
				
				\foreach \x in {0,1,2,3,4,5,6,7,8,9} {
					\draw (\x,-1.2) -- (\x,-1);  
					\node[below] at (\x, -1.3) {\small \x}; 
				}					
			\end{tikzpicture}
		\end{center}
		\caption{$E_{2}$-page of Adams chart for $\mathrm{Ext}^{s,t}_{\mathcal{A}_{3}}\left(\mathcal{A}_{3}\big/(\beta)\otimes_{\mathbb{Z}/3}H^{*}\left(\mathbb{Z}\big/3,2;\mathbb{Z}\big/3\right)\mathbb{Z}\big/3\right)$, $t-s\leqslant9$}
		\label{Figure: ASS, A_3/(beta) otimes L, E_2 page}
	\end{figure}
	Accordingly, the Adams chart of $\mathrm{Ext}^{s,t}_{\mathcal{A}_{3}}\left(MSpin\wedge\left(K\left(\mathbb{Z}\big/3,2\right)_{+}\right),\mathbb{Z}\big/3\right)$ up to the range $t-s\leqslant9$ is given as in Figure \ref{Figure: ASS, compute Omega^Spin_8(Z/3,2)}. 
	\begin{figure}[H]
		\begin{center}
			\begin{tikzpicture}[xscale=0.8, yscale=0.5]
				\draw[thick,->] (-1.5,-1) -- (10,-1) node[right] {\small $t - s$};
				\draw[thick,->] (-1,-1.5) -- (-1,9) node[above] {\small $s$};
				
				\draw[thick] (-0,0) -- (-0,8);
				
				\draw[thick] (4,1) -- (4,8);
				\filldraw[black] (4,0) circle (0.6pt);
				
				\draw[thick] (6,0) -- (6,8);
				
				\draw[thick] (7,0) -- (7,8);
				
				\draw[thick] (7.9,2) -- (7.9,8);
				\filldraw[black] (7.9,0) circle (0.6pt);	
				\draw[thick] (8.1,0) -- (8.1,8);			
				
				\foreach \y in {0,1,2,3,4,5,6,7,8} {
					\draw (-1.2,\y) -- (-1,\y);  
					\node[left] at (-1.3, \y) {\small \y}; 
				}
				
				\foreach \x in {0,1,2,3,4,5,6,7,8,9} {
					\draw (\x,-1.2) -- (\x,-1);  
					\node[below] at (\x, -1.3) {\small \x}; 
				}					
				
				\draw[-stealth,color=red,line width=0.2mm] (7,0) -- (6,2);
				\draw[-stealth,color=red,line width=0.2mm] (7,1) -- (6,3);
				\draw[-stealth,color=red,line width=0.2mm] (7,2) -- (6,4);
				\draw[-stealth,color=red,line width=0.2mm] (7,3) -- (6,5);
				\draw[-stealth,color=red,line width=0.2mm] (7,4) -- (6,6);
				\draw[-stealth,color=red,line width=0.2mm] (7,5) -- (6,7);
				\draw[-stealth,color=red,line width=0.2mm] (7,6) -- (6,8);
			\end{tikzpicture}
		\end{center}
		\caption{Adams chart of $\mathrm{Ext}^{s,t}_{\mathcal{A}_{3}}\left(\mathcal{A}_{3}\big/(\beta)\otimes_{\mathbb{Z}/3}H^{*}\left(\mathbb{Z}\big/3,2;\mathbb{Z}\big/3\right)\mathbb{Z}\big/3\right)$, $t-s\leqslant9$}
		\label{Figure: ASS, compute Omega^Spin_8(Z/3,2)}
	\end{figure}
	Here we have non-trivial differentials exhibited as the red arrows, which follows from the known results that $\Omega_{6}^{Spin}\left(\mathbb{Z}\big/3,2\right)\cong\mathbb{Z}\big/9$ and $\Omega_{7}^{Spin}\left(\mathbb{Z}\big/3,2\right)=0$. Hence we obtain
	\(\left(\Omega_{8}^{Spin}\left(\mathbb{Z}\big/3,2\right)\right)\sphat_{3}\cong\left(\mathbb{Z}_{(3)}\right)^{2}\oplus\mathbb{Z}\big/3\),
	and \(\Omega_{8}^{Spin}\left(\mathbb{Z}\big/3,2\right)\cong\mathbb{Z}^{2}\oplus\mathbb{Z}\big/3\).
	
	Finally, after a straightforward computation of characteristic numbers we obtain Table \ref{Table: Omega_8^Spin(Z/3,2) to Z^2+Z/3}.
	\begin{table}[H]
		\centering
		\caption{Homomorphism $\Omega^{Spin}_{8}\left(\mathbb{Z}\big/3,2\right)\to\mathbb{Z}^{2}\oplus\mathbb{Z}\big/3$}
		\label{Table: Omega_8^Spin(Z/3,2) to Z^2+Z/3}
		\begin{tabular}{cccc}
			\toprule[1.5pt]
			Basis $[X,x]$ of $\Omega^{Spin}_{8}\left(\mathbb{Z}\big/3,2\right)$ & $\sigma$ & $\widehat{A}$ & $\left(x^{2}\right)^{2}$ \\
			\midrule[1pt]
			$\mathbb{H}P^{2}$ & $1$ & $0$ & $0$ \\
			$Bott$ & $224$ & $-1$ & $0$ \\
			$[V_{5}(2),\rho_{3}\left(\widehat{z_{5}}\right)]$ & $2$ & $0$ & $2$ \\
			\bottomrule[1.5pt]
		\end{tabular}
	\end{table}
	Table \ref{Table: Omega_8^Spin(Z/3,2) to Z^2+Z/3} implies that these characteristic numbers establish an explicit isomorphism $\Omega^{Spin}_{8}\left(\mathbb{Z}\big/3,2\right)\cong\mathbb{Z}^{2}\oplus\mathbb{Z}\big/3$ and detect a generating set of $\Omega^{Spin}_{8}\left(\mathbb{Z}\big/3,2\right)$. This completes the proof of Proposition \ref{Proposition: Omega_q^Spin(Z/3,2)}.
\end{pf}

\begin{pf}[of Lemma \ref{Lemma: H_{q}(Z/p,2)}]
	When $q\leqslant6$ the results already follows from \cite[\S 21,22]{EM54}, and it remains to compute the groups for $q=7, 8, 9$. 
	The arguments are different when $p=3$ and $p\geqslant5$.
	
	\noindent\textbf{The case $p\geqslant5$.} We have \(H^{q}\left(\mathbb{Z}\big/p,2;\mathbb{Z}\big/p\right)\cong\mathbb{Z}\big/p\) for \(2\leqslant q\leqslant 9\), and it can be deduced from universal coefficient theorem that \(H_{q}\left(\mathbb{Z}\big/p,2\right)=0\) for \(q=7,9\) and \(H^{9}\left(\mathbb{Z}\big/p,2\right)\cong \mathrm{Ext}\left(H_{8}\left(\mathbb{Z}\big/p,2\right),\mathbb{Z}\right)\cong\mathbb{Z}\big/p^{e}\) for some positive integer \(e\).
	
	Next we determine the value of $e$. The short exact sequence $\mathbb{Z}\rightarrowtail\mathbb{Z}\twoheadrightarrow\mathbb{Z}\big/p$ induces a fibration \(K(\mathbb{Z},2)\to K\left(\mathbb{Z}\big/p,2\right)\to K(\mathbb{Z},3)\). Let us consider the associated cohomological Leray-Serre spectral sequence (abbreviated as LSSS). For this purpose we need the cohomology groups of \(K(\mathbb{Z},3)\). They are given as in Table \ref{Table: H_q(Z,3)}, which contains the results for $q\geqslant3$ since \(K(\mathbb{Z},3)\) is $2$-connected. The first row can be found in \cite[\S4.3, p.~404]{HatcherAT} and the second row is the result of universal coefficient theorem. Accordingly, we can compute the $E_{2}$-page of LSSS. Since \(H^{*}(\mathbb{Z},2)=\mathbb{Z}[z]\) with \(\deg z=2\), the differentials in $E_{2}$-page either start from or end at zero groups, and $E_{3}$-page admits identical terms as $E_{2}$-page but with possibly non-trivial differentials. See Figure \ref{Figure: E_{3} page of LSSS ass to K(Z,2) to K(Z/p,2) to K(Z,3)}.
	\begin{table}[h]
		\centering
		\caption{Integral homology and cohomology of $K\left(\mathbb{Z},3\right)$}
		\begin{tabular}{ccccccccc}
			\toprule[1pt]
			$q$ & $3$ & $4$ & $5$ & $6$ & $7$ & $8$ & $9$ & $10$ \\
			\midrule[0.3pt]
			$H_{q}\left(\mathbb{Z},3\right)$ & $\mathbb{Z}$ & $0$ & $\mathbb{Z}\big/2$ & $0$ & $\mathbb{Z}\big/3$ & $\mathbb{Z}\big/2$ & $\mathbb{Z}\big/2$ & $\mathbb{Z}\big/3$ \\
			$H^{q}\left(\mathbb{Z},3\right)$ & $\mathbb{Z}$ & $0$ & $0$ & $\mathbb{Z}\big/2$ & $0$ & $\mathbb{Z}\big/3$ & $\mathbb{Z}\big/2$ & $\mathbb{Z}\big/2$ \\
			\bottomrule[1pt]
		\end{tabular}
		\label{Table: H_q(Z,3)}
	\end{table}
	
	\begin{figure}[H]
		\begin{center}
			\begin{tikzpicture}
				\matrix (m) [matrix of math nodes,
				nodes in empty cells,nodes={minimum width=7ex,
					minimum height=5ex,inner sep=3pt, outer sep=0pt},
				column sep=1ex,row sep=-1ex]{
					b & & & & & & & & & & & &\\
					9 & 0 & 0 & & & & & & & & & &\\
					8 & \mathbb{Z}\left\{z^{4}\right\} & 0 & 0 & & & & & & & & &\\
					7 & 0 & 0 & 0 & 0 & & & & & & & &\\
					6 & \mathbb{Z}\left\{z^{3}\right\} & 0 & 0 & \mathbb{Z}\left\{z^{3}\iota\right\} & 0 & & & & & & &\\
					5 & 0 & 0 & 0 & 0 & 0 & 0 & & & & & &\\
					4 & \mathbb{Z}\left\{z^{2}\right\} & 0 & 0 & \mathbb{Z}\left\{z^{2}\iota\right\} & 0 & 0 & \mathbb{Z}\big/2 & & & & &\\
					3 & 0 & 0 & 0 & 0 & 0 & 0 & 0 & 0 & & & &\\
					2 & \mathbb{Z}\left\{z\right\} & 0 & 0 & \mathbb{Z}\left\{z\iota\right\} & 0 & 0 & \mathbb{Z}\big/2 & 0 & \mathbb{Z}\big/3 & & &\\
					1 & 0 & 0 & 0 & 0 & 0 & 0 & 0 & 0 & 0 & 0 & &\\
					0 & \mathbb{Z} & 0 & 0 & \mathbb{Z}\left\{\iota\right\} & 0 & 0 & \mathbb{Z}\big/2 & 0 & \mathbb{Z}\big/3 & \mathbb{Z}\big/2 & \mathbb{Z}\big/2 &\\
					\quad\strut & 0 & 1 & 2 & 3 & 4 & 5 & 6 & 7 & 8 & 9 & 10 & a\strut \\};
				\draw[-stealth,color=blue,line width=0.3mm] (m-9-2) -- (m-11-5)
				node[midway,above,yshift=-4pt]{\scriptsize \textcolor{blue}{$d_{3}$}};
				\draw[-stealth,color=blue,line width=0.3mm] (m-7-2) -- (m-9-5)
				node[midway,above,yshift=-4pt]{\scriptsize \textcolor{blue}{$d_{3}$}};
				\draw[-stealth,color=blue,line width=0.3mm] (m-5-2) -- (m-7-5)
				node[midway,above,yshift=-4pt]{\scriptsize \textcolor{blue}{$d_{3}$}};
				\draw[-stealth,color=blue,line width=0.3mm] (m-3-2) -- (m-5-5)
				node[midway,above,yshift=-4pt]{\scriptsize \textcolor{blue}{$d_{3}$}};
				
				\draw[-stealth,color=blue,line width=0.3mm] (m-9-5) -- (m-11-8)
				node[midway,above,yshift=-4pt]{\scriptsize \textcolor{blue}{$d_{3}$}};
				\draw[-stealth,color=blue,line width=0.3mm] (m-7-5) -- (m-9-8)
				node[midway,above,yshift=-4pt]{\scriptsize \textcolor{blue}{$d_{3}$}};
				\draw[-stealth,color=blue,line width=0.3mm] (m-5-5) -- (m-7-8)
				node[midway,above,yshift=-4pt]{\scriptsize \textcolor{blue}{$d_{3}$}};
				
				\draw[-stealth,color=blue,line width=0.3mm] (m-9-8) -- (m-11-11)
				node[midway,above,yshift=-4pt]{\scriptsize \textcolor{blue}{$d_{3}$}};
				
				\draw[thick,<-] (m-1-1.east) -- (m-12-1.east) ;
				\draw[thick,->] (m-12-1.north) -- (m-12-13.north) ;
				\draw [opacity=.4,line width=6mm,line cap=round, color=yellow] 
				(m-2-2.center) -- (m-11-11.center);
				\draw[opacity=.4,line width=6mm,line cap=round, color=orange]
				(m-2-3.center) -- (m-11-12.center);
				\draw[opacity=.4,line width=6mm,line cap=round, color=orange]
				(m-3-2.center) -- (m-11-10.center);
			\end{tikzpicture}
		\end{center}
		\caption{The $E_{3}$-page of Leray-Serre spectral sequence associated to \(K(\mathbb{Z},2)\to K\left(\mathbb{Z}\big/p,2\right)\to K(\mathbb{Z},3)\)}
		\label{Figure: E_{3} page of LSSS ass to K(Z,2) to K(Z/p,2) to K(Z,3)}
	\end{figure}
	
	Here \(H^{3}(\mathbb{Z},3)\cong\mathbb{Z}\) admits a distinguished generator $\iota$ corresponding to the identity map of \(K(\mathbb{Z},3)\). Since there are no differentials starting from or ending at \(E_{r}^{3,0}\), we must have \(E_{4}^{3,0}=E_{\infty}^{3,0}\) for such pairs. While for all remaining terms \(E_{3}^{a,b}=0\) when $a+b=3$, we obtain further that \(E_{4}^{3,0}=H^{3}\left(\mathbb{Z}\big/p,2\right)\cong\mathbb{Z}\big/p\).
	Therefore, we can change the sign of $z$ if necessary and obtain $d_{3}z=p\iota$, $d_{3}\left(z^{k}\right)=kz^{k-1}\iota$. Since $p$ is odd, we have \(\mathrm{im}d_{3}^{0,8}=4\mathbb{Z}\left\{z^{3}\iota\right\}\). The differential $d_{3}^{3,6}$ has domain $\mathbb{Z}\left\{z^{3}\iota\right\}$ codomain $\mathbb{Z}\big/2$, hence its kernel is generated by either $z^{3}\iota$ or $2z^{3}\iota$. As a consequence, $E_{4}^{3,6}\cong\mathbb{Z}\big/4p$ or $\mathbb{Z}\big/2p$. 
	
	There is another possibly non-vanishing term $E_{4}^{9,0}$ among the terms $E_{4}^{a,b}$ with $a+b=9$. It follows from the $E_{3}$-page that $E_{4}^{9,0}$ is a quotient of $\mathbb{Z}\big/2$. Now we conclude from LSSS that $H^{9}\left(\mathbb{Z}\big/p,2\right)$ is an extension of sub-quotients of $\mathbb{Z}\big/4p$ and $\mathbb{Z}\big/2$. However, we have seen $H^{9}\left(\mathbb{Z}\big/p,2\right)\cong\mathbb{Z}\big/p^{e}$ for some positive integer $e$. Therefore, we must have $e=1$.
	
	\noindent\textbf{The case $p=3$.} Now we read from the ring structure that
	\[H^{q}\left(\mathbb{Z}\big/3,2;\mathbb{Z}\big/3\right)\cong\begin{cases}
		\mathbb{Z}\big/3,\ &2\leqslant q\leqslant6;\\
		\left(\mathbb{Z}\big/3\right)^{2},\ &7\leqslant q\leqslant9.
	\end{cases}\]
	Hence by universal coefficient theorem \(H_{q}\left(\mathbb{Z}\big/3,2\right)\cong\mathbb{Z}\big/3^{e_{q}}\) for some positive integers \(e_{q}\) when \(7\leqslant q\leqslant9\). 
	
	Just as the case $p\geqslant5$, we apply cohomological LSSS associated to the fibration \(K(\mathbb{Z},2)\to K\left(\mathbb{Z}\big/3,2\right)\to K(\mathbb{Z},3)\) to determine \(e_{q}\) for \(7\leqslant q\leqslant9\). Notice that Figure \ref{Figure: E_{3} page of LSSS ass to K(Z,2) to K(Z/p,2) to K(Z,3)} still applies to the case $p=3$. After analyzing the $E_{4}$-page of LSSS we obtain that
	\begin{compactenum}
		\item $H^{8}\left(\mathbb{Z}\big/3,2\right)$ is an extension of sub-quotients of $\mathbb{Z}\big/2$ and $\mathbb{Z}\big/3$;
		\item $H^{9}\left(\mathbb{Z}\big/3,2\right)$ is an extension of sub-quotients of $\mathbb{Z}\big/12$ (or $\mathbb{Z}\big/6$) and $\mathbb{Z}\big/2$;
		\item $H^{10}\left(\mathbb{Z}\big/3,2\right)$ is an extension of sub-quotients of $\mathbb{Z}\big/2$, $\mathbb{Z}\big/3$ and $\mathbb{Z}\big/2$.
	\end{compactenum}
	Meanwhile, it follows from universal coefficient theorem that $H^{q+1}\left(\mathbb{Z}\big/3,2\right)\cong \mathrm{Ext}\left(H_{q}\left(\mathbb{Z}\big/p,2\right),\mathbb{Z}\right)\cong\mathbb{Z}\big/3^{e_{q}}$ for $7\leqslant q\leqslant9$. Therefore, $e_{7}=e_{8}=e_{9}=1$.
\end{pf}
	
	\section{The Arf type invariants}\label{Section: Arf inv}

In this section we introduce the Arf type invariants $\Xi_{odd}$ applied in Section \ref{Section: Prove of main thm}. We will show that they serve as the obstructions to existence of certain Lagrangian. We also define the Arf type invariant $\Xi_{even}$ for even forms and establish certain properties of these Arf type invariants. This section is of independent algebraic interest. 

Let $\mathcal{V}\cong\mathbb{Z}^{2n}$ be a free abelian group of rank $2n$. Let $\beta$ be a symmetric unimodular bilinear form on $\mathcal{V}$ with vanishing signature (no prescribed parity). Let $p$ be a prime and let $\mathcal{V}\xrightarrow{g}\mathbb{Z}\big/p$ be a homomorphism. We are interested in whether $\ker g$ contains a Lagrangian of $(\mathcal{V},\beta)$, and we expect to express this obstruction im terms of certain numerical invariant of the triple $(\mathcal{V},\beta,g)$. For this purpose we introduce certain Arf type invariants on $(\mathcal{V},\beta,g)$. We will show that these invariants are well-defined and that $\ker g$ contains a Lagrangian if and only if such an Arf invariant vanishes (Propositions \ref{Proposition: sym unimodular odd bilinear (V, lambda) with vanishing signature admits Lagrangian contained in ker of epim onto Z/2}, \ref{Proposition: sym unimodular even bilinear (V, lambda) with vanishing signature admits Lagrangian contained in ker of epim onto Z/2}). We will also establish properties that concern orthogonal direct sums (Proposition \ref{Proposition: Arf inv of direct sum}).

\begin{prop}\label{Proposition: sym unimodular odd bilinear (V, lambda) with vanishing signature admits Lagrangian contained in ker of epim onto Z/2}
	Let $(\mathcal{V},\beta,g)$ be given as above and suppose $\beta$ is odd. Then $(\mathcal{V},\beta)\cong{D}^{n}$ admits a standard orthogonal basis $\left\{e_{i},f_{i}:1\leqslant i\leqslant n\right\}$, namely $\beta\left(e_{i},e_{j}\right)=-\beta\left(f_{i},f_{j}\right)=\delta_{i,j}$ and $\beta\left(e_{i},f_{j}\right)=0$. Define
		$$\Xi_{odd}\left(\mathcal{V},\beta,g\right):=\sum_{i=1}^{n}g\left(e_{i}\right)^{2}-\sum_{i=1}^{n}g\left(f_{i}\right)^{2}.$$
	For simplicity we also denote by $\Xi_{odd}\left({g}\right)$. Then we have
	\begin{compactenum}
		\item $\Xi_{odd}\left({g}\right)$ does not depend on choices of standard orthogonal bases for $(\mathcal{V},\beta)$.
		\item $(\mathcal{V},\beta)$ admits a Lagrangian contained in $\ker g$ if and only if $\Xi_{odd}\left({g}\right)=0$.
	\end{compactenum}
\end{prop}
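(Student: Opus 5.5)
The plan is to turn $g$ into a vector via unimodularity, show that $\Xi_{odd}(g)$ is the mod $p$ self-intersection of that vector, and then reduce the Lagrangian question to finding a suitable isotropic vector.

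\textbf{Normal form.} Since $\beta$ is odd, indefinite when $n\geqslant1$, and unimodular, the classification of indefinite odd unimodular forms gives $(\mathcal{V},\beta)\cong m\langle1\rangle\oplus m'\langle-1\rangle$. Vanishing signature forces $m=m'=n$, so $(\mathcal{V},\beta)\cong D^{n}$ and a standard orthogonal basis exists.

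\textbf{Step 1: independence of the basis.} Because $\beta$ is unimodular, the homomorphism $g$ lifts to $\mathcal{V}\to\mathbb{Z}$ and is represented as $g(x)=\beta(x,w)\bmod p$ for some $w\in\mathcal{V}$. This $w$ is determined modulo $p\mathcal{V}$. In a standard orthogonal basis
\[w=\sum_i\beta(w,e_i)e_i-\sum_i\beta(w,f_i)f_i,\]
hence
\[\Xi_{odd}(g)=\sum_i g(e_i)^2-\sum_i g(f_i)^2\equiv\beta(w,w)\pmod p.\]
The right-hand side involves no basis. It is also unchanged when $w$ is replaced by $w+pu$, since the change is $2p\beta(w,u)+p^2\beta(u,u)\equiv0\pmod p$. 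This proves Statement 1: $\Xi_{odd}(g)=\beta(w,w)\bmod p$.

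\textbf{Step 2: reformulation.} For a Lagrangian $L$ we have $L=L^{\perp}$. Therefore $L\subset\ker g$ iff $\beta(L,w)\subset p\mathbb{Z}$ iff $w\in L^{\perp}+p\mathcal{V}=L+p\mathcal{V}$.

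\emph{Necessity.} If $w\equiv l\pmod p$ with $l\in L$, then $\Xi_{odd}(g)=\beta(l,l)\bmod p=0$.

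\emph{Sufficiency.} It suffices to find an isotropic vector $l\equiv w\pmod p$ and a Lagrangian containing $l$.
\begin{compactenum}
\item If $l=0$, take the Lagrangian spanned by $e_i+f_i$.
\item Otherwise write $l=kl'$ with $l'$ primitive and isotropic. Then $(l')^{\perp}/\langle l'\rangle$ is unimodular of rank $2n-2$ and signature $0$, exactly as in the reduction step in the proof of Proposition \ref{Propositioin: theta_0 elementary, equivalent condition}. Such a form is either $D^{n-1}$ or $H^{n-1}$, so it has a Lagrangian. Its preimage in $(l')^{\perp}$ is a Lagrangian of $\mathcal{V}$ containing $l'$, and hence $l$.
\end{compactenum}

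\textbf{Step 3 (main obstacle): finding the isotropic lift.} Assume $\beta(w,w)=pm$. We look for $u\in\mathcal{V}$ with
\[\beta(w+pu,w+pu)=p\bigl(m+2\beta(w,u)+p\beta(u,u)\bigr)=0.\]
The first attempt is to restrict $u$ to the plane spanned by $e_1,f_1$, say $u=ae_1+bf_1$. Writing $x_1=\beta(w,e_1)$ and $y_1=-\beta(w,f_1)$, the condition becomes
\[m+2(ax_1-by_1)+p(a^2-b^2)=0.\]
Putting $a=b+t$ turns this into a condition linear in $b$ once $t$ is fixed. The coefficient of $b$ is $2(x_1-y_1+pt)$.

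We may preprocess $w$ in two ways. First, we may replace $w$ by $w+pv$ for any $v$. Second, since $p$ is odd, adding $pe_1$ changes $m$ by the odd amount $2x_1+p$, so the parity of $m$ can be adjusted. We may also apply any isometry, for example to put $w$ into a normal form under $O(D^n)$ such as $w=c(e_1+f_1)+\ldots$, to make the linear coefficient $\pm1$ or $\pm2$ with $m$ of the right parity.

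If this elementary bookkeeping becomes unwieldy, the fallback is to work over $\mathbb{F}_p$. There $(\mathcal{V}/p,\bar\beta)$ is a split quadratic space, since $\langle1\rangle\oplus\langle-1\rangle$ is hyperbolic. An isotropic $\bar w$ lies in some Lagrangian by Witt's theorem. We would then lift that Lagrangian using the fact that $\mathcal{V}/p$ is spanned by reductions of $e_i\pm f_i$. The case $p=2$, which is needed in Section \ref{Section: Prove of main thm}, is treated separately: there $\Xi_{odd}$ is a sum of $g$-values, and the same argument runs through parity considerations.
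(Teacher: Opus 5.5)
Your proof of Statement 1 is correct, and it is cleaner than the paper's computation in $O_{n,n}(\mathbb{Z})$: once $\Xi_{odd}(g)=\beta(w,w)\bmod p$, independence of the basis (and additivity over orthogonal sums) is immediate. The reformulation $L\subset\ker g\iff w\in L+p\mathcal{V}$ is also fine, as are the necessity direction and the extension of an isotropic $l$ to a Lagrangian.

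\textbf{Step 3 is the gap.} The existence of an isotropic $l\equiv w\pmod p$ is the entire content of sufficiency, and you only sketch it.
\begin{compactenum}
\item The one-plane ansatz fails in general. Write $w=\alpha e_1+\gamma f_1+w_\perp$ and take $u\in\langle e_1,f_1\rangle$. Isotropy of $w+pu$ then means $X^2-Y^2=-\beta(w_\perp,w_\perp)$ with $X\equiv\alpha$ and $Y\equiv\gamma\pmod p$. This is impossible when $\beta(w_\perp,w_\perp)\equiv 2\pmod 4$; for example take $p=7$ and $w=3f_1+e_2+e_3$ in $D^3$.
\item Adding $pe_1$ flips the parity of $m$ but leaves $w_\perp$ unchanged, so it does not touch this obstruction. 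The isometry normalization you invoke instead is never specified.
\item The fallback is circular. An integral Lagrangian whose reduction contains $\bar w$ is exactly what Statement 2 asks for. Lifts of a mod-$p$ isotropic basis are only isotropic mod $p$, and knowing that $\mathcal{V}/p$ is spanned by reductions of $e_i\pm f_i$ does not fix that.
\end{compactenum}

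\textbf{The missing idea.} Do not try to make $w$ isotropic; instead peel off isotropic vectors orthogonal to it. The paper does this by constructing a primitive isotropic $x$ with $\beta(x,w)\equiv 0\pmod p$. It then splits off a rank-$2$ unimodular summand containing $x$ and inducts using additivity of $\Xi$, with a parallel induction for even forms. In your formulation this is shorter still:
\begin{compactenum}
\item For $n\geqslant 2$, pick a rank-$2$ totally isotropic summand, e.g.\ $\langle e_1-f_1,e_2+f_2\rangle$; such summands exist in $D^m$ and $H^m$ for $m\geqslant2$. Inside it take a primitive $x$ with $\beta(x,w)=0$ exactly.
\item Pass to $x^{\perp}/\langle x\rangle$, which contains the image of $w$ with the same norm, and pull back a Lagrangian as in your Step 2.
\item In rank $2$, handle $D$ and $H$ directly from $\beta(w,w)\equiv 0\pmod p$; for $H$ this uses that $p$ is odd.
\end{compactenum}

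\textbf{The case $p=2$.} Your claim that it ``runs through parity considerations'' is false, and no argument can repair it. For $p=2$ your own Step 2 gives the stronger necessary condition $\beta(w,w)\equiv 0\pmod 4$, because $\beta(l+2u,l+2u)=4\beta(l,u)+4\beta(u,u)$ for isotropic $l$. On $D^2$ take $w=e_1+e_2$, so $g(e_1)=g(e_2)=1$, $g(f_1)=g(f_2)=0$, and $\Xi_{odd}(g)=0$. Yet $\beta(w+2u,w+2u)\equiv 2\pmod 4$ for every $u$, so no Lagrangian lies in $\ker g$.

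So Statement 2 fails for $p=2$; the right invariant is $\beta(w,w)\bmod 4$. The paper's own $p=2$ proof breaks at the same point. The vector $e'_{2i-1}-\varepsilon_{2i-1}\varepsilon_{2i}e'_{2i}$ is isotropic only if the paired signs are opposite, and this cannot always be arranged. The error is harmless for the paper's application to $\mathcal{G}^{3}(\mathrm{Wu})$-like manifolds. There, the relative Wu formula gives $\overline{g_k}(x)=\overline{\beta_k}(x,x)\bmod 2$, which vanishes on every isotropic vector.
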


\begin{prop}\label{Proposition: sym unimodular even bilinear (V, lambda) with vanishing signature admits Lagrangian contained in ker of epim onto Z/2}
	Let $(\mathcal{V},\beta,g)$ be given as above and suppose $\beta$ is even. Then $(\mathcal{V},\beta)\cong{H}^{n}$ admits a hyperbolic basis $\left\{e_{i},f_{i}:1\leqslant i\leqslant n\right\}$, namely $\beta\left(e_{i},e_{j}\right)=\beta\left(f_{i},f_{j}\right)=0$ and $\beta\left(e_{i},f_{j}\right)=\delta_{i,j}$. Define
		$$\Xi_{even}\left(\mathcal{V},\beta,g\right):=\begin{cases}
			\sum_{i=1}^{n}g\left(e_{i}\right)g\left(f_{i}\right),\ &p=2,\\
			2\sum_{i=1}^{n}g\left(e_{i}\right)g\left(f_{i}\right),\ &\text{otherwise}.
		\end{cases}$$
		For simplicity we also denote by $\Xi_{even}\left({g}\right)$. Then we have
		\begin{compactenum}
			\item $\Xi_{even}\left({g}\right)$ does not depend on choices of standard orthogonal bases for $(\mathcal{V},\beta)$.
			\item $(\mathcal{V},\beta)$ admits a Lagrangian contained in $\ker g$ if and only if $\Xi_{even}\left({g}\right)=0$.
		\end{compactenum}
\end{prop}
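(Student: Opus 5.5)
The idea is to encode $g$ by a vector of $\mathcal{V}$ via unimodularity. Then $\Xi_{even}$ becomes the norm of that vector, suitably reduced, and both the basis-independence and the Lagrangian criterion reduce to statements about that vector.

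\textbf{Encoding $g$.} Since $\beta$ is unimodular, there is $w\in\mathcal{V}$ with $g(x)=\beta(x,w)\ \mathrm{mod}\ p$ for all $x$, and $w$ is unique modulo $p\mathcal{V}$. Fix a hyperbolic basis and write $w=\sum_i(a_ie_i+b_if_i)$. Then $g(e_i)=b_i$ and $g(f_i)=a_i$ mod $p$. Hence
\[\sum_i g(e_i)g(f_i)\equiv\sum_i a_ib_i=\tfrac12\beta(w,w).\]
So $\Xi_{even}(g)=\tfrac12\beta(w,w)\ \mathrm{mod}\ 2$ when $p=2$, and $\Xi_{even}(g)=\beta(w,w)\ \mathrm{mod}\ p$ when $p$ is odd. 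This expression involves no basis, so Statement 1 reduces to checking that it does not depend on the lift $w$.
\begin{itemize}
\item For odd $p$ this is clear, since replacing $w$ by $w+pu$ changes $\beta(w,w)$ by a multiple of $p$.
\item For $p=2$ we have $\tfrac12\beta(w+2u,w+2u)=\tfrac12\beta(w,w)+2\beta(w,u)+2\beta(u,u)$, which agrees mod $2$.
\end{itemize}

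\textbf{Necessity in Statement 2.} Let $L\subset\ker g$ be a Lagrangian. Unimodularity gives $L=L^{\perp}$, and $L$ is a direct summand. The condition $g|_L=0$ says that the functional $\beta(\cdot,w)$ reduced mod $p$ vanishes on $L$. Hence $w\in L^{\perp}+p\mathcal{V}=L+p\mathcal{V}$, so we may write $w=l+pu$ with $l\in L$. Since $\beta(l,l)=0$, the formula above gives $\Xi_{even}(g)=0$ in both cases, using $\beta(u,u)\in2\mathbb{Z}$ when $p=2$.

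\textbf{Sufficiency in Statement 2.} If $w\in p\mathcal{V}$ then $g=0$ and any Lagrangian works, so assume $w\notin p\mathcal{V}$. The key step, and the main obstacle, is to change the lift $w$ within $w+p\mathcal{V}$ so that it becomes primitive and exactly isotropic. The hypothesis gives $\beta(w,w)\equiv0\pmod{2p}$ (mod $4$ when $p=2$). I would proceed in two moves.
\begin{enumerate}
\item First make $w$ primitive by adding a suitable $pu$; this is a CRT-type argument on the coordinates $(a_i,b_i)$, using that some coordinate is a unit mod $p$.
\item Then, with $w$ primitive, correct the norm. Unimodularity lets me choose $u$ with $\beta(w,u)$ prescribed. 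I would try $u$ from an isotropic direction, for instance a multiple of a basis vector $e_j$ or $f_j$ after a basis change making $w$ have a coordinate coprime to the required adjustment. With such $u$, $\beta(w+pu,w+pu)=\beta(w,w)+2p\beta(w,u)$ can be driven to $0$.
\end{enumerate}
If this direct coordinate manipulation gets fiddly, I would instead invoke the integral primitive isotropic element lemma (Lemma \ref{Lemma: Existence of integral primitive isotropic elt}) on the coset $w+p\mathcal{V}$.

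\textbf{Building the Lagrangian.} Once $w$ is primitive with $\beta(w,w)=0$, unimodularity gives $w'$ with $\beta(w,w')=1$. Since $\beta$ is even, replacing $w'$ by $w'-\tfrac12\beta(w',w')w$ makes $\{w,w'\}$ a hyperbolic pair spanning a copy $H_0$ of $H$. Then $\mathcal{V}=H_0\oplus H_0^{\perp}$ with $H_0^{\perp}\cong H^{n-1}$ even, unimodular and of signature $0$. Take any Lagrangian $L'$ of $H_0^{\perp}$ and set $L=\mathbb{Z}w\oplus L'$, which is a Lagrangian of $\mathcal{V}$. Finally $L\subset\ker g$: we have $g(w)=\beta(w,w)=0$, and $L'\subset H_0^{\perp}\subset w^{\perp}$ gives $g(L')=\beta(L',w)=0$.
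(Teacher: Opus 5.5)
The parts outside the key step are fine. Your basis-free formula (with $g=\beta(\cdot,w)$ mod $p$, $\Xi_{even}(g)=\tfrac12\beta(w,w)$ mod $2$ for $p=2$ and $\beta(w,w)$ mod $p$ for odd $p$) proves Statement 1 correctly, and more cleanly than the paper's matrix computation. Your necessity argument and the final assembly of the Lagrangian from a primitive isotropic $w$ are also correct.

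The gap is the step you flag yourself: producing an \emph{exactly} isotropic vector in $w+p\mathcal{V}$.
\begin{itemize}
\item Move 2 needs an \emph{isotropic} $u$ with $\beta(w,u)=-\beta(w,w)/(2p)$. Unimodularity gives some $u$ with this pairing, but no control on $\beta(u,u)$.
\item Taking $u=te_j$ requires the $f_j$-coordinate of $w$ to \emph{divide} that number; ``coprime'' is the wrong condition.
\item An isometry making some coordinate of $w$ equal to $\pm1$ is not a routine basis change. For $n\geqslant2$ it is a genuine theorem: Eichler/Wall transitivity of primitive vectors of fixed norm in $H^{n}$.
\item For $n=1$ that claim is false. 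Take $w=2e+3f$: the only isotropic vectors are $te$ and $tf$, and $\beta(w,te)=3t$, $\beta(w,tf)=2t$, so no isotropic $z$ has $\beta(w,z)=1$. The lift exists there only because $\Xi_{even}(g)=0$ forces $p$ to divide a coordinate.
\item Your fallback does not apply either. Lemma~\ref{Lemma: Existence of integral primitive isotropic elt} gives a primitive isotropic $x$ with $\overline{\beta}(\xi,\overline{x})=0$, i.e.\ $x\in\ker g$, not an isotropic $x$ with $\overline{x}=\xi$. It is also stated only for odd $p$, whereas your key step is needed for $p=2$ as well.
\end{itemize}

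The missing idea is to use such an $x\in\ker g$ to \emph{split off a plane and induct}, as the paper does, rather than lifting $\xi$ in one go.
\begin{itemize}
\item Complete $x$ to a hyperbolic pair $\{x,y\}$, write $\mathcal{V}=\langle x,y\rangle\oplus\mathcal{V}_1$, and decompose $w=\alpha x+\gamma y+w_1$ with $w_1\in\mathcal{V}_1$.
\item Then $\gamma=\beta(x,w)\equiv0\pmod p$ and $g|_{\mathcal{V}_1}=\beta(\cdot,w_1)$ mod $p$. Also $\beta(w_1,w_1)\equiv\beta(w,w)\pmod{2p}$ (mod $4$ if $p=2$), so $\Xi_{even}(g|_{\mathcal{V}_1})=0$.
\item By induction $\mathcal{V}_1$ has a Lagrangian $L_1\subset\ker g$, and $\langle x\rangle\oplus L_1$ is the required Lagrangian. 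The base case $n=1$ is just $p\mid ab$.
\item The same induction also yields your isotropic lift: $\alpha x+w_1'$, where $w_1'$ is an isotropic lift for $\mathcal{V}_1$.
\end{itemize}
For $p=2$ you must produce $x$ by hand. Take the appropriate $e_j$ or $f_j$ if some coordinate of $w$ is even. If all coordinates are odd, take $e_1-f_2$, which is available since $n\geqslant2$ in that case ($n=1$ with $ab$ odd is excluded by $\Xi_{even}=0$). The paper instead handles $p=2$ by an explicit pairing construction using $e_{2i-1}+e_{2i}$ and $f_{2i-1}-f_{2i}$.
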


\begin{rmk}\label{Remark: on Arf type inv}
	When $p=2$, the Arf type invariant for odd forms can be simplified as
		$$\Xi_{odd}\left(\mathcal{V},\beta,g\right)=\sum_{i=1}^{n}g\left(e_{i}\right)+\sum_{i=1}^{n}g\left(f_{i}\right).$$
	The definitions of Arf type invariants for even forms are slightly different, with an extra factor $2$ added when $p$ is an odd prime. Clearly whether the invertible element $2\in\mathbb{Z}\big/p$ is multiplied does not affect whether the invariant vanishes, and this extra factor enables us to better describe the Arf type invariants of directed sum, as the following Lemma implies.
\end{rmk}

\begin{prop}\label{Proposition: Arf inv of direct sum}
	Let $(\mathcal{V},\beta,g)$ and $\left(\mathcal{V}',\beta',g'\right)$ be two such triples given as above. Then we have
	\begin{align*}
		\Xi_{odd}\left(g\oplus g'\right)&=\begin{cases}
			\Xi_{odd}(g),\ &\text{if $\left(\mathcal{V},\beta\right)$ is odd, $\left(\mathcal{V}',\beta'\right)$ is even and $p=2$,}\\
			\Xi_{odd}(g)+\Xi_{even}\left(g'\right),\ &\text{if $\left(\mathcal{V},\beta\right)$ is odd, $\left(\mathcal{V}',\beta'\right)$ is even and $p\geqslant3$,}\\
			\Xi_{odd}(g)+\Xi_{odd}\left(g'\right),\ &\text{if $\left(\mathcal{V},\beta\right)$ and $\left(\mathcal{V}',\beta'\right)$ are odd;}
			\end{cases}\\
		\Xi_{even}\left(g\oplus g'\right)&=\Xi_{even}(g)+\Xi_{even}\left(g'\right),\quad\text{if $\left(\mathcal{V},\beta\right)$ and $\left(\mathcal{V}',\beta'\right)$ are even.}
	\end{align*}
\end{prop}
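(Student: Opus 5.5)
The plan is to prove the direct-sum formulae by concatenating bases and, in the mixed case, by rewriting an odd-plus-even form as a diagonal form. Throughout, we use the independence of basis from Statement 1 of Propositions \ref{Proposition: sym unimodular odd bilinear (V, lambda) with vanishing signature admits Lagrangian contained in ker of epim onto Z/2} and \ref{Proposition: sym unimodular even bilinear (V, lambda) with vanishing signature admits Lagrangian contained in ker of epim onto Z/2}. So for each summand we are free to pick whichever standard basis is convenient, and the invariant of the sum may be computed on any standard basis of the sum.

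\emph{Same-parity cases.} If both forms are odd, the union of standard orthogonal bases $\{e_i,f_i\}$ of $\mathcal{V}$ and $\{e'_j,f'_j\}$ of $\mathcal{V}'$ is a standard orthogonal basis of $\mathcal{V}\oplus\mathcal{V}'$. Also $(g\oplus g')(e_i)=g(e_i)$ and $(g\oplus g')(e'_j)=g'(e'_j)$. The defining sum therefore splits, giving $\Xi_{odd}(g\oplus g')=\Xi_{odd}(g)+\Xi_{odd}(g')$. The even/even case is identical with hyperbolic bases.

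\emph{Mixed case.} Let $\beta$ be odd and $\beta'$ even. By the classification, $D\oplus H\cong D^{2}$. Concretely, take $e,f$ from the odd part and a hyperbolic pair $a,b$ with $\beta'(a,b)=1$. Then
\begin{align*}
e_1&=e+a-\tfrac{1}{2}\cdot 0, \text{ etc.}
\end{align*}
The cleanest explicit choice is
$$e_1=e+b,\quad f_1=f+b,\quad e_2=e+a+b,\quad f_2=f+a+b.$$
Before using this basis we will verify its Gram matrix, or substitute a variant, since the signs need care. A convenient variant: with $\beta(e,e)=1$, $\beta(f,f)=-1$, $H$ hyperbolic, the vectors
$$u_1=e+a,\quad u_2=e-b+\ldots$$
are candidates to check. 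The real content is this. We fix one explicit unimodular change of basis from $\{e,f,a,b\}$ to a standard orthogonal basis $\{\epsilon_1,\epsilon_2,\phi_1,\phi_2\}$ with $\epsilon_k^2=1$ and $\phi_k^2=-1$. We then compute
$$\sum_k g(\epsilon_k)^2-\sum_k g(\phi_k)^2$$
as a quadratic polynomial in $g(e),g(f),g(a),g(b)$. Since $\beta$ is unimodular and the basis change is integral, cross terms in $g(e),g(f)$ must cancel against those from the hyperbolic part, because the difference of squares is the quadratic form associated with the dual form. The outcome should be
$$g(e)^2-g(f)^2+2g(a)g(b).$$
Indeed, the invariant equals $Q(g)$, where $Q$ is the dual quadratic form of $\beta^{-1}$ evaluated on $g$ viewed in $\mathcal{V}^{\vee}\otimes\mathbb{Z}/p$. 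This conceptual reformulation is what I would actually use. With it, $\Xi_{odd}(g)=\beta^{-1}(g,g)$, and $\Xi_{even}(g)=2\sum g(e_i)g(f_i)=\beta^{-1}(g,g)$ as well. That explains the factor $2$ in the even invariant, and additivity under orthogonal sums becomes immediate since $(\beta\oplus\beta')^{-1}=\beta^{-1}\oplus\beta'^{-1}$. Iterating over the summands of $H^{m}$ gives the case $p\ge3$.

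For $p=2$, squaring is additive and $x^2=x$, so $\Xi_{odd}(g)$ reduces to $\sum g(e_i)+\sum g(f_i)$. This is the pairing of $g$ with the characteristic element $\sum e_i+\sum f_i$, whose class mod $2$ is basis-independent. For the sum, a characteristic element of $\beta\oplus\beta'$ is $c\oplus c'$, where $c'$ is characteristic for the even form and may be taken to be $0$. Hence $\Xi_{odd}(g\oplus g')=\Xi_{odd}(g)$, and the even part contributes nothing. The main obstacle is justifying the reformulation $\Xi=\beta^{-1}(g,g)$, or the characteristic-element description when $p=2$, cleanly from the definitions. Once that holds, every case follows.
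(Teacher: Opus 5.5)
Your argument is correct, but in the mixed case it takes a different route from the paper.

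\textbf{The paper's route.} Like you, the paper handles the same-parity cases by concatenating bases. For odd$\,\oplus\,$even it reduces, via the classification, to $D\oplus H$. It then writes down the explicit standard orthogonal basis $u_1=e_1$, $u_2=f_1+e_2+f_2$, $v_1=f_1+e_2$, $v_2=f_1+f_2$. Expanding gives $g(e_1)^2-g(f_1)^2+2g'(e_2)g'(f_2)$, and the same expansion reduces mod $2$ to $g(e_1)+g(f_1)$. This is elementary and uniform in $p$.

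\textbf{Your route.} You identify the invariant with the inverse form. The step you call the main obstacle is a one-line check. The Gram matrices $\mathrm{diag}(I_n,-I_n)$ and $\begin{pmatrix}0&I_n\\ I_n&0\end{pmatrix}$ are their own inverses, so in the dual basis $\beta^{-1}(g,g)$ is literally the defining sum. This is the identity $\Xi=\overline{\beta}(\xi,\xi)$ that the paper itself invokes later, in the $p\geqslant3$ proofs of Statement~2.

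\textbf{What each approach buys.} Your route gives more than the paper's computation:
\begin{itemize}
\item additivity for $p\geqslant3$ is immediate in every parity combination, with no reduction to rank $2$;
\item it explains the factor $2$ in $\Xi_{even}$;
\item it proves Statement~1 for $\Xi_{odd}$ (all $p$) and for $\Xi_{even}$ ($p\geqslant3$) without the $O_{n,n}(\mathbb{Z})$ computation.
\end{itemize}
The one thing outside its reach is $\Xi_{even}$ at $p=2$, a genuine quadratic refinement that $\beta^{-1}$ mod $2$ cannot see. The statement only needs concatenation there, so this costs nothing.

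\textbf{The $p=2$ mixed case.} Your characteristic-element argument is fine. Characteristic elements are unique modulo $2\mathcal{V}$ by unimodularity, and $0$ is characteristic for an even form. The case also follows directly from your reformulation: $\Xi_{odd}(g)=\beta^{-1}(g,g)$ still holds mod $2$, and $\beta'^{-1}$ is even.

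\textbf{One correction.} Delete the tentative explicit bases in your mixed-case paragraph. They are not standard orthogonal bases; for instance $(\beta\oplus\beta')(e+a+b,e+a+b)=3$. Nothing in your proof uses them.
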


The proofs of Propositions \ref{Proposition: sym unimodular odd bilinear (V, lambda) with vanishing signature admits Lagrangian contained in ker of epim onto Z/2}$\sim$\ref{Proposition: Arf inv of direct sum} are organized as follows. First we prove Statement 1 of Propositions \ref{Proposition: sym unimodular odd bilinear (V, lambda) with vanishing signature admits Lagrangian contained in ker of epim onto Z/2} and \ref{Proposition: sym unimodular even bilinear (V, lambda) with vanishing signature admits Lagrangian contained in ker of epim onto Z/2}, which can be treated uniformly. Then we prove Statement 2 of Propositions \ref{Proposition: sym unimodular odd bilinear (V, lambda) with vanishing signature admits Lagrangian contained in ker of epim onto Z/2} and \ref{Proposition: sym unimodular even bilinear (V, lambda) with vanishing signature admits Lagrangian contained in ker of epim onto Z/2} for the case $p=2$. Next we prove Proposition \ref{Proposition: Arf inv of direct sum}. Finally we deal with Statement 2 of Propositions \ref{Proposition: sym unimodular odd bilinear (V, lambda) with vanishing signature admits Lagrangian contained in ker of epim onto Z/2} and \ref{Proposition: sym unimodular even bilinear (V, lambda) with vanishing signature admits Lagrangian contained in ker of epim onto Z/2} for the case $p\geqslant3$.

\begin{pf}[of Proposition \ref{Proposition: sym unimodular odd bilinear (V, lambda) with vanishing signature admits Lagrangian contained in ker of epim onto Z/2}, Statement 1]
	Suppose $(\mathcal{V},\beta)$ is odd. It follows from \cite[Chapter I, Theorem 5.3]{HosemullerMilnor13} that if $(\mathcal{V},\beta)$ is odd, then $(\mathcal{V},\beta)$ admits a standard orthogonal basis $\left\{e_{i},f_{i}:1\leqslant i\leqslant n\right\}$. Denote
	$$g\begin{pmatrix}
		e_{1} & \cdots & e_{n} & f_{1} & \cdots & f_{n}
	\end{pmatrix}=\begin{pmatrix}
		a & b
	\end{pmatrix},$$
	where $a,b\in\left(\mathbb{Z}\big/p\right)^{n}$ are viewed as row vectors, and we have
	\begin{align*}
		\sum_{i=1}^{n}g\left(e_{i}\right)^{2}-\sum_{i=1}^{n}g\left(f_{i}\right)^{2}
		=aa^{T}-bb^{T}.
	\end{align*}
	
	The orthogonal group $O(\mathcal{V},\beta)$ acts transitively on hyperbolic bases of $O(\mathcal{V},\beta)$, and under the given hyperbolic basis $O(\mathcal{V},\beta)$ can be described explicitly as matrix group
	$$O(\mathcal{V},\beta)\cong O_{n,n}(\mathbb{Z})=\left\{\begin{pmatrix}
		A & C \\ B & D
	\end{pmatrix}\in GL(2n,\mathbb{Z})\middle|
	\left.
	\begin{alignedat}{2}
		AA^{T}-CC^{T}=BB^{T}-DD^{t}&=I_{n},\\
		AB^{T}-CD^{T}&=O.
	\end{alignedat}
	\right. \right\}$$
	Take any $m\in O(\mathcal{V},\beta)$, let $\begin{pmatrix}
		A & C \\ B & D
	\end{pmatrix}$ be the matrix representation of $m$ and now we compute
	\begin{align*}
		\sum_{i=1}^{n}g\left(me_{i}\right)^{2}-\sum_{i=1}^{n}g\left(mf_{i}\right)^{2}&=\begin{pmatrix}
			a & b
		\end{pmatrix}\begin{pmatrix}
			A & C \\ B & D
		\end{pmatrix}\left(\begin{pmatrix}
			a & b
		\end{pmatrix}\begin{pmatrix}
			A & C \\ B & D
		\end{pmatrix}\right)^{T}\\
		&=\begin{pmatrix}
			a & b
		\end{pmatrix}\begin{pmatrix}
			A & C \\ B & D
		\end{pmatrix}\begin{pmatrix}
			A & C \\ B & D
		\end{pmatrix}^{T}\begin{pmatrix}
			a &  b
		\end{pmatrix}^{T}\\
		&=\begin{pmatrix}
			a & b
		\end{pmatrix}\begin{pmatrix}
			a & b
		\end{pmatrix}^{T}\\
		&=aa^{T}-bb^{T}=\sum_{i=1}^{n}g\left(e_{i}\right)^{2}-\sum_{i=1}^{n}g\left(f_{i}\right)^{2}.
	\end{align*}
	Therefore, $\Xi_{odd}(g)$ is well-defined.
\end{pf}

\begin{pf}[of Proposition \ref{Proposition: sym unimodular even bilinear (V, lambda) with vanishing signature admits Lagrangian contained in ker of epim onto Z/2}, Statement 1]
	Suppose $(\mathcal{V},\beta)$ is even. Again it follows from \cite[Chapter I, Theorem 5.3]{HosemullerMilnor13} that $(\mathcal{V},\beta)$ is hyperbolic and admits a hyperbolic basis $\left\{e_{i},f_{i}:1\leqslant i\leqslant n\right\}$. Denote
	$$g\begin{pmatrix}
		e_{1} & \cdots & e_{n} & f_{1} & \cdots & f_{n}
	\end{pmatrix}=\begin{pmatrix}
		a & b
	\end{pmatrix},$$
	where $a,b\in\left(\mathbb{Z}\big/2\right)^{n}$ are viewed as row vectors, and we have $\sum_{i=1}^{n}g\left(e_{i}\right)g\left(f_{i}\right)=ab^{T}$.
	
	The orthogonal group $O(\mathcal{V},\beta)$ acts transitively on the hyperbolic bases of $O(\mathcal{V},\beta)$, and under the given hyperbolic basis $O(\mathcal{V},\beta)$ can be described explicitly as matrix group
	$$O(\mathcal{V},\beta)\cong Sp_{2n}(\mathbb{Z})=\left\{\begin{pmatrix}
		A & C \\ B & D
	\end{pmatrix}\in GL(2n,\mathbb{Z})\middle|
	\left.
	\begin{alignedat}{2}
		AC^{T}+CA^{T}=BD^{T}+DB^{t}&=O,\\
		AD^{T}+CB^{T}&=I_{n}.
	\end{alignedat}
	\right. \right\}$$
	Take any $m\in O(\mathcal{V},\beta)$, let $\begin{pmatrix}
		A & C \\ B & D
	\end{pmatrix}$ be the matrix representation of $m$ and now we compute $\sum_{i=1}^{n}g\left(me_{i}\right)g\left(mf_{i}\right)$:
	\begin{align*}
		\sum_{i=1}^{n}g\left(me_{i}\right)g\left(mf_{i}\right)&=(aA+bB)(aC+bD)^{T}\\
		&=aAC^{T}a^{T}+aAD^{T}b^{T}+bBC^{T}a^{T}+bBD^{T}b^{T}\\
		&=aAD^{T}b^{T}+bBC^{T}a^{T}&\left(AC^{T}\text{ and }BD^{T}\text{ are skew-symmetric}\right)\\
		&=aAD^{T}b^{T}+aCB^{T}b^{T}&\left(\text{the transpose of a number is itself}\right)\\
		&=ab^{T}=\sum_{i=1}^{n}g\left(e_{i}\right)g\left(f_{i}\right).
	\end{align*}
	Therefore, $\Xi_{even}(g)$ is well-defined.
\end{pf}

\begin{pf}[of Proposition \ref{Proposition: sym unimodular odd bilinear (V, lambda) with vanishing signature admits Lagrangian contained in ker of epim onto Z/2}, Statement 2, the case $p=2$]
	Now we prove that $(\mathcal{V},\beta)$ admits a Lagrangian contained in $\ker g$ if and only if $\Xi_{odd}(g)=0$ for the case $p=2$. We begin with the ``only if'' part. Suppose $L$ is a Lagrangian of $(\mathcal{V},\beta)$ contained in $\ker g$. Then $\mathcal{V}$ admits a basis $\left\{e_{i},f_{i}:1\leqslant i\leqslant n\right\}$ such that $L=\left<e_{i}:1\leqslant i\leqslant n\right>$, $g\left(e_{i}\right)=0$ and $\beta\left(e_{i},f_{j}\right)=\beta\left(f_{i},f_{j}\right)=\delta_{i,j}$. Then we set $e_{i}'=f_{i}$, $f_{i}'=e_{i}-f_{i}$, and it is straightforward to verify that $\left\{e_{i}',f_{i}':1\leqslant i\leqslant n\right\}$ is a standard orthogonal basis for $(\mathcal{V},\beta)$. Hence
	\begin{align*}
		\Xi_{odd}(g)&=\sum_{i=1}^{n}g\left(e_{i}'\right)^{2}-\sum_{i=1}^{n}g\left(f_{i}'\right)^{2}\\
		&=\sum_{i=1}^{n}g\left(f_{i}\right)^{2}-\sum_{i=1}^{n}\left(g\left(e_{i}\right)-g\left(f_{i}\right)\right)^{2}=0.
	\end{align*}
	
	To show the ``if'' part, we directly construct a Lagrangian contained in $\ker g$ when $\Xi_{odd}(g)=0$. Since $\Xi_{odd}(g)=0$, $(\mathcal{V},\beta)$ admits a standard orthogonal basis $\left\{e_{i},f_{i}:1\leqslant i\leqslant n\right\}$ with $\sum_{i=1}^{n}g\left(e_{i}\right)+\sum_{i=1}^{n}g\left(f_{i}\right)=0$, and we may permute them suitably into a new basis $\left\{e_{i}':1\leqslant i\leqslant 2n\right\}$ such that
	\begin{compactenum}
		\item $\beta\left(e_{i}',e_{i}'\right)=\varepsilon_{i}=\pm1$ and $\beta\left(e_{i}',e_{j}'\right)=0$ for $i\neq j$; 
		\item $\sum_{i=1}^{2n}\varepsilon_{i}=0$;
		\item there exists $n_{0}$ such that $g\left(e_{i}'\right)=1$ for all $1\leqslant i\leqslant 2n_{0}$ and $g\left(e_{j}'\right)=0$ for all $2n_{0}+1\leqslant j\leqslant 2n$.
	\end{compactenum}
	Then we set $e_{i}''=e_{2i-1}'-\varepsilon_{2i-1}\varepsilon_{2i}e_{2i}'$ for $1\leqslant i\leqslant n$, and it is straightforward to verify that $g\left(e_{i}''\right)=0$, that $e_{i}''$ is primitive in $\left<e_{2i-1}',e_{2i}'\right>$ for all $1\leqslant i\leqslant n$ and $L:=\left<e_{i}'':1\leqslant i\leqslant n\right>$ is a Lagrangian of $(\mathcal{V},\beta)$ contained in $\ker g$. This completes the proof of Proposition \ref{Proposition: sym unimodular odd bilinear (V, lambda) with vanishing signature admits Lagrangian contained in ker of epim onto Z/2}.
\end{pf}

\begin{pf}[of Proposition \ref{Proposition: sym unimodular even bilinear (V, lambda) with vanishing signature admits Lagrangian contained in ker of epim onto Z/2}, Statement 2, the case $p=2$]
	Now we prove that $(\mathcal{V},\beta)$ admits a Lagrangian contained in $\ker g$ if and only if $\Xi_{even}(g)=0$ for the case $p=2$. We begin with the ``only if'' part. Let $L$ be a Lagrangian of $(\mathcal{V},\beta)$ contained in $\ker g$ and let $\left\{e_{1},\cdots,e_{n}\right\}$ be a basis of $L$. Then $g\left(e_{i}\right)=0$. Extend the basis of $L$ to a hyperbolic basis $\left\{e_{1},f_{1},\cdots,e_{n},f_{n}\right\}$ of $(\mathcal{V},\beta)$. Then it follows from $g\left(e_{i}\right)=0$ that $\Xi_{even}(g)=0$.
	
	To show the ``if'' part, we directly construct a Lagrangian contained in $\ker g$ if $\Xi_{even}(g)=0$. Suppose $\Xi_{even}(g)=0$, and after a suitable permutation we may assume that  $\left\{e_{1},f_{1},\cdots,e_{n},f_{n}\right\}$ is a hyperbolic basis of $(\mathcal{V},\beta)$ such that
	\begin{compactenum}
		\item $g\left(e_{i}\right)g\left(f_{i}\right)=1$, $1\leqslant i\leqslant 2l$, $4l\leqslant n$;
		\item $g\left(e_{j}\right)g\left(f_{j}\right)=0$, $2l+1\leqslant j\leqslant n$.
	\end{compactenum}
	For each $1\leqslant i\leqslant 2l$ we have $g\left(e_{i}\right)=g\left(f_{i}\right)=1$. We set $e_{2i-1}'=e_{2i-1}+e_{2i}$ and $e_{2i}'=f_{2i-1}-f_{2i}$, and it can be directly verified that $\left<e_{2i-1}',e_{2i}'\right>$ is a direct summand of the hyperbolic summand $\left<e_{2i-1},f_{2i-1},e_{2i},f_{2i}\right>$ when $1\leqslant i\leqslant l$, and $g\left(e_{i}'\right)=0$, $\beta\left(e_{i}',e_{i}'\right)=0$ when $1\leqslant i\leqslant 2l$. For each $2l+1\leqslant j\leqslant n$ we have $g\left(e_{j}\right)g\left(f_{j}\right)=0$, and there exists $e_{j}'\in\left\{e_{j},f_{j}\right\}$ such that $g\left(e_{j}'\right)=0$. In particular $\left<e_{j}'\right>$ is a direct summand of the hyperbolic summand $\left<e_{j},f_{j}\right>$ and $g\left(e_{j}'\right)=0$, $\beta\left(e_{j}',e_{j}'\right)=0$ when $2l+1\leqslant j\leqslant n$. Then $\left<e_{i}':1\leqslant i\leqslant n\right>$ is a Lagrangian of $(\mathcal{V},\beta)$ contained in $\ker g$. This completes the proof of Proposition \ref{Proposition: sym unimodular even bilinear (V, lambda) with vanishing signature admits Lagrangian contained in ker of epim onto Z/2}.
\end{pf}

\begin{pf}[of Proposition \ref{Proposition: Arf inv of direct sum}]
	The proof is clear when $\left(\mathcal{V},\beta\right)$ and $\left(\mathcal{V}',\beta'\right)$ are both odd or both even, and we only need to prove the case where $\left(\mathcal{V},\beta\right)$ is odd and $\left(\mathcal{V}',\beta'\right)$ is even. According to the classification of symmetric unimodular bilinear forms with vanishing signatures over $\mathbb{Z}$ (\cite[Chapter I, Theorem 5.3]{HosemullerMilnor13}), it suffices to further reduce to the case that $\left(\mathcal{V},\beta\right)$ and $\left(\mathcal{V}',\beta'\right)$ both have rank $2$.
	
	Let $\left\{e_{1},f_{1}\right\}$ be a standard orthogonal basis for $\left(\mathcal{V},\beta\right)$ and let $\left\{e_{2},f_{2}\right\}$ be a hyperbolic basis for $\left(\mathcal{V}',\beta'\right)$. Let $\mathcal{V}^{1}\xrightarrow{g}\mathbb{Z}\big/p$ and $\mathcal{V}'\xrightarrow{g'}\mathbb{Z}\big/p$ be homomorphisms. We shall give a standard orthogonal basis for $\left(\mathcal{V}\oplus\mathcal{V}',\beta\oplus\beta'\right)$ in terms of $e_{i}$, $f_{i}$,  compute $\Xi_{odd}\left(g\oplus g'\right)$ and express the result in terms of $\Xi_{odd}(g)$, $\Xi_{even}\left(g'\right)$. It is straightforward to verify that
	\begin{align*}
		u_{1}&=e_{1},\\
		u_{2}&=f_{1}+e_{2}+f_{2},\\
		v_{1}&=f_{1}+e_{2},\\
		v_{2}&=f_{1}+f_{2}
	\end{align*}
	is a standard orthogonal basis for $\left(\mathcal{V}\oplus\mathcal{V}',\beta\oplus\beta'\right)$. Hence by definition, when $p=2$ we have
	\begin{align*}
		\Xi_{odd}\left(g\oplus g'\right)&=\left(g\oplus g'\right)\left(u_{1}\right)+\left(g\oplus g'\right)\left(u_{2}\right)+\left(g\oplus g'\right)\left(v_{1}\right)+\left(g\oplus g'\right)\left(v_{2}\right)\\
		&=g\left(e_{1}\right)+\left(g\left(f_{1}\right)+g'\left(e_{2}\right)+g'\left(f_{2}\right)\right)+\left(g\left(f_{1}\right)+g'\left(e_{2}\right)\right)+\left(g\left(f_{1}\right)+g'\left(f_{2}\right)\right)\\
		&=g\left(e_{1}\right)+g\left(f_{1}\right)=\Xi_{odd}(g),
	\end{align*}
	and when $p\geqslant3$ we have
	\begin{align*}
		\Xi_{odd}\left(g\oplus g'\right)&=\left(g\oplus g'\right)\left(u_{1}\right)^{2}+\left(g\oplus g'\right)\left(u_{2}\right)^{2}-\left(g\oplus g'\right)\left(v_{1}\right)^{2}-\left(g\oplus g'\right)\left(v_{2}\right)^{2}\\
		&=g\left(e_{1}\right)^{2}+\left(g\left(f_{1}\right)+g'\left(e_{2}\right)+g'\left(f_{2}\right)\right)^{2}-\left(g\left(f_{1}\right)+g'\left(e_{2}\right)\right)^{2}-\left(g\left(f_{1}\right)+g'\left(f_{2}\right)\right)^{2}\\
		&=g\left(e_{1}\right)^{2}-g\left(f_{1}\right)^{2}+2g'\left(e_{2}\right)g'\left(f_{2}\right)\\
		&=\Xi_{odd}(g)+\Xi_{even}\left(g'\right).
	\end{align*}
	This completes the proof of Proposition \ref{Proposition: Arf inv of direct sum}.
\end{pf}

It remains to prove Statement 2 of Propositions \ref{Proposition: sym unimodular odd bilinear (V, lambda) with vanishing signature admits Lagrangian contained in ker of epim onto Z/2} and \ref{Proposition: sym unimodular even bilinear (V, lambda) with vanishing signature admits Lagrangian contained in ker of epim onto Z/2} when $p\geqslant3$. In this more general case we require the following lemma concerning the existence of specific primitive element in a lattice, whose proof is postponed to the end of this section.

\begin{lem}\label{Lemma: Existence of integral primitive isotropic elt}
	Suppose $\mathcal{V}\cong\mathbb{Z}^{2n}$ and $\beta$ is a symmetric unimodular bilinear form on $V$ with vanishing signature. Let $p$ be an odd prime, denote $\overline{\mathcal{V}}=\mathcal{V}\big /p\mathcal{V}$, $\mathcal{V}\xrightarrow{\overline{\cdot}}\overline{\mathcal{V}}$ the quotient homomorphism and $\overline{\mathcal{V}}\times \overline{\mathcal{V}}\xrightarrow{\overline{\beta}}\mathbb{Z}\big/p$ the induced symmetric bilinear form. Let $\xi\in \left(\overline{\mathcal{V}},\overline{\beta}\right)$ be an isotropic element, i.e. $\overline{\beta}(\xi,\xi)=0\in\mathbb{Z}\big/p$.
	
	Then $(\mathcal{V},\beta)$ admits a primitive isotropic element $x$ such that $\overline{\beta}\left(\xi,\overline{x}\right)=0$.
\end{lem}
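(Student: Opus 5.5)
The plan is to prove the stronger statement that every isotropic vector of $\left(\overline{\mathcal{V}},\overline{\beta}\right)$ is, up to a unit of $\mathbb{Z}\big/p$, the reduction of a primitive isotropic vector of $(\mathcal{V},\beta)$. This suffices: if $\xi=0$, any primitive isotropic $x$ works, and one exists because $(\mathcal{V},\beta)\cong H^{n}$ or $D^{n}$ with $n\geqslant1$ by \cite[Chapter I, Theorem 5.3]{HosemullerMilnor13}; take $e_{1}$ in the even case and $e_{1}+f_{1}$ in the odd case. If $\xi\neq0$ and $\overline{x}=c\xi$ with $c$ a unit, then $\overline{\beta}(\xi,\overline{x})=c\,\overline{\beta}(\xi,\xi)=0$. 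A first observation is that, since $p$ is odd, $\langle1\rangle\oplus\langle-1\rangle$ becomes hyperbolic over $\mathbb{F}_{p}$ via $e\pm f$ (up to the invertible factor $2$). So in both the odd and even cases $\left(\overline{\mathcal{V}},\overline{\beta}\right)$ is the hyperbolic space of rank $2n$ over $\mathbb{F}_{p}$, and Witt's theorem gives that $O\left(\overline{\mathcal{V}},\overline{\beta}\right)$ acts transitively on nonzero isotropic vectors.

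First I would settle rank $2$ by hand. For $H$ with basis $\{e,f\}$, the nonzero isotropic vectors of $\overline{\mathcal{V}}$ are exactly the nonzero multiples of $\overline{e}$ and $\overline{f}$, both reductions of primitive isotropic integral vectors. For $D^{1}$ they are the multiples of $\overline{e+f}$ and $\overline{e-f}$, again reductions of primitive isotropic vectors. This also covers $\xi$ lying in a single coordinate plane in general.

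For $n\geqslant2$ the main route is to show that the reduction map $O(\mathcal{V},\beta)\to O\left(\overline{\mathcal{V}},\overline{\beta}\right)$ has image acting transitively on nonzero isotropic vectors. Then $\xi=\overline{g}(c\,\overline{e_{0}})$ for a fixed primitive isotropic $e_{0}$ from the rank-$2$ step and some $g\in O(\mathcal{V},\beta)$, and $x=g e_{0}$ is the desired vector. To produce enough integral isometries I would use Eichler transformations $E(u,w)\colon y\mapsto y+\beta(y,u)w-\beta(y,w)u-\tfrac12\beta(w,w)\beta(y,u)u$, where $u$ is primitive isotropic and $w\perp u$ with $\beta(w,w)$ even (in the odd case, take $w$ inside an even sublattice, e.g. spanned by differences $e_{i}-e_{j}$, $f_{i}-f_{j}$, $e_{i}\pm f_{j}$), together with coordinate permutations and sign changes. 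The reductions of these transformations are the Eichler transformations over $\mathbb{F}_{p}$. The standard generation argument (Eichler transformations attached to a hyperbolic plane, plus those of its complement, act transitively on isotropic vectors) should then move any nonzero isotropic $\xi$ into the span of the first coordinate plane, where the rank-$2$ case finishes.

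The main obstacle is this transitivity and integrality bookkeeping, especially in the odd case $D^{n}$ where one must keep $w$ in the even part so that $E(u,w)$ is integral. If it becomes messy, the fallback is a direct Hensel-type lifting. Lift $\xi$ to any $y\in\mathcal{V}$, so $\beta(y,y)\equiv0$ mod $p$, and correct $y$ by $p$-multiples of vectors in an isotropic pair $e,f$ (an integral hyperbolic plane, which sits inside $D^{n}$ for $n\geqslant2$ as $\langle e_{1}+f_{1},e_{2}+f_{2}\rangle$-type combinations, or inside $H^{n}$ directly). This solves $\beta(y+pte+psf,\,y+pte+psf)=0$ over $\mathbb{Z}$ by choosing $y$ so that $\beta(y,e)$ or $\beta(y,f)$ is a unit mod $p$ and then solving a linear-plus-$p^{2}ts$ equation. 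Afterwards one divides by the content, which is prime to $p$ because $\overline{y}=\xi\neq0$, to get a primitive vector.
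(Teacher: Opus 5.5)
Your main route is sound, but it differs from the paper's and proves more than the lemma asks. The paper uses no isometries.

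\emph{Even case.} The paper rescales $\xi$ so that its $\overline{e_1}$-coordinate is $1$, lifts the other coordinates arbitrarily, and solves $v_1=-\sum_{i\geq2}u_iv_i$. That vector is exactly your $E(f_1,w)e_1$ with $w=\sum_{i\geq2}(u_ie_i+v_if_i)$, so here the two constructions coincide.

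\emph{Odd case.} The paper does not lift $\xi$ at all. It only needs the linear condition $\overline{\beta}(\xi,\overline{x})=0$, so it takes either some $e_k\pm f_k$, or a vector in the totally isotropic plane $\langle e_1-f_1,e_2+f_2\rangle$ that is orthogonal to $\xi$ mod $p$, divided by its content. This avoids every parity issue, and for $n\geq2$ a dimension count on any rank-$2$ totally isotropic summand already suffices.

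\emph{What your route buys.} You get the stronger fact that every isotropic line of $(\overline{\mathcal{V}},\overline{\beta})$ is the reduction of a primitive isotropic vector. The cost is the Eichler bookkeeping, which can be made precise as follows.
\begin{itemize}
\item You only need transitivity on isotropic lines, not on vectors, and it comes from a one-step computation.
\item Take an integral hyperbolic plane $\langle u,v\rangle$ with complement $W$, and write $\xi=a\overline{u}+b\overline{v}+\overline{w_0}$.
\item If $a\neq0$, then $E(v,w)$ with $\overline{w}=-a^{-1}\overline{w_0}$ sends $\xi$ to $a\overline{u}$. The case $b\neq0$ is symmetric, using $E(u,\cdot)$.
\item If $a=b=0$, first apply $E(u,w_1)$ with $\overline{\beta}(\overline{w_0},\overline{w_1})\neq0$.
\item Integrality in the odd case is harmless: the even part of $W$ contains $2W$, so it surjects onto $W/pW$ because $p$ is odd.
\end{itemize}

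Two things need correcting.

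First, $\langle e_1+f_1,e_2+f_2\rangle$ is totally isotropic, not a hyperbolic plane. In $D^n$ with $n\geq2$, use for instance $\langle e_1+f_1,e_1+f_2\rangle$.

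Second, the Hensel fallback fails as written. Put $a=\beta(y,e)$ and $b=\beta(y,f)$.
\begin{itemize}
\item Adding $pte+psf$ changes $\beta(y,y)$ by $2p(ta+sb)+2p^2ts$. This is even, so in the odd lattice you must first fix the parity of $\beta(y,y)$, for example by adding $pe_1$.
\item Writing $\beta(y,y)=2pm$, the equation becomes $(a+ps)(b+pt)=ab-pm$. This is a factorization condition, not a linear one, and it is not solved merely because $a$ is a unit mod $p$.
\item For a concrete failure in $H^2$, take $p=5$ and $y=3e+2f+(e_2-f_2)$. Then $a=2$, $b=3$, $m=1$, and the equation $(2+5s)(3+5t)=1$ has no integer solution.
\item The fallback does work if you rescale $\xi$ and re-lift so that $a=1$ exactly; then $s=0$, $t=-m$ works. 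That is precisely the paper's even-case construction.
\end{itemize}
You also need a plane with $\overline{\beta}(\xi,\overline{e})$ or $\overline{\beta}(\xi,\overline{f})$ nonzero. This is a condition on $\xi$, not on the choice of lift $y$.
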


\begin{pf}[of Proposition \ref{Proposition: sym unimodular even bilinear (V, lambda) with vanishing signature admits Lagrangian contained in ker of epim onto Z/2}, Statement 2, the case $p\geqslant3$]
	Now we prove that $(\mathcal{V},\beta)$ admits a Lagrangian contained in $\ker g$ if and only if $\Xi_{even}(g)=0$ for the case $p\geqslant3$. The ``only if'' part is exactly the same as the case $p=2$, and we focus on the proof of the ``if'' part.
	
	We prove by induction on $n=\frac{1}{2}\mathrm{rank}\ \mathcal{V}$. When $n=1$, let $\{e,f\}$ be a hyperbolic basis of $(\mathcal{V},\beta)$. Then $\Xi_{even}(\mathcal{V},\beta,g)=0$ implies $g(e)g(f)=0\in\mathbb{Z}\big/p$. Since $p$ is prime, either $g(e)=0$ or $g(f)=0$. Without losing of generality we suppose $g(e)=0$, and $L=\left<e\right>$ is the desired Lagrangian.
	
	Suppose the sufficiency is true for all such tripples $(\mathcal{V},\beta,g)$ with $\beta$ even and $n=\frac{1}{2}\mathrm{rank}\ \mathcal{V}$ not exceeding $k$ for some $k\geqslant1$. Given $\mathcal{V}\cong\mathbb{Z}^{2k+2}$, a symmetric unimodular bilinear even form $\beta$ on $\mathcal{V}$ and a homomorphism $\mathcal{V}\xrightarrow{g}\mathbb{Z}\big/p$ with $\Xi_{even}(\mathcal{V},\beta,g)=0$. Let us apply the notations of Lemma \ref{Lemma: Existence of integral primitive isotropic elt} that overline implies mod $p$ reductions. Since $\left(\overline{\mathcal{V}},\overline{\beta}\right)$ is again unimodular, there is a unique element $\xi\in\overline{\mathcal{V}}$ such that $g\left(v\right)=\overline{\beta}\left(\xi,v\right)$ for all $v\in\overline{\mathcal{V}}$, and it is routine to verify that $\Xi_{even}(\mathcal{V},\beta,g)=\overline{\beta}(\xi,\xi)=0$ by computing under a hyperbolic basis. Then by Lemma \ref{Lemma: Existence of integral primitive isotropic elt} $\left(\mathcal{V},\beta\right)$ admits a primitive isotropic element $x$ contained in $\ker g$. Since $(\mathcal{V},\beta)$ is even, symmetric and unimodular with vanishing signature and $x$ is primitive, we can find another vector $y$ such that $\mathcal{H}_{0}:=\left<x,y\right>$ is a hyperbolic summand of $(\mathcal{V},\beta)$. 
	
	Denote $h_{0}=\beta|_{\mathcal{H}_{0}}$, $\mathcal{V}_{1}=\left(\mathcal{H}_{0}\right)^{\perp}$, $\beta_{1}=\beta|_{\mathcal{V}_{1}}$, $g_{0}=g|_{\mathcal{H}_{0}}$ and $g_{1}=g|_{\mathcal{V}_{1}}$. Then we have the decompositions
	\begin{align*}
		(\mathcal{V},\beta)&=\left(\mathcal{H}_{0},h_{0}\right)\oplus\left(\mathcal{V}_{1},\beta_{1}\right),\\
		g&=g_{0}\oplus g_{1},
	\end{align*}
	and by Proposition \ref{Proposition: Arf inv of direct sum} we have 
	\[\Xi_{even}\left(\mathcal{V},\beta,g\right)=\Xi_{even}\left(\mathcal{H}_{0},h_{0},g_{0}\right)+\Xi_{even}\left(\mathcal{V}_{1},\beta_{1},g_{1}\right).\]
	By construction $\{x,y\}$ is a hyperbolic basis of $\left(\mathcal{H}_{0},h_{0}\right)$ with $g_{0}(x)=0$. Hence $\Xi_{even}\left(\mathcal{H}_{0},h_{0},g_{0}\right)=0$ and $\Xi_{even}\left(\mathcal{V}_{1},\beta_{1},g_{1}\right)=\Xi_{even}\left(\mathcal{V},\beta,g\right)=0$. Now $\mathrm{rank}\ \mathcal{V}$ admits a Lagrangian $L_{1}$ contained in $\ker g_{1}$. Hence $L=\left<x\right>\oplus L_{1}$ is a Lagrangian of $\left(\mathcal{V},\beta\right)$ contained in $\ker\left(g_{0}\oplus g_{1}\right)=\ker g$. By induction the sufficiency is true for all such tripples $(\mathcal{V},\beta,g)$ with $\beta$ even and $\frac{1}{2}\mathrm{rank}\ \mathcal{V}$ being any positive integer.
\end{pf}

\begin{pf}[of Proposition \ref{Proposition: sym unimodular odd bilinear (V, lambda) with vanishing signature admits Lagrangian contained in ker of epim onto Z/2}, Statement 2, the case $p\geqslant3$]
	Now we prove that $(\mathcal{V},\beta)$ admits a Lagrangian contained in $\ker g$ if and only if $\Xi_{odd}(g)=0$ for the case $p\geqslant3$. The ``only if'' part is exactly the same as the case $p=2$, and we focus on the proof of the ``if'' part.
	
	We argue by induction on $n=\frac{1}{2}\mathrm{rank}\ \mathcal{V}$. When $n=1$, let $\{e,f\}$ be standard orthogonal basis of $(\mathcal{V},\beta)$. Then $\Xi_{odd}(\mathcal{V},\beta,g)=0$ implies $g(e)^{2}-g(f)^{2}=0\in\mathbb{Z}\big/p$. Since $p$ is prime, $g(e)=\varepsilon g(f)$ for some $\varepsilon=\pm1$, and $L=\left<e-\varepsilon f\right>$ is the desired Lagrangian.
	
	Suppose the sufficiency is true for all such tripples $(\mathcal{V},\beta,g)$ with $\beta$ odd and $n=\frac{1}{2}\mathrm{rank}\ \mathcal{V}$ not exceeding $k$ for some $k\geqslant1$. Given $\mathcal{V}\cong\mathbb{Z}^{2k+2}$, a symmetric unimodular bilinear odd form $\beta$ on $\mathcal{V}$ and a homomorphism $\mathcal{V}\xrightarrow{g}\mathbb{Z}\big/p$ with $\Xi_{odd}(\mathcal{V},\beta,g)=0$. Let us apply the notations of Lemma \ref{Lemma: Existence of integral primitive isotropic elt} for mod $p$ reductions. Since $\left(\overline{\mathcal{V}},\overline{\beta}\right)$ is again unimodular, there is a unique element $\xi\in\overline{\mathcal{V}}$ such that $g\left(v\right)=\overline{\beta}\left(\xi,v\right)$ for all $v\in\overline{\mathcal{V}}$, and it is routine to verify that $\Xi_{odd}(\mathcal{V},\beta,g)=\overline{\beta}(\xi,\xi)=0$ by computing under a standard orthogonal basis. Then by Lemma \ref{Lemma: Existence of integral primitive isotropic elt} $\left(\mathcal{V},\beta\right)$ admits a primitive isotropic element $x$ contained in $\ker g$. Since $(\mathcal{V},\beta)$ is symmetric and unimodular with vanishing signature and $x$ is primitive, we can find another vector $y$ such that $\beta(x,y)=1$ and $\mathcal{V}_{0}:=\left<x,y\right>$ is a direct summand of $(\mathcal{V},\beta)$. We shall mention that the parity of $\beta(y,y)$ cannot be determined, and the best we can do is to add certain multiple of $x$ to $y$ and assume that $\beta(y,y)=0$ or $1$.
	
	Denote $\beta_{0}=\beta|_{\mathcal{V}_{0}}$, $\mathcal{V}_{1}=\left(\mathcal{V}_{0}\right)^{\perp}$, $\beta_{1}=\beta|_{\mathcal{V}_{1}}$, $g_{0}=g|_{\mathcal{V}_{0}}$ and $g_{1}=g|_{\mathcal{V}_{1}}$. Then we have the decompositions
	\begin{align*}
		(\mathcal{V},\beta)&=\left(\mathcal{V}_{0},\beta_{0}\right)\oplus\left(\mathcal{V}_{1},\beta_{1}\right),\\
		g&=g_{0}\oplus g_{1}.
	\end{align*}
	If $\beta(y,y)=0$ so that $\left(\mathcal{V}_{0},\beta_{0}\right)$ is even, then $\left(\mathcal{V}_{1},\beta_{1}\right)$ must be odd,
	and by Proposition \ref{Proposition: Arf inv of direct sum} we have 
	\[\Xi_{odd}\left(\mathcal{V},\beta,g\right)=\Xi_{even}\left(\mathcal{H}_{0},h_{0},g_{0}\right)+\Xi_{odd}\left(\mathcal{V}_{1},\beta_{1},g_{1}\right).\]
	By construction $\{x,y\}$ is a hyperbolic basis of $\left(\mathcal{H}_{0},h_{0}\right)$ with $g_{0}(x)=0$. Hence $\Xi_{even}\left(\mathcal{V}_{0},\beta_{0},g_{0}\right)=0$ and $\Xi_{odd}\left(\mathcal{V}_{1},\beta_{1},g_{1}\right)=\Xi_{odd}\left(\mathcal{V},\beta,g\right)=0$. Now $\mathcal{V}$ admits a Lagrangian $L_{1}$ contained in $\ker g_{1}$. 
	
	If $\beta(y,y)=1$ so that $\left(\mathcal{V}_{0},\beta_{0}\right)$ is odd, then $\{x,y\}$ is a standard orthogonal basis with $g_{0}(x)=0$ and thus $\Xi_{odd}\left(\mathcal{V}_{0},\beta_{0},g_{0}\right)=0$, and $\left(\mathcal{V}_{1},\beta_{1}\right)$ can be either even or odd. When $\left(\mathcal{V}_{1},\beta_{1}\right)$ is even, we have $\Xi_{even}\left(\mathcal{V}_{1},\beta_{1},g_{1}\right)=\Xi_{odd}\left(\mathcal{V},\beta,g\right)=0$, and by the case $p\geqslant3$ of Proposition \ref{Proposition: sym unimodular even bilinear (V, lambda) with vanishing signature admits Lagrangian contained in ker of epim onto Z/2}, Statement 2 \(\left(\mathcal{V}_{1},\beta_{1}\right)\) admits a Lagrangian $L_{1}$ contained in $\ker g_{1}$. When $\left(\mathcal{V}_{1},\beta_{1}\right)$ is odd, we have $\Xi_{odd}\left(\mathcal{V}_{1},\beta_{1},g_{1}\right)=\Xi_{odd}\left(\mathcal{V},\beta,g\right)=0$, and by induction hypothesis \(\left(\mathcal{V}_{1},\beta_{1}\right)\) admits a Lagrangian $L_{1}$ contained in $\ker g_{1}$. 
	
	Hence in either case, $L=\left<x\right>\oplus L_{1}$ is a Lagrangian of $\left(\mathcal{V},\beta\right)$ contained in $\ker\left(g_{0}\oplus g_{1}\right)=\ker g$. By induction the sufficiency is true for all such tripples $(\mathcal{V},\beta,g)$ with $\beta$ even and $\frac{1}{2}\mathrm{rank}\ \mathcal{V}$ being any positive integer.
\end{pf}

\begin{pf}[of Lemma \ref{Lemma: Existence of integral primitive isotropic elt}]
	When $\xi=0$, any primitive isotropic element $x$ of $(\mathcal{V},\beta)$ meets the requirement, and since $(V,\beta)$ is symmetric and unimorular with vanishing signature, it always admits primitive isotropic elements. Hence in the following we require that $\xi\neq0\in \overline{\mathcal{V}}$. 
	
	The treatment differs due to the parity of $(\mathcal{V},\beta)$, and we begin with the even case. Let $\left\{e_{1},f_{1},\cdots,e_{n},f_{n}\right\}$ be a hyperbolic basis. For simplicity we denote $\overline{v}=\rho_{p}v$ for $v\in \mathcal{V}$. Set $\xi=\sum_{i=1}^{n}\left(\overline{c_{i}}\overline{e_{i}}+\overline{d_{i}}\overline{f_{i}}\right)$ with $\overline{c_{i}},\overline{d_{i}}\in\mathbb{Z}\big/p$. Then $\overline{\beta}(\xi,\xi)=0$ implies that $2\sum_{i=1}^{n}\overline{c_{i}}\overline{d_{i}}=0$. Since $\xi\neq0$, $\overline{c_{i}},\overline{d_{i}}$ cannot be simultaneously zero, and we may assume $\overline{c_{1}}\neq0$. Since $\overline{\beta}$ is bilinear, whether $\overline{\beta}(\xi,\xi)$ and $\overline{\beta}\left(\xi,\overline{x}\right)$ vanish is not affected by multiplying a non-zero scalar to $\xi$, and we may further assume that $\overline{c_{1}}=1$.
	
	Now we construct the desired element $x=\sum_{i=1}^{n}\left(u_{i}e_{i}+v_{i}f_{i}\right)\in \mathcal{V}$ directly as follows. Set $u_{1}=1$. For each $i\geqslant2$, let $u_{i}$ and $v_{i}$ be any integral liftings of $\overline{c_{i}}$ and $\overline{d_{i}}$ respectively. Finally we set $v_{1}=-\sum_{i=2}^{n}u_{i}v_{i}$. Then $x$ is primitive since $u_{1}=1$, and it is straightforward to verify that $x$ is isotropic and $\overline{\beta}\left(\xi,\overline{x}\right)=0$.
	
	Next we consider the odd case. Let $\left\{e_{1},f_{1},\cdots,e_{n},f_{n}\right\}$ be a standard orthogonal basis of $(\mathcal{V},\beta)$ and set $\xi=\sum_{i=1}^{n}\left(\overline{c_{i}}\overline{e_{i}}+\overline{d_{i}}\overline{f_{i}}\right)$ with $\overline{c_{i}},\overline{d_{i}}\in\mathbb{Z}\big/p$. Then $\overline{\beta}(\xi,\xi)=0$ implies that $\sum_{i=1}^{n}\left(\overline{c_{i}}^{2}-\overline{d_{i}}^{2}\right)=0$.
	
	If there is an index $k$ such that $\overline{c_{k}}^{2}-\overline{d_{k}}^{2}=0$, then we must have $\overline{c_{k}}=\varepsilon\overline{d_{k}}$ for some $\varepsilon=\pm1$. Set $x=e_{k}+\varepsilon f_{k}$. Then $x$ is primitive, and it is straightforward to verify that $x$ is isotropic with $\overline{\beta}\left(\xi,\overline{x}\right)=0$.
	
	Suppose $\overline{c_{i}}^{2}-\overline{d_{i}}^{2}\neq0$ for all indices $i$. In particular $\overline{c_{i}}^{2}-\overline{d_{i}}^{2}\neq0$ and $\overline{c_{i}}\pm\overline{d_{i}}\neq0$ for $i=1,2$. Let $u_{i}$, $v_{i}$ be any integral liftings of $\overline{c_{i}}$, $\overline{d_{i}}$ respectively. Set
	\[x=\left(u_{2}-v_{2}\right)\left(e_{1}-f_{1}\right)-\left(u_{1}+v_{1}\right)\left(e_{2}+f_{2}\right).\]
	Since $\overline{c_{i}}\pm\overline{d_{i}}\neq0$, we have $\overline{x}\neq0\in\overline{V}$ and thus $x\neq0\in \mathcal{V}$. It is straightforward to verify that $x$ is isotropic with $\overline{\beta}\left(\xi,\overline{x}\right)=0$. When $x$ is not primitive, denote $g=\gcd\left(u_{1}+v_{1},u_{2}-v_{2}\right)$, $x'=\frac{1}{g}\cdot x$, and $x'$ is a primitive isotropic element in $(\mathcal{V},\beta)$ with $\overline{\beta}\left(\xi,\overline{x'}\right)=0$. This completes the proof of Lemma \ref{Lemma: Existence of integral primitive isotropic elt}.
\end{pf}
	
	\bibliography{refs}
	
\end{document}